\titleformat{\section}[block]{\Large\bfseries\filcenter}{\thesection}{1em}{}
\newcommand{\red}[1]{\textcolor{red}{#1}}
\newcommand{\blue}[1]{\textcolor{blue}{#1}}
\newcommand{\Ha}{\mathrm{Ha}}
\newtheorem{theorem}{Theorem}
\newtheorem{remark}{Remark}
\newtheorem{lemma}{Lemma}
\newtheorem{problem}{Problem}
\newtheorem{algorithm}{Algorithm}
\newcommand\grad{\operatorname{grad}}
\renewcommand\div{\nabla \cdot }
\newcommand\curl{\operatorname{curl}}
\newcommand{\scurl}{\curl}
\newcommand{\del}{\partial}
\newcommand{\cross}{\times}
\DeclareMathOperator{\vcurl}{\mathbf{curl}}
\newcommand{\Hhc}{\mathbf{H}^{h}_{0}(\curl, \Omega)}
\newcommand{\Ltz}{L^2_0(\Omega)}
\newcommand{\Hhd}{\mathbf{H}^{h}_{0}(\operatorname{div}, \Omega)}
\newcommand{\Hd}{{H}(\mathrm{div}; \Omega)}
\newcommand{\Hc}{{H}(\mathrm{curl}; \Omega)}
\newcommand{\Ned}{N\'{e}d\'{e}lec\ }
\newcommand{\R}{\mathbb{R}}
\newcommand{\hcurl}{\mathbf{H}(\curl)}
\newcommand{\hdiv}{\mathbf{H}(\operatorname{div})}
\newcommand{\hdivz}{\mathbf{H}(\operatorname{div},0)}
\newcommand{\hzcurl}{\mathbf{H}_0(\curl, \Omega)}
\newcommand{\hzdiv}{\mathbf{H}_0(\operatorname{div}, \Omega)}
\newcommand{\Hozv}{\mathbf{H}^1_0(\Omega)}
\newcommand{\Hoz}{H^1_0(\Omega)}
\renewcommand\S{{S}}
\newcommand{\ou}{\overline{U}}
\newcommand{\eps}[1]{\ensuremath{\varepsilon(#1)}}
\newcommand\x{\mathbf{x}}
\newcommand\A{{\mathcal A}}
\renewcommand{\Re}{\ensuremath{\mathrm{Re}}}
\newcommand{\Rem}{\ensuremath{\mathrm{Re_m}}}
\newcommand{\Reminv}{\ensuremath{\mathrm{Re_m^{-1}}}}
\newcommand{\RH}{\ensuremath{\mathrm{R_H}}}
\renewcommand{\Pr}{\ensuremath{\mathrm{Pr}}}
\newcommand{\Pm}{\ensuremath{\mathrm{Pm}}}
\newcommand{\Ra}{\ensuremath{\mathrm{Ra}}}
\newcommand{\Qc}{\mathbb{Q}_{c}}
\newcommand{\Qd}{\mathbb{Q}_{d}}
\newcommand{\MM}{\mathcal{M}}
\newcommand{\ZZ}{\mathcal{Z}}
\newcommand{\UU}{\mathcal{U}}
\newcommand{\NN}{\mathcal{N}}
\newcommand{\II}{\mathcal{I}}
\newcommand{\JJ}{\mathcal{J}}
\newcommand{\CC}{\mathcal{C}}
\newcommand{\DD}{\mathcal{D}}
\renewcommand{\AA}{\mathcal{A}}
\newcommand{\BB}{\mathcal{B}}
\newcommand{\GG}{\mathcal{G}}
\newcommand{\FF}{\mathcal{F}}
\newcommand{\KK}{\mathcal{K}}
\newcommand{\LL}{\mathcal{L}}
\newcommand{\B}{\mathbf{B}}
\newcommand{\E}{\mathbf{E}}
\renewcommand{\u}{\mathbf{u}}
\renewcommand{\v}{\mathbf{v}}
\renewcommand{\j}{\mathbf{j}}
\renewcommand{\k}{\mathbf{k}}
\newcommand{\F}{\mathbf{F}}
\newcommand{\C}{\mathbf{C}}
\newcommand{\n}{\mathbf{n}}
\newcommand{\zerov}{\mathbf{0}}
\newcommand{\f}{\mathbf{f}}
\newcommand{\V}{\mathbf{V}}
\newcommand{\Rr}{\mathbf{R}}
\newcommand{\W}{\mathbf{W}}
\renewcommand*\env@matrix[1][\arraystretch]{%
	\edef\arraystretch{#1}%
	\hskip -\arraycolsep
	\let\@ifnextchar\new@ifnextchar
	\array{*\c@MaxMatrixCols c}}
\newcommand\scalemath[2]{\scalebox{#1}{\mbox{\ensuremath{\displaystyle #2}}}}
\renewcommand{\Re}{\mathrm{Re\,}}
\def\XXint#1#2#3{{\setbox0=\hbox{$#1{#2#3}{\int}$}
     \vcenter{\hbox{$#2#3$}}\kern-.5\wd0}}
\numberwithin{equation}{section}
\newcommand*{\ldblbrace}{\{\mskip-5mu\{}
\newcommand*{\rdblbrace}{\}\mskip-5mu\}}
\begin{document}
%\maketitle
\raggedbottom 
\pagestyle{empty}

%%%%%%%%%%%%%%%%%%%%%%%%%%%%%%%%%%%%%%%%%%%%%%%%%%%%%%%%%%%%%%%%%%
%%%%%%%%%%%%%%%%%%%%%%%%%%%%%%%%%%%%%%%%%%%%%%%%%%%%%%%%%%%%%%%%%%
\begin{center}
	\LARGE{\textbf{
			Discretisations and Preconditioners for Magnetohydrodynamics Models
	}}
\end{center}
\vspace{1cm}
\begin{center}
 \includegraphics[scale=0.30]{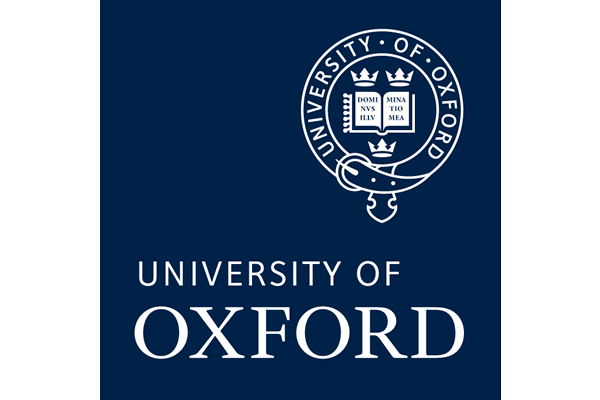}
\end{center}
\vspace{0.5cm}
\begin{center}
\large{Keble College \\University of Oxford}
\end{center}
\vspace{1cm}
\begin{center}
{\Large{A thesis submitted for the degree of\\
Doctor of Philosophy}}
\end{center}

%\begin{center}
%zur Erlangung des akademischen Grades\\
%Master of Science
%\end{center}

\begin{center}
by
\end{center}

\begin{center}
\large{\textbf{Fabian Laakmann}} \\
\end{center}

\begin{center}
%\large{\monthyeardate\today}
\large{Trinity 2022}
\end{center}

\newpage
\pagestyle{empty}
%\chapter*{Abstract}
\section*{Abstract}
	Magnetohydrodynamics (MHD) models describe the behaviour of electrically conducting fluids such as astrophysical and laboratory plasmas or liquid metals in the presence of magnetic fields. They are generally known to be difficult to solve numerically, due to their highly nonlinear structure and the strong coupling between the electromagnetic and hydrodynamic variables, especially for high Reynolds and coupling numbers.
	
	In the first part of this work, we present a scalable augmented Lagrangian preconditioner for a finite element discretisation of the $\B$-$\E$ formulation of the incompressible viscoresistive MHD equations. For stationary problems, our solver achieves robust performance with respect to the Reynolds and coupling numbers in two dimensions and good results in three dimensions. We extend our method to fully implicit methods for time-dependent problems which we solve robustly in both two and three dimensions. Our approach relies on specialised parameter-robust multigrid methods for the hydrodynamic and electromagnetic blocks. The scheme ensures exactly divergence-free approximations of both the velocity and the magnetic field up to solver tolerances.
	We confirm the robustness of our solver by numerical experiments in which we consider fluid and magnetic Reynolds numbers and coupling numbers up to 10,000 for stationary problems and up to 100,000 for transient problems in two and three dimensions.
	
	In the second part, we focus on incompressible, resistive Hall MHD models and derive structure-preserving finite element methods for these equations. These equations incorporate the Hall current term in Ohm's law and provide a more appropriate description of fully ionized plasmas than the standard MHD equations on length scales close to or smaller than the ion skin depth. In particular, we present a variational formulation of Hall MHD that enforces the magnetic Gauss's law precisely (up to solver tolerances) and prove the well-posedness of a Picard linearisation. For the transient problem, we present time discretisations that preserve the energy and magnetic and hybrid helicity precisely in the ideal limit for two types of boundary conditions. Additionally, we investigate an augmented Lagrangian preconditioning technique for both the stationary and transient cases. Finally, we confirm our findings by several numerical experiments.
	
	In the third part, we investigate anisothermal MHD models. We start by performing a bifurcation analysis for a magnetic Rayleigh--B\'enard problem at a high coupling number $S=1{,}000$ by choosing the Rayleigh number in the range between 0 and $100{,}000$ as the bifurcation parameter. We study the effect of the coupling number on the bifurcation diagram and outline how we create initial guesses to obtain complex solution patterns and disconnected branches for high coupling numbers. Moreover, we extend the parameter-robust augmented Lagrangian preconditioner for the standard MHD equations to the anisothermal case. Again, we obtain excellent robustness with respect to the Rayleigh number, Prandtl number, magnetic Prandtl number and coupling number in two dimensions and good robustness in three dimensions. We verify our finding by reporting iteration numbers for a magnetic double glazing problem and a magnetic cooling channel problem.
	
    \newpage
 
\section*{Acknowledgement}

First, I want to thank Patrick Farrell, who has done an incredible job in proposing and supervising my project. 
%You were exactly the kind of supervisor I always wanted to have.
 You have a remarkable talent in fascinating people for mathematics, explaining complicated things in an understandable way and  coming up with new ideas when one gets stuck with his project. You really support your students, constantly check on them and offer your support. You always respond quickly and know exactly what is going on in your students' projects. And on top of that you are also a very likeable person. 

Furthermore, I want to express my gratitude to Kaibo Hu for being such a helpful person over the years. Since most of my work is based on your fundamental contributions to the field of finite elements methods for MHD, I am convinced that this thesis would not have been possible in this form without your input. 

I also want to thank Lawrence Mitchell for his continuous help with implementations in Firedrake and, in general, for all the contributions to Firedrake in the past years. I'm very glad that I had access to this software and how (relatively) easy it made it to produce all the numerical results included in this thesis. 

Further thanks belongs to Prof.~Ben Schweizer for fascinating me about the world of partial differential equations and all the incredible effort he puts into his lectures and students. Similarly, I have to thank Prof.~Stefan Turek and Prof.~Sandra May for laying my foundation in all kinds of numerical methods for partial differential equations. 

Special thanks also belongs to all my friends in Oxford, who were mainly responsible for giving me such an awesome time in Oxford. I want to explicitly mention: Aili, Chris, Christoph, Jingmin, Pablo, Tian, Timo and Tommaso.

Finally, I want to thank all my friends at home, above all Florian, as well as my brother and my parents. Without your continuous support and unconditional love none of this would have been possible. 

\newpage
\includepdf[pages=-]{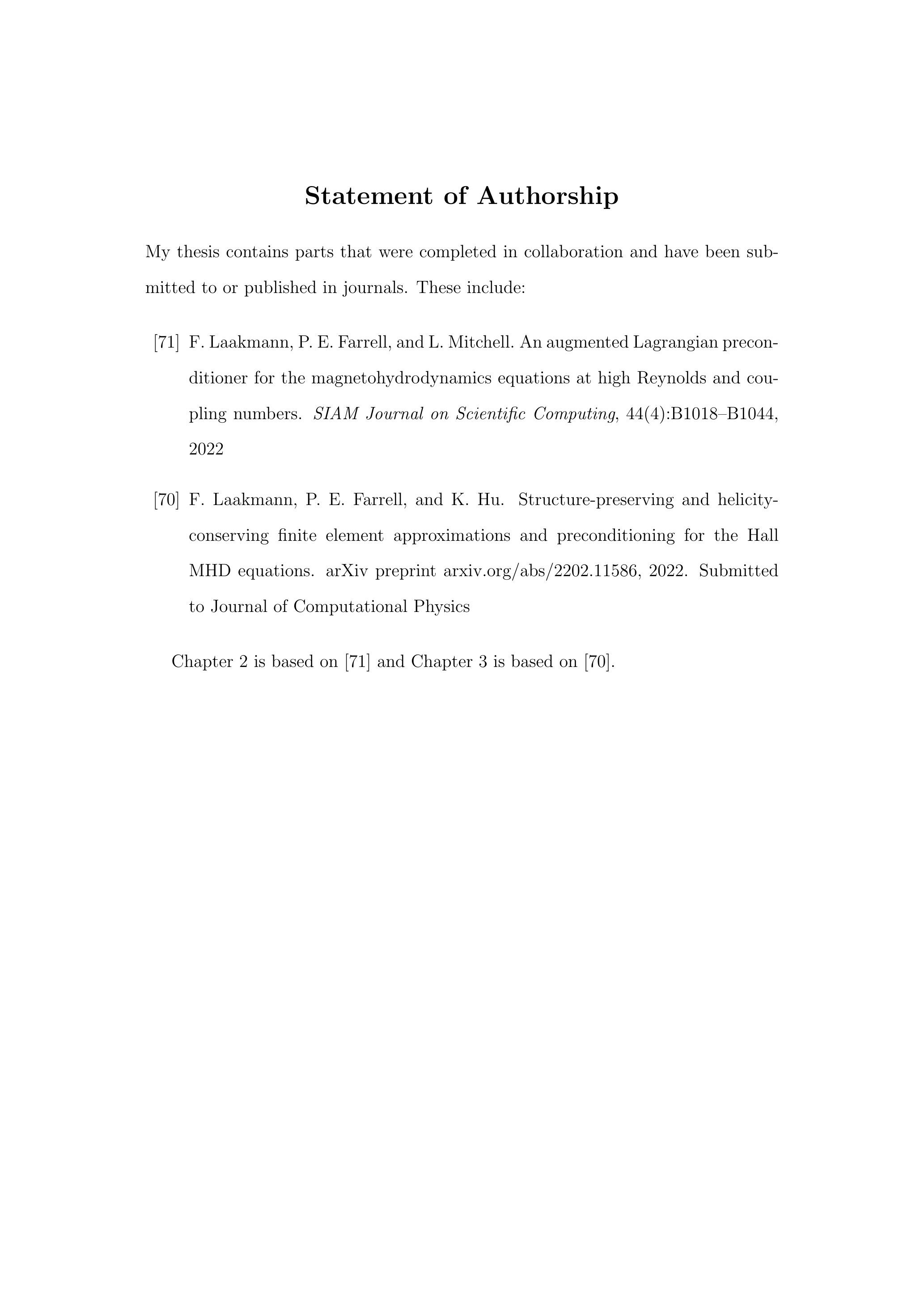}

%\newpage
\tableofcontents
\listoftables
\listoffigures
%\newpage
\pagestyle{fancy}
	
\chapter{Introduction}
\section{The magnetohydrodynamics equations}
Magnetohydrodynamics (MHD) models describe the flow of electrically conducting fluids in the presence of electromagnetic fields. They have numerous important applications in astrophysics, geophysics, the liquid metal industry and thermonuclear fusion. Mathematically, they are described by the Navier--Stokes and Maxwell's equations which are coupled through the Lorentz force and Ohm's law. The development of numerical methods is an area of active research and known to be very challenging due to the highly nonlinear character of the system and the strong coupling between the hydrodynamic and electromagnetic variables.

The flow of conducting fluids can be mainly characterised by three dimensionless parameters, which are called the fluid Reynolds number $\Re$, the magnetic Reynolds number $\Rem$ and the coupling number $S$. The size of these numbers has important consequences for the physical behaviour of the fluid and electromagnetic field, but also for the development of numerical methods. Low fluid Reynolds numbers describe laminar flow, which is characterised by a smooth and constant fluid motion due to the domination of viscous forces. On the other hand, high fluid Reynolds numbers model turbulent flow driven by inertial forces that exhibit eddies, vortices and other irregular behaviour.
Small magnetic Reynolds number imply that the magnetic field is in a purely diffusive state and inhomogeneities in the field will be smoothed out. For high magnetic Reynolds numbers the magnetic field lines are mainly advected by the fluid flow. The size of the coupling number $S$ quantifies the strength of the coupling between the electromagnetic and hydrodynamic unknowns.

Most applications are in the regime of high Reynolds and coupling numbers and hence it is of great interest to build robust solvers that work in these parameter regimes.
For liquid metals, the fluid Reynolds number $\Re$ tends to be much larger than $\Rem$. For example, the flow of liquid mercury is characterized by a 
ratio of $10^6$ between these two constants; typical values in aluminium electrolysis are $\Rem = 10^{-1}$ and $\Re = 10^5$ \cite{Gerbeau2006}. High 
magnetic Reynolds numbers occur on large length scales, as in geo- and astrophysics. The magnetic Reynolds number of the outer Earth's core is in the range of $10^3$ and of the sun is in the range of $10^6$ \cite{Davies2015}. Magnetic Reynolds numbers between $10^1-10^3$ have been used in several dynamo experiments that investigate planetary magnetic fields \cite{Molokov2007}. The coupling number $S$ is around $10^0$ for aluminium electrolysis \cite{Gerbeau2006} and Armero \& Simo \cite{Armero1996} define strong coupling for $S$ in the range of $10^2-10^9$.

From a mathematical point of view, high Reynolds numbers imply a strong nonlinear coupling between the fluid and magnetic field, while singular terms start to dominate the equations. This setting demands the use of appropriate discretisations, linearisation schemes and globalisation techniques and the construction of highly specialised linear solvers that can capture the kernel of the singular terms.

\section{MHD models}
MHD models can be derived under specific assumptions from the Vlasov-Maxwell equations which provide a general kinetic description of collisionless plasmas.
These assumptions include that the considered fluid is quasi-neutral, the considered length scales are larger than the kinetic length scales such as the gyro radius or the ion skin depth and the considered time scales are shorter than the cyclotron frequencies of the ions and electrons. Moreover, displacement currents, electric forces, electron inertia and relativistic effects are neglected \cite{Gerbeau2006}. 

These assumptions are common to all MHD models. Depending on the physical setting or application additional assumptions or simplifications are often considered. Ideal MHD models \cite[Section 2.4.3]{Galtier2015} further neglect the resistivity of the fluid and hence assume that the fluid is perfectly conducting (which formally corresponds to an infinite magnetic Reynolds number). This results in the so-called frozen-in condition, where the magnetic field lines are tied to the fluid flow. Electron MHD models \cite[Section 2.4.2]{Galtier2015} describe the limit of small time and space scales where the velocity of the electron is much higher than that of the ions. In this case, the ions build a neutralising background and the electric current is only carried by the electrons. The Hall MHD equations \cite[Section 2.4]{Galtier2015} which include an additional current term in Ohm's law are described in more detail in the next section. A further extension is given by multi-fluid models that track the separate movement of ions, electrons and neutral species. 
 
%Since the MHD equations are used to model a wide range of plasmas and fluids for different applications, there exists different standard MHD formulations. 
Depending on the application, the standard MHD equations can describe compressible or incompressible fluids and isothermal or anisothermal systems with homogeneous or spatially-varying physical parameters. In this work, we focus on incompressible MHD models that include visco-resistive effects for homogeneous physical parameters. The most common applications for these models are liquid metals, but, e.g., also the modelling of solar winds is well approximated in the incompressible regime since the occurring density fluctuations are usually small \cite[Section 2.4.1]{Galtier2015}. On the other hand, compressible MHD models provide a more suitable description of plasmas. The first two main chapters deal with isothermal MHD models, while the last main chapter is devoted to the Boussinesq approximation for anisothermal magnetic convection problems.

\section{Hall MHD equations}
The Hall MHD equations extend standard MHD models by including the so-called Hall effect.
This provides a more appropriate description of fully ionized plasmas on length scales close to or smaller than the ion skin depth \cite{Galtier2015}. On these length scales the Hall MHD equations take into account the different motions of ions and electrons in a two-fluid approach. While the electron motion is frozen to the magnetic field in this regime, it remains to solve for the velocity of the centre of mass $\u$ \cite{Huba2003}. 
The Hall MHD equations can be used to describe many important plasma phenomena, such as magnetic reconnection processes \cite{Forbes1991, Morales2005}, the expansion of sub-Alfv\'{e}nic plasma \cite{Ripin1993} and the dynamics of Hall drift waves and Whistler waves \cite{Huba2003}.  

The essence of the Hall effect is described by adding the Hall-term $\j \times \B$, the cross product of the current density and magnetic field, in the generalised Ohm's law \cite[Section 2.2.2]{Galtier2015}.
%\begin{equation}
%	\eta \j = \E + \u \times \B - \frac{1}{n e} \j \times \B
%\end{equation}
%where $\eta$ denotes the magnetic resistivity, $n$ the charge density and $e$ the electron charge. 
Hence, the Hall-MHD equations only differ by the Hall term $\RH\,\j \times \B$ with the Hall parameter $\RH$ from the standard MHD equations. Nevertheless, the extension of existing theory and algorithms for the standard MHD equations is highly non-trivial, since the Hall term represents the highest order term in the final system of equations given by $\RH \nabla \times ((\nabla \times \B)\times \B))$. Furthermore, the current density $\j$ cannot be eliminated with the help of Ohm's law and has to be kept in the formulation.

Several analytical results for the continuous Hall MHD problem \cite{Chae2014, Danchin2020} and computational results of physical simulations \cite{Gmez2008, Chacn2003, Tth2008} are available in the literature.

\section{Anisothermal MHD models}\label{sec:IntroAnisothermalMHDmodels}

There are several ways to incorporate temperature dependence in MHD models. Here, we mainly focus on the Boussinesq approximation which assumes that the flow is bouyancy driven and that density differences only appear in the buoyancy term, while the remaining unknowns are assumed to neither depend on the density nor temperature. This results in an additional term in the velocity equation of the form $\Pr\, \Ra\, \sigma\, \mathbf{g}$ where $\Pr$ denotes the Prandtl number, $\Ra$ the Rayleigh number, $\sigma$ the temperature unknown and $\mathbf{g}$ a unit vector in the direction of the gravity. Anisothermal formulations usually include $\Ra$, $\Pr$  and the magnetic Prandtl number $\Pm$ as parameters instead of the fluid and magnetic Reynolds numbers $\Re$ and $\Rem$. 

The Prandtl number describes the ratio of kinematic viscosity and thermal diffusivity. It is solely determined by the fluid and its state and does not depend on the problem configuration or length scale. Typical values for liquid metals are 0.065 for lithium at 975 Kelvin, 0.015 for mercury at room temperature and 0.003 for potassium at 975 Kelvin \cite{Coulson1999}. 

The magnetic Prandtl number corresponds to the quotient $\Rem/\Re$. For liquid-metals this number tends to be quite small, for example, a typical value for liquid mercury is $10^{-7}$, for aluminium electrolysis is $10^{-6}$, for liquid sodium is $10^{-4}$, while $\Pm\approx 1$ occurs in tokamaks \cite{Davies2015, Galtier2015}.  

The Rayleigh number encodes the importance of the bouyancy-driven natural convection in comparison to viscous dissipation and heat conduction. Depending on the application, the Rayleigh number usually varies between $10^3-10^9$, but can also reach values up to $10^{20}$, e.g., when the Earth's core is considered.

\section{Structure and contribution of thesis}
This thesis is split into three main chapters. Chapter \ref{chap:2} deals with robust solvers for the $\B$-$\E$ formulation of the stationary and transient standard incompressible resistive MHD equations.
The main contribution of this chapter is to provide block preconditioners with good convergence even at high Reynolds and coupling numbers. The performance relies on the following three (novel) approaches:
\begin{itemize}
	\item[1.)] We consider a fluid-Reynolds-robust augmented Lagrangian preconditioner for an $\hdiv\times L^2$-discretisation of the Navier--Stokes equations that relies on a specialised multigrid method.
	\item[2.)]  We introduce a new monolithic multigrid method for the electromagnetic block.
	\item[3.)]  We discover that using the outer Schur complement which eliminates the $(\u,p)$ block instead of the $(\B, \E)$ block has crucial advantages for ensuring robustness for high parameters.
\end{itemize}
Furthermore, we show that our preconditioners extend in a straightforward manner to time-dependent versions. This has the substantial advantage that the choice of the time-stepping scheme is no longer restricted by the ability to solve the linear systems. In particular, it allows the use of fully implicit methods for high Reynolds numbers and coupling parameters.

In Section~\ref{sec:MHDModel}, we introduce the MHD model that we mainly focus on in this work.
In Section~\ref{sec:discretization}, we derive an augmented Lagrangian formulation and describe the finite element discretisation and linearisation schemes. In Section~\ref{sec:derivationofblockpreconditioner}, we introduce block preconditioners for these schemes, present a calculation of the corresponding (approximate) Schur complements and describe robust linear multigrid solvers for the different blocks. Numerical examples and a detailed description of the algorithm are presented in Section~\ref{sec:numericalresults}. 
\newline

In Chapter \ref{chap:3}, we introduce structure-preserving finite element discretisations for the Hall MHD equations and derive conservative algorithms that preserve energy and helicity precisely on the discrete level in the ideal limit.
Here the main contribution includes the following results:
\begin{enumerate}
	\item[1.)] We provide a variational formulation and structure-preserving discretisation for the stationary and time-dependent Hall-MHD equations and prove a well-posedness results for a Picard type linearisation.
	\item[2.)] We construct numerical schemes that preserve the energy, magnetic helicity and hybrid helicity precisely in the ideal limit of $\Re=\Rem=\infty$.
	\item[3.)] We investigate parameter-robust preconditioners and report corresponding iteration numbers.
\end{enumerate}

In Section \ref{sec:vatiational-formulation}, we derive a variational formulation of the stationary Hall MHD system and prove the well-posedness of a Picard type iteration. In Section \ref{sec:timedepproblems}, we derive algorithms that preserve the energy, magnetic and hybrid helicity precisely in the ideal limit. We investigate an augmented Lagrangian preconditioner for the Hall MHD system in Section \ref{sec:ALP}. Finally, we present numerical results in Section \ref{sec:numericalresultsHall}, which include iterations numbers for a lid-driven cavity problem, the simulation of magnetic reconnection for an island coalescence problem and a numerical verification of the conservation properties for our algorithms in the ideal limit.
\newline

In Chapter 4, we investigate anisothermal MHD models by performing a bifurcation analysis for a magnetic Rayleigh-B\'enard problem and deriving a parameter-robust preconditioner for these models. The main results of this chapter include:

\begin{itemize}
	\item[1.)] We show how to create bifurcation diagrams over the bifurcation parameter $\Ra$ at a high coupling number of $S=1{,}000$. We investigate the dependence of the coupling number on the bifurcation analysis, observe how increasing the coupling number can stabilise unstable branches and describe how we discover disconnected branches.
	\item[2.)] We extend the parameter-robust preconditioner that was developed for the standard MHD equations to the anisothermal case and report iteration numbers for two test problems.
	
\end{itemize}

In Section \ref{sec:AnIsoFormulationAndDiscretisation}, we introduce the strong and weak formulation of our anisothermal model based on the Boussinesq approximation and outline our finite element approximation. In Section \ref{sec:BifurcationAnalysis}, we perform the bifurcation analysis for the magnetic Rayleigh-B\'enard problem over $0 \leq \Ra \leq 100{,}000$ at $S=1{,}000$ and describe in detail the evolution and stability of our discovered branches and how we obtained suitable initial guesses that allowed us to compute these solutions. Finally, we describe in Section \ref{sec:BlockPreconAniso} how we construct the augmented Lagrangian preconditioner for the anisothermal model and underline our findings with numerical tests.

\section{Notation}
Throughout this work, we assume that $\Omega\subset \R^d$, $d \in \{2,3\}$ is a bounded Lipschitz polygon or polyhedron that is simply connected in order to ensure that the de Rham complexes, considered later in this work, are exact. We use the convention that vector-valued functions and function spaces are denoted by bold letters. We use $(\cdot,\cdot)$ and $\|\cdot\|$ (sometimes $\|\cdot\|_0$) to denote the $L^2(\Omega)$ inner product and norm. The dual pairing between an $H^{-1}$ (with norm $\|\cdot\|_{-1}$) and $H^1$ (with norm $\|\cdot\|_{1}$) function is denoted as $\langle \cdot , \cdot \rangle$.
We define the function spaces
\begin{align}
    \Hoz &= \{v \in H^1(\Omega) \, | \, v = 0 \text{ on } \partial \Omega \}, \\
    \hzdiv &= \{\C \in \mathbf{L}^2(\Omega) \, | \, \div \C \in \mathrm{L}^2(\Omega), \, \C \cdot \n = 0 \text{ on } \partial \Omega\}, \\
    \hzcurl &= \{\F \in \mathbf{L}^2(\Omega) \, | \, \vcurl \F \in \mathbf{L}^2(\Omega), \, \F \times \n = \mathbf{0} \text{ on } \partial \Omega\}, \\
    \mathrm{L}^2_0(\Omega) &= \{q \in L^2(\Omega) \, | \, \int_{\Omega} q\,  \mathrm{d}x = 0 \},
\end{align}
where $\n$ denotes the unit outward normal vector on the boundary of $\Omega$. In some cases, we also denote $\Hoz$ as $H_{0}(\grad, \Omega)$. Further, we drop the domain $\Omega$ in the notation of the function spaces if it is obvious which domain we consider.

In our formulations, $\u:\Omega\to\R^3$ denotes the velocity, $p:\Omega\to\R$ the fluid pressure, $\j:\Omega \to \R^3$ the current density, $\B:\Omega\to\R^3$ the magnetic field, $\E:\Omega\to\R^3$ the electric field and $\theta: \Omega \to \R$ the temperature. Furthermore, $\Re$ denotes the fluid Reynolds number, $\Rem$ the magnetic Reynolds number, $S$ the coupling number, $\RH$ the Hall parameter, $\Ra$ the Rayleigh number, $\Pr$ the Prandtl number, $\Pm$ the magnetic Prandtl number, $\f: \Omega \to \R^3$ a source term and $\eps \u = \frac{1}{2} (\grad \u + \grad \u^\top )$ the symmetric gradient. We regularly use the notation $\bm \omega = \nabla \times \u$ for the vorticity.

We often use properties of the following continuous de Rham complexes in two and three dimensions 
\begin{equation}\label{eq:contDC2d}
	\R \xrightarrow[]{\text{id}} H_0(\mathrm{curl}, \Omega) \xrightarrow[]{\vcurl} \hzdiv \xrightarrow[]{\mathrm{div}} L^2_0(\Omega) \xrightarrow[]{\text{null}} 0,
\end{equation}
\begin{equation}\label{eq:contDC3d}
	\R \xrightarrow[]{\text{id}} H_{0}(\grad, \Omega) \xrightarrow[]{\mathrm{grad}} \hzcurl\xrightarrow[]{\vcurl} \hzdiv \xrightarrow[]{\mathrm{div}} L^2_0(\Omega) \xrightarrow[]{\text{null}} 0.
\end{equation}
De~Rham complexes are called exact if the kernel of an operator in the sequence is given by the range of the preceding operator, e.g., $\mathrm{range}\ \mathrm{grad} = \mathrm{ker} \vcurl$ or $\mathrm{range} \vcurl = \mathrm{ker}\ \mathrm{div}$. Both sequences are exact for the simply connected domains we consider \cite{Arnold2006}.

For  numerical approximations, we use the finite element de~Rham sequences
\begin{equation}
	\label{eqn:derhamfe2d}
	\R\xrightarrow[]{\text{id}}  H^{h}_{0}(\scurl, \Omega) \xrightarrow[]{\vcurl} \Hhd \xrightarrow[]{\mathrm{div}}  L^{2}_{h}(\Omega)\xrightarrow[]{\text{null}} 0, 
\end{equation}
\begin{equation}
	\label{eqn:derhamfe}
		\R \xrightarrow[]{\text{id}}  H^{h}_{0}(\grad, \Omega) \xrightarrow[]{\mathrm{grad}}  \Hhc \xrightarrow[]{\vcurl} \Hhd \xrightarrow[]{\mathrm{div}}  L^{2}_{h}(\Omega) \xrightarrow[]{\text{null}}0, 
\end{equation}
to discretise the variables from \eqref{eq:contDC2d} and \eqref{eq:contDC3d}, where $H^{h}_{0}(D, \Omega)\subset H_{0}(D, \Omega), ~D = \grad, \curl, \operatorname{div}$ are conforming finite element spaces, see e.g.~Arnold, Falk, Winther
\cite{Arnold2006,Arnold2010},
Hiptmair \cite{Hiptmair.R.2002a}, Bossavit \cite{Bossavit.A.1998a} for
more detailed discussions on discrete differential forms. A concrete example for such  finite element de~Rham sequences are given by %of \eqref{eq:contDC3d} and \eqref{eq:contDC2d} are given by 
\begin{equation}\label{eq:deRhamdiscret2d}
	\mathbb{CG}_k \xrightarrow[]{\vcurl} \mathbb{RT}_k \xrightarrow[]{\mathrm{div}} \mathbb{DG}_{k-1} \xrightarrow[]{\text{null}} 0,
\end{equation} 
\begin{equation}\label{eq:deRhamdiscret3d}
	\mathbb{CG}_k  \xrightarrow[]{\mathrm{grad}} \mathbb{NED}1_k \xrightarrow[]{\vcurl} \mathbb{RT}_k \xrightarrow[]{\mathrm{div}} \mathbb{DG}_{k-1} \xrightarrow[]{\text{null}} 0.
\end{equation}
Here, $\mathbb{RT}_k$ denotes the Raviart--Thomas elements \cite{Raviart1977} of degree $k$, $\mathbb{NED}1_k$ the  \Ned elements of first kind \cite{Nedelec1980},  $\mathbb{CG}_k$ continuous Lagrange elements and $\mathbb{DG}_k$ discontinuous Lagrange elements.

 In the schemes presented in this work, we require that $\E_h, \j_h \in \Hhc$ and $\B_h \in \Hhd$, i.e.~that they are drawn from the same sequence. We denote the finite element spaces used for the velocity $\bm{u}_h$ and pressure $p_h$
by $\bm{V}_{h}$ and $Q_{h}$ respectively,
and assume that the choice is inf-sup stable
\cite{Girault.V;Raviart.P.1986a}.

In two dimensions, there exist two different curl operators \mbox{given by}
\begin{equation}\label{eq:curl2d}
	\scurl \B \coloneqq \del_x B_2 - \del_y B_1, \qquad  \vcurl E \coloneqq \begin{pmatrix}
		\del_y E\\
		-\del_x E
	\end{pmatrix},
\end{equation}
that correspond to the cross-products
\begin{equation}\label{eq:cross2d}
	\u \times \B \coloneqq u_1 B_2 - u_2 B_1, \qquad
	\B \times E \coloneqq \begin{pmatrix}
		B_2 E \\
		-B_1 E
	\end{pmatrix}.
\end{equation}
We also regularly use the notation $\nabla \cross$ to denote the curl operator in three dimensions. 

We define the weak $\curl$-operator $\tilde\nabla_{h}\times  : [L^{2}(\Omega)]^{3}\to \Hhc$ by
\begin{equation}\label{eq:weakcurl}
(\tilde \nabla_{h}\times \B_h, \k_{h})=(\B_h, \nabla\times \k_{h}) \quad\forall\, \k_{h}\in \Hhc.
\end{equation}
%If the hybrid/cross helicity conservation is not part of the concern, then one may choose the velocity $\u_{h}$ and the pressure $p_{h}$ independent of other variables. \\@Fabian: does this sound reasonable? Perhaps we do not mention the last sentence at all because all the schemes we present below require the variables to be in a same complex?}
%We regularly use the notation $\bm \omega = \nabla \times \u$.
We regularly use the generalised Gaffney inequality
\begin{equation}
	\|\B_h\|_{L^{3+\delta}} \leq \|\tilde{\nabla} \times \B_h\| + \|\nabla \cdot \B_h\| \quad \forall \ \B_h \in \Hhd
\end{equation}
for  $0\leq \delta \leq3$, where $\delta$ depends on the regularity of $\Omega$. For a proof, we refer to \cite[Theorem 1]{he2019generalized} and references therein.

The interpolant of a function $u$ into a finite element space $V_h$ with a set of degrees of freedom $\{{\ell_{h,i}(\cdot)}\}$ and basis functions $\{\varphi_i\}$ is represented by
\begin{equation}
	\II^h_{V_h}(u) = \sum_i \ell_{h,i}(u) \varphi_i.
\end{equation}
		
\renewcommand{\grad}{\nabla}
\chapter{Robust solvers for standard magnetohydrodynamics models}\label{chap:2}
The content of this chapter was developed in collaboration with Patrick Farrell and Lawrence Mitchell. A manuscript \cite{laakmann2021} has been published in the SIAM Journal of Scientific Computing.
\vspace{-0.3cm}
\section{MHD model}\label{sec:MHDModel}
In this chapter, we consider the incompressible viscoresistive magnetohydrodynamics equations on a bounded polytopal Lipschitz domain $\Omega\subset\R^d$, $d \in \{2,3\}$. In the stationary three-dimensional setting, we investigate the formulation
\begin{subequations}
	\label{eq:MHD3d}
	\begin{align}
		- \frac{2}{\Re} \div \eps \u + \u \cdot \nabla \u + \nabla p + S\, \B \times (\E + \u \times \B) &= \f, \label{eq:MHD3d1}\\
		\div \u&=0, \label{eq:MHD3d2}\\
		\E + \u \times \B - \frac{1}{\Rem} \vcurl \B &= \mathbf{0}, \label{eq:MHD3d3}\\
		\vcurl \E &= \mathbf{0}, \label{eq:MHD3d4}\\
		\div \B &= 0. \label{eq:MHD3d5}
	\end{align}
\end{subequations}
We mainly consider the perfectly conducting boundary conditions
\begin{equation}\label{eq:boundarycond3d}
	\u=\mathbf{0}, \quad  \E \times \n = \mathbf{0}, \quad \B \cdot \n=0 \quad \text{ on } \del \Omega,
\end{equation}
although the treatment of the alternative boundary conditions
\begin{equation}\label{eq:boundarycond3dalt}
	\u=\mathbf{0}, \quad  \E \cdot \n = 0, \quad \B \times \n=\mathbf{0} \quad \text{ on } \del \Omega,
\end{equation}
is also possible, see \cite{hu2020convergence}. The term $\mathbf{f}$ on the right-hand side of \eqref{eq:MHD3d1} can, e.g., represent a force like gravity. The above formulation based on the electric and magnetic fields was first rigorously analysed by Hu et al.\ \cite{hu2020convergence}.

 It is straightforward to derive the two-dimensional formulation from this work, which is given by
\begin{subequations}
	\label{eq:MHD2d}
	\begin{align}
		- \frac{2}{\Re} \div \eps \u + \u \cdot \nabla \u + \nabla p + S\, \B \times  (E + \u \times \B) &= \f, \label{eq:MHD2d1}\\
		\div \u&=0, \label{eq:MHD2d2}\\
		E + \u \times \B - \frac{1}{\Rem} \scurl \B &= 0, \label{eq:MHD2d3}\\
		\vcurl E &= \mathbf{0}, \label{eq:MHD2d4}\\
		\div \B &= 0, \label{eq:MHD2d5}
	\end{align}
\end{subequations}
subject to the boundary conditions
\begin{equation}\label{eq:boundarycond2d}
	\u=\mathbf{0}, \quad  E = 0, \quad \B \cdot \n=0 \quad \text{ on } \del \Omega.
\end{equation}

Note that the electric field $E$ is a scalar field in 2D and we have used two different curl operators and cross products corresponding to \eqref{eq:curl2d} and \eqref{eq:cross2d}.
Moreover, the boundary conditions for the electric field change to $E=0$ on $\del \Omega$ in two dimensions.

Other formulations include the current density $\mathbf{j}= \E + \u \times \B$ \cite{Hu2018} as an unknown or eliminate the electric field using equation \eqref{eq:MHD3d3}. In addition to the stationary case, we also consider the time-dependent version of \eqref{eq:MHD3d} where  the time-derivatives $\frac{\del\u}{\del t}$ and $\frac{\del\B}{\del t}$ are added to \eqref{eq:MHD3d1} and \eqref{eq:MHD3d4} respectively with suitable initial conditions 
\begin{equation}
    \u(\x,0)=\u_0(\x) \text{ and } \B(\x,0)=\B_0(\x) \, \forall \x \in \Omega.
\end{equation} Note that MHD models neglect displacement currents $\frac{\del\E}{\del t}$ \cite[Sec. 1.5]{Gerbeau2006}.

The above equations are derived from the Navier--Stokes and Maxwell's equations for a single, incompressible, homogeneous fluid in steady state, which are given by 
\begin{subequations}
\begin{align}
   - \div(2 \nu \mathbf{\varepsilon} (\u)) +  \u \cdot \nabla \u + \nabla p + \frac{1}{\rho_0\mu_0}\B \times \vcurl \B &= \f,  \label{eq:MHDparam1}\\
  \div \u&=0, \label{eq:MHDparam2}\\
  \vcurl \E &= \mathbf{0},  \label{eq:MHDparam3} \\ 
 \mu_0 \j - \vcurl \B &= \mathbf{0}, \label{eq:MHDparam4}  \\
  \div \B &=0, \label{eq:MHDparam5}\\
  \eta \vcurl \B - \E - \u \cross \B &= 0, \label{eq:MHDparam6}
\end{align}
\end{subequations}
with the kinematic viscosity $\nu>0$, the magnetic permeability of free space $\mu_0>0$, a reference density $\rho_0$ and the magnetic resistivity $\eta>0$. We treat each of these parameters as constant throughout the domain. Equation \eqref{eq:MHDparam1} and \eqref{eq:MHDparam2} describe the incompressible Navier--Stokes equations where the Lorentz force $\B \times \j$ acts on the fluid. The stationary forms of the Maxwell-Faraday law and Amp\`{e}re's circuital law are given by \eqref{eq:MHDparam3} and \eqref{eq:MHDparam4}. The system is completed by the magnetic Gauss's law \eqref{eq:MHDparam5} and Ohm's law \eqref{eq:MHDparam6}

To obtain the MHD system \eqref{eq:MHD3d}, we non-dimensionalise the resulting system 
by introducing the new unknowns
\begin{align}
    \u^\star(\xi)&=\frac{\u(L\xi)}{\ou},\\
    p^\star(\xi)&= \frac{p(L\xi)}{ \rho_0 \ou^2}, \\
    \B^\star(\xi)&= \frac{\B(L\xi)}{\overline{B}},\\
    \E^\star(\xi) &= \frac{\E(L\xi)}{\ou\overline{B}},\\
    \f^\star(\xi)&=\frac{f(L\xi)L}{ \ou^2},
\end{align}
with a characteristic value for the velocity $\ou$, magnetic field $\overline{B}$ and the length scale $L$. Finally, we obtain \eqref{eq:MHD3d} by defining the fluid Reynolds number, magnetic Reynolds number and coupling number
\begin{equation}
    \Re= \frac{\ou L}{\nu}, \quad \quad \Rem= \frac{ \ou L}{\eta}, \quad \quad S = \frac{\overline{B}^2 L}{\rho_0\mu_0\eta \overline{U}}.
\end{equation}
Note that some formulations ignore the coupling number $S$ in front of the Lorentz force $\B \cross \j$. With the alternative scaling of 
\begin{align}
    \B^\star(\xi)&= \frac{\B(L\xi)}{\overline{B}},\qquad \overline{B} = \ou \sqrt{\mu_0},\\
    \E^\star(\xi) &= \frac{\E(L\xi)}{\overline{E}}, \qquad \overline{E}=\ou \overline{B},
\end{align}
one can achieve that $S=1$. However, in this case the characteristic value $\overline{B}$ cannot be chosen freely, but has to be expressed in terms of $\overline{U}$. In order to include the case where one wants to choose $\overline{B}$ freely depending on the problem we include the parameter $S$ and in particular consider high values of $S$ in the numerical examples.

An important point for discretisations is the enforcement of the magnetic Gauss' law $\div \B=0$ in the weak formulation, achieved in most cases by a non-physical Lagrange multiplier $r$ \cite{Schoetzau2004}. However, in general, a Lagrange multiplier only enforces the divergence constraint in a weak sense, which can cause severe problems for the discretisation and numerical simulations \cite{Brackbill1980, Dai1998}. Therefore, in recent years increased attention has been paid to derive discretisations that enforce $\div \B=0$ pointwise. These approaches include the use of a magnetic vector-potential \cite{Adler2016, Adler2019, Cyr2013, Hiptmair2018, Shadid2010}, exact penalty methods on convex domains \cite{Phillips2014},  compatible discretisations \cite{Hu20162, Hu2018}, the use of divergence-free basis functions \cite{Cai2013} and divergence-cleaning methods \cite{Brackbill1980, dedner2002hyperbolic}. For the $\B$-$\E$ formulation \eqref{eq:MHD3d} Hu et al.~\cite{hu2020convergence} show that both a Lagrange multiplier and an augmented Lagrangian term lead to a pointwise preservation of Gauss' law with appropriate choices of spaces. In this work, we consider the latter approach by replacing \eqref{eq:MHD3d4} with
\begin{equation}\label{eq:augdivB0}
\frac{1}{\Rem}\grad \div\B + \vcurl \E = \mathbf{0},
\end{equation}
which we show below enforces $\div \B = 0$ exactly.

The literature proposes numerous numerical schemes and preconditioning strategies for the numerical solution of the different formulations. The most common approach is based on block preconditioners in both the stationary \cite{Li2017, PhillipsPHD, Phillips2014, Wathen2017, Wathen2020} and time-dependent \cite{Chacn2008,Cyr2013, Phillips2016} cases. Here, the main challenges are to find suitable approximations of one or more Schur complements and robust linear solvers for the inner auxiliary problems. Phillips et al.~\cite{Phillips2016} simplify the Schur complement by the use of vector identities and approximate the remaining parts based on a spectral analysis. They report iteration counts for a stationary three-dimensional lid-driven cavity problem up to $\Re=\Rem=100$. A similar approach is used by Wathen and Greif in \cite{Wathen2020} where they construct an approximate inverse block preconditioner by sparsifying a derived formula for the exact inverse and drop low order terms. Here, results for Hartmann numbers $\Ha=\sqrt{S\Rem\Re}$ up to 1,000 are reported for stationary problems. Other approaches include fully-coupled geometric \cite{adler2020monolithic, Adler2016} and algebraic \cite{Shadid2010, Shadid2016} monolithic multigrid methods. In \cite{adler2020monolithic}, Adler et al.\ present results for a two-dimensional Hartmann problem for parameters up to $\Re = \Rem = 64$.

However, the performance of most of these preconditioners deteriorates significantly for high Reynolds and coupling numbers. To the best of our knowledge, a practical robust preconditioner for the stationary MHD equations has not yet been proposed. The common problem for high magnetic Reynolds numbers and coupling numbers for the stationary case is that all available Schur complement approximations become less accurate for Newton-type linearisations, causing the linear solver to fail to converge. Conversely, Picard-type linearisations can allow an exact computation of the Schur complement but fail to converge in the nonlinear iteration at high magnetic Reynolds number. This stands, e.g., in contrast to the Navier--Stokes equations where under certain assumptions the Picard iteration converges globally even for high fluid Reynolds numbers \cite[page 346]{Elman2014}.

Ma et al.~\cite{Ma2016} have developed Reynolds-robust preconditioners for the time-depen\-dent MHD equations that are based on norm-equivalent and field-of-values equivalent approaches. To the best of our knowledge, their strategy does not extend to the stationary case; in general, the time-dependent case offers crucial advantages for the development of robust solvers. For example, Ma et al.\ treat complicated terms like the hydrodynamic convection term $\u \cdot \grad \u$ explicitly in the time-stepping scheme, which can cause problems for convection-dominated problems and does not apply in the stationary case. The discretisation of the time derivative causes mass matrices with a scaling of $1/\Delta t$, where $\Delta t$ denotes the time step size, to appear in the block matrix on the diagonal blocks. As we will see also in our numerical results for the time-dependent problems, these extra terms dominate the scheme for small $\Delta t$ and hence simplify the development of robust solvers.

In this work, we consider two different linearisations. The first is the Picard iteration proposed by Hu et al.~\cite{hu2020convergence}. We compute an approximation to the outer Schur complement of the arising block system and introduce a robust linear solver for the different blocks. This scheme works well for small magnetic Reynolds numbers but the nonlinear iteration fails to converge for higher $\Rem$, as anticipated in the analysis of \cite{hu2020convergence}. The second is a full Newton linearisation, which converges well for high Reynolds numbers and coupling numbers for suitable initial guesses that we obtain with parameter continuation. However, our approximation of the Schur complement deteriorates slightly for high parameters.

%Finally, we want to mention that in many papers the coupling number $S$ is not explicitly considered because the unknowns can be rescaled in a way such that $S=1$. Usually, one expresses this rescaling in terms of the physical quantities and characteristic values $\overline{u}, \overline{B}$ and $\overline{L}$ of the velocity, magnetic field and length scale of the problem. However, to eliminate $S$ the characteristic value $\overline{B}$ cannot be chosen freely, but has to be expressed in terms of $\overline{u}$ and $\overline{L}$.
%To include the case where one wants to choose $\overline{B}$ freely depending on the problem we include the parameter $S$ and in particular consider high values of $S$ in the numerical examples.
\section{Formulation, linearisation, and discretisation} \label{sec:discretization}

\subsection{An augmented Lagrangian formulation}\label{sec:AnaugmentedLagrangianFormulation}
We modify \eqref{eq:MHD3d} by introducing two augmented Lagrangian terms: $-\gamma \grad \div \u$ for $\gamma>0$ is added to \eqref{eq:MHD3d1}, and (as previously discussed)  $-1/\Rem\ \grad \div \B$ is added to \eqref{eq:MHD3d4}. Note that both terms leave the continuous solution of the problem unchanged. We use the first term to control the Schur complement of the fluid subsystem \cite{BenziOlshanskii,Farrell2020} and the second term to enforce the divergence constraint $\div \B=0$.

Following these modifications, we consider the system
\begin{subequations}
	\label{eq:MHDFinal}
	\begin{align}
		- \frac{2}{\mathrm{Re}} \div \mathbf{\varepsilon}( \u)  + \u \cdot \nabla \u - \gamma \grad \div \u  + \nabla p + S\, \B \times  (\E + \u \times \B) &= \f, \label{eq:MHDFinal1}\\
		\div \u&=0, \label{eq:MHDFinal2}\\
		\E + \u \times \B - \frac{1}{\Rem} \vcurl \B &= \mathbf{0}, \label{eq:MHDFinal3}\\
		-\frac{1}{\Rem} \grad \div \B + \vcurl \E &= \mathbf{0} \label{eq:MHDFinal4},
	\end{align}
\end{subequations}
subject to the boundary conditions \eqref{eq:boundarycond3d}.
For convenience, we consider homogeneous boundary conditions in this section but all the results extend in a mathematically straightforward manner to inhomogeneous boundary conditions. However, there are subtle technicalities for the implementation of the degrees of freedom in the finite element method in the inhomogeneous case, which are explained in detail in Section~\ref{sec:interp-boundary-data}.

The weak formulation of \eqref{eq:MHDFinal} seeks $\UU \coloneqq(\u,p,\E,\B)\in \ZZ \coloneqq \V\times Q \times \Rr \times \W$ with
\begin{equation}
	\V \coloneqq \Hozv, \quad Q\coloneqq L^2_0(\Omega),\quad \Rr \coloneqq \hzcurl, \quad  \W \coloneqq \hzdiv.
\end{equation}
In two dimensions,  the space for the electric field is scalar-valued and can be identified with $R\coloneqq \Hoz$.
The weak formulation is to find $\mathcal{U} \in \ZZ$ such that for all $\mathcal{V}\coloneqq(\v,q,\F,\C) \in \ZZ$ and $\mathcal{F} =  (\f, 0, \mathbf{0}, \mathbf{0})$ there holds
\begin{equation} \label{eq:weakform}
	\mathcal{R}(\mathcal{U}, \mathcal{V})	\coloneqq \NN(\mathcal{U}, \mathcal{V}) - (\mathcal{F},\mathcal{V}) = 0
\end{equation}
with
\begin{align}
	\begin{split}
		\NN(\mathcal{U},\mathcal{V}) & =\frac{2}{\Re}(\mathbf{\varepsilon}(\u), \mathbf{\varepsilon}(\v)) + (\u\cdot \grad \u, \v) + \gamma (\div \u, \div \v) \\
		& - (p,\div \v) +  S (\B \times \E, \v) + S (\B \times (\u \times \B), \v)  \label{eq:NN3} \\
		&- (\div \u, q) \\
		&+ (\E, \F) + (\u\times \B, \F) - \frac{1}{\Rem}(\B, \vcurl \F) \\
		&+ \frac{1}{\Rem} (\div \B, \div \C) + ( \vcurl \E, \C).
	\end{split}
\end{align}
All boundary integrals that result from integration by parts vanish because of the choice of the boundary conditions \eqref{eq:boundarycond3d}.

Note that $\W$ and $\Rr$ are chosen from the same exact de Rham complex \eqref{eq:contDC3d} or \eqref{eq:contDC2d}.
This ensures that formulation \eqref{eq:MHDFinal} enforces the divergence constraint $\div \B =0$ and $\vcurl \E = \mathbf{0}$ \cite[Theorem 9]{hu2020convergence}. To see this, we test \eqref{eq:weakform} with $\mathcal{V} = (\mathbf{0}, 0, \mathbf{0}, \vcurl \E)$ and conclude that $\vcurl \E =\mathbf{0}$. Here, $\mathcal{V}$ is a valid test function because the above exact sequence implies that $\vcurl(\Rr) = \W$. Similarly, testing with $\mathcal{V} = (\mathbf{0}, 0, \mathbf{0}, \B)$ results in $\div \B = 0$.

\subsection{Linearisation: Newton and Picard}

The Newton linearisation of \eqref{eq:weakform} for the initial guess $\mathcal{U}^n = (\u^n,p^n,\E^n,\B^n)$ is to find an update $\delta \mathcal{U}$ such that
\begin{align}
	\NN_\text{N}(\delta\mathcal{U},\mathcal{U}^n,\mathcal{V})&=\mathcal{R}(\mathcal{U}^n,\mathcal{V}) \quad \forall \ \mathcal{V} \in \ZZ,\\
	\mathcal{U}^{n+1} &= \mathcal{U}^n + \delta\mathcal{U},
\end{align}
with the weak form of the nonlinear residual $\mathcal{R}(\mathcal{U}^n, \mathcal{V}) := (\mathcal{F}, \mathcal{V}) - \mathcal{N}(\mathcal{U}^n, \mathcal{V})$ evaluated at $\mathcal{U}^n$ and
\begin{align}
	\label{eq:Newton}
	\begin{split}
		\NN_\text{N}(\delta\mathcal{U},\mathcal{U}^n,\mathcal{V}) &= \frac{2}{\Re}(\mathbf{\varepsilon}( \delta\u), \mathbf{\varepsilon}(\v)) + (\u^n\cdot \grad \delta\u, \v) + (\delta\u\cdot \grad \u^n, \v)  \\
		&+ \gamma (\div \delta\u, \div \v) - (\delta p,\div \v)\\
		& +  S (\B^n\times \delta \E, \v)  + S (\delta\B \times \E^n, \v)  \\
		& + S  (\B^n \times (\delta \u \times \B^n), \v) +
		S (\delta\B \times (\u^n \times \B^n), \v) \\
		&+ S (\B^n \times  (\u^n \times \delta\B), \v)\\
		&- (\div \delta\u, q) \\
		& + (\delta \E, \F) + (\u^n\times\delta\B, \F) + (\delta\u \times \B^n, \F) \\
		&- \frac{1}{\Rem} (\delta \B, \vcurl \F) \\
		&+ \frac{1}{\Rem} (\div \delta \B, \div \C) + (\vcurl \delta \E, \C).
	\end{split}
\end{align}
The bilinear form for the Picard iteration that we consider is given by
\begin{align}
	\label{eq:Picard}
	\begin{split}
		\NN_\text{P}(\delta\mathcal{U},\mathcal{U}^n,\mathcal{V}) =\, & \NN_{N} (\delta\mathcal{U},\mathcal{U}^n,\mathcal{V}) - S (\delta\B \times \E^n, \v) - S (\B^n \times  (\u^n \times \delta\B), \v)\\
		& - S (\delta\B \times (\u^n \times \B^n), \v) - (\u^n\times\delta\B, \F).
	\end{split}
\end{align}
Note that in contrast to \cite{hu2020convergence}, we do not scale the term $(\vcurl \delta \E, \C)$ with $S/\Rem$ and consider the full Newton linearisation of the advection term $(\u\cdot \grad) \u$.
The advantage of the Picard linearisation \eqref{eq:Picard} in comparison to the Newton linearisation \eqref{eq:Newton} is that it allows an exact Schur complement computation in two dimensions and converges well for high $\Re$. However, its major disadvantage is the failure of nonlinear convergence for high $\Rem$.

\subsection{Discretisation}\label{sec:Discretisationchap2}

For a finite element discretisation, we seek $\mathcal{U}_h:=(\u_h, p_h,$ $\E_h, \B_h) \in \ZZ_h \coloneqq \mathbf{V}_h \times Q_h \times \mathbf{R}_h\times \W_h$ such that
\begin{equation} \label{eq:weakformdiscr}
	\NN(\mathcal{U}_h, \mathcal{V}_h)=  (\mathcal{F},\mathcal{V}_h) \quad \forall\, \mathcal{V}_h \in \ZZ_h.
\end{equation}
We choose Raviart--Thomas elements of degree $k$ $\mathbb{RT}_k$ for $\W_h$, \Ned elements of first kind $\mathbb{NED}1_k$ for $\Rr_h$ in 3D and continuous Lagrange elements $\mathbb{CG}_k$ for $R_h$ in 2D. 
Note that these elements belong to the discrete subcomplexes \eqref{eq:deRhamdiscret3d} and \eqref{eq:deRhamdiscret2d}. 
This implies that we enforce $\div \B_h=0$ and $\vcurl \E_h=\mathbf{0}$ pointwise with the same proof as for the continuous case. These identities also hold for inhomogeneous boundary conditions, since the interpolation operator $\II^h_{\W_h}$ into the Raviart--Thomas space satisfies for all divergence-free $\B \in \mathbf{H}_0(\mathrm{div}, \Omega)$~\cite[Prop.~2.5.2]{Boffi2013}
\begin{equation}\label{eq:InterpolationPreserveRT}
	\div(\II^h_{\W_h} \B) = 0.
\end{equation}

To be more precise, we consider the non-homogeneous boundary condition $\B \cdot \n = g_1$ for $g_1 \in H^{\frac{1}{2}}(\partial \Omega)$ and assume for the solvability of the problem that there exists a $\B_{g_1}  \in \mathbf{H}(0, \mathrm{div})$ such that $\B_{g_1}\cdot \n = g_1$. For a finite element approximation, one then computes $\II^h_{\W_h}\B_{g_1} = \sum_{i=1}^{N} B_{i, g_1} \mathbf{\Phi}_i  $ in terms of the $N$ basis function $\mathbf{\Phi}_i$ of $\W_h$ and sets, cf. \cite{LectureNotesBoundary},
\begin{equation}
	\B_{h,g_1} = \sum_{i=1}^{N_D} B_{i, g_1} \mathbf{\Phi}_i.
\end{equation} Here, we have chosen the ordering that the basis functions that have non-vanishing normal components on the boundary are the first $N_D$ ones. Then, we look for a solution of the form 
\begin{equation}
	\B_h = \B_{h,{g_1}}+ \B_{h,0} \quad \text{ with }\quad \B_{h,0} = \sum_{i=N_D+1}
	^N B_i \mathbf{\Phi}_i.
\end{equation} Hence, the approximation of the weak form of \eqref{eq:MHDFinal4} is given by
\begin{equation}\label{eq:weaknonhom}
	\frac{1}{\Rem}(\div \B_{h,0}, \div \mathbf{\Phi}_i) + (\vcurl \E_h, \mathbf{\Phi}_i) = - \frac{1}{\Rem} (\div \B_{h, g_1}, \div \mathbf{\Phi}_i), \quad i=N_D + 1,...,N.
\end{equation}
As before, we can conclude that $\vcurl \E_h=\mathbf{0}$ since $\vcurl \E_h \in \mathrm{span}\{\mathbf{\Phi}_i \, | \, i=N_D+1,...,N\}$ by the sequence \eqref{eq:deRhamdiscret3d}.
Equation \eqref{eq:weaknonhom} implies for the homogeneous test functions $\mathbf{\Phi}=\B_{h,0}$ and $\mathbf{\Phi}=\B_{h,g_1}-\II^h_{\W_h}\B_{g_1}$ 
\begin{equation}\label{eq:divres}
	(\div \B_{h}, \div \B_{h,0}) = 0 \quad\text{ and }\quad (\div \B_{h}, \div (\B_{h,g_1}-\II^h_{\W_h}\B_{g_1})) = 0.
\end{equation}
But there holds $\div (\II^h_{\W_h}\B_{g_1})=0$  by the interpolation property \eqref{eq:InterpolationPreserveRT} and thus adding the two expressions in \eqref{eq:divres} shows that $\div \B_h=0$. 

The same approach applied for non-homogeneous boundary conditions $\E=\mathbf{g}_2$ on $\partial \Omega$ would add a term $ (\vcurl \E_{h,g_2}, \mathbf{\Phi}_i)$ to the right-hand side of \eqref{eq:weaknonhom}. But since $\vcurl \E=\mathbf{0}$ implies that $\mathbf{g}_2$ is constant, this extra term vanishes and thus $\vcurl \E_h=\mathbf{0}$ holds by the same proof as before.

Moreover, following \cite{douglas1976}, we add the following interior penalty stabilisation term to address the problem that the Galerkin discretisation of advection-dominated problems can be oscillatory \cite{Elman2014}
\begin{equation}\label{eq:stabBurman}
	\sum_{K\in \MM_h} \frac{1}{2} \int_{\del K} \mu \, h_{\del K}^2 \llbracket \grad \u_h \rrbracket : \llbracket \grad \v_h \rrbracket \ \mathrm{d} s.
\end{equation}
Here, $\llbracket \grad \u_h \rrbracket$ denotes the jump of the gradient, $h_{\del K}$ is a function giving the facet size, and $\mu$ is a free parameter that is chosen according to \cite{burman_edge_2006}.

Note that a fully robust discretisation should also include a stabilisation term for the magnetic field $\B$ in the case of dominating magnetic advection. The literature does not propose many stabilisation types for this problem. The most promising work by Wu and Xu \cite{Wu2020} uses the so-called SAFE-scheme for stabilisation which is based on an exponential fitting approach. While the original SAFE-scheme is only a first order method, it can be extended to higher order as shown in \cite{wu2020unisolvence}. We aim to include this stabilisation in future work.

For the hydrodynamic part, we consider the $\hdiv\times L^2$-conforming element pair $\mathbb{BDM}_k\times \mathbb{DG}_{k-1}$ with the Brezzi-Douglas-Marini element $\mathbb{BDM}_k$ of order $k$ \cite{Brezzi1985, Ndlec1986}. This discretisation ensures that $\div \u_h =0$ holds pointwise since $\div \mathbf{V}_h \subset Q_h$. Additionally, it exhibits pressure robustness, i.e., the error estimates do not degrade for high Reynolds numbers \cite{john2017}.

Since the discretisation is nonconforming, we must consider a discontinuous Galer\-kin formulation of the hydrodynamic advection and diffusion terms \cite[Section 7]{GauerLinke2019}. We denote by $\FF_h = \FF_h^i \cup \FF_h^\partial$ all facets of the triangulation, which consists of the interior facets $\FF_h^i $ and the Dirichlet boundary facets $\FF_h^\partial$. We assign to each $F \in \mathcal{F}_h$ its diameter $h_F$ and unit normal vector $\mathbf{n}_F$. The jump and average operators across a facet are denoted by $\llbracket\cdot \rrbracket$ and $\ldblbrace\cdot \rdblbrace$, respectively, and are defined as $\llbracket \Phi \rrbracket=\Phi^+ - \Phi^-$ and $\ldblbrace\Phi \rdblbrace=\frac{1}{2}(\Phi^+ + \Phi^-)$. The penalisation parameter is chosen as $\sigma = 10 k^2$, $k$ being the degree of the velocity space. Inhomogeneous boundary data are described by $\mathbf{g}_D$. We then add the following bilinear forms to \eqref{eq:weakformdiscr}:
\begingroup
%\allowdisplaybreaks
\begin{align}\label{eq:hidvl2form}
	\begin{split}
		a_h^{DG}(\u_h, \v_h)= &-\frac{2}{\Re}\sum_{F \in \FF_h} \int_F \ldblbrace\varepsilon( \u_h) \rdblbrace \mathbf{n}_F \cdot \llbracket \v_h \rrbracket \,\mathrm{d}s\\
		&-\frac{2}{\Re}\sum_{F \in \FF_h} \int_F  \llbracket \u_h \rrbracket \cdot \ldblbrace\varepsilon(\v_h) \rdblbrace \mathbf{n}_F  \,\mathrm{d}s\\
		&+\frac{1}{\Re}\sum_{F \in \FF_h} \frac{\sigma}{h_F} \int_F \llbracket\u_h \rrbracket \cdot \llbracket \v_h \rrbracket \,\mathrm{d}s  \\
		& - \frac{1}{\Re}\sum_{F \in \FF_h^\partial} \frac{\sigma}{h_F} \int_F \mathbf{g}_D \cdot \v_h \,\mathrm{d}s\ +  \frac{2}{\Re}\sum_{F \in \FF_h^\partial} \int_F \mathbf{g}_D \cdot \varepsilon( \v_h) \mathbf{n}_F \,\mathrm{d}s,
	\end{split}
	\\
	\begin{split}
		c_h^{DG}(\u_h, \v_h)= &\ \ \ \ \frac{1}{2}\sum_{F \in \FF^i_h}\int_F \llbracket (\u_h \cdot \mathbf{n}_F + |\u_h\cdot \n_F|)\u_h \rrbracket \cdot
		\llbracket \v_h \rrbracket \,\mathrm{d}s  \\
		& +\frac{1}{2} \sum_{F \in \FF_h^\partial} \int_F (\u_h \cdot \mathbf{n}_F + |\u_h\cdot \n_F|) \u_h \cdot \v_h \,\mathrm{d}s \\
		& +\frac{1}{2} \sum_{F \in \FF_h^\partial} \int_F (\u_h \cdot \mathbf{n}_F - |\u_h\cdot \n_F|) \mathbf{g}_D \cdot \v_h \,\mathrm{d}s.
	\end{split}
\end{align}
\endgroup

Alternatively, a discretisation with Scott--Vogelius elements \cite{scott_conforming_1985}, i.e.\ $(\mathbb{CG}_k)^d\times \mathbb{DG}_{k-1}$ elements, also enforces $\div \u_h = 0 $. While this conforming discretisation does not require stabilisation terms to weakly enforce continuity, it is only stable on certain types of meshes. For this reason, the mesh hierarchy we consider here is barycentrically refined and ensures stability for polynomial order $k=d$ \cite{zhang2004}. That means in three dimensions one has to use at least polynomial order three for a stable discretisation, which can be relatively expensive. For this reason and the fact that the $\hdiv \times L^2$-discretisation does not have restrictions on the mesh types, we mainly focus on the $\hdiv\times L^2$-conforming discretisation in the following. However, we also explain in the next section how a parameter robust multigrid method can be constructed for this discretisation and include a comparison for the iteration numbers in the numerical results in Section \ref{sec:numericalresults}.

%\begin{itemize}
%	\item\{\text{MDP}, \alpha}{Discuss advantages and disadvantages of both schemes.}
%	\item\todo{Hdiv does not need special MG scheme, nested grids, standard prolongation operator, smaller patches, can be used in 3d, but more difficult weak formulation, penalty parameter have to be chosen }
%	\item\todo{simple weak form, conforming discretisation, more reliable, but only stable for k >= d}
%\end{itemize}

Hu et al.\ prove in \cite[Theorem 4]{hu2020convergence} that \eqref{eq:weakformdiscr} is well-posed and has at least one solution. The solution is unique for suitable source and boundary data.
While the well-posedness and convergence of the Newton iteration remains an open problem, Hu et al.\ prove that the Picard iteration converges to the unique solution of \eqref{eq:weakformdiscr} if both $\Re$ and $\Rem$ are small enough \cite[Theorem 6]{hu2020convergence}.

For the Newton linearisation \eqref{eq:Newton}, we must solve the following linear system at each step:
\begin{equation}
	\label{eq:matrix_upBE}
	\begin{bmatrix}
		\mathcal{F} +\DD & \BB^\top &  \JJ & \tilde{\JJ} + \tilde{\DD}_1 + \tilde{\DD}_2 \\
		\BB & \mathbf{0} & \mathbf{0} & \mathbf{0} \\
		\GG& \mathbf{0}& \MM_\E & \tilde{\GG} -\frac{1}{\Rem} \AA \\
		\mathbf{0} & \mathbf{0} & \AA^\top  &\CC
	\end{bmatrix}
	\begin{bmatrix}
		x_\u \\ x_p \\ x_\E \\ x_\B
	\end{bmatrix} =
	\begin{bmatrix}
		\mathcal{R}_\u\\ 	\mathcal{R}_p\\ 	\mathcal{R}_\E\\ 	\mathcal{R}_\B
	\end{bmatrix},
\end{equation}
where $x_\u$, $x_p$, $x_\E$ and $x_\B$ are the coefficients of the discretised Newton corrections and $\mathcal{R}_\u$, $\mathcal{R}_p$, $\mathcal{R}_\E$ and $\mathcal{R}_\B$ the corresponding nonlinear residuals.
The correspondence between the discrete and continuous operators is illustrated in Table \ref{tab:Operators}. We have chosen the notation that operators that include a tilde are omitted in the Picard linearisation $\eqref{eq:Picard}$. Moreover, we introduce $\eta\in\{0,1\}$ to distinguish between the stationary ($\eta=0$) and transient ($\eta=1$) cases.

For the time-dependent equations, we concentrate here on the implicit Euler method, but the following computations are straightforward to adapt to other implicit multi-step methods. We use the same finite element discretisation as in the stationary case. Note that in the transient case, the equation
\begin{equation}
	\del_t \B + \vcurl \E = \mathbf{0}
\end{equation}
immediately implies $\div \B=0$ if the initial condition satisfies $\div \B_0=0$, and this remains true on the discrete level up to solver tolerances; see \cite[Theorem 1]{Hu20162} for a proof for implicit Euler which can be extended to other multi-step methods in a straightforward manner, provided all starting values are divergence-free.
Hence, the augmented Lagrangian term $-	\frac{1}{\Rem}\grad \div \B$ is no longer necessary to enforce the divergence constraint and could therefore be omitted. Nevertheless, we retain it in our scheme since we employ the identity
\begin{equation}\label{eq:laplaceidentity}
	\frac{1}{\Rem} \vcurl \vcurl \u - 	\frac{1}{\Rem} \grad \div \u = - 	\frac{1}{\Rem} \Delta \u
\end{equation}
in our derivation of Schur complement approximations below.

\begin{table}[htb!]
	\centering
	\resizebox{\textwidth}{!}{
		\begin{tabular}{c|c|c}
			\toprule
			\textbf{Discrete} & \textbf{Continuous} & \textbf{Weak form}\\
			\midrule
			$\mathcal{F} \u$ & $\frac{\eta}{\Delta t} \u-\frac{2}{\Re} \div \varepsilon(\u) + \u^n\cdot \grad \u $ & $\frac{\eta}{\Delta t} (\u, \v)+\frac{2}{\Re}(\varepsilon( \u), \varepsilon( \v)) + (\u^n\cdot \grad \u , \v)$  \\
			& $+ \u \cdot \grad \u^n-\gamma\grad \div \u$ &  $+(\u\cdot \grad \u^n, \v) + \gamma(\div \u, \div \v)  $  \\
			$\DD \u$& $S \B^n\times(\u\times\B^n)$ & $ S (\B^n\times (\u \times \B^n), \v)$ \\
			$\JJ \E$ & $ S \B^n\times\E$ & $ S (\B^n\times\E, \v)$   \\
			$\tilde{\JJ} \B$ & $ S \B\times\E^n$ & $ S (\B\times\E^n, \v)$   \\
			$\tilde{\DD}_1\B$ & $ S \B \times (\u^n\times\B^n)$ & $ S (\B \times (\u^n\times\B^n),\v)$  \\
			$\tilde{\DD}_2\B$ & $ S \B^n \times (\u^n\times\B)$ & $ S (\B^n \times (\u^n\times\B),\v)$  \\
			$\MM_\E \E$ & $\E$ & $(\E,\F)$  \\
			$\GG \u$ & $\u \times \B^n$ & $(\u\times\B^n, \F)$  \\
			$\tilde{\GG} \B$ & $\u^n \times \B$ & $(\u^n\times\B, \F)$  \\
			$\AA \B$ & $ \vcurl \B$ & $ (\B, \vcurl \F)$ \\
			$\CC \B$ & $\frac{\eta}{\Delta t} \B- \frac{1}{\Rem}\grad \div \B$ & $\frac{\eta}{\Delta t}(\B, \C) +  \frac{1}{\Rem}(\div\B,\div\C)$  \\
			$\AA^\top \E$ & $\vcurl \E$ & $(\vcurl \E, \C)$ \\
			%$$ & $$ & $$ & $$ \\
			$\BB^\top p$ & $\grad p$ & $-(p, \div \v)$ \\
			$\BB \u$ & $-\div \u$ & $-(\div \u, q)$  \\
			\bottomrule
	\end{tabular}}
	\caption{Overview of operators. Terms that include a tilde are dropped in the Picard iteration. The stationary and transient cases are distinguished by $\eta\in\{0,1\}$. }
	\label{tab:Operators}
\end{table}

\section{Derivation of block preconditioners}\label{sec:derivationofblockpreconditioner}
We now consider block preconditioners for \eqref{eq:matrix_upBE}. The inverse of a $2 \times 2$  block matrix can factorised as \cite{Benzi2005,Elman2014} 
\begin{equation}\label{eq:blockfactor}
	\begin{pmatrix} \MM & \mathcal{K} \\ \mathcal{L} & \mathcal{N} \end{pmatrix}^{-1}
   =
	\begin{pmatrix}
		\II &-\MM^{-1}\mathcal{K} \\
		\mathbf{0} &\II
	\end{pmatrix}
	\begin{pmatrix}
		\MM^{-1} &\mathbf{0} \\
		\mathbf{0} &\mathcal{S}^{-1}
	\end{pmatrix}
	\begin{pmatrix}
		\II &\mathbf{0} \\
		-\mathcal{L}\MM^{-1} &\II
	\end{pmatrix}
\end{equation}
provided the top-left block $\MM$ and the Schur complement $\mathcal{S}=\mathcal{N} - \LL\MM^{-1} \KK$ are invertible. Since the Schur complement is usually a dense matrix, the main task is to find a suitable approximation $\tilde{\mathcal{S}}$ for the Schur complement $\mathcal{S}$ as well as efficient solvers for $\MM$ and $\tilde{\mathcal{S}}$.

 In Sections \ref{sec:OuterSchurEB} and \ref{sec:OuterSchurup} we derive approximations of the Schur complements for two different block elimination strategies.  We briefly introduce the theory of parameter-robust multigrid relaxation in Section \ref{sec:robustssc}, and then describe the multigrid methods that we use to solve systems with the top-left block $\MM$ and with the Schur complement approximations $\tilde{\mathcal{S}}$ in Sections \ref{sec:solverforschurcomp} and \ref{sec:solverformagneticblock}.

Both block preconditioners we consider gather the variables as $(\E, \B)$ and $(\u, p)$. They differ in the order of block elimination: the first takes the Schur complement that eliminates (inverts) the $(\E, \B)$ block, while the second takes the Schur complement that eliminates the $(\u, p)$ block. References for the first choice are given by \cite{Li2017,Phillips2016} and for the second choice by \cite{Cyr2013, Cyr2016}. As we will see, for small $\Rem$ and $S$ both preconditioners perform similarly, while for more difficult parameter regimes the second choice notably outperforms the first. We therefore recommend the second strategy and mainly report numerical results for this choice. Nevertheless, we also investigate the first option, both for comparison and because it allows a much more detailed description of the Schur complement. In two dimensions it even allows an exact computation of the Schur complement. The two strategies are compared in Section \ref{sec:stationaryliddrivencavityproblemin3d} below.

Another overview for Schur complement approximations and physics-based preconditioners is given in \cite{Chacn2003} in the context of the Hall MHD equations. This theory is also applicable to our standard MHD system when the case of vanishing ion skin depth $d_i=0$ is considered in \cite{Chacn2003}.

\subsection{Outer Schur complement eliminating the $(\E, \B)$ block}\label{sec:OuterSchurEB}
Reordering \eqref{eq:matrix_upBE} for convenience, we consider
\begin{equation}
	\label{eq:matrix_EBup}
	\left[
	\begin{array}{cc|cc}
		\MM_\E & \tilde{\GG} - \frac{1}{\Rem} \AA & \GG & \mathbf{0}\\
		\AA^\top & \CC & \mathbf{0} & \mathbf{0} \\
		\hline \rule{0pt}{1.0\normalbaselineskip}
		\JJ & \tilde{\JJ} + \tilde{\DD}_1 + \tilde{\DD}_2 & \mathcal{F} + \DD & \BB^\top\\
		\mathbf{0} & \mathbf{0} & \BB & \mathbf{0}
	\end{array}
	\right]
	\begin{bmatrix}
		x_\E \\ x_\B \\ x_\u \\ x_p
	\end{bmatrix} =
	\begin{bmatrix}
		\mathcal{R}_\E\\ \mathcal{R}_\B\\ \mathcal{R}_\u\\ \mathcal{R}_p
	\end{bmatrix}.
\end{equation}
In the following, we refer to the Schur complement of the $4 \times 4$ matrix as the outer Schur complement, while we call the Schur complements of the resulting $2 \times 2$ blocks inner Schur complements.
The outer Schur complement eliminating the $(\E, \B)$ block is given by
\begin{equation}
	\label{eq:outerSchurCompNewton}
	\mathcal{S}^{(\E, \B)} =
	\begin{bmatrix}
		\FF+\DD & \BB^\top \\
		\BB & \mathbf{0}
	\end{bmatrix}
	-
	\begin{bmatrix}
		\JJ & \tilde{\JJ} + \tilde{\DD}_1 + \tilde{\DD}_2\\
		\zerov & \zerov
	\end{bmatrix}
	\begin{bmatrix}
		\MM_\E& \tilde{\GG} - \frac{1}{\Rem} \AA \\
		\AA^\top & \CC
	\end{bmatrix}^{-1}
	\begin{bmatrix}
		\GG & \zerov\\
		\zerov & \zerov
	\end{bmatrix}.
\end{equation}

We simplify $\mathcal{S}^{(\E, \B)}$ by applying the identity \eqref{eq:blockfactor}
%\begin{equation}
%	\label{eq:matrixinvers}
%	\begin{bmatrix}
%		A & B \\
%		C & D
%	\end{bmatrix}^{-1} = \begin{bmatrix}
%		A^{-1} + B^{-1} B (D - C A^{-1} B)^{-1} C A^{-1} &
%		-A^{-1}B(D-C A^{-1} B)^{-1} \\
%		- (D - C A^{-1} B)^{-1} C A^{-1}&
%		(D-CA^{-1}B)^{-1}
%	\end{bmatrix}
%\end{equation}
%for non-singular matrices $A$ and $D-C A^{-1} B$. 
to the top-left electromagnetic block
\begin{equation}
	\label{eq:M}
	\MM =
	\begin{bmatrix}
		\MM_\E& \tilde{\GG} - \frac{1}{\Rem} \AA \\
		\AA^\top & \CC
	\end{bmatrix}.
\end{equation}
This results in
\begin{equation}
	\mathcal{S}^{(\E, \B)}=
	\begin{bmatrix}
		\FF + \DD - \JJ\MM^{-1}_{1,1} \GG - (\tilde{\JJ} + \tilde{\DD}_1 + \tilde{\DD}_2)\MM^{-1}_{2,1}\GG& \BB^\top \\
		\BB & \zerov
	\end{bmatrix}
\end{equation}
with
\begin{equation}
	\MM^{-1}_{1,1} = \MM_\E^{-1} + \MM_\E^{-1}\left(\tilde{\GG}-\frac{1}{\Rem} \AA\right)\left(\CC - \AA^\top \MM_\E^{-1}\left(\tilde{\GG}-\frac{1}{\Rem} \AA\right)\right)^{-1}\AA^\top\MM_\E^{-1}
\end{equation}
and
\begin{equation}
	\MM^{-1}_{2,1} = -\left(\CC - \AA^\top \MM_\E^{-1}\left(\tilde{\GG}-\frac{1}{\Rem} \AA\right)\right)^{-1}\AA^\top\MM_\E^{-1}.
\end{equation}

We precondition $\mathcal{S}^{(\E, \B)}$ for both linearisations in the stationary case by 
\begin{equation}\label{eq:SchurStat}
	\tilde{\mathcal{S}}^{(\E, \B)} =
	\begin{bmatrix}
		\FF+\DD & \BB^\top \\
		\BB & \mathbf{0}
	\end{bmatrix},
\end{equation} 
and in the transient case by
\begin{equation}\label{eq:SchurAlpha}
	\tilde{\mathcal{S}}^{(\E, \B)}_{\alpha}:=
	\begin{bmatrix}
		\FF+\alpha\DD & \BB^\top \\
		\BB & \mathbf{0}
	\end{bmatrix},
	\quad \quad \alpha = \frac{\Delta t}{\Delta t + \Rem h^2 + \delta \Rem h \|\u^n\|_{L^2}\Delta t }.
\end{equation}

In the following, we motivate this choice of preconditioners and emphasise the cases in which these Schur complement approximations are exact. Therefore, we mainly follow \cite{Phillips2016}, but adapt the computations for our formulation which includes the electric field $\E$ instead of a Lagrange multiplier $r$.

For the simplification of the outer Schur complement $\mathcal{S}^{(\E, \B)}$ we must find approximations for
\begin{equation}\label{eq:SchurcompExtraterms}
	\mathcal{K}_1 \coloneqq	\DD - \JJ\MM^{-1}_{1,1} \GG \quad \text{  and  } \quad  \mathcal{K}_2 \coloneqq -(\tilde{\JJ} + \tilde{\DD}_1 + \tilde{\DD}_2)\MM^{-1}_{2,1}\GG.
\end{equation}
Note that the first summand of $\JJ\MM^{-1}_{1,1} \GG $ is $\JJ \MM_\E^{-1} \GG$ which equals $\DD$. Hence, $\mathcal{K}_1$ simplifies to the second summand of  $\JJ \MM_\E^{-1} \GG$, i.e., 
\begin{equation}\label{eq:Schurcomppart1}
	\mathcal{K}_1 = -\JJ \MM_\E^{-1}\left(\tilde{\GG}-\frac{1}{\Rem} \AA\right)\left(\CC - \AA^\top \MM_\E^{-1}\left(\tilde{\GG}-\frac{1}{\Rem} \AA\right)\right)^{-1}\AA^\top\MM_\E^{-1}\GG
\end{equation}
which corresponds on a continuous level to
\begin{equation}\label{eq:Schurcomppart1cont}
	\scalemath{0.945}{
		-S\, \B^n\times \left( \left(\delta\,\u^n \times \cdot - \frac{1}{\Rem} \vcurl\right)\left(\frac{\eta}{\Delta t} I-\frac{1}{\Rem} \Delta - \delta\,\vcurl(\u^n \times \cdot)\right)^{-1}  \vcurl(\u \times \B^n)\right),
	}
\end{equation}
where  $\cdot$ denotes a placeholder for the input of the corresponding operators.
Moreover, we have used $\delta \in \{0,1\}$ to distinguish between the Picard ($\delta=0$) and Newton ($\delta=1$) linearisations. In the discrete counterpart \eqref{eq:Schurcomppart1}, the matrix arising in the Picard iteration is made by dropping all terms with a tilde.
The continuous expression for $\mathcal{K}_2$ is given by
\begin{align}\label{eq:Schurcomppart2cont}
	\begin{split}
		\delta\,S\,(\cdot \times \E^n + \cdot & \times(\u^n \times \B^n) + \B^n\times(\u^n\times \cdot)) \\
		& \left(\frac{\eta}{\Delta t}{I}-\frac{1}{\Rem}
		\Delta - \delta \vcurl(\u^n \times \cdot)\right)^{-1} \vcurl(\u \times \B^n).
	\end{split}
\end{align}

\subsubsection{The two-dimensional case}
For the Picard linearisation, expression \eqref{eq:Schurcomppart1} simplifies to $\DD$ in the stationary case. This follows immediately from the two-dimen\-sional analogue of \eqref{eq:laplaceidentity} and the identity \cite{Phillips2014}
\begin{equation}\label{eq:curlidentity2d}
	\scurl (-\Delta)^{-1} \vcurl \varphi = \varphi
\end{equation}
which implies for our structure-preserving discretisation that
\begin{equation}
	\AA (\CC + \AA^\top \MM_\E^{-1}\AA)^{-1}\AA^\top = \MM_\E.
\end{equation}
That means in the two-dimensional stationary case the outer Schur complement for the Picard iteration is exactly given by
\eqref{eq:SchurStat},
i.e.,~the Navier--Stokes block with the linearised Lorentz force.
%The subscript $\text{dec}$ stands for decoupling, since this matrix corresponds to the bottom right 2x2-block.

In the transient case, the Schur complement for the Picard linearisation can no longer be calculated exactly. The behaviour of the Schur complement now depends on which of the terms $\frac{1}{\Delta t} I$ and $\frac{1}{\Rem} \Delta$ dominates in \eqref{eq:Schurcomppart1cont}. If $\frac{1}{\Delta t}$ is small in comparison to $\frac{1}{\Rem h^2}$, a good approximation of \eqref{eq:Schurcomppart1cont} is given, as in the stationary case, by $\tilde{\mathcal{S}}^{(\E, \B)}$. If $\frac{1}{\Delta t}$ dominates over $\frac{1}{\Rem h^2}$, \eqref{eq:Schurcomppart1cont}  is approximately given by 
\begin{equation}
	S\, \B^n\times \left( \frac{1}{\Rem} \vcurl\, \left(\frac{1}{\Delta t} I \right)^{-1}  \vcurl(\u \times \B^n)\right).
\end{equation}
Hence, its magnitude can be approximated by $\frac{S \|\B^n\|^2\Delta t}{\Rem\, h^2}\ll1$ for moderate coupling numbers and therefore we neglect this term by using the approximation
$\begin{bmatrix}
	\FF & \BB^T \\
	\BB & \mathbf{0}
\end{bmatrix} $ for the Schur complement in this case.

 To also include the intermediate regime, we use the approximation of Phillips et al.\ \cite{Phillips2016} who suggest to use
\eqref{eq:SchurAlpha}.
The expression for $\alpha$ interpolates between the above mentioned dominating cases, since $\alpha \approx 0$ if $\frac{1}{\Delta t}\gg \frac{1}{\Rem h^2}$  and $\alpha \approx 1$ if  $\frac{1}{\Delta t}\ll \frac{1}{\Rem h^2}$.

%We show now that the Schur complement for the Picard iteration $\mathcal{S}_{P}$ is also given by $\mathcal{S}_{\text{MDP}}$ in 2d. Therefore, we have to simplify
%\todo{Treat this in a different way. Say That D is $ \JJ \MM_E^{-1} \GG$ and therefore because of minus sign they cancel. So whats interesting is $-  \JJ \MM_E^{-1} \AA \left(\CC + \AA^\top \MM_\E^{-1}\AA\right)^{-1}\AA^\top \MM_E^{-1} \GG$. Write it in general for with $\eta$ dt and $\delta\tilde{\GG}$. For picard this term simplifies again to D. For Newton and small Rem and S D is still a good approximation. Otherwise it shouldn't. But surprisingly performs still well in 2d. In 3d not. Say in beginning we think about the same Schur complement as Phillips. For convenience of the reader we restate their arugments and adapt them to our case.}
%\begin{align}\label{eq:picardsc}
%   \JJ\MM^{-1}_{1,1} \GG = \JJ \MM_E^{-1} \GG -  \JJ \MM_E^{-1} \AA \left(\CC + \AA^\top \MM_\E^{-1}\AA\right)^{-1}\AA^\top \MM_E^{-1} \GG.
%\end{align}

A simplification for the full Newton linearisation of $\mathcal{S}^{(\E, \B)}$ is not straightforward, but our numerical tests suggest that $\tilde{\mathcal{S}}^{(\E, \B)}$ and $\tilde{\mathcal{S}}^{(\E, \B)}_{\alpha}$ are acceptable preconditioners for $\mathcal{S}^{(\E, \B)}$ in the stationary and transient cases, deteriorating only for high $S$ and $\Rem$.
This can be explained by the fact that for small $\Rem$ or $\Delta t$ the terms $\frac{1}{\Rem}\vcurl$ and $\frac{\eta}{\Delta t}{I}-\frac{1}{\Rem} \Delta$ dominate over $\delta \u^n\times \cdot$ and $\delta \vcurl(\u^n\times \cdot)$ in \eqref{eq:Schurcomppart1cont}. Remember that the terms that include a $\delta$ do not appear in the Picard iteration and were hence neglected in the previous derivation for the Picard iteration. Moreover, the term $\mathcal{K}_2$ is not included in our preconditioner for the Newton scheme which should deteriorate the performance for large $S$.

%\todo{Note that we have a minus sign. For large dt the second summand is 0, cmp Phillips.Therefore Just the NS blokc is the right precondiitioner. Because factor is like S*dt/Rem. For small dt we have identiy so gamma=1. Here it makes really sense to use variable gamma.  Remember to include the dt factor.  One could use same reasoning in 3d. But here the problems is that curl Laplace invers curl is not the identity. Therefore in general worse results. Surprisingly also for high Re. Change the things above to eta dt and delta picard term. Then one can treat all terms at the same time}

\subsubsection{The three-dimensional case}\label{sec:derivationofblockpreconditioner3d}
The main difficulty in three dimensions is that the identity \eqref{eq:curlidentity2d} no longer holds. Therefore, $\tilde{\mathcal{S}}^{(\E, \B)}$ is not the exact outer Schur complement for the Picard linearisation in the stationary case. In \cite{Phillips2016} the same approximation from the two-dimensional case is used in three dimensions. Based on the argument for the two-dimensional case in the previous subsection, we expect this approximation to work well when the term $\Delta t$ dominates and to deteriorate in the other cases, especially in the stationary case.
The three-dimensional performance of this preconditioner could be substantially improved with a better approximation of $\vcurl \Delta^{-1} \vcurl$ than a scaled identity.

We briefly comment on the main part of the outer Schur complement in the stationary case, given by
\begin{equation}\label{eq:SchurComp3dMain}
	S \B^n \times \left[\vcurl \Delta^{-1} \vcurl(\u \times \B^n) \right].
\end{equation}
As shown in \cite[Chapter 4]{PhillipsPHD} one can rewrite $\vcurl \Delta^{-1} \vcurl$ as $I -\nabla \Delta_r^{-1} \nabla \cdot$, where $\Delta_r$ denotes a scalar Laplacian. These two representations show that the operator is the identity on divergence-free functions and maps curl-free functions to zero. Hence, this operator corresponds to the orthogonal $L^2$-projection of a vector field onto its divergence-free part, which we denote by $\mathbb{P}$. Thus, the weak form of \eqref{eq:SchurComp3dMain} is given by
\begin{equation}\label{eq:SchurComp3dMainWeakForm}
	S(\mathbb{P}(\u \times \B^n), \mathbb{P}(\v \times \B^n)).
\end{equation}
The key challenge is then to find a sparse approximation of \eqref{eq:SchurComp3dMainWeakForm}. We do not further address this challenge here and focus instead on the outer Schur complement that eliminates the $(\u,p)$ block.

%A further discussion on the simplification of the Schur complement can be found in \cite[section 4.1.2.2]{PhillipsPHD}\todo{Change this. It is the same as below for timedependent}. Since we did not observe an improvement in the iteration counts with this approach we choose to aprroximate the Schur complement in 3d with $\mathcal{S}_{\text{MDP}}$, too.
%\todo{In 3d $\scurl \Delta^{-1} \vcurl \varphi = -\varphi$ does not hold anymore. Therefore we don't know what exact Schur comp is, but I suppose that SMDP is still a good preconditioner. Take a look how Phillips tried to simplify in 3d.}

%We summarize the outer Schur complement approximations for the different linearisations in Table \ref{tab:SchurComps}.
%
%\begin{table}[!htb]
%  \centering
%		\begin{tabular}{ c|c|c|c }
%			\toprule
%			& Picard & MDP & Newton \\
%			\midrule
%			Stationary & $\mathcal{S}^{(\E, \B)}_{\text{MDP}} (\checkmark, \times)$ & $\mathcal{S}^{(\E, \B)}_{\text{MDP}} (\checkmark, \checkmark)$ & $\mathcal{S}^{(\E, \B)}_{\text{MDP}} (\times, \times)$ \\
%			\midrule
%			Transient & $\mathcal{S}^{(\E, \B)}_{\text{MDP}, \alpha} (\times, \times)$ & $\mathcal{S}^{(\E, \B)}_{\text{MDP}} (\checkmark, \checkmark)$ & $\mathcal{S}^{(\E, \B)}_{\text{MDP}, \alpha} (\times, \times)$ \\
%			\bottomrule
%		\end{tabular}
%		\caption{Outer Schur complement approximations for taking the Schur complement that eliminates the $(\E, \B)$ block. Entries in brackets indicate whether the approximation is exact in two and three dimensions.}
%		\label{tab:SchurComps}
%\end{table}

\subsection{Outer Schur complement eliminating the $(\u, p)$ block}\label{sec:OuterSchurup}
The outer Schur complement eliminating the $(\u, p)$ block is given by
\begin{equation}
	\label{eq:outerSchurCompNewton_BE}
	\mathcal{S}^{(\u, p)} =
	\begin{bmatrix}
		\MM_\E& \tilde{\GG} - \frac{1}{\Rem} \AA \\
		\AA^\top & \CC
	\end{bmatrix}
	-
	\begin{bmatrix}
		\GG & \zerov\\
		\zerov & \zerov
	\end{bmatrix}
	\begin{bmatrix}
		\FF+\DD & \BB^\top \\
		\BB & \mathbf{0}
	\end{bmatrix}^{-1}
	\begin{bmatrix}
		\JJ & \tilde{\JJ} + \tilde{\DD}_1 + \tilde{\DD}_2\\
		\zerov & \zerov
	\end{bmatrix}.
\end{equation}
The outer Schur complement for the Newton iteration is given by
\begin{equation}
	\mathcal{S}^{(\u, p)}=
	\begin{bmatrix}
		\MM_\E - \GG\NN^{-1}_{1,1} \JJ & \tilde{\GG} - \frac{1}{\Rem}\AA -\GG\NN^{-1}_{1,1}(\tilde{\JJ} + \tilde{\DD}_1 + \tilde{\DD}_2) \\
		\AA^\top & \CC
	\end{bmatrix},
\end{equation}
where
\begin{equation}
	\NN^{-1}_{1,1} = (\FF+\DD)^{-1} - (\FF+\DD)^{-1}\BB^\top(-\BB(\FF+\DD)^{-1}\BB^\top)^{-1}\BB(\FF+\DD)^{-1}.
\end{equation}
For this strategy, further simplifications of the Picard or Newton linearisations are not straightforward. Our numerical results in the next section show that 
\begin{equation}
	\tilde{\mathcal{S}}^{(\u, p)} =
	\begin{bmatrix}
		\MM_\E& \tilde{\GG} - \frac{1}{\Rem} \AA \\
		\AA^\top & \CC
	\end{bmatrix}
\end{equation}
works very well as a preconditioner for both schemes. Indeed, in contrast to the previous order of elimination, this approximation works qualitatively the same in two and three dimensions. We expect the approximation to deteriorate in the stationary case for very high $\Rem$, since the missing term $-\GG\NN^{-1}_{1,1}(\tilde{\JJ}+\tilde{\DD}_1 + \tilde{\DD}_2)$ in the Schur complement approximation gains more influence in comparison to $- \frac{1}{\Rem}\AA$. We also observe this numerically in the next section. 

However, we make the crucial observation that a good approximation of the outer Schur complement is maintained for high coupling numbers $S$, which will clearly be seen in our numerical results in Section \ref{sec:stationaryliddrivencavityproblemin3d} below. This behaviour is perhaps explained by the fact that $\NN^{-1}_{1,1}$ also includes a factor $S$ in the inverse of $(\FF+\DD)$, which balances the factor of $S$ in the matrices $\JJ, \tilde{\JJ}, \tilde{\DD_1}$ and $\tilde{\DD_2}$. We believe that this inclusion of $S$ in the Schur complement approximation is crucial for the outperformance of the previous order of elimination for which it was obvious that the Schur complement approximation deteriorates with higher $S$.

To use these block preconditioners in practice, we must develop robust preconditioners for the electromagnetic and hydrodynamic subsystems.

\subsection{Parameter-robust relaxation}\label{sec:robustssc}

The equations in the hydrodynamic and electromagnetic blocks become difficult to solve in the parameter regimes of interest at high Reynolds and coupling numbers both due to the non-symmetric linearised advection and Lorentz force terms, and the addition of the symmetric positive semi-definite (SPSD) augmented Lagrangian terms. 
Standard multigrid methods are known to perform poorly for these kinds of problems. 

The key components for a robust multigrid method  for the SPSD augmented Lagrangian terms are a parameter-robust relaxation method, that efficiently damps error modes in the kernel of the singular operators, and a kernel-preserving prolongation operator, as revealed in the seminal work of Sch\"oberl~\cite{schoberl1999b}. The non-symmetric terms are more troublesome, but numerical results have shown \cite{Farrell2020,Farrell2018} that subspace correction methods can still perform well for the Navier--Stokes equations at high Reynolds numbers.

A recent summary of the theory of robust relaxation methods can be found in \cite{farrell2019pcpatch}. Briefly, we consider the multigrid relaxation methods in the framework of subspace correction methods \cite{xu1992}. These decompose a (finite-dimensional) trial space $V$ as
\begin{equation}\label{eq:decomp}
	V = \sum_{i} V_i,
\end{equation}
where the sum is not necessarily direct.
The parallel subspace correction method applied to a linear variational problem $a(u,v)=(f,v) \, \forall v \in V$ computes for an initial guess $u^k$ a correction $\delta u_i$ to the error $e=u-u^k$ in each subspace $V_i$ by solving
\begin{equation}
	a(\delta u_i, v_i) = (f, v_i) - a(u^k, v_i) \text{ for all } v_i \in V_i,
\end{equation}
and sets $u^{k+1} = u^k + \sum_i w_i \delta u_i$ for damping parameters $w_i$.
A rigorous statement regarding the properties the decomposition \eqref{eq:decomp} and the considered bilinear form $a$ must fulfil to yield a robust relaxation method can be in found in \cite[Theorem 4.1]{schoberl1999b}.
A key property is that the kernel $\mathcal{N}$ of the SPSD terms is decomposed over the subspaces, i.e.,
\begin{equation}\label{eq:kerneldecomp}
	\mathcal{N} = \sum_i (V_i \cap \mathcal{N}).
\end{equation}
This property means that it must be possible to write any kernel function as the sum of kernel functions in the subspaces $V_i$. This implies that the subspaces $V_i$ must
be at least rich enough to support nonzero kernel functions. The choice of the space decomposition \eqref{eq:decomp} is often made by consideration of the
discrete Hilbert complexes underpinning the discretisation. We outline specific examples for such decompositions and discrete complexes in the next two subsections. 
%For example, in \cite{Farrell2020} it is shown why the Scott--Vogelius elements where the velocity and pressure space belong to the same Stokes complex fulfils property \eqref{eq:kerneldecomp} \\
%We want to emphasize that this theory is not directly applicable to the hydrodynamic and electromagnetic block as, e.g., the linearised advection terms $\u^n\cdot \grad \u$, $\u\cdot \grad \u^n$ and Lorentz force $\B^n \times (\u \times \B^n)$ are not symmetric positive-semidefinite. However, it was observed before \cite{Farrell2020,Farrell2018} that these subspace correction methods can still perform well for the Navier--Stokes equations at high Reynolds numbers. We give more details in the next subsections.

\subsection{Solver for the hydrodynamic block}\label{sec:solverforschurcomp}

%As we will see in the next section, the definition of $V_i$ given by
%ensures this property for the equations we want to solve. Here, $\mathrm{macrostar}(v_i)$ denotes the union of all macro cells touching the vertex $v_i$ in the macro mesh.
%\todo{properly include macrostar picture}
%\begin{figure}[htbp]
%  \centering
%  \includegraphics[width=0.7\textwidth]{IMG/macrostar.png}
%  \caption{The macrostar patch for a vertex in the macro mesh. Reproduced from \cite{Farrell2020}.}\label{fig:macrostar}
%\end{figure}
%A second key component for a robust multigrid method is a robust prolongation operator. The prolongation operator defined in \cite[Section 4.3]{Farrell2020} ensures that the prolongation of divergence-free velocities $\u_H$ on the coarse grid are still divergence-free on the fine grid, or nearly so.

In order to implement the block factorisation \eqref{eq:blockfactor} as the outer preconditioner, we need a solver for the Navier--Stokes subsystem. To do this,
we will apply ideas of parameter-robust multigrid relaxation described in Section \ref{sec:robustssc}, albeit without a theoretical guarantee of success. The
variational statement of the PDE we wish to solve is
%\begin{equation}
\begin{alignat}{1}
	\frac{2}{\Re}(\mathbf{\varepsilon}(\u), \mathbf{\varepsilon}(\v)) + (\u^n\cdot \grad \u, \v) +  (\u\cdot \grad \u^n, \v) &+ \gamma (\div \u, \div \v) \nonumber\\
	+ S (\B^n \times (\u \times \B^n), \v) - (p,\div \v) &= (\mathbf{f},\v) \quad \, \forall\, \v \in \Hozv,\\
	-(\div \u, q) &= 0 \qquad \quad \, \forall\, q \in L^2(\Omega). \nonumber
\end{alignat}
%\end{equation}
This corresponds to the standard Newton linearisation of the Navier--Stokes equations with an augmented Lagrangian term, plus the linearisation of the Lorentz force $\mathcal{D}$. We follow the approach of~\cite{Hong2015,Farrell2018,Farrell2020} to solve this system. The first idea is to use the augmented Lagrangian term $-\gamma \grad \div \u$ to approximate the inner Schur complement of the hydrodynamic block by choosing a large $\gamma$, e.g., $\gamma \approx 10^4$. One can show \cite[Theorem 3.2]{Bacuta2006} that the inner Schur complement of the augmented system $\tilde{\mathcal{S}}_\text{NS}$ satisfies
\begin{equation}\label{eq:SchurCompNS}
	\tilde{\mathcal{S}}_\text{NS}^{-1} = \mathcal{S}_\text{NS}^{-1}-\gamma \MM_p^{-1},
\end{equation}
where $\mathcal{S}_\text{NS}^{-1}$ denotes the Schur complement of the system without the augmented Lagrangian term and $\MM_p$ denotes the pressure mass matrix. Therefore, for large $\gamma$ the pressure mass matrix scaled by $-1/(1/\Re + \gamma)$ is a good approximation for $\tilde{\mathcal{S}}_\text{NS}$. As the discretisation considered in this work uses discontinuous pressures, the pressure mass matrix is block-diagonal and hence directly invertible. In the transient case $\mathcal{S}_\text{NS}^{-1}$ can be further approximated by the inverse of the stationary Schur complement plus an extra term $- \Delta t L_p^{-1}$  \cite{Heister2013}, where $L_p$ corresponds to the Poisson problem for $p$ with Neumann boundary conditions. In our numerical examples this extra term makes little difference as we only consider time steps $\frac{1}{\Delta t} \ll \gamma$, and we therefore neglect it.

Since the augmented Lagrangian term has a large kernel that consists of all solenoidal vector fields, a robust multigrid scheme as described in Section \ref{sec:robustssc} must be used to solve the augmented momentum block.
For the $\hdiv\times L^2$-conforming discretisation the \emph{star iteration} \cite[Section 4]{Farrell2018} can be used as a robust relaxation method. The subspace decomposition is defined as
\begin{equation}\label{eq:stardecomp}
	\mathbf{V}_i = \{\v \in \mathbf{V}_h: \mathrm{supp}(\v) \subset K_i \},
\end{equation}
where $K_i$ is the patch of elements sharing the vertex $i$ in the mesh. Example patches are shown in Figure~\ref{fig:star}. Since we use a structure-preserving discretisation, the properties of the de Rham complexes \eqref{eq:deRhamdiscret3d} and \eqref{eq:deRhamdiscret2d} imply that \eqref{eq:stardecomp} fulfils the kernel decomposition property \eqref{eq:kerneldecomp}. This property was also used in \cite{Arnold2000} to construct a robust smoother for the $\hdiv$ and $\hcurl$ Riesz maps and in \cite{Hong2015} for the Stokes equations.
In this case we may employ the standard prolongation operator induced by the finite element discretisation, because the uniformly refined mesh hierarchy we consider is nested.
\begin{figure}[htbp]
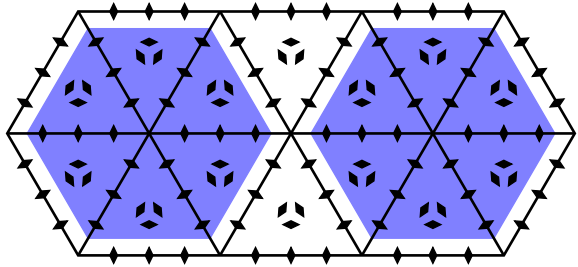

	\centering
	\includestandalone[width=8cm]{./images/star}
	\caption{Star patch for $\mathbb{BDM}_2$-elements.}
	\label{fig:star}
\end{figure}

A fluid-Reynolds-robust multigrid method for the conforming Scott--Vogelius discretisation was recently developed in \cite{Farrell2020}. Remember that this discretisation is only stable on certain types of meshes and we consider a mesh hierarchy here which is barycentrically refined. This required the design of another specialised multigrid method and we refer to  \cite{Farrell2020} for the details of this method. The two main parts include a so-called macrostar iteration which is used for the relaxation and special prolongation operator which is necessary since the barycentrically refined mesh hierarchy is not nested. 

The velocity block further includes terms given by the convection-diffusion term $(\u\cdot \grad) \u$, the linearisation of the Lorentz force $S\, \B^n \times (\u \times \B^n)$ and the stabilisation term \eqref{eq:stabBurman}.
Numerical experiments in \cite{Farrell2020} and in the next Section~\ref{sec:numericalresults} show that these terms only degrade the performance of the preconditioner at high Reynolds and coupling numbers. As we have mentioned before, these somewhat surprising numerical observations are not backed up by theory since these terms do not fit in the framework of Section \ref{sec:robustssc}, and applying geometric multigrid methods to problems with strong advection typically requires special care, since these methods are primarily developed for elliptic PDEs. The kernel of the stabilisation \eqref{eq:stabBurman} consists of all $C^1$ vector fields. Therefore, the stabilisation term slightly degrades the performance of the solver, but the impact is not very significant as the factor $\mu h_{\partial K}^2$ is small.

\subsection{Solver for the electromagnetic block}\label{sec:solverformagneticblock}
The weak formulation of the electromagnetic block is given by
\begin{equation}
	\begin{aligned}
		(\E, \F) - \frac{1}{\Rem}(\B, \vcurl \F) + \delta\, (\u^n \times \B, \F) &= 0 &&\forall\, \F \in \hzcurl,\\
		\frac{\eta}{\Delta t} (\B, \C) + (\vcurl \E, \C) + \frac{1}{\Rem}(\div \B, \div \C) &= (\f, \C) &&\forall\, \C \in \hzdiv.
	\end{aligned}
\end{equation}
Recall that $\eta,\delta \in \{0,1\}$ distinguish between the stationary ($\eta=0$) and transient ($\eta = 1$) cases and the Picard ($\delta=0$) and Newton ($\delta = 1$) linearisations.
Eliminating $\E$, this corresponds to a mixed formulation of
\begin{equation}
	\begin{aligned}
		\frac{\eta}{\Delta t} \B +  \frac{1}{\Rem}\left(\vcurl \vcurl \B - \grad \div \B\right)   + \delta\, \vcurl(\u^n\times\B) &= \f \text{ in } \Omega,\label{eq:LaplaceGtilde} \\
		\B \cdot \mathbf{n} &= 0 \text{ on } \partial \Omega, \\
		\frac{1}{\Rem} \vcurl \B - \delta\, \u^n\times \B &= \mathbf{0} \text{ on } \partial \Omega.
	\end{aligned}
\end{equation}
For the Picard linearisation, this problem simplifies to the mixed formulation for the standard vector Laplace problem \cite{Arnold2006} with boundary conditions $\B\cdot \n = \vcurl \B =\mathbf{0}$ on $\del\Omega$. Chen et al.\ \cite{Chen2018} propose a Schur complement solver and Arnold et al.\ \cite{Arnold2006} propose a norm-equivalent block diagonal preconditioner for the mixed formulation.
%In order to avoid the approximation of the Schur complement with a discontinuous Galerkin formulation we use the norm equivalent block preconditioner described from \cite{Arnold2006}. We solve the $\hdiv$ Riesz map with the macrostar multigrid solver described in the previous section.
We also found that the star multigrid solver applied monolithically to the electromagnetic block \eqref{eq:LaplaceGtilde} results in an efficient solver and employ this solver in our numerical examples.
%Since the top left block corresponds to a mass-matrix that is easy to solve, we expect the monolithic approach to work as well as a schur complement preconditioner, cmp. $\red{cite Berg}$. \red{Can we prove that? Does normal multigrid work for this block? No! Or kernel capturing? Yes.}
All of the solvers described show $\Rem$-robust performance.

In contrast, the presence of the additional term $\vcurl(\u^n \times \B)$ in the Newton linearisation, which has a non-trivial kernel, makes the problem almost singular for high $\Rem$ in the stationary case and hence requires a special multigrid method.
%One can also formally compute the Schur complement of $\M$
%\begin{equation}
%    \mathcal{S}_\mathcal{M} = \frac{1}{\rem} (\CC -\A^\top \M_\E^{-1}\A) - \A^\top\M_\E^{-1}\tidle{G}
%\end{equation}
%which corresponds on a continuous level to
%\begin{equation}\label{eq:SchurCont2}
%    -\frac{1}{\rem} \Delta \B + \vcurl(\u^n \times \B).
%\end{equation}
Unfortunately the troublesome term $\vcurl(\u^n \times \B)$ is not symmetric and thus does not fit the available analytical framework of Sch\"oberl. Our considerations on this point are therefore necessarily heuristic. Some insight may be gained by employing the vector identity
\begin{equation}
	\vcurl (\mathbf{A} \times \mathbf{B}) =  (\mathbf{B} \cdot \nabla) \mathbf{A} -  (\mathbf{A} \cdot \nabla) \mathbf{B} +
	\mathbf{A}(\nabla \cdot \mathbf{B}) - \mathbf{B}(\nabla \cdot \mathbf{A})
\end{equation}
to rewrite \eqref{eq:LaplaceGtilde} to
\begin{equation}
	\frac{\eta}{\Delta t} \B -\frac{1}{\Rem}\Delta \B - (\B \cdot \grad) \u^n + (\u^n \cdot \nabla) \B  - \u^n(\nabla \cdot \mathbf{B}) - \mathbf{B}(\nabla \cdot \u^n).
\end{equation}
The last term $ - \mathbf{B}(\nabla \cdot  \u^n)$ vanishes since we exactly enforce $\nabla \cdot \u^n=0$ in each step.
The terms  $-(\B \cdot \grad) \u^n + (\u^n \cdot \nabla) \B $ are reminiscent of the Newton linearisation of the advection term $(\u \cdot \grad) \u$ of the Navier--Stokes equation, for which it has been demonstrated that a star multigrid method is effective \cite{Farrell2018}. On the other hand, this similarity also suggests that we cannot hope for a simpler solver than a specialised kernel-capturing multigrid method to solve for this block. Numerical experiments with our approach applied monolithically do indeed yield a robust solver for the stationary and transient cases in two dimensions, and in the transient case in three dimensions for sufficiently small $\Delta t$. We observe in our numerical tests that in three dimensions the solver breaks down for $\Rem \approx 700$ for a stationary lid-driven cavity problem.

\section{Numerical results} \label{sec:numericalresults}
%\begin{itemize}
%    \item maybe non-convex domain
%    \item Error robustness wrt Rem, Do we %have that?
%    \item Do we want to report robustness %wrt h? Take away would bex that smaller h %is good.
%    \item Numerical test for alternative boundary conditions
%\end{itemize}
%\begin{itemize}
%\item \todo{I suppose numerical results in 2d will show that Re-S table is identical for Newton, picard and picardG. Then only present results for one and argue which one to choose(pro picard: full theory available, pro MDP: know schur comp and good nonlinear conv, pro Newton: probably best nonlinear conv)}
%\item \todo{Do we want to add performance tests?}
%\end{itemize}

In this section, we present numerical results for the Picard and Newton linearisation described in the previous sections. We investigate three test problems: the stationary Hartmann problem, the stationary and transient version of a lid-driven cavity problem and a transient island-coalescence problem. The numerical results were produced on ARCHER2, the UK national supercomputer, which consists of 5,860 compute nodes each built of two AMD Zen2 7742 processors with 64 2.25 GHz cores  and 256 GB of memory.

\subsection{Algorithm details}\label{sec:algorithmdetails}
The algorithm is implemented in Firedrake \cite{rathgeber2016} and uses the solver package PETSc \cite{balay2019}.
% and PCPATCH \cite{farrell2019pcpatch}. The latter includes the implementation of the multigrid relaxation method described in Section \ref{sec:derivationofblockpreconditioner}. 
It is well-known that the convergence of the nonlinear scheme depends heavily on the initial guess and might fail to converge for high Reynolds numbers with poor initial guesses. To circumvent this problem we perform continuation in the Reynolds numbers and coupling number, for the stationary problems. In the presented tables we always apply continuation to the variable in the column first and use each solution as the starting point for the continuation over the rows. We use the steps $1,100,1{,}000,5{,}000,10{,}000$ for $S$ and $1,500,1{,}000,3{,}000,5{,}000,7{,}000,10{,}000$ for $\Re$ and $\Rem$.
The reported nonlinear and linear iteration numbers correspond to the final solve in the continuation; however, the extra cost for the continuation should be kept in mind for stationary problems. For time-dependent problems, continuation is not necessary.
%We first apply continuation to one variable while the other variable is fixed. Each solution in the first continuation is then used as the starting point for the continuation in the second variable, to investigate how our solver scales across parameter space.

We use flexible GMRES \cite{saad1993} as the outermost Krylov solver since we apply GMRES in the multigrid relaxation. Moreover, we apply a block upper triangular preconditioner~\cite{Benzi2005}
\begin{equation}
	\mathcal{P}=
	\begin{pmatrix}
		\II &-\tilde{\MM}^{-1}\mathcal{K} \\
		\mathbf{0} &\II
	\end{pmatrix}
	\begin{pmatrix}
		\tilde{\MM}^{-1} &\mathbf{0} \\
		\mathbf{0} &\tilde{\mathcal{S}}^{-1}
	\end{pmatrix}
	%\begin{pmatrix}
	%	\II &\mathbf{0} \\
	%	-\mathcal{L}\tilde{\MM}^{-1} &\II
	%\end{pmatrix}
\end{equation}
to \eqref{eq:matrix_upBE}, where we denoted \eqref{eq:matrix_upBE} here as $\begin{pmatrix} \MM & \mathcal{K} \\ \mathcal{L} & \mathcal{N} \end{pmatrix}$. We also investigated a full block-LDU preconditioner without notable improvements in terms of iteration counts, which fits with the recent theoretical results in \cite{Southworth2020} and is also suggested by the eigenvalue analysis in \cite{Wathen2000}.
% We realized after our numerical computations have finished that an upper block factorisation, where only the first two matrices of \eqref{eq:fullblockfactor} are used as the outer preconditioner, nearly halves the computational runtime only causing a minor increase in the iteration numbers. We intend to run the numerical experiments again with the upper block factorisation in the future.
% where we denoted \eqref{eq:matrix_upBE} here as $\begin{pmatrix} \MM & \mathcal{K} \\ \mathcal{L} & \mathcal{N} \end{pmatrix}$.

Both the block matrix $\MM$ and the outer Schur complement approximation $\mathcal{S}^{(\u, p)}$ are inverted approximately with two iterations of preconditioned FGMRES (denoted $\tilde{\MM}^{-1}$ and $\tilde{S}^{-1}$, respectively). The former uses the block preconditioner for the hydrodynamic block described in Section \ref{sec:solverforschurcomp}, the latter the monolithic multigrid method described in Section \ref{sec:solverformagneticblock}. In the numerical results we focus on taking the outer Schur complement that eliminates the hydrodynamic block, except for one case in Section \ref{sec:stationaryliddrivencavityproblemin3d}. In both multigrid methods we use six preconditioned GMRES iterations as the smoother on each level and the direct solver MUMPS \cite{MUMPS} to solve the problem on the coarsest grid. Since this relaxation is quite expensive, convergence in a very small number of outer iterations is required for efficiency. See Figure \ref{fig:solver_diagramm} for a graphical representation of the solver.
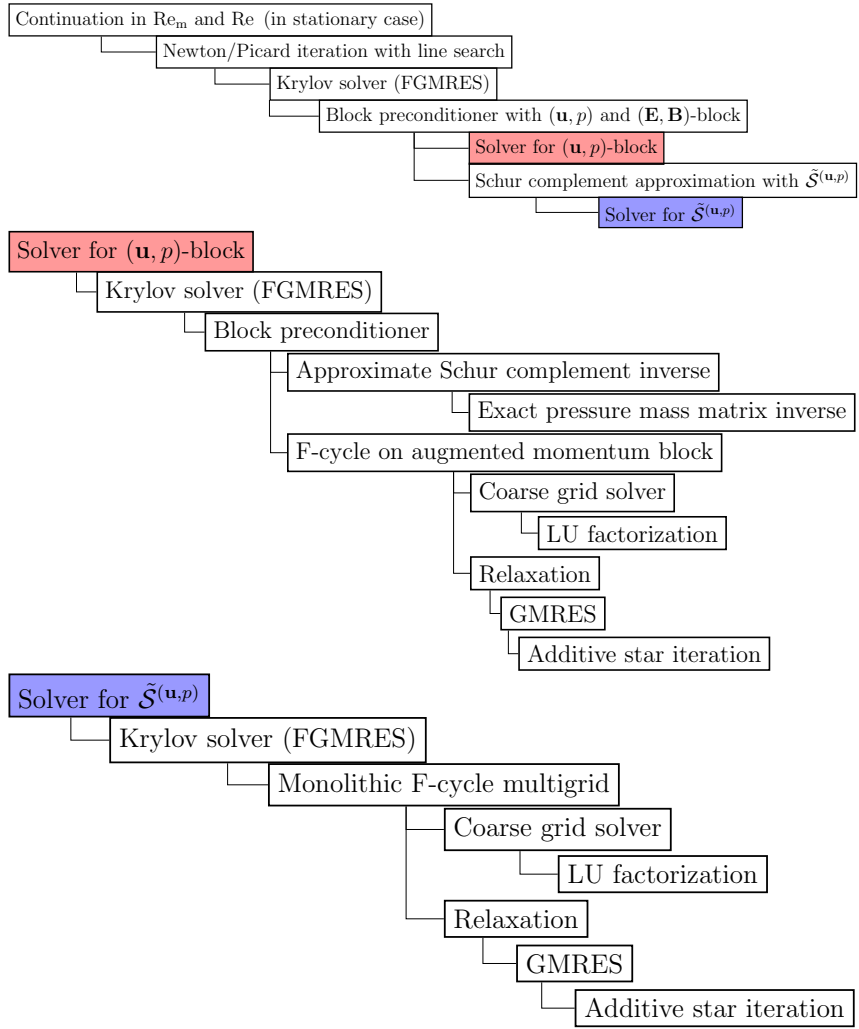
\begin{figure}[htb!]
	\centering
	\resizebox{0.75\textwidth}{!}{\begin{tikzpicture}[%
  every node/.style={draw=black, thick, anchor=west},
  grow via three points={one child at (-1.4,-0.7) and
  two children at (-1.4,-0.7) and (-1.4,-1.4)},
  edge from parent path={(\tikzparentnode.188) |- (\tikzchildnode.west)}]
  \node {Continuation in $\Rem$ and $\Re$ (in stationary case)}
  child { node{Newton/Picard iteration with line search}
  child { node{Krylov solver (FGMRES)}
    child { node {Block preconditioner with $(\u, p)$ and $(\E,\B)$-block}
      child { node[fill=red!40] {Solver for $(\u, p)$-block}
      }
      child { node {Schur complement approximation with $\tilde{\mathcal{S}}^{(\u, p)}$}
      child { node[fill=blue!40] {Solver for $\tilde{\mathcal{S}}^{(\u, p)}$}
      }
      }
     }
     }
    };
    
\end{tikzpicture}}
	\resizebox{0.75\textwidth}{!}{\begin{tikzpicture}[%
  every node/.style={draw=black, thick, anchor=west},
  grow via three points={one child at (-0.6,-0.7) and
  two children at (-0.6,-0.7) and (-0.6,-1.4)},
  edge from parent path={(\tikzparentnode.201) |- (\tikzchildnode.west)}]
   \node[fill=red!40]{Solver for $(\u, p)$-block}
      child { node {Krylov solver (FGMRES)}
        child { node {Block preconditioner}
          child { node {Approximate Schur complement inverse}
              child{ node {Exact pressure mass matrix inverse}}
          }
          child [missing] {}
          child { node {F-cycle on augmented momentum block}
              child { node {Coarse grid solver}
                child { node {LU factorization}}
              }
              child [missing] {}
              child { node {Relaxation}
                child { node {GMRES}
                  child { node {Additive star iteration}}
                }
              }
          }
        }
      };
\end{tikzpicture}}
	\resizebox{0.75\textwidth}{!}{\begin{tikzpicture}[%
  every node/.style={draw=black, thick, anchor=west},
  grow via three points={one child at (0.0,-0.7) and
  two children at (0.0,-0.7) and (0.0,-1.4)},
  edge from parent path={(\tikzparentnode.210) |- (\tikzchildnode.west)}]
   \node[fill=blue!40] {Solver for $\tilde{\mathcal{S}}^{(\u, p)}$}
      child { node {Krylov solver (FGMRES)}
          child { node {Monolithic F-cycle multigrid}
              child { node {Coarse grid solver}
                child { node {LU factorization}}
              }
              child [missing] {}
              %child { node {Prolongation %operator}
              %  child { node {Local solves over coarse macro cells (Scott--Vogelius)}}
              %}
              child { node {Relaxation}
                child { node {GMRES}
                  child { node {Additive star iteration}}
                }
              }
          }
      };
\end{tikzpicture}}
	\caption{Graphical outline of the solver.}
	\label{fig:solver_diagramm}
\end{figure}

We have chosen relative and absolute tolerances of $10^{-10}$ and $10^{-6}$ for the residuals of the nonlinear solver and $10^{-7}$ and $10^{-7}$ for the residuals of the outermost linear solver, measured in the Euclidean norm. We use the  $\hdiv\times L^2$-conforming elements $\mathbb{BDM}_2 \times \mathbb{DG}_{1}$ for $(\u_h,p_h)$.   Moreover, we apply $\mathbb{CG}_2 \times \mathbb{RT}_2$ elements for $(E_h,\B_h)$ in 2D and $\mathbb{NED}1_2 \times \mathbb{RT}_2$ elements for $(\E_h,\B_h)$ in 3D. All problems are posed over the domain $\Omega=(-1/2,1/2)^d$, unless stated otherwise. For the multigrid hierarchy we use a regular coarse mesh of $16 \times 16$ cells and five levels of uniform refinement in 2D resulting in a $512\times512$ grid with 18.5 million degrees of freedom (DoFs). In three dimensions a coarse grid of $6\times6\times6$ cells with 3 levels of refinement is used which results in a $48\times48\times48$ grid with 25 million DoFs. When we consider a manufactured solution we always subtract $\int_\Omega p \,\mathrm{d}x$ from the pressure to fix the average of $p$ to be zero. We present iteration numbers in tables where two of the three parameters $\Re$, $\Rem$ and $S$ are varied and the third one is fixed to be 1 unless stated otherwise.

For time-dependent problems, we apply the second-order, L-stable BDF2 method with a fixed time-step. We compute the first time-step with Crank-Nicolson to provide the second starting value for BDF2. For the transient lid-driven cavity problem, we use a time-step of $\Delta t = 0.01$ and a final time of $T=0.1$. We did not choose a higher final time T because of budget limitations. However, we confirmed that the reported numbers are representative for higher T by computing the solution for a few parameters until $T=1$ without noticeable changes in the iteration counts. Moreover, we confirm the efficiency for more time-steps in the island coalescence problem where we iterate in the finest run until $T=15$ in 2400 time steps.

%In \cite[Theorem 4.5]{Ma2016} a time-step restriction for $\Delta t$ is reported that includes the factor $1/S$ when implicit Euler is applied. However, the authors say that they only need this restriction to prove their analytical results, but observe good convergence numerically even if this bound is not fulfilled. We confirm that we do not observe stability problems for high $S$ in our numerical examples.

\subsection{Interpolating boundary data}
\label{sec:interp-boundary-data}
The theory from the previous sections has been formulated for homogeneous boundary conditions, but the generalisation is straightforward for non-homogeneous boundary conditions as shown in Section \ref{sec:discretization}. However, there is a subtle technicality in the implementation if one wants to enforce the divergence constraint $\div \B_h=0$ pointwise. Strong boundary conditions are enforced in a finite element code by interpolating the given boundary data onto the corresponding finite element space. If the interpolation of the boundary values $\mathbf{g}$ were exact, identity \eqref{eq:InterpolationPreserveRT} would imply that $\div \B_h=0$ holds. However, the degrees of freedom for the interpolation are moments and are usually implemented by a quadrature rule whose quadrature degree is based on the polynomial degree of the finite element space. If $\mathbf{g}$ is a non-polynomial expression, this quadrature rule might not interpolate the boundary condition exactly and therefore one loses the property that $\div \mathbf{g}_h=0$ on $\del \Omega$ holds exactly.

To circumvent this problem we use high-order quadrature rules for the evaluation of the degrees of freedom to ensure that the interpolation is exact up to machine precision.
In Figure \ref{tab:divBQuadDeg} we have illustrated the effect of the quadrature degree on the enforcement of the divergence constraint. We have used the method of manufactured solutions for a smooth problem to compute $\|\div \B_h\|_0$ for different quadrature degrees. Moreover, we have plotted the $L^2$-norm over $\partial \Omega$ of the interpolation of the divergence-free function $\B$ into the $\mathbb{RT}_2$ space. One can clearly observe that a quadrature degree of 2 for $\mathbb{RT}_2$ elements is not sufficient to enforce $\div \B_h=0$ up to machine precision. A higher quadrature degree preserves the divergence of the boundary data more accurately and leads to the pointwise enforcement of $\div \B_h=0$.
\begin{figure}[htb!]
	\centering
	\includegraphics[width=0.5\textwidth]{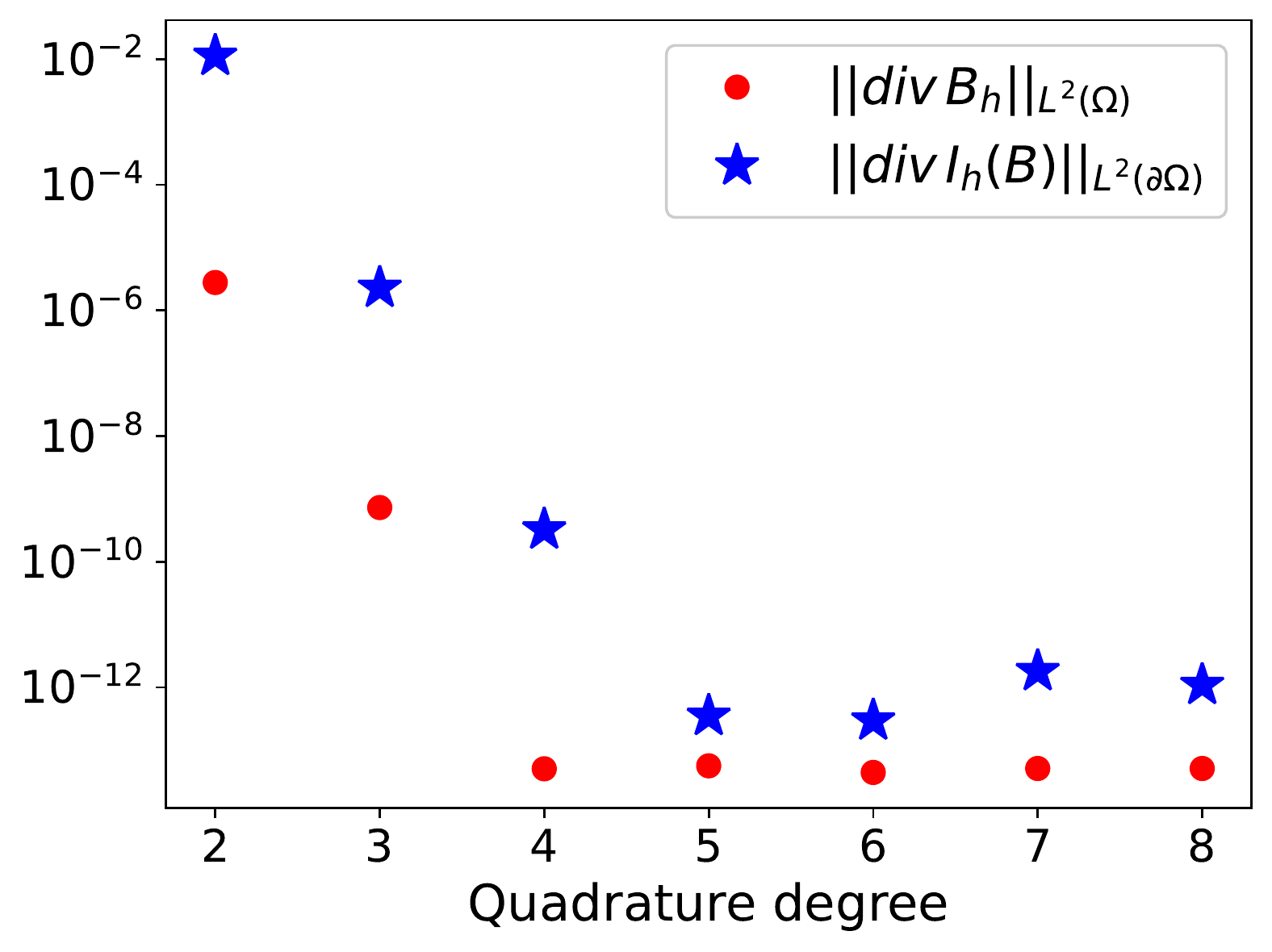}
	\caption{$L^2$-norm of the divergence of the solution $\B_h$ and the interpolant of the boundary condition for different quadrature degrees in the evaluation of the degrees of freedom for the Raviart--Thomas space.}
	\label{tab:divBQuadDeg}
\end{figure}

\subsection{Two-dimensional results}

\subsubsection{Hartmann flow}\label{sec:HartmannFlow}
First, we consider the Hartmann flow problem posed over $\Omega = (-1/2, 1/2)^2$ that describes the flow of a conducting fluid through a section of a channel to which a transverse magnetic field $\B_0=(0,1)^\top$ is applied. This problem was considered in \cite{adler2020monolithic, Wathen2017}.
The analytical solution is given by $\u=(u_1(y), 0)^\top$ and $\B=(B_1(y), 1)^\top$ with
\begin{align*}
	u_1(y) &= \frac{G \Re}{2 \Ha \tanh(\Ha/2)}\left(1 - \frac{\cosh(y\Ha)}{\cosh(\Ha/2)} \right),\\
	B_1(y) &= \frac{G}{2}\left( \frac{\sinh(y/\Ha)}{\sinh(\Ha/2)} -2y \right), \\
	p(x,y) &= - Gx - \frac{B_1^2(y)}{2}.
\end{align*}
Here, we used the Hartmann number $\Ha=\sqrt{S\Re \Rem}$ and $\mathrm{G} = \frac{2 \Ha \sinh(\Ha/2)}{\Re(\cosh(\Ha/2)-1)}$. The analytical solution for $E$ is computed via \eqref{eq:MHD2d3}. Note that for high $\mathrm{Ha}$ the computation of, e.g., $\sinh(\Ha/2)$ exceeds the range that double precision floating point numbers can represent. Therefore, we have chosen the following approximation for Hartmann numbers $\Ha \geq 100$ with $G = 2 \frac{\Ha}{\Re}$
\begin{align*}
	u_1(y) &= \frac{G \Re}{2 \Ha} \left(1 + \exp(\Ha(-y-\frac{1}{2})) - \exp(\Ha(y-\frac{1}{2}))  \right),\\
	B_1(y) &= \frac{G}{2} \left(\exp(\Ha(-y-\frac{1}{2})) - \exp(\Ha(y-\frac{1}{2}))  - 2y\right).
\end{align*}

The iteration counts for the three different linearisation methods are presented in Table \ref{tab:hartmann2d}.
For both schemes, we observe fairly constant Krylov iteration counts for $\Re$ and $S$ in the range of 1 to 10,000. In terms of the nonlinear convergence, the Picard linearisation takes more iterations for higher values of $S$ than the Newton linearisation. %This is due to the fact that we have good control over the outer Schur complement and that we have robust solvers for both the electromagnetic block and the outer Schur complement.

\begin{table}[htbp!]
	\centering
	\begin{tabular}{r|ccc|ccc}
		\toprule
		& \multicolumn{3}{c|}{Picard} &  \multicolumn{3}{c}{Newton} \\
		\midrule
		$S\backslash\Re$ & 1        & 1,000    & 10,000     & 1        & 1,000    & 10,000   \\
		\midrule
					1           & ( 3) 6.3 & ( 2) 6.0 & ( 2) 6.0 & ( 2) 7.0 & ( 2) 5.5 & ( 2) 6.0  \\
			1,000     & ( 6) 6.0 & ( 4) 5.8 & ( 4) 4.2 & ( 2) 7.5 & ( 2) 5.5 & ( 2) 4.5     \\
			10,000    & ( 6) 6.3 & ( 5) 5.0 & ( 4) 4.5 & ( 2) 7.0 & ( 2) 6.0 & ( 2) 6.0    \\
		\bottomrule
	\end{tabular}
	\caption{(Nonlinear iterations) Average outer Krylov iterations per nonlinear step for the Hartmann problem.\label{tab:hartmann2d}}
\end{table}

\subsubsection{Stationary lid-driven cavity in two dimensions}
\iffalse
\begin{itemize}
	\item \todo{What are right boundary conditions for E? Never mentioned in paper of Hu/Xu. I assume homogeneous. I think that's right because our plots look identical to Hu/Xu. Although that means that E=0 everywhere in 2d. In 3d not.}
	\item \todo{Do a test for Re-S and Re-Rem.}
\end{itemize}
\fi

Next, we consider a lid-driven cavity problem posed over $\Omega=(-1/2,1/2)^2$ for a background magnetic field $\B_0=(0,1)^\top$ which determines the boundary conditions $\B\cdot \n = \B_0\cdot \n$ on $\del \Omega$ and set $\f = \mathbf{0}$ \cite{Ma2016}. We impose the boundary condition $\u=(1,0)^\top$ at the boundary $y=0.5$ and homogeneous boundary conditions elsewhere. The problem models the flow of a conducting fluid driven by the movement of the lid at the top of the cavity. The magnetic field imposed orthogonal to the lid creates a Lorentz force that perturbs the flow of the fluid. 

For both linearisations we observe fairly constant Krylov iteration counts for $\Re$ and $S$ in the range of 1 to 10,000 in Table \ref{tab:ldc2dSRE}. In terms of the nonlinear convergence, the Picard linearisation sometimes takes slightly more iterations than the Newton linearisations, with slightly better linear iteration numbers.

\begin{table}[htbp!]
	\centering
	\begin{tabular}{r|ccc|ccc}
		\toprule
		& \multicolumn{3}{c|}{Picard} &  \multicolumn{3}{c}{Newton} \\
		\midrule
		$S\backslash\Re$ & 1        & 1,000    & 10,000     & 1        & 1,000    & 10,000   \\
		\midrule
		1           & ( 3) 5.3 & ( 4) 3.5 & ( 3) 4.3 & ( 2) 6.5 & ( 4) 3.5 & ( 3) 4.3   \\
		1,000     & ( 4) 3.5 & ( 3) 4.7 & ( 2) 8.5 & ( 2) 5.5 & ( 3) 4.7 & ( 2) 6.5     \\
		10,000    & ( 3) 5.0 & ( 3) 4.3 & ( 2) 7.0 & ( 2) 6.5 & ( 2) 6.0 & ( 2) 7.0     \\
		\bottomrule
	\end{tabular}
	\caption{(Nonlinear iterations) Average outer Krylov iterations per nonlinear step for the stationary lid-driven cavity problem in 2D.\label{tab:ldc2dSRE}}
\end{table}

As mentioned earlier, our scheme does not include a stabilisation for high magnetic Reynolds numbers. However, we have verified that our solutions do not exhibit oscillations in this regime. A plot of the streamlines for different $\Re$ and $\Rem$ can be found in Figure \ref{fig:Streamlinesldc}. One can clearly observe the phenomenon that for high magnetic Reynolds numbers the magnetic field lines are advected with the fluid flow. Iteration counts are displayed in Table \ref{tab:ldc2dREREMHdiv}. %First of all, we observe that the treatment of high $\Rem$ seems to be more challenging.

For the Picard linearisation we observe that the nonlinear scheme already fails to converge for a magnetic Reynolds number of 100. The poor nonlinear convergence of the Picard iteration for high $\Rem$ even with continuation was previously observed for other formulations \cite{PhillipsPHD,Phillips2014}.

%For the minimal decoupling Picard linearisation we observe that the linear solver is also robust with respect to the magnetic Reynolds number, with a slight exception for $\Rem=10,000$ and $\Re=1$. This indicates that the linear solver for the $(\E, \B)$ block described in Section \ref{sec:solverformagneticblock} works very well for high $\Rem$. However, due to the omission of the matrix $\GG$ from the nonlinear scheme one can see an increase in the number of nonlinear iterations.

For the Newton linearisation the linear iterations increase slightly since the approximation of the Schur complement $\tilde{\mathcal{S}}^{(\u, p)}$ becomes less accurate for high Reynolds numbers. On the other hand, the number of nonlinear iterations remains fairly constant which seems to indicates that the linear solver for the $(\E, \B)$ block described in Section \ref{sec:solverformagneticblock} works very well for high $\Rem$ in two dimensions.

%Note that the cost per Krylov iteration is approximately the same for the Newton and minimal decoupling Picard iterations. Hence, a good estimate for the overall cost of the preconditioner is given by the product of the Krylov and nonlinear iterations. We can see that for high $\Rem$ the Newton iteration clearly outperforms the minimal decoupling Picard iteration.

\begin{table}[htbp!]
	\centering
	\begin{tabular}{r|ccc|ccc}
		\toprule
		& \multicolumn{3}{c|}{Picard} & \multicolumn{3}{c}{Newton} \\
		\midrule
		$\Rem\backslash\Re$  &1 &     1,000 &    10,000 &1 &     1,000 &    10,000 \\
		\midrule
        1              &( 3) 5.3&(4) 3.5 & (3) 4.3&( 2) 6.0 & ( 3) 4.3 & ( 3) 4.3   \\
		1,000      &NF&NF&NF& ( 2) 4.5 & ( 3) 3.0 & ( 3) 3.0  \\
		10,000    &NF&NF&NF& ( 2) 4.5 & ( 4) 5.5 & ( 3) 5.7      \\
		\bottomrule
	\end{tabular}
	\caption{Iteration counts for the stationary lid-driven cavity problem in 2D with $\hdiv\times L^2$-discretisation for different $\Rem$ and $\Re$. NF indicates that this entry was not computable due to the failure of nonlinear convergence. \label{tab:ldc2dREREMHdiv}}
%	\begin{tabular}{r|ccc|ccc}
%		\toprule
%		& \multicolumn{3}{c|}{Picard} & \multicolumn{3}{c}{Newton} \\
%		\midrule
%		$\Rem\backslash\Re$ & 1        & 1,000    & 10,000  & 1        & 1,000    & 10,000   \\
%		\midrule
%		1                   & ( 2) 3.0 & (2) 2.0 &  (3) 6.0 &   ( 2) 3.0 & ( 2) 2.0 & ( 3) 6.0\\
%		1,000               & -        & -        & -        &  ( 6) 1.8 & ( 3) 3.0 & ( 3) 5.3\\
%		10,000              & -        & -        & -        & ( 8) 3.5 & ( 4) 5.0 & ( 2) 8.5 \\
%		\bottomrule
%	\end{tabular}
%	\caption{Iteration counts for the stationary lid-driven cavity problem in 2D with Scott--Vogelius elements for different $\Rem$ and $\Re$.\label{tab:ldc2dREREMSV}}
\end{table}

Thus far we have considered the $\hdiv\times L^2$ discretisation for the hydrodynamic variables. For comparison, in this subsection we include results for Scott--Vogelius elements which we previously referred to in Section \ref{sec:Discretisationchap2} and Section \ref{sec:solverforschurcomp}.
The results are shown in Table \ref{tab:ldc2dREREMSV}. We observe that the Krylov iteration counts are in general similar for the Scott--Vogelius element, making this an attractive alternative for those wishing to employ conforming schemes. However, one must keep in mind that the work per Krylov iteration is substantially higher for this element, due to the use of larger patches in the macrostar relaxation method.

\begin{table}
	\centering
    \begin{tabular}{r|ccc|ccc}
	\toprule
	& \multicolumn{3}{c|}{Picard} & \multicolumn{3}{c}{Newton} \\
	\midrule
	$\Rem\backslash\Re$ & 1        & 1,000    & 10,000  & 1        & 1,000    & 10,000   \\
	\midrule
	1                   & ( 2) 4.0 & ( 2) 2.5 & ( 2) 8.5 & ( 2) 4.0 & ( 2) 2.5 & ( 3) 9.7\\
	1,000               & NF       & NF        & NF        & ( 5) 1.8 & ( 3) 3.0 & ( 2) 4.0 \\
	10,000              & NF       & NF        & NF        & ( 8) 5.2 & ( 4) 6.2 & ( 2) 5.5 \\
	\bottomrule
\end{tabular}
\caption{Iteration counts for the stationary lid-driven cavity problem in 2D with Scott--Vogelius elements for different $\Rem$ and $\Re$.\label{tab:ldc2dREREMSV}}
\end{table}

\begin{figure}[htbp!]
	\centering
	\begin{tabular}{ccc}
		\includegraphics[width=3.9cm]{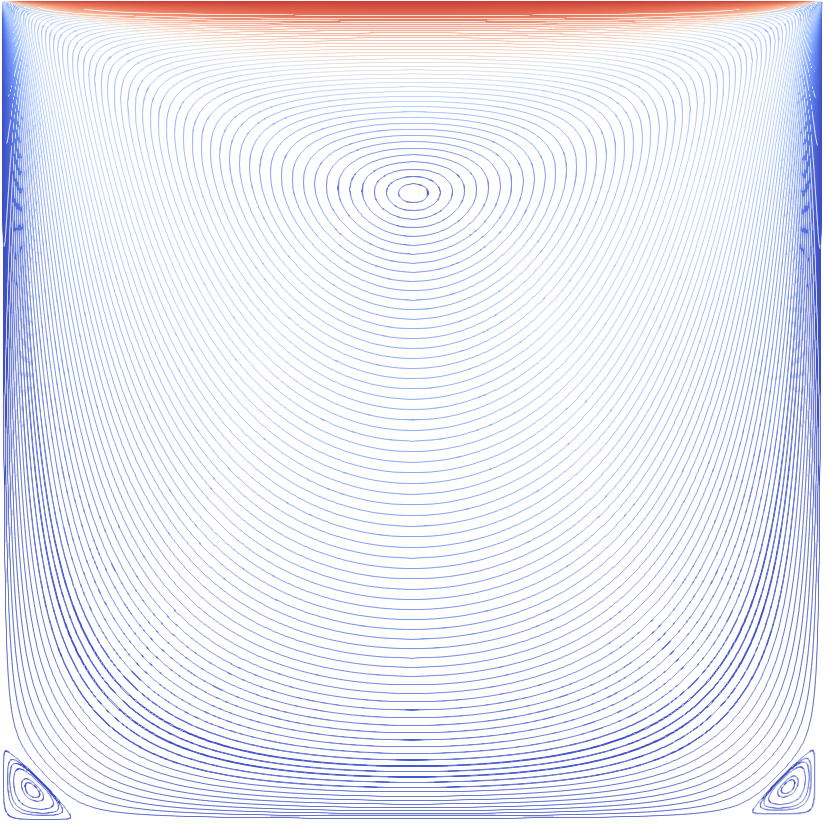} &
		\includegraphics[width=3.9cm]{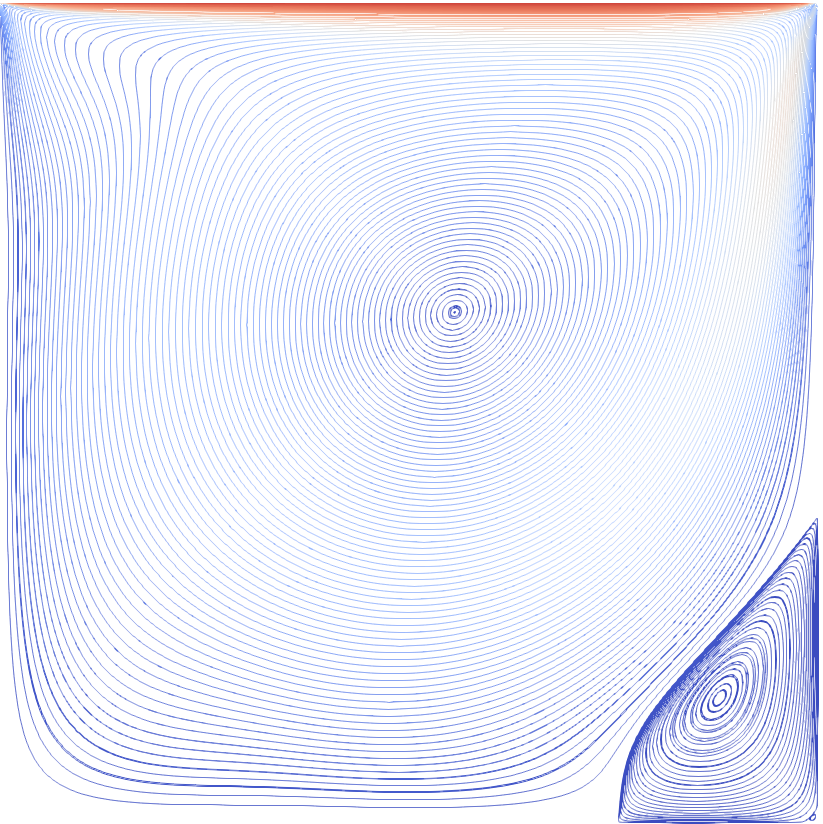} &
		\includegraphics[width=3.9cm]{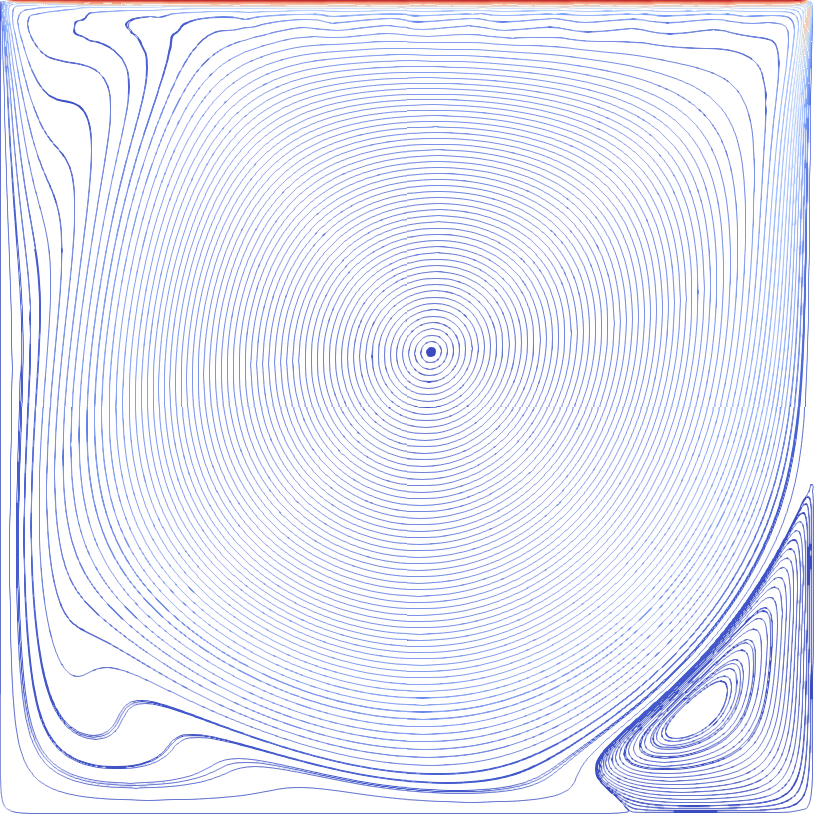} \\
		\includegraphics[width=3.9cm]{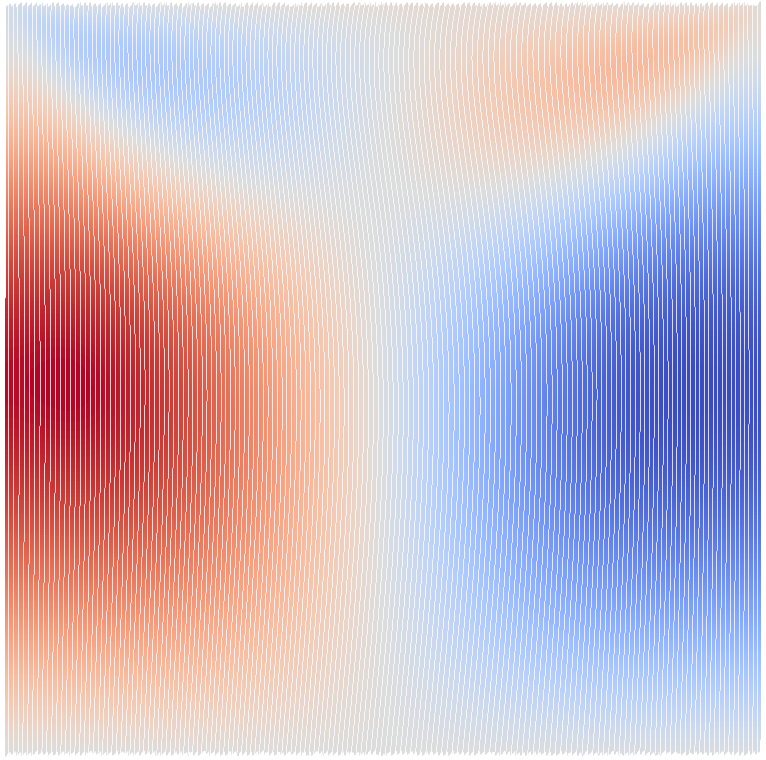} &
		\includegraphics[width=3.82cm]{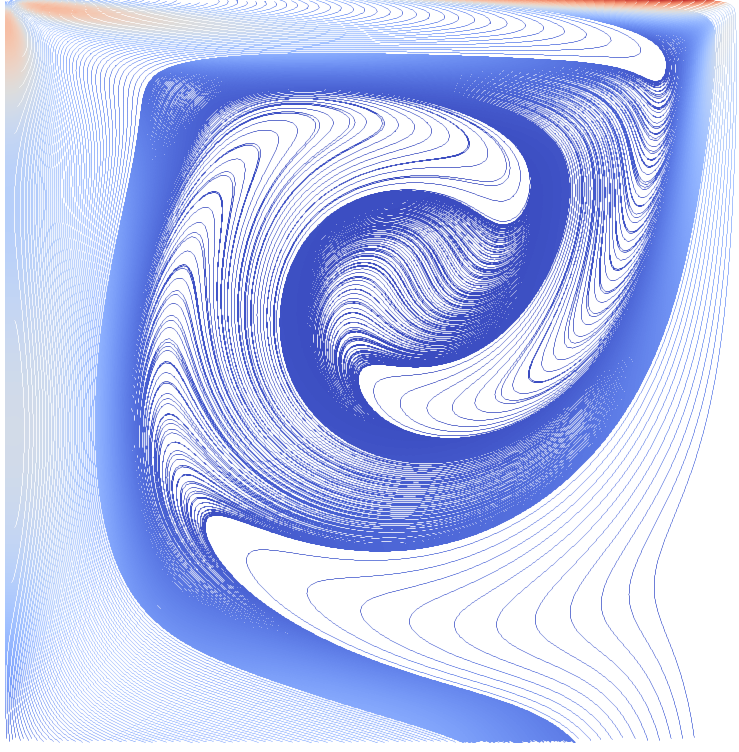} &
		\includegraphics[width=3.82cm]{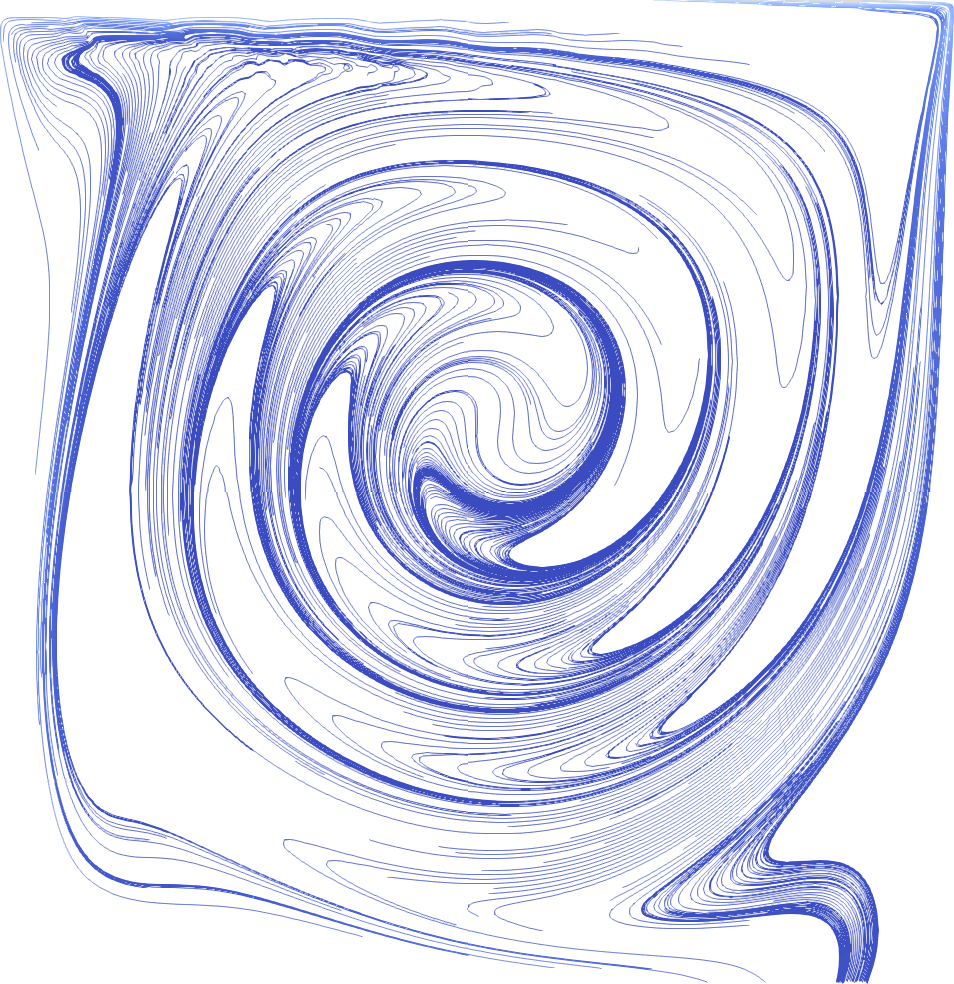} \\
		$\Re=\Rem=1$ & $\Re=\Rem=500$ & $\Re=\Rem=5,$000\\
	\end{tabular}
	\caption{Streamlines for the two-dimensional stationary lid-driven cavity problem for $\u$ (upper row) and $\B$ (lower row).\label{fig:Streamlinesldc}}
\end{figure}

\subsubsection{Time-dependent lid-driven cavity problem in two dimensions}
We next consider the time-dependent lid-driven cavity problem. We choose the same boundary conditions and right-hand side as in the stationary case. The numerical results for varying $S$ and $\Re$ are shown in Table \ref{tab:ldc2dtimedep}. As in the stationary case, the Krylov iteration counts remain almost constant for the two linearisations. We notice that the Picard iteration fails to converge for high $S$ and $\Rem$ for the chosen $\Delta t = 0.01$. However, we tested that one can get the Picard iteration to converge in most cases by choosing a smaller $\Delta t$ in the first time steps. We do not report these results here to keep the tables consistent.

Table \ref{tab:ldc2dtimedep} also shows iteration counts for varying $\Re$ and $\Rem$. The linear solver is robust for most parameter values, with iteration counts only increasing for $\Re=1$ and $\Rem=100,000$.

For completeness, we also study the case of high $\Rem$ and $S$ at the same time in Table \ref{tab:ldc2dtimedep}, which we expect to be the most challenging case. Again the slight increase of the Krylov iterations in the Newton iteration is due to inaccurate outer Schur complement approximation. However, the solvers perform very well, considering the difficulty of the problem.

\begin{table}[htbp!]
	\centering
	\begin{tabular}{r|ccc|ccc}
		\toprule
		& \multicolumn{3}{c|}{Picard} & \multicolumn{3}{c}{Newton} \\
		\midrule
		$S \backslash\Re$  &1 &     10,000 &    100,000 & 1 &     10,000 &    100,000 \\
		\midrule
        1 &  (2.0) 3.0 & (2.2) 3.6 & (3.1) 3.3 & (1.6) 3.6 & (2.2) 3.6 & (3.1) 3.3\\
		1,000 &   (3.0) 4.0 & (3.0) 3.3 & (2.7) 3.0 & (2.1) 4.7 & (2.2) 3.9 & (2.3) 3.3\\
		10,000 & NF & NF & NF &  (2.2) 6.5 & (2.5) 5.0 & (2.3) 5.6\\
		\bottomrule
		\multicolumn{7}{c}{} \\
		\toprule
		& \multicolumn{3}{c|}{Picard} & \multicolumn{3}{c}{Newton} \\
		\midrule
		$\Rem\backslash\Re$  &1 &     10,000 &    100,000 &1 &     10,000 &    100,000 \\
		\midrule
        1 & (2.0) 3.0 & (2.2) 3.6 & (3.1) 3.3 & (1.6) 3.6 & (2.2) 3.6 & (3.1) 3.3  \\
		10,000 & NF & NF & NF & (2.0) 3.1 & (2.3) 3.6 & (3.1) 3.3\\
		100,000 &  NF & NF & NF &  (2.2)10.9 & (3.0) 3.2 & (3.3) 3.2\\
		\bottomrule
		\multicolumn{7}{c}{} \\
		\toprule
		& \multicolumn{3}{c|}{Picard} & \multicolumn{3}{c}{Newton} \\
		\midrule
		$\Rem\backslash S$  &1 &     1,000 &    10,000 & 1 &     1,000 &    10,000 \\
		\midrule
        1 &  (2.0) 3.0 & (3.0) 4.0 & NF & (1.6) 3.6 & (2.1) 4.7 & (2.2)  6.5  \\
		1,000 &  (3.0) 2.5 & NF & NF & (2.0) 3.1 &  (2.2) 5.6 & (2.8)11.0 \\
		10,000 & NF & NF & NF &  (2.0) 3.1 & (2.2) 6.3 & (3.1)11.8\\
		\bottomrule
	\end{tabular}
	\caption{Iteration counts for the transient lid-driven cavity problem in 2D.\label{tab:ldc2dtimedep}}
\end{table}

\subsubsection{Time-dependent island coalescence problem in two dimensions}\label{sec:islandcoal}
Next, we consider a two-dimensional island coalescence problem to demonstrate the effectiveness of our method for a physically relevant model that shows behaviour which is unique to MHD problems. Furthermore, we report results for a weak parallel scalability test ranging from 4 processors and 160K DoFs to 256 processors and 41M DoFs to examine the performance of our algorithm. \\
The island coalescence problem is used to model magnetic reconnection processes in large aspect ratio tokamaks. For a strong magnetic field in the toroidal direction, the flow can be described in a two-dimensional setting by considering a cross-section of the tokamak. We consider the same problem as in \cite[Section 4.2]{adler2020monolithic}. The domain $\Omega=(-1,1)^2$ results from the unfolding of an annulus in the cross-sectional direction where the left and right edges are mapped periodically. The equilibrium solution for $k=0.2$ is given by
\begin{align*}
	\u_{eq}&=\mathbf{0},\qquad p_{eq}(x,y) = \frac{1-k^2}{2}\left (1+\frac{1}{(\cosh(2\pi y) + k \cos(2\pi x))^2}\right), \\
	\B_{eq}(x,y) &= \frac{1}{\cosh (2\pi y) + k \cos(2\pi x)} 
	\begin{pmatrix}
		\sinh (2\pi y) \\ k \sin(2\pi x)
	\end{pmatrix}, E_{eq}=\frac{1}{\Rem} \scurl \B_{eq} - u_{eq} \times B_{eq},
\end{align*}
which results in right-hand sides $\f=\mathbf{0}$ and $\mathbf{g}$ for \eqref{eq:MHD3d3} given by
\begin{equation}
	\mathbf{g} = \frac{-8 \pi^2 (k^2-1)}{\Rem(\cosh(2\pi y) + k\cos(2\pi x))^3}
	\begin{pmatrix}
		\sinh(2\pi y) \\ k \sin(2\pi x)
	\end{pmatrix}.
\end{equation}
The initial condition for $\B_{eq}$ is given by perturbing it for $\varepsilon=0.01$ with 
\begin{equation}
	\Delta \B = \frac{\varepsilon}{\pi} \begin{pmatrix}
		-\cos(\pi x) \sin(\pi y/2) \\
		2\cos(\pi y/2) \sin(\pi x)
	\end{pmatrix}.
\end{equation}
The authors believe that the reported $\Delta \B$ in \cite{adler2020monolithic} includes a typographical error, as it is not divergence-free, and amended the second component appropriately. Therefore, the problem setup is not exactly identical and hence we might see slightly different solutions.
The reconnection rate can be computed as the difference between $\curl \B$ evaluated at the origin $(0,0)$ at the current time and the initial time, divided by $\sqrt{\Rem}$. Note that in our formulation $\B \in \hdivz$ and therefore we apply the $\scurl$ weakly by solving a problem for $j_0 \in \mathrm{H}_0(\mathrm{curl}, \Omega)$ such that
\begin{equation}
	(j_0, k) = (\B, \vcurl k) \quad \forall\, k \in \mathrm{H}_0(\mathrm{curl}, \Omega).
\end{equation}
In order to make the point evaluation of $j_0$ at (0,0) well-posed we project $j_0$ to the space $\mathbb{CG}1$ as in \cite{adler2020monolithic}. 

Figure \ref{fig:reconnectionrate} shows the reconnection rates for $\Re=\Rem=1{,}000$, $\Re=\Rem=5{,}000$ and $\Re=\Rem=10{,}000$ for three different spatial and temporal resolutions. We have fixed a coarse grid of $16\times16$ cells and compute results for three (1.1M DoFs), four (4.6M DoFs) and five (18.4M DoFs) levels of refinement. For the three levels of refinement, we chose a fixed step size of $\Delta t =0.025$ and halved it with each refinement. We iterated until $T=15$ which results in 2400 time steps for the finest resolution. In \cite{adler2020monolithic}, a coarse mesh of $20 \times 20$ cells has been considered with four, five, six and seven levels of refinement and $\Delta t =0.025$ on the coarsest level, i.e., they considered a finer mesh for the same time-step size. 

One can observe a decreasing height of the peak for increasing Reynolds numbers and the so-called ``sloshing" \cite{Knoll2006} effect that results in further peaks after the main peak with higher Reynolds numbers. Convergence for our considered meshes can be observed for $\Re=\Rem=1{,}000$ and  $\Re=\Rem=5{,}000$ while a further refinement is needed for $\Re=\Rem=10{,}000$. Nevertheless, our finest grid results match the results of \cite[Fig. 4]{adler2020monolithic} where finer meshes of up to $2{,}560\times2{,}560$ cells and $\Delta t=0.0016$ have been considered. For example, our finest result for $\Re=\Rem=10{,}000$ clearly reproduces the second peak in the reconnection rate. The time evolution of the current density $j$ at $\Re=\Rem=5{,}000$ can be found in Figure \ref{fig:islandcoalplots}.

\begin{figure}[htbp!]
	\centering
	\begin{tabular}{ccc}
		\includegraphics[width=3.9cm]{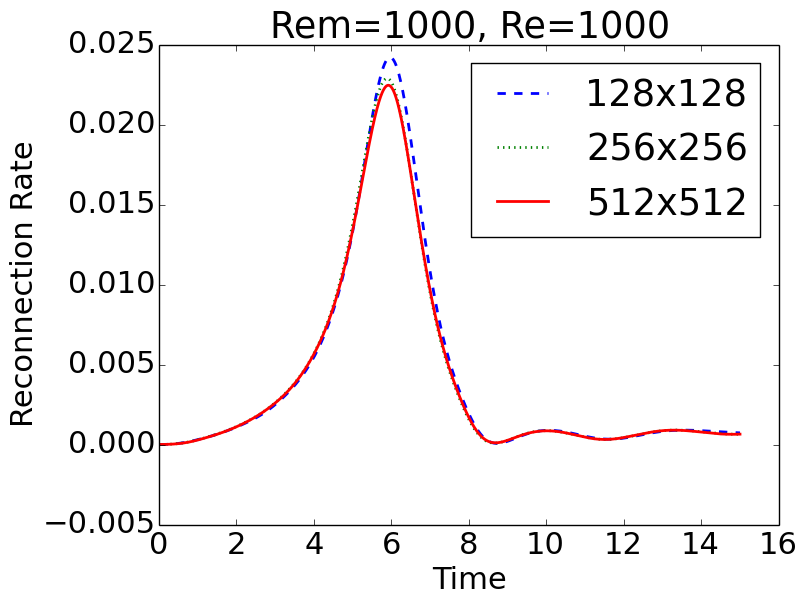} &
		\includegraphics[width=3.9cm]{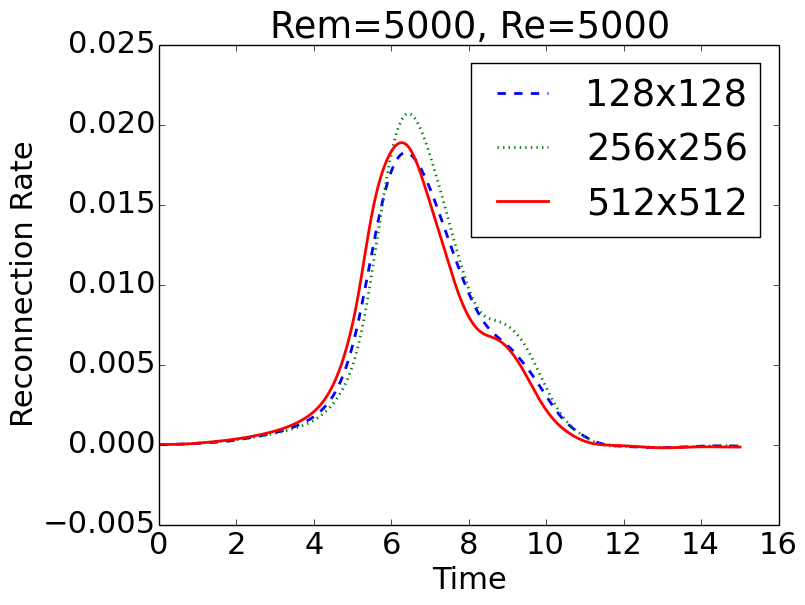} &
		\includegraphics[width=3.9cm]{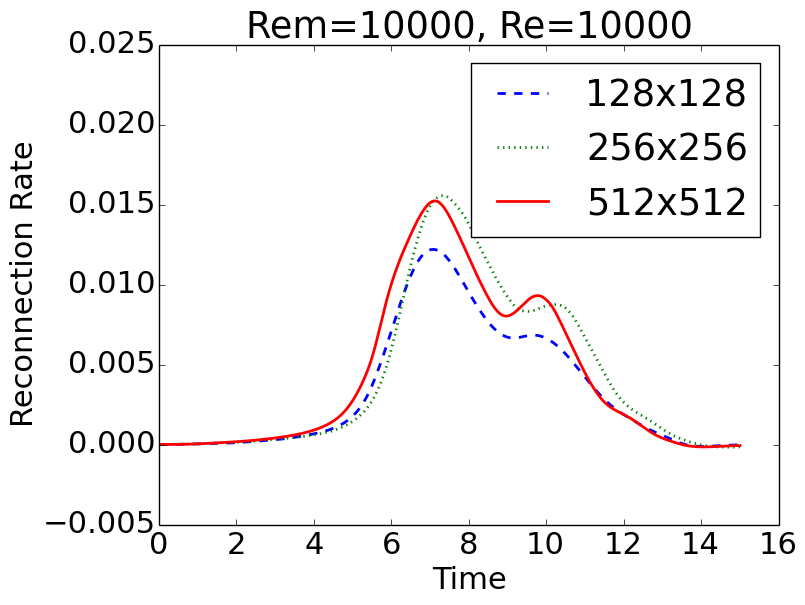} \\
	\end{tabular}
	\caption{Reconnection rates for the island coalescence problem.\label{fig:reconnectionrate}}
	\vspace{-0.3cm}
\end{figure}

 Furthermore, we performed a weak parallel scaling test for nine different combinations of the Reynolds numbers. We chose a coarse grid of $16\times16$ cells with three (1.1M DoFs), four (4.6M DoFs) and five (18.4M DoFs) levels of refinement. All tests were performed with 16 cores per node on 1, 4 and 16 nodes resulting in 16, 64 and 256 cores for the different refinements. We observed (not reported here) that scaling over the nodes with a fixed number of cores per node provides better results than increasing the number of cores per node. This seems to indicate that our code is mainly limited by the memory bandwidth. Furthermore, we ensured that the numbers of cores used in our simulations evenly divide the number of cells in the $16\times16$ coarse grid to minimize load imbalances. For an optimal scaling of the patch smoother in the multigrid relaxation the number of patches (and hence vertices) per processor should also be evenly balanced, but this was not implemented.

In Table \ref{tab:islandcoalscaling}, we report the average runtimes per linear iteration rather than the total runtime to take into account that the numbers of linear and nonlinear iterations change slightly between the different refinements. The runtimes only show a slight increase the more cores being used and hence underline good weak scaling of our method.

 As for the lid-driven cavity problem, we observe excellent robustness of the linear and nonlinear iteration counts with respect to the Reynolds numbers. Both linear and nonlinear solvers converge in either 1 or 2 iterations in the investigated ranges of $\Re$ and $\Rem$. We therefore do not report a table here that shows each iteration count.

\begin{table}[htbp!]
	\centering
	\resizebox{\textwidth}{!}{%
		\begin{tabular}{r|ccc|ccc|ccc}
			\toprule
			& \multicolumn{3}{c|}{$128\times 128$ on 16 cores} & \multicolumn{3}{c|}{$256\times 256$ on 64 cores} & \multicolumn{3}{c}{$512\times512$ on 256 cores} \\
			\midrule
			$\Rem \backslash\Re$  &1 &     1,000 &    10,000 &1 &     1,000 &    10,000 &1 &     1,000 &    10,000 \\
			\midrule
			1 & 0.13 & 0.12 & 0.13 & 0.14 & 0.14 & 0.14 & 0.17 & 0.17 & 0.17\\
			1,000 & 0.13 & 0.12 & 0.12 & 0.14 & 0.13 & 0.13 & 0.15 & 0.14 & 0.15\\
			10,000 &  0.12 & 0.11 & 0.12 & 0.14 & 0.13 & 0.13 & 0.15 & 0.15 & 0.15\\
			\bottomrule
	\end{tabular}}
	\caption{Average time per linear iteration in minutes for the two-dimensional island coalescence problem. \label{tab:islandcoalscaling}}
\end{table}

\begin{figure}[htbp!]
	\centering
	\begin{tabular}{c c c c}
		\includegraphics[width=3.3cm]{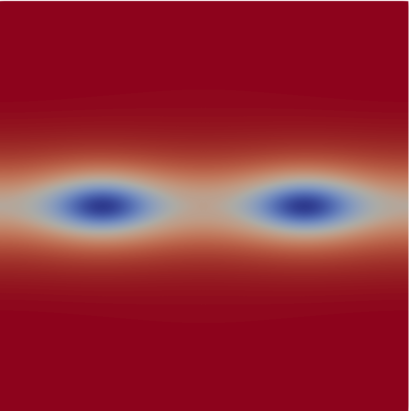} &
		\includegraphics[width=3.3cm]{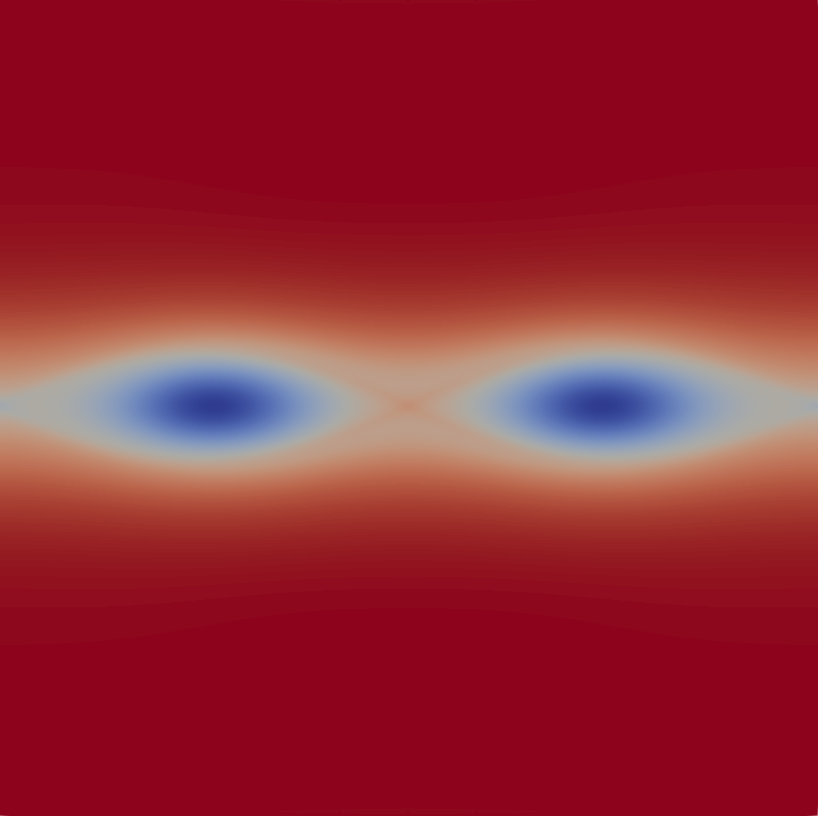} &
		\includegraphics[width=3.3cm]{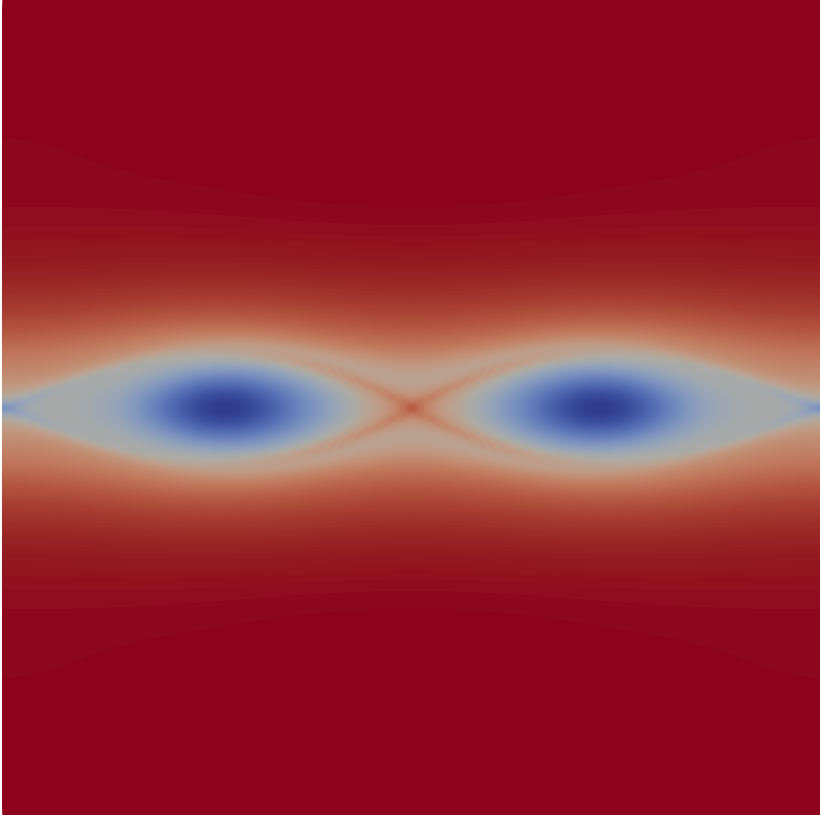} &
		\includegraphics[width=3.3cm]{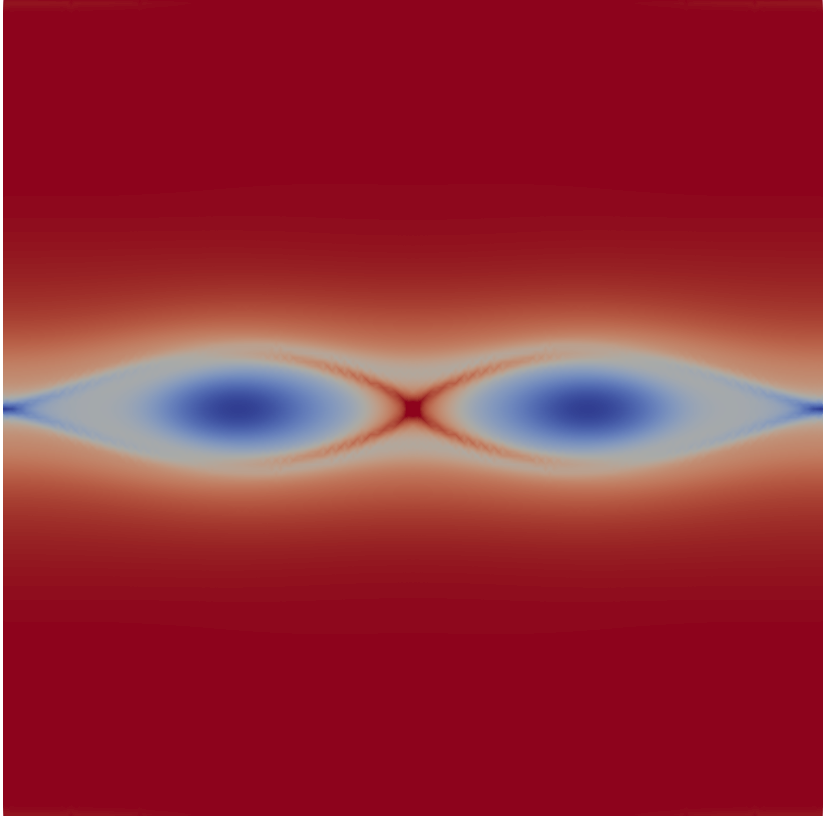} \\[0.3cm]
		\includegraphics[width=3.3cm]{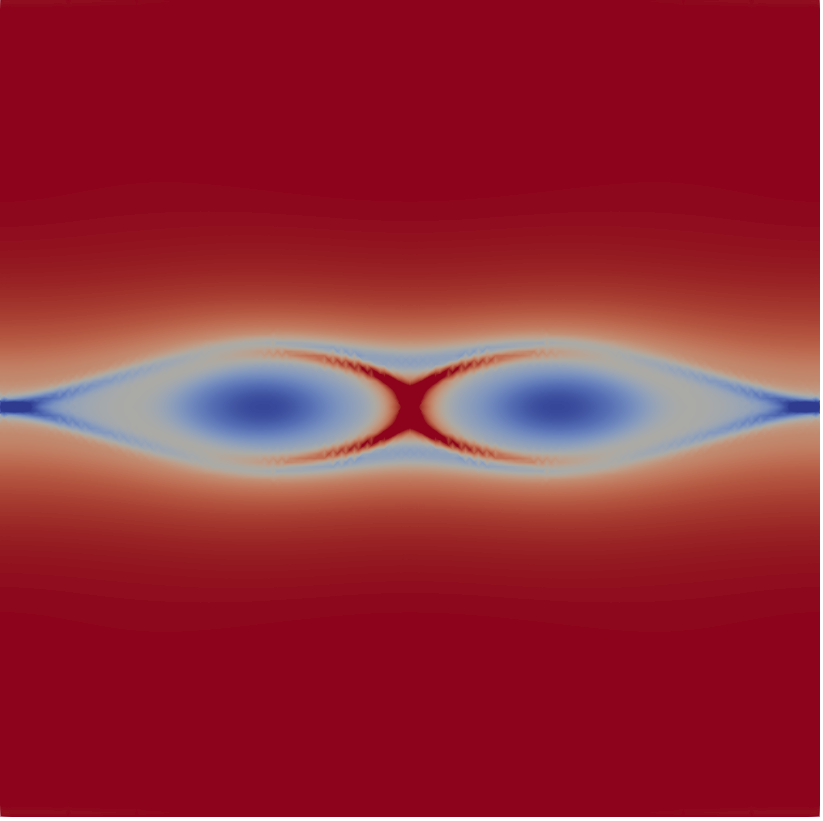} &
		\includegraphics[width=3.3cm]{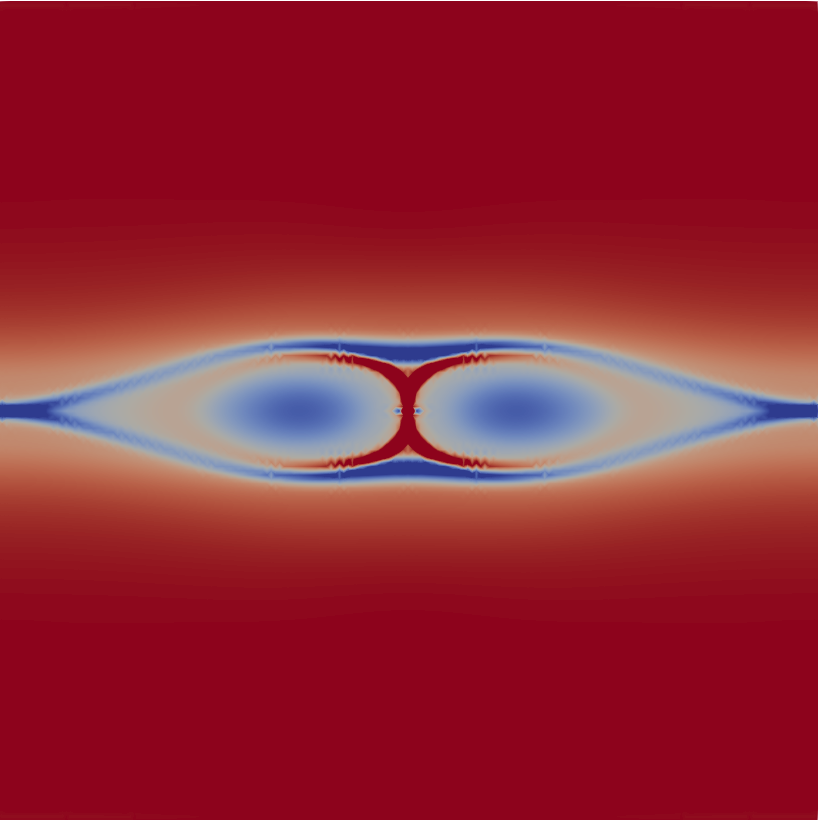} &
		\includegraphics[width=3.3cm]{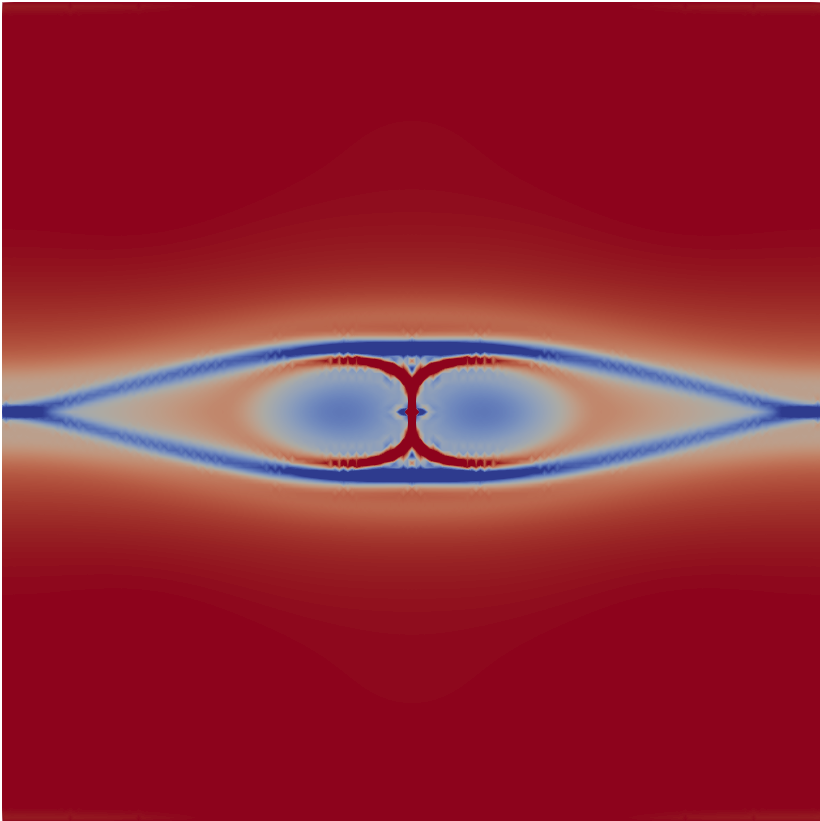} &
		\includegraphics[width=3.3cm]{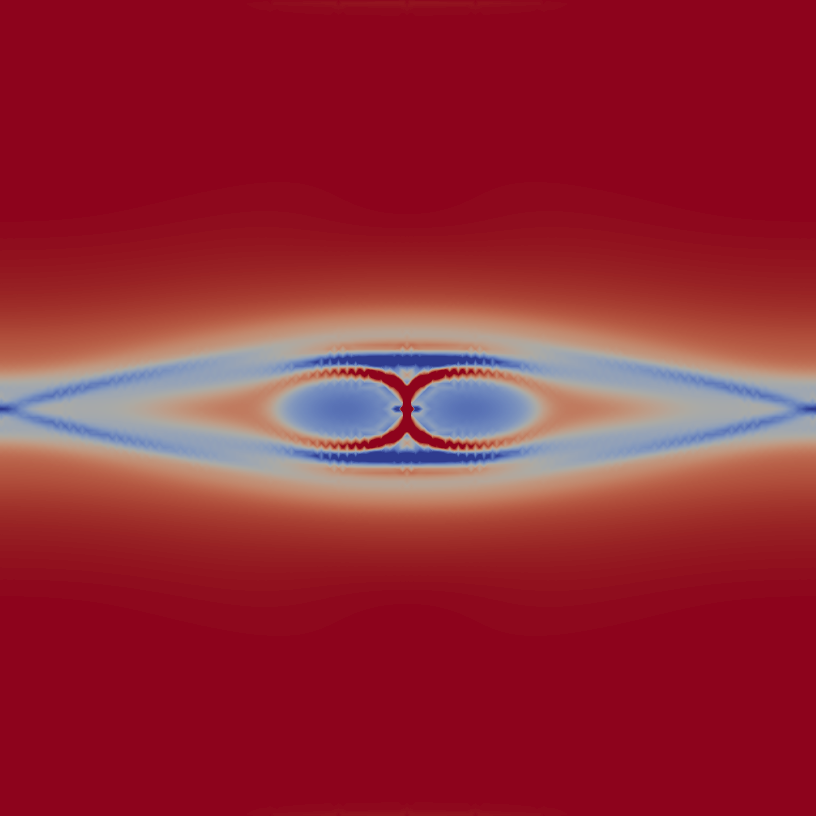} \\[0.3cm]
		\includegraphics[width=3.3cm]{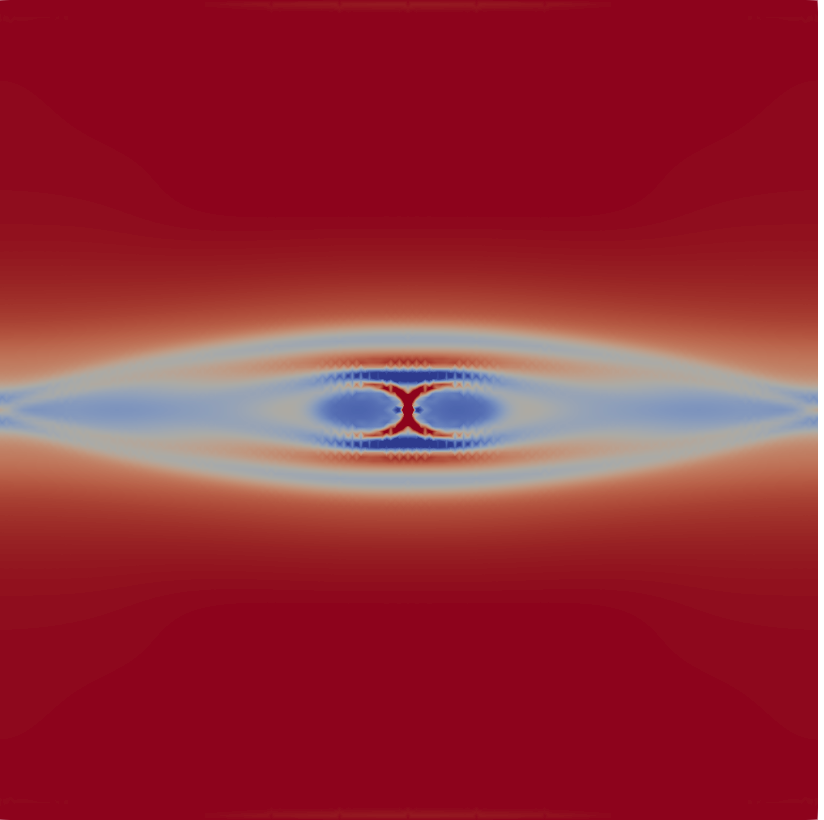} &
		\includegraphics[width=3.3cm]{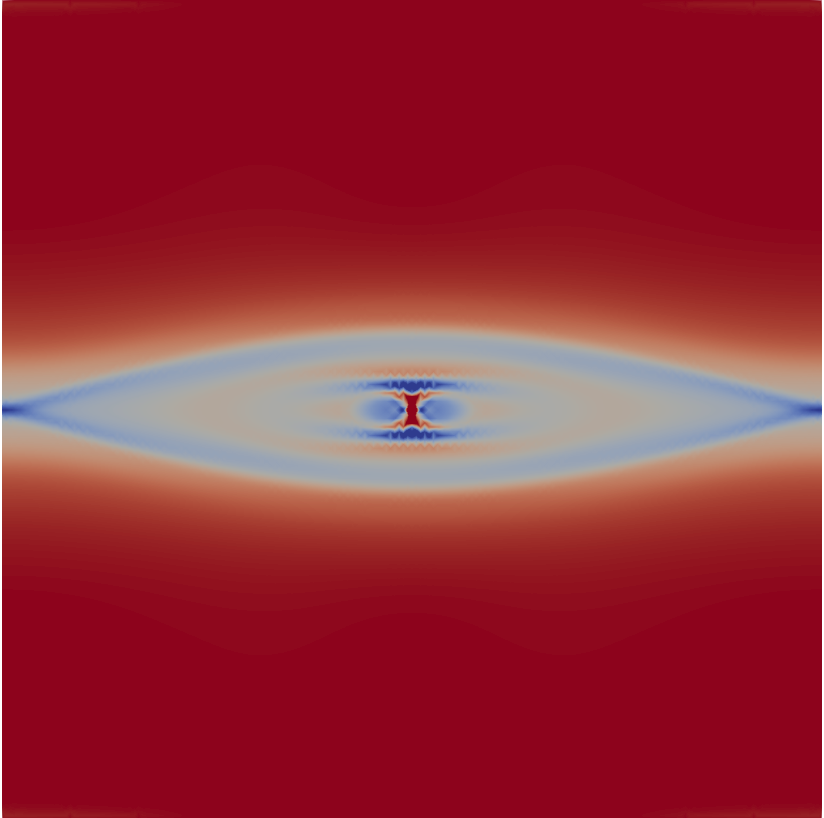} &
		\includegraphics[width=3.3cm]{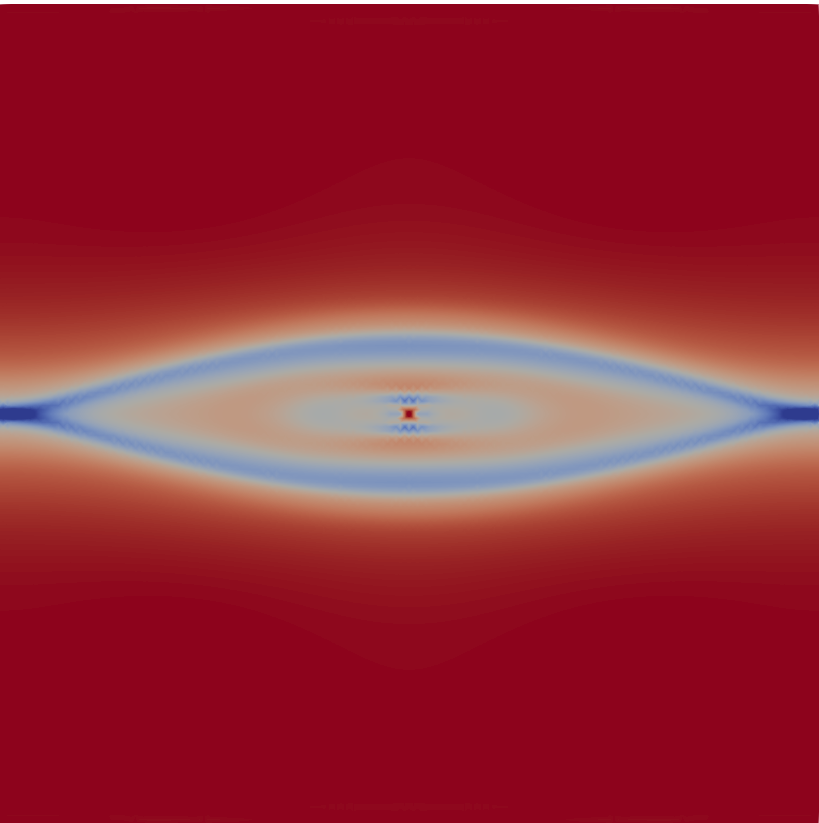} &
		\includegraphics[width=3.3cm]{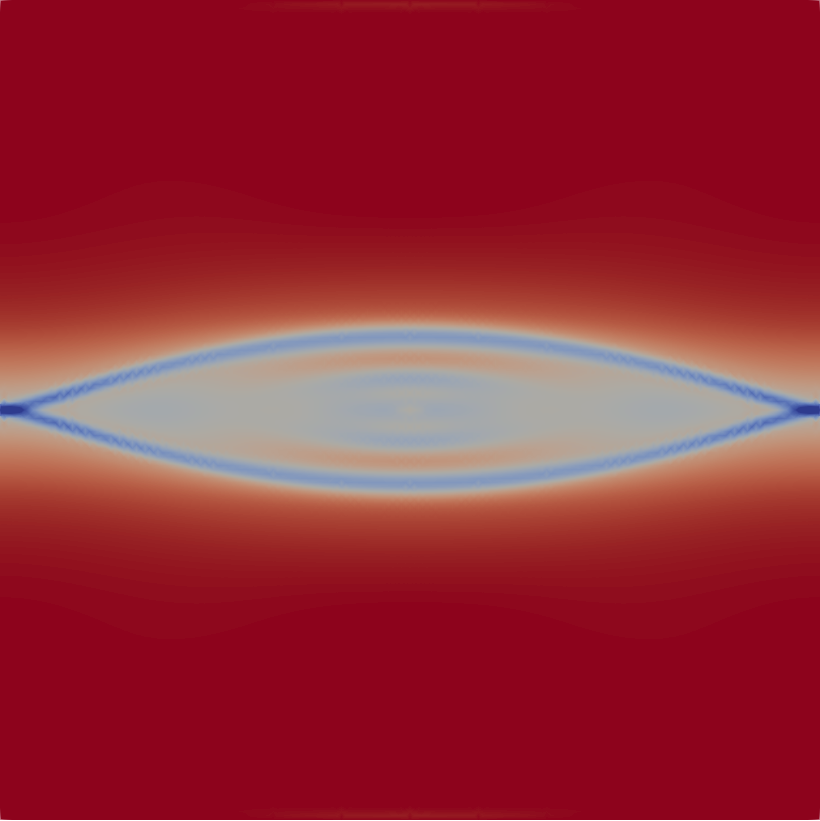} \\

		%		$\Re=\Rem=1$ & $\Re=\Rem=500$ & $\Re=\Rem=5,$000\\
	\end{tabular}
	\caption{Plots of the current density $j$ for the island coalescence problem at $\Re=\Rem=5{,}000$ for t = 1, 2, ..., 12 counted from top left to bottom right.\label{fig:islandcoalplots}}
\end{figure}

\subsection{Three-dimensional results}\label{sec:3dresults}

In three dimensions, we observe in general that the stationary problems are harder to solve for high parameters than in two dimensions. We believe that the following three points are mainly responsible for this behaviour. First of all, the discretisation of the electric field changes from a scalar-valued $\mathbb{CG}$-function to a vector-valued $\mathbb{NED}1$-function with tangential boundary conditions. Moreover, the kernel of the term $\vcurl(\u^n \times \B)$ is larger in three dimensions which degrades the performance of the monolithic solver for the $(\E, \B)$ block for high $\Rem$. Furthermore, the grids we consider are much coarser than in two dimensions because of computational costs.

\subsubsection{Stationary lid-driven cavity problem in three dimensions}\label{sec:stationaryliddrivencavityproblemin3d}

We adapt the two-dimensional lid-driven cavity problem to three dimensions by considering the domain $\Omega=(1/2,1/2)^3$ and the boundary conditions $\u=(1,0,0)^\top$ on the boundary $y=0.5$ and $\u=(0,0,0)^\top$ on the other faces. The background magnetic field $\B_0 = (0,1,0)^\top$ determines the boundary conditions for $\B$. For the three-dimensional problem, we only investigate the Newton linearisation as we have seen in two dimensions that the Newton iteration outperforms the Picard iteration in nearly all cases. The results on the left in Table \ref{tab:ldc3dstationarySRE} show a good control over the linear iteration numbers for the lid-driven cavity problem, where the case of $S=1$ and $\Re=10{,}000$ seems to be the most challenging case. A streamline plot of the solution at $\Re=\Rem=100$ can be found in Figure \ref{fig:3dldc}.
%The MDP iteration struggles with nonlinear convergence. For this test problem, the case of $\Re=10,000$ and $S=1$ seems to be the most challenging, with better convergence for high $S$ in comparison to the generator problem.

\begin{figure}[htbp!]
	\centering
	\includegraphics[width=14cm]{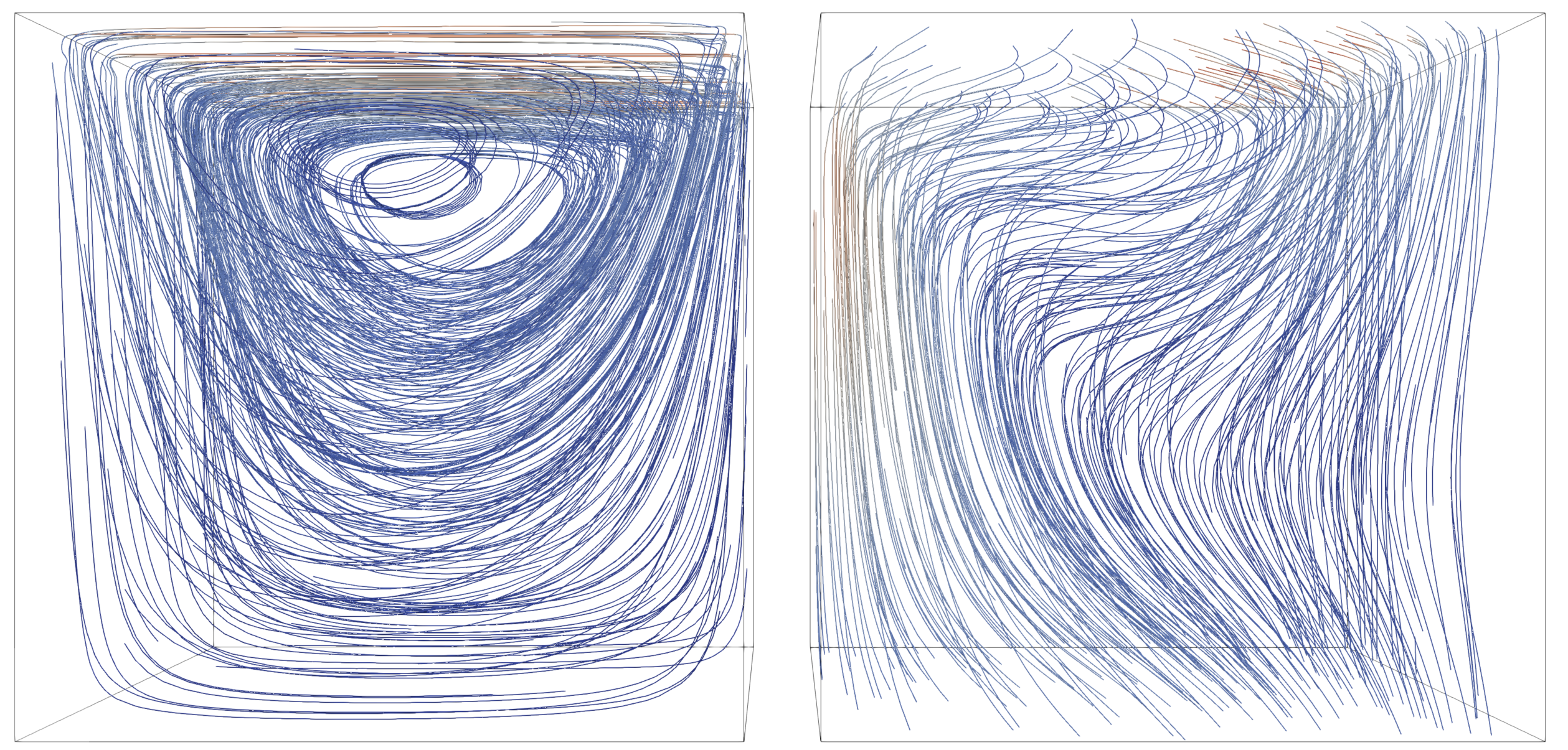} 
	\caption{Streamlines for the three-dimensional stationary lid-driven cavity problem for $\u$ (left) and $\B$ (right) at $\Re=\Rem=100$.}
	\label{fig:3dldc}
\end{figure}

On the right of Table \ref{tab:ldc3dstationarySRE} we report a comparison to taking the outer Schur complement that eliminates the $(\E, \B)$ block.  As mentioned in Section \ref{sec:derivationofblockpreconditioner}, we can clearly see that this choice performs worse for high values of $\Re$ and $S$ where no convergence in 50 linear iterations was reached. We observed similar behaviour for unreported experiments on transient and two-dimensional problems.

We do not include a full table for high $\Rem$, as in this case the monolithic multigrid solver cannot deal with the term $\vcurl(\u^n \times \B^n)$ that occurs in the Newton linearisation. As in two dimensions, this term is crucial for the convergence of the nonlinear iteration. For Newton, the iteration counts increase very slightly from $\Re=\Rem=1$ by 8.0 Krylov iterations per nonlinear step to 10.0 iterations for $\Rem=500$ and $\Re=1$ and fails to converge for higher $\Rem$. We want to emphasise that in this case the failure of convergence is indeed caused by the inner multigrid method and not by an inaccurate outer Schur complement approximation. To the best of our knowledge, preconditioning methods that robustly treat the vector Laplace operator with an additional $\vcurl(\u^n \times \B)$ term in three dimensions are not known, and we intend to investigate this problem further in future work.

\begin{table}[htbp!]
	\centering
	\begin{minipage}[c]{0.45\textwidth}
		\resizebox{\textwidth}{!}{
			\begin{tabular}{r|ccc}
				\toprule 
				\multicolumn{4}{c}{Using $\tilde{\mathcal{S}}^{(\u, p)}$ for order $(\u, p, \E, \B)$} \\
				\midrule
				$S \backslash\Re$  &1 &     1,000 &    10,000 \\
				\midrule
				1  & ( 3) 6.0 & ( 3) 7.0 & ( 4)20.0 \\
				1,000 & ( 3) 7.3 & ( 2) 9.5 & ( 2) 6.5\\
				10,000 & ( 3) 9.0 & ( 2)13.0   & ( 2)12.5  \\
				\bottomrule
		\end{tabular}}
	\end{minipage}
	\begin{minipage}[c]{0.43\textwidth}
		\resizebox{\textwidth}{!}{
			\begin{tabular}{r|ccc}
				\toprule
				\multicolumn{4}{c}{Using $\tilde{\mathcal{S}}^{(\E, \B)}$ for order $(\E, \B, \u, p)$} \\
				\midrule
				$S \backslash\Re$  &1 &     1,000 &    10,000 \\
				\midrule
				1  &( 3) 6.0 & (4)14.7 & (-)$>$50\\
				1,000 &  ( 3)12.7 & (-)$>$50 &(-)$>$50 \\
				10,000 &  ( 3)20.0 & (-)$>$50 &(-)$>$50\\
				\bottomrule
				%	\caption{Iteration counts for the transient lid-driven cavity problem in 3D.\label{tab:ldc3dtimedep}}
		\end{tabular}}
	\end{minipage}
	\caption{(left) Iteration counts for the stationary lid-driven cavity problem in 3D for the Newton linearisation. (right) Iteration counts for the stationary lid-driven cavity problem in 3D for taking the outer Schur complement that eliminates the $(\E, \B)$ block.}
	\label{tab:ldc3dstationarySRE}
\end{table}

\subsubsection{Time-dependent lid-driven cavity problem in three dimensions}
Finally, we consider the time-dependent version of the three-dimensional lid-driven cavity problem, which was also investigated in detail in \cite{Ma2016}.
The numerical results in Table \ref{tab:ldc3dtimedep} show good control of the iteration counts and the linear iteration numbers only notably increases for very high values of $S$. Moreover, we observe robust convergence of the monolithic multigrid solver for the $(\E, \B)$ block for high $\Rem$. As mentioned before in Section \ref{sec:solverformagneticblock}, this can be explained by the fact that the problem does not become nearly singular for high $\Rem$ due to the extra mass matrix. Therefore, the fact that the kernel of $\vcurl(\u^n\times \B)$ is not fully captured by the multigrid method has less influence. 
\newline 

%\begin{table}[htbp!]
\begin{table}[htbp!]
	\centering
\begin{minipage}[c]{0.45\textwidth}
	\resizebox{\textwidth}{!}{
		\begin{tabular}{r|ccc}
			\toprule
			$S \backslash\Re$  &1 &     10,000 &    100,000 \\
			\midrule
      		1  &   (2.1) 7.3 & (3.2) 2.1 & (3.3) 2.0\\
			1,000 & (3.0) 8.6 & (3.3) 2.8 & (3.5) 2.6 \\
			10,000 & (4.0)11.3 & (4.0) 7.0 & (4.0) 6.2 \\
			\bottomrule
	\end{tabular}}
\end{minipage}
\begin{minipage}[c]{0.463\textwidth}
	\resizebox{\textwidth}{!}{
		\begin{tabular}{r|ccc}
			\toprule
			$\Rem \backslash\Re$  &1 &     10,000 &    100,000 \\
			\midrule
			1  &     (2.1) 7.3 & (3.2) 2.1 & (3.3) 2.0\\
			10,000 & (2.5) 7.1   & (3.2) 2.0 & (3.3) 2.0   \\
			100,000 & (3.0)15.1 & (3.2) 2.6 & (3.3) 2.0 \\
			\bottomrule
			%	\caption{Iteration counts for the transient lid-driven cavity problem in 3D.\label{tab:ldc3dtimedep}}
	\end{tabular}}
\end{minipage}
\end{table}
\vspace{-0.8cm}
\begin{table}[htbp!]
	\centering
	\resizebox{0.45\textwidth}{!}{
		\begin{tabular}{r|ccc}
			\toprule
			$\Rem \backslash S$  &1 &     100 &    1,000 \\
			\midrule
			1  & (2.1) 7.3 & (2.1) 7.3 & (3.0) 8.6\\
			1,000 & (3.0) 7.6 & (3.0) 6.8& (3.1) 9.5\\
			10,000 & (2.5) 7.1  & (3.1) 7.1&(3.2) 9.7\\
			\bottomrule
			%	\caption{Iteration counts for the transient lid-driven cavity problem in 3D.\label{tab:ldc3dtimedep}}
	\end{tabular}}
	\caption{Iteration counts for the transient lid-driven cavity problem in 3D for the Newton linearisation.\label{tab:ldc3dtimedep}}
\end{table}

\renewcommand{\grad}{\operatorname{grad}}
\renewcommand{\div}{\operatorname{div}}
\chapter{Structure-preserving finite element methods and solvers for the Hall MHD equations}\label{chap:3}
The content of this chapter was developed in collaboration with Patrick Farrell and Kaibo Hu. A manuscript \cite{laakmann2022} has been submitted to the Journal of Computational Physics.
\section{Hall MHD model}   
In this chapter, we consider finite element methods for the solution of the incompressible, resistive Hall magnetohydrodynamics (MHD) equations. The stationary formulation on a bounded polyhedral Lipschitz domain $\Omega \subset \mathbb{R}^3$ is given by
\begin{subequations}
	\label{eq:HallMHD}
	\begin{align} \label{eq:HallMHDu}
		- \Re^{-1} \Delta \u + 
		( \u \cdot \nabla) \u
		- \S\, \j \times \B
		+ \nabla p
		&= \f , \\\label{eq:HallMHDj}
		\j - 
		\nabla \times \B
		&= \mathbf{0} , \\\label{eq:HallMHDB}
		\nabla \times \E &= \mathbf{0} , \\
		\nabla \cdot \B &= 0, \\
		\nabla \cdot \u &= 0, \\
		\label{eq:HallMHDE}
		\Reminv \j - (\E + \u \times \B-\RH\,\j\times \B) & = \mathbf{0} .
	\end{align}
\end{subequations}
When the Hall current term $\RH\,\j\times \B$ vanishes, one obtains the standard resistive MHD system that we considered in the previous chapter.

For time-dependent problems, the time derivatives $\frac{\partial \u}{\partial t}$ and $\frac{\partial \B}{\partial t}$ are added to the left-hand sides of \eqref{eq:HallMHDu} and \eqref{eq:HallMHDB} respectively. 
We mainly consider the boundary conditions
\begin{equation}
	\label{boundary_cond}
	\u = \mathbf{0}, \quad \B \cdot \n = 0, \quad \E \times \n = \mathbf{0}, \quad \j \times \n = \mathbf{0}, \quad \text{on $\partial \Omega$}.
\end{equation}
However, a treatment of the alternative boundary conditions (c.f., \cite{gunzburger1991existence})\begin{align}
	\label{boundary_cond2}
	\u = \mathbf{0}, \quad \B \times \n = \mathbf{0}, \quad \E \cdot \n = 0, \quad \j \times \n = \mathbf{0}, \quad \text{on $\partial \Omega$}
\end{align}
is also possible.

%These equations extend the standard MHD formulation by including the Hall current term in Ohm's law to model the behaviour of plasmas on length and times scales close to or smaller than the ion skin depth and ion-cyclotron period. They can be used to describe many important plasma phenomena that occur in space or laboratories; e.g., magnetic reconnection processes \cite{Forbes1991, Morales2005}, the expansion of sub-Alf\'{e}nic plasma \cite{Ripin1993} and the dynamics of Hall drift waves and Whistler waves \cite{Huba2003}.  
The essence of the Hall effect is described by adding the Hall-term $\j \times \B$ in the generalised Ohm's law \cite[Section 2.2.2]{Galtier2015}
\begin{equation}
	\mu_0 \eta \j = \E + \u \times \B - \frac{1}{n e} \j \times \B,
\end{equation}
where $\eta$ denotes the magnetic resistivity, $\mu_0$ the permeability of free space, $n$ the charge density and $e$ the electron charge. 
The non-dimensionalised form of the generalised Ohm's law corresponds to \eqref{eq:HallMHDE} where the Hall parameter $\RH$ is defined as 
\begin{equation}
	\RH = \frac{1}{\mu_0 n e} \frac{\overline{B}}{L \overline{U}}
\end{equation}
for a characteristic length $L$, magnetic field strength $\overline{B}$ and speed $\overline{U}$ of the fluid. We refer to the case $\RH = 0$ as the standard MHD equations.

% However, little attention has been paid to provide rigorous analytical results for the numerical approximation of these equations
% [FIXME: this refers to convergence estimates? Doesn't segue nicely to the next sentence.].
%The main contribution of our chapter is to close this gap. We provide a variational formulation and structure-preserving discretisation for the stationary and time-dependent Hall MHD equations and prove a well-posedness results for a Picard linearisation. We next construct numerical schemes that preserve the energy, magnetic helicity and hybrid helicity precisely in the ideal limit of $\Re=\Rem=\infty$.
%Finally, we investigate parameter-robust preconditioners for the efficient solution of the arising linear systems.

Although \eqref{eq:HallMHD} only differs by one term from the standard MHD equations, the extension of existing theory and algorithms to the Hall case is non-trivial. Most formulations of the standard MHD equations use Ohm's law \eqref{eq:HallMHDE} directly to eliminate $\j$ as an unknown such as the one in Chapter \ref{chap:2}; with $\RH \neq 0$ this is no longer possible. Therefore our proposed variational formulation for the stationary problem includes as unknowns both the current density $\j$ and the electric field $\E$.
In the time-dependent case, the various conservation properties of the MHD system in the ideal limit are based upon the symmetries of the system; the introduction of the Hall term changes these symmetries, thus making it substantially more difficult to construct numerical methods that preserve several quantities simultaneously.
%Furthermore, the Hall term changes the conservation properties of the system in the ideal limit and the preservation of the hybrid helicity as an additional conservative quantity requires special modifications. 
Finally, the development of preconditioning techniques becomes more difficult as an additional non-symmetric term with a non-trivial kernel enters the system.

Most variational formulations of the standard MHD system either look for the magnetic field $\B$ in an $\Hc$- or $\Hd$-conforming space as in Chapter~\ref{chap:2}. $\Hc$-conforming formulations have the advantage that they usually include the fewest unknown variables, typically $\u$, $p$, $\B$ and a Lagrange multiplier for the enforcement of the magnetic Gauss's law. However, such formulations only enforce the magnetic Gauss's law weakly, which can cause problems for numerical approximations \cite{Brackbill1980}. Therefore, in recent years much interest has been paid to structure-preserving $\Hd$-conforming approximations that enforce $\nabla \cdot \B = 0$ precisely on the discrete level \cite{Hu20162,Hu2018}. These formulations either eliminate $\E$ or $\j$ with help of \eqref{eq:HallMHDE} or \eqref{eq:HallMHDj}. Here, the augmented Lagrangian formulation \eqref{eq:MHDFinal} from Chapter \ref{chap:2} seems a natural approach, as it only includes $\u,p,\B$ and $\E$ as unknowns and enforces $\nabla \cdot \B = 0$ without the need for a Lagrange multiplier. Our proposed formulation with both $\j$ and $\E$ as unknowns tries to use the fewest number of unknown variables for the Hall system while still enforcing the magnetic Gauss's law precisely.

Another way of enforcing $\nabla \cdot \B = 0$ for the incompressible MHD system is to use formulations based on the vector potential $\mathbf{A}$ where $\B=\nabla \times \mathbf{A}$ (see, e.g., \cite{Adler2019, Hiptmair2018, pagliantini2016computational}). The Hall term is $\j\times \B=(\nabla\times \B)\times \B=(\nabla\times \nabla\times \mathbf{A})\times (\nabla\times \mathbf{A})$, which is a high order term in $\mathbf{A}$. It seems difficult to deal with this term with the magnetic potential and we will not pursue potential-based formulations in this work. 

%For the extension to the Hall MHD system, equation \eqref{eq:HallMHDE} cannot be used any more to eliminate the current density due to the additional Hall-term. Hence, our formulation includes both the electric field and current density as unknowns. In summary, our proposed formulation tries to use the lowest number of unknowns while it still enforces the magnetic Gauss's law precisely. 

The ideal limit in Hall MHD describes the case of vanishing magnetic resistivity $\eta$. We also include the case of vanishing fluid viscosity $\nu$ in this notion and hence the ideal limit formally corresponds to $\Re=\Rem=\infty$. It is well-known that in this case the energy, magnetic helicity and cross helicity are conserved properties of the standard MHD system \cite{Galtier2015}. For the ideal Hall MHD system, the cross helicity is not conserved any more; instead the so-called hybrid helicity \cite{Mininni_2003} is conserved, which is a suitable combination of magnetic, cross and fluid helicity. In \cite{gawlik2020} and \cite{hu2021helicity}, the authors propose numerical algorithms that preserve the conservative properties of the standard MHD equations precisely on the discrete level. We extend their work for the additional Hall term and propose algorithms that also preserve the hybrid helicity precisely.

Helicity characterises the linkage of field lines (the vortex lines for the fluid helicity, the magnetic lines for the magnetic helicity etc.), and is thus fundamentally important for the flow kinematics \cite{moffatt1992helicity}. 
%For fluid dynamics, the helicity provides a lower bound for the energy which reflects the helicity as a topological constraint in relaxation of equilibriums \cite{arnold1974asymptotic}. %[FIXME: I don't understand this sentence, can we discuss? Or the next few. \blue{@ Kaibo}]. The presence of non-zero mean helicity in the background turbulence leads to a nonzero $\alpha$-effect and instability \cite{moffatt1992helicity}. 
The importance of the magnetic and cross helicity can be found in, e.g., \cite{taylor1974relaxation,pariat2005photospheric,perez2009role} and the references therein. 
Even in the non-ideal case, i.e., for non-vanishing resistivity, the total helicity is approximately preserved if the magnetic field undergoes small-scale turbulence \cite[Remark 7.19]{arnold1999topological}. Hence, algorithms that preserve the helicity and other quantities precisely (or nearly in the non-ideal case) at the discrete level are important and can lead to more physical solutions for the same resolution, because the pollution of the solutions through numerical errors is minimised. 
\section{Stationary variational formulation, linearisation and discretisation}\label{sec:vatiational-formulation}

{}

\subsection{Nonlinear scheme}
\label{sec_scheme}

We propose the following variational form for the stationary problem  \eqref{eq:HallMHD} with boundary conditions (\ref{boundary_cond}).
%Let $\bm V_{h}$ and $Q_{h}$ be a stable Stokes finite element pair for the velocity and the pressure, respectively. 
Define $\bm{X}_{h}:=\bm{V}_{h}\times Q_h \times \mathbf{H}_{0}^{h}(\curl)\times \mathbf{H}_{0}^{h}(\div)\times \mathbf{H}_{0}^{h}(\curl)$.
\begin{problem}\label{prob:continuous}
	Find $(\u_h, p_h, \E_h, \B_h, \j_h)\in \bm{X}_{h}$, such that for any $(\v_h, q_h, \F_h, \C_h, \k_h)\in \bm{X}_{h} $,
	\begin{subequations}\label{fem-discretisation}
		\begin{align}\label{fem-1}
			\Re^{-1} (\nabla \u_h, \nabla \v_h)  
			+(( \u_h \cdot \nabla) \u_h, \v_h) \qquad \qquad \qquad \qquad & \nonumber \\ 
			- \S (\j_h\times \B_h,\v_h ) - (p_h,\nabla\cdot \v_h) 
			&= \langle \f,\v_h \rangle,\\
			\label{fem-2}
			(\j_h,\F_h) -( \B_h, \nabla\times \F_h) &=0,\\
			\label{fem-3}
			(\nabla\times \E_h, \C_h) +(\nabla\cdot \B_h, \nabla\cdot \C_h)&= 0,\\ 
			\label{fem-4}
			\Reminv(\j_{h}, \k_{h})-(\E_{h}+\u_{h}\times \B_{h}-\RH\,\j_{h}\times \B_{h}, \k_{h})&=0,\\
			\label{fem-5}
			-(\nabla\cdot \u_h, q_h) &=0.
		\end{align}
	\end{subequations}
\end{problem} 

The above formulation includes the weak form of the augmented Lagrangian term $-\nabla \nabla \cdot \B_h$ in \eqref{fem-3}, which is used to enforce the magnetic Gauss's law $\nabla \cdot \B_h=0$ precisely with the same proof as in Section \ref{sec:discretization}.
We summarise some properties of the variational formulation in the next theorem.
\begin{theorem}\label{stability_fem}
	Any solution for  Problem \ref{prob:continuous} satisfies the 
	\begin{enumerate}
		\item magnetic Gauss's law:
		$$
		\nabla\cdot \B_h=0,
		$$
		\item  stationary Faraday's law:
		$$
		\nabla\times \E_h =\mathbf{0},
		$$
		\item energy estimates:
		\begin{align}\label{energy-1}
			{\Re^{-1}}\|\nabla\u_h\|^{2}+\Reminv \S\|\j_h\|^{2} &= \langle \f, \u_h\rangle, \\
			\label{energy-2}
			\frac{1}{2}\Re^{-1}\|\nabla\u_h\|^{2}+\Reminv \S\|\j_h\|^{2} &\leq \frac{\Re}{2}\|\f\|_{-1}^{2}.
		\end{align}
	\end{enumerate}
\end{theorem}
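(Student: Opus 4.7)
Parts (1) and (2) follow from the structure-preserving choice of spaces for $\E_h$ and $\B_h$ drawn from the same discrete de~Rham complex. The augmented Lagrangian equation \eqref{fem-3} holds for every $\C_h\in\mathbf{H}_{0}^{h}(\div)$, and the exact sequence guarantees that $\nabla\times \E_h\in \mathbf{H}_{0}^{h}(\div)$ with $\nabla\cdot(\nabla\times \E_h)=0$. Testing \eqref{fem-3} with $\C_h=\nabla\times \E_h$ therefore yields
\begin{equation*}
\|\nabla\times \E_h\|^{2}+(\nabla\cdot \B_h,\nabla\cdot(\nabla\times \E_h))=\|\nabla\times \E_h\|^{2}=0,
\end{equation*}
so (2) holds. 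Substituting this back into \eqref{fem-3} and now testing with $\C_h=\B_h\in \mathbf{H}_{0}^{h}(\div)$ gives $\|\nabla\cdot \B_h\|^{2}=0$, which proves (1).

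For the energy identity I would combine \eqref{fem-1}, \eqref{fem-2}, \eqref{fem-4}, \eqref{fem-5} with carefully chosen test functions. First, test \eqref{fem-5} with $q_h=\nabla\cdot \u_h$ (which lies in $Q_h$ for the $\mathbf{H}(\div)\times L^2$-conforming pair used here, by $\nabla\cdot \bm{V}_h\subset Q_h$) to obtain $\nabla\cdot \u_h=0$ pointwise; this makes the convection term vanish, $((\u_h\cdot\nabla)\u_h,\u_h)=0$, and also kills the pressure term in \eqref{fem-1}. Next, test \eqref{fem-2} with $\F_h=\S\,\E_h$ and use part (2) to get $\S(\j_h,\E_h)=\S(\B_h,\nabla\times \E_h)=0$. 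Then test \eqref{fem-4} with $\k_h=\S\,\j_h$; the Hall term contributes $-\S\RH(\j_h\times \B_h,\j_h)=0$ by orthogonality of the cross product, and the term $\S(\E_h,\j_h)$ vanishes by what we just showed, leaving
\begin{equation*}
\S\Reminv \|\j_h\|^{2}=\S(\u_h\times \B_h,\j_h).
\end{equation*}

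Finally, test \eqref{fem-1} with $\v_h=\u_h$, so that
\begin{equation*}
\Re^{-1}\|\nabla \u_h\|^{2}-\S(\j_h\times \B_h,\u_h)=\langle \f,\u_h\rangle.
\end{equation*}
The scalar triple-product identity $(\j_h\times \B_h,\u_h)=-(\u_h\times \B_h,\j_h)$ converts this Lorentz contribution into $\S\Reminv\|\j_h\|^{2}$ via the previous display, producing \eqref{energy-1}. For \eqref{energy-2} I would bound the right-hand side by Young's inequality, $\langle\f,\u_h\rangle\le \|\f\|_{-1}\|\nabla \u_h\|\le \tfrac{\Re}{2}\|\f\|_{-1}^{2}+\tfrac{1}{2\Re}\|\nabla \u_h\|^{2}$, and absorb the gradient term into the left-hand side. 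The only subtle point is ensuring pointwise incompressibility of $\u_h$ so that the convection term truly drops; this is precisely where the structure-preserving $\mathbf{H}(\div)$-conforming discretisation of the velocity pays off, and it is the step I would check most carefully.
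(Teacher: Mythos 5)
Your proposal is correct and follows essentially the same route as the paper: parts (1) and (2) by testing \eqref{fem-3} with $\C_h=\nabla\times\E_h$ and then $\C_h=\B_h$, and the energy law by testing \eqref{fem-4} with $\k_h=\j_h$ so that the Hall term $\RH(\j_h\times\B_h,\j_h)$ drops out, the remainder being the standard MHD argument (which the paper simply cites rather than writing out as you do). Your closing caveat about needing $\nabla\cdot\u_h=0$ exactly for the convection term to vanish is the same assumption the paper makes explicit in a later remark, so there is no gap.
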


\begin{proof} 
	
	As for the standard MHD formulation, the stationary Faraday's law \mbox{$\nabla\times \E_{h}=\mathbf{0}$} follows from testing \eqref{fem-3} with $\C_h = \nabla\times \E_h$, and the magnetic Gauss' law $\nabla\cdot \B_h = 0$ then follows from testing \eqref{fem-3} with $\C_h = \B_h$.
	The proof of the energy law follows from testing \eqref{fem-4} with $\k_h = \j_h$. Since the additional Hall term $\RH\,(\j_{h}\times \B_{h}, \k_{h})$ vanishes for $\k_h=\j_h$, the proof coincides with the one in \cite{hu2020convergence} for the standard MHD system.
\end{proof}

\subsection{Picard iteration}

In the following, we propose a Picard-type iteration for Problem \ref{prob:continuous}. Even though the Picard iteration was clearly outperformed by the Newton iteration in the previous chapter in terms of nonlinear convergence for high $\Rem$, Picard-type iterations are still interesting to investigate since they allow for rigorous well-posedness proofs. In this section, we extend these proofs for the additional Hall term. The well-posedness of the full Newton linearisation is much more difficult to achieve or even unknown for certain MHD formulations, such as the one in Chapter \ref{chap:2}.
%However, they often show better nonlinear convergence, especially in the regime of high magnetic Reynolds numbers, see \cite{laakmann2021}. In Section \ref{sec:numericalresults}, we report numerical results for both linearisation types. 

\begin{algorithm}[Picard step]
	\label{alg:picard-s}
	Given $(\u_{h}^{n-1},\B_{h}^{n-1})$,   find $(\u_{h}^{n}, p_h^n, \E_{h}^{n}, \B_{h}^{n}, \j_{h}^{n})\in \bm{X}_{h}$, such that for any $(\v_{h}, q_h, \F_{h}, \C_{h}, \k_{h})\in \bm{X}_{h} $,
	\begin{subequations}
		\begin{align}
			\label{picard1}
			{\Re^{-1}} (\nabla \u^{n}_{h}, \nabla \v_{h}) 
			+ (( \u^{n-1}_h \cdot \nabla) \u^{n}_h, \v_h) \qquad \qquad \qquad & \nonumber \\
			- \S (\j_{h}^{n}\times \B^{n-1}_{h},\v_{h} ) - (p_{h}^{n},\nabla\cdot \v_{h}) 
			& = \langle \f,\v_{h} \rangle,\\
			\label{picard2} 
			(\j_{h}^{n},\F_{h}) - ( \B_{h}^{n}, \nabla\times \F_{h}) &=0, \\
			\label{picard3}
			(\nabla\times \E_{h}^{n}, \C_{h}) +(\nabla\cdot\B_{h}^n, \nabla\cdot \C_{h})&= 0, \\ 
			\label{picard4} 
			\Reminv  (\j_{h}^{n}, \k_{h}
			)-(\E^{n}_{h}+\u^{n}_{h}\times \B^{n-1}_{h}-\RH\,\j_{h}^{n}\times \B^{n-1}_{h}, \k_{h})&=0,
			\\\label{picard5}
			-(\nabla\cdot \u^{n}_{h}, q_{h}) &=0.
		\end{align}
	\end{subequations}
\end{algorithm} 

\begin{algorithm}[Newton iteration]\label{alg:newton-s}
	The Newton iteration includes the additional terms $(( \u^{n}_h \cdot \nabla) \u^{n-1}_h, \v_h) - S (\j_h^{n-1} \times \B_h^n, \v_h)$ on the left-hand side of \eqref{picard1}, and $-(\u_h^{n-1}\times \B_h^n, \k_h) + \RH\,(\j_h^{n-1}\times \B_h^n, \k_h)$ on the left hand side of \eqref{picard4}.
\end{algorithm}

\begin{remark}
	By construction, any solution $(\u^n_h, p^n_h, \E^n_h, \B^n_h, \j^n_h)$ of Algorithm \ref{alg:picard-s} also fulfils $(1)$, $(2)$, and $(3)$ from Theorem \ref{stability_fem} precisely.
\end{remark}

\begin{comment}
The divergence-free property, compatibility and energy estimates can be obtained in an analogous way.
\begin{theorem}
For any possible solution to Algorithm \ref{alg:picard-s}:
\begin{enumerate}
\item
magnetic Gauss's law holds precisely:
$$
\nabla\cdot \B_{h}^{n}=0.
$$
\item  
$$
\nabla\times \E_h^{n} =\mathbf{0},
$$
\item
the energy estimates hold:
$$
\Re^{-1}\|\nabla\u_{h}^{n}\|^{2}+\S\|\j^{n}_{h}\|^{2}=\langle \f, \u^{n}_{h}\rangle,
$$
and
\begin{align}\label{energy-E}
\frac{1}{2}\Re^{-1}\|\nabla\u^{n}_{h}\|^{2}+\S \|\j^{n}_{h}\|^{2}\leq \frac{1}{2}\Re\|\f\|_{-1}^{2}.
\end{align}
\end{enumerate}
\end{theorem}
\end{comment}

We will use the Brezzi theory \cite{Brezzi.F.1974a} to prove the well-posedness of the Picard iteration. 
We recast Algorithm \ref{alg:picard-s} as follows.
We first formally eliminate the variables $\j_{h}^{n}$ and $\E_{h}^{n}$ from the system by 
\begin{equation}\label{jE}
	\j_{h}^{n}=\tilde{\nabla}_{h}\times \B_{h}^{n}, \quad \E_{h}^{n}=\Reminv\tilde{\nabla}_{h}\times \B_{h}^{n}-\mathbb{Q}_{c}(\u_{h}^{n}\times \B_{h}^{n-1})+\RH\, \mathbb{Q}_{c}((\tilde{\nabla}_{h}\times \B_{h}^{n})\times  \B_{h}^{n-1}),
\end{equation}
where $\mathbb{Q}_{c}$ is the $L^{2}$-projection to $\mathbf{H}^{h}_{0}(\curl, \Omega)$ and $\tilde{\nabla}_h\times$ is the weak $\curl$-operator as defined in \eqref{eq:weakcurl}. Then \eqref{picard1}-\eqref{picard5} becomes 
\begin{subequations}
	\begin{align}
		\label{picard-reduced1}
		{\Re^{-1}} (\nabla \u^{n}_{h}, \nabla \v_{h}) 
		+ (( \u^{n-1}_h \cdot \nabla) \u^{n}_h, \v_h) \qquad  \qquad \qquad \qquad \qquad &\nonumber\\
		- \S ((\tilde{\nabla}_{h}\times \B_{h}^{n})\times \B^{n-1}_{h},\v_{h} ) - (p_{h}^{n},\nabla\cdot \v_{h}) 
		&= \langle \f,\v_{h} \rangle,\\
		\label{picard-reduced2}
		\Reminv(\tilde{\nabla}_{h}\times \B_{h}^{n}, \tilde{\nabla}_{h}\times \C_{h} )-(\u_{h}^{n}\times \B_{h}^{n-1}, \tilde{\nabla}_{h}\times \C_{h} )\qquad  \qquad \quad &\nonumber\\ +\RH\,(\tilde{\nabla}_{h}\times \B_{h}^{n})\times  \B_{h}^{n-1}, \tilde{\nabla}_{h}\times \C_{h} ) +(\nabla\cdot\B_{h}^n, \nabla\cdot \C_{h})&= 0, \\ 
		\label{picard-reduced3}
		-(\nabla\cdot \u^{n}_{h}, q_{h}) &=0.
	\end{align}
\end{subequations}

Define $\bm{W}_{h}:=\bm{V}_{h}\times \mathbf{H}_{0}^{h}(\div, \Omega)$. Given $(\u^{-}, \B^{-})\in \bm{W}_{h}$, for ${\bm{x}}=(\u, \B)$, ${\bm{y}}=(\v, \C)\in \bm{W}_{h}$ and $p, 
q\in Q_{h}$, we define the bilinear forms
\begin{align*}
	a({\bm{x}}, {\bm{y}})
	:=\ &\Re^{-1}(\nabla \u, \nabla \v) + (( \u^{-} \cdot \nabla) \u, \v) - \S ((\tilde{\nabla}_{h}\times \B)\times \B^{-},\v) \\
	& +(\nabla\cdot\B, \nabla\cdot \C)
	+\Reminv( \tilde{\nabla}_{h}\times \B, \tilde{\nabla}_{h}\times\C)-(\u \times \B^{-}, \tilde{\nabla}_{h}\times \C)\\
	&+\RH\,((\tilde{\nabla}_{h}\times \B)\times  \B^{-}, \tilde{\nabla}_{h}\times \C), \\
	b(\bm{x}, q):=\ & (\nabla\cdot \u, q).
\end{align*}

The mixed form of the Picard step in Algorithm \ref{alg:picard-s}
can be written as: for $\bm{h}\in \bm{W}_{h}^{\ast} $ and $g\in Q_{h}^*$, 
find $({\bm{x}}, p)\in  \bm{W}_{h} \times Q_{h}$, such that for all
$({\bm{y}}, q) \in \bm{W}_{h} \times Q_{h}$,  
\begin{subequations}
	\begin{alignat}{3}
		& a({\bm{x}}, \bm{y})+ b(\bm{y}, p)
		&&=\langle \bm{h}, \bm{y} \rangle, \\
		& b(\bm{x}, q)
		&&=\langle g, q \rangle .
		\label{brezzip}
	\end{alignat}
\end{subequations}
Define the norms
\begin{subequations}
	\begin{align}
			\|(\u,  \B)\|^{2}_{X}&:=\|\nabla \u\|^{2}+\|\nabla\cdot \B\|^{2}+\|\tilde{\nabla}_{h}\times \B\|^{2}, \label{stationary-BE-norm-X} \\
		\|p\|_{Q}&:=\|p\| \label{norm-Y}.
	\end{align}	
\end{subequations}
We verify that $\|\cdot\|_{X}$ is a norm. Indeed, $\|(\u,   \B)\|_{X}^{2}$ is quadratic for $\bm x:=(\u,  \B)$. Moreover, when $\|(\u,   \B)\|_{X}=0$, we find $\u=\mathbf{0}$ (Poincar\'e inequality) and $\B=\mathbf{0}$ (generalised Poincar\'e inequality or the discrete Gaffney inequality).

\begin{theorem}\label{thm:wellposed-picard}
	Assume that $\B^{-}\in \mathbf{L}^{\infty}(\Omega)$. Then, problem \eqref{brezzip} is well-posed with the norms defined by \eqref{stationary-BE-norm-X} and \eqref{norm-Y}.
\end{theorem}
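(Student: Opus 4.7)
The plan is to verify the hypotheses of Brezzi's theorem for the mixed system \eqref{brezzip}. Continuity of $b$ is immediate from Cauchy--Schwarz, and the inf-sup condition for $b$ reduces to the standing assumption that $(\bm V_h, Q_h)$ is an inf-sup stable pair. Since the discretisations considered in this work satisfy $\nabla\cdot \bm V_h \subset Q_h$, the kernel of $b$ consists of those pairs $(\u,\B)\in \bm W_h$ whose velocity component is pointwise divergence-free.

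For the continuity of $a$ on $\bm W_h \times \bm W_h$, I would bound each term separately. The diffusion, augmented Lagrangian and curl--curl contributions follow directly from Cauchy--Schwarz. The hypothesis $\B^{-}\in \mathbf{L}^{\infty}(\Omega)$ controls the Lorentz, motional and Hall couplings, for instance
$$|((\tilde\nabla_h\times \B)\times \B^{-}, \tilde\nabla_h\times \C)| \leq \|\B^{-}\|_{\mathbf{L}^{\infty}}\|\tilde\nabla_h\times \B\|\|\tilde\nabla_h\times \C\|,$$
and the convection term $((\u^{-}\cdot\nabla)\u,\v)$ is handled using $\u^{-}\in \bm V_h\hookrightarrow \mathbf{L}^6(\Omega)$, Sobolev embeddings and the Poincar\'e inequality.

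The key step is establishing the inf-sup condition of $a$ on $\ker b$. For $\bm x=(\u,\B)\in \ker b$ I test against the diagonal vector $\bm y = (\u, \S\B)$. The convective term $((\u^{-}\cdot\nabla)\u, \u)$ vanishes by the usual antisymmetry argument, exploiting $\nabla\cdot \u^{-}=0$ and the homogeneous trace. The two cross couplings between the momentum and Ohm equations cancel by the scalar triple product identity $(\mathbf a\times \mathbf b)\cdot \mathbf c = -(\mathbf c\times \mathbf b)\cdot \mathbf a$:
$$-\S\,((\tilde\nabla_h\times \B)\times \B^{-}, \u) - \S\,(\u\times \B^{-}, \tilde\nabla_h\times \B) = 0.$$
Crucially, the new Hall contribution also drops out,
$$\S\RH\,((\tilde\nabla_h\times \B)\times \B^{-}, \tilde\nabla_h\times \B) = 0,$$
because $(\mathbf a\times \mathbf b)\cdot \mathbf a = 0$. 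What survives is strictly positive:
$$a(\bm x,(\u,\S\B)) = \Re^{-1}\|\nabla\u\|^{2} + \S\|\nabla\cdot \B\|^{2} + \S\Reminv\|\tilde\nabla_h\times \B\|^{2} \geq C\|\bm x\|_{X}^{2},$$
with $C = \min(\Re^{-1}, \S, \S\Reminv)$. Combined with the estimate $\|(\u,\S\B)\|_{X} \leq \max(1,\S)\|\bm x\|_{X}$, this delivers the required inf-sup bound for $a$ on $\ker b$; the transposed estimate follows identically. Brezzi's theorem then yields existence, uniqueness and continuous dependence of $(\bm x, p)\in \bm W_h\times Q_h$ on the data.

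The hard part will be confirming that the Hall coupling does not spoil the energy balance. Because the Hall term enters as $\tilde\nabla_h\times$ acting on a cross product with $\B^{-}$, it naturally pairs with $\tilde\nabla_h\times \C$ in the weak form, and the triple-product antisymmetry fortuitously annihilates it under the diagonal test vector. The weighting factor $\S$ in the second slot of the test function is dictated precisely by the need to eliminate the Lorentz and motional couplings simultaneously; any other scaling would leave an unbalanced term of size $\S$ that could not be absorbed, so this choice is essential rather than cosmetic.
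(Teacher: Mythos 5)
Your proposal is correct and follows essentially the same route as the paper: continuity of both forms (with the Hall term controlled by $\|\B^{-}\|_{\mathbf{L}^{\infty}}$), the inf-sup condition for $b$ from the stability of the Stokes pair, and the lower bound on the kernel obtained by testing against $(\u, \S\B)$ so that the convective, Lorentz/motional and Hall contributions all cancel, leaving $\Re^{-1}\|\nabla\u\|^{2}+\S\|\nabla\cdot\B\|^{2}+\S\Reminv\|\tilde\nabla_h\times\B\|^{2}$. The only difference is presentational: you phrase the kernel estimate as an inf-sup condition with the explicit bound $\|(\u,\S\B)\|_X\leq\max(1,\S)\|\bm x\|_X$, whereas the paper calls the same computation coercivity.
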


\begin{proof}
	To prove the well-posedness of \eqref{brezzip} based on the Brezzi theory, we need to verify the boundedness of each term, the inf-sup condition of $b(\cdot, \cdot)$ and the coercivity of $a(\cdot,\cdot)$ on the discrete kernel defined by 
	$$
	\bm{W}_{h}^{0}:=\{\bm x\in \bm{W}_{h}: ~(\nabla \cdot \u, q)=0 \quad \forall q\in Q_{h}\}.
	$$
	The boundedness of both bilinear forms is obvious from the definition of the norms. In particular, the Hall term fulfils
	$$
	|((\tilde{\nabla}_{h}\times \B)\times  \B^{-}, \tilde{\nabla}_{h}\times \C)|\leq \|\tilde{\nabla}_{h}\times \B\|\|\B^{-}\|_{L^{\infty}}\|\tilde{\nabla}_{h}\times \C\|.
	$$
	The inf-sup condition of $b(\cdot, \cdot)$ follows by assumption.
	To prove coercivity on the kernel, we take
	$\v=\u$ and $\C=S\B$, yielding
	\begin{align*}
		\bm a((\u,   \B), (\v,   \C))&= \Re^{-1}\|\nabla \u\|^{2}+ S\|\nabla\cdot \B\|^{2}+ S\Reminv \|\tilde{\nabla}_{h}\times \B\|^{2},
	\end{align*}
	and thus the coercivity of $a(\cdot, \cdot)$.
	% [FIXME: this is actually yielding coercivity on the whole of $\bm{W}_{h}$, right? \blue{@Kaibo}]
	Combining the boundedness of the variational forms, the inf-sup condition of $b(\cdot, \cdot)$ and the coercivity of $a(\cdot,\cdot)$ on $\bm{W}_{h}^{0}$, we complete the proof.
\end{proof}
\begin{remark}
	The assumption $\B^{-}\in \mathbf{L}^{\infty}(\Omega)$ is due to the Hall term, since we do not have higher regularity for $\tilde{\nabla}_{h}\times \B$ and $\tilde{\nabla}_{h}\times \C$ than $\mathbf{L}^2(\Omega)$. The other nonlinear terms can be controlled by $\|\cdot\|_{X}$ as, e.g., 
	\begin{align*}
	|(\u \times \B^{-}, \tilde{\nabla}_{h}\times \C)| &\leq \|\u\|_{L^{6}}\|\B^{-}\|_{L^{3}}\|\tilde{\nabla}_{h}\times \C\|\\
	 &\leq C\|\nabla \u\|(\|\tilde{\nabla}_{h}\times \B^{-}\|^{2}+\|\nabla\cdot \B^{-}\|^{2})^{\frac{1}{2}}\|\tilde{\nabla}_{h}\times \C\|,
	\end{align*}
	where we used the Poincar\'e inequality, the Sobolev embedding, and the discrete Gaffney inequality for the last step. On the discrete level, we always have that the finite element function $\B^{-}\in \mathbf{L}^{\infty}(\Omega)$ and hence we have proved the well-posedness of the discrete problem on a fixed mesh.
	% By the inverse inequality, $\|\B^{-}\|_{L^{\infty}}$ can be bounded by $h^{-1}\|\B^{-}\|_{L^{3}}$. 
\end{remark}

\begin{remark}
	In the above proof, we have used that $(( \u^{-} \cdot \nabla) \u, \u)=0$ which holds if $\nabla\cdot\u=0$ is enforced exactly on the discrete level. If one wishes to use a Stokes pair that is not exactly divergence-free, one can replace this term by $(( \u^{-} \cdot \nabla) \u, \v)-(( \u^{-} \cdot \nabla) \v, \u)$. This approximation is equal to  $(( \u^{-} \cdot \nabla) \u, \u)=0$ if $\nabla\cdot\u$ and a consistent approximation otherwise, cf.~\cite{hu2020convergence}.
\end{remark}

\begin{remark}[Boundary conditions]
	For the standard MHD equations with $\u=\mathbf{0}$ and $\B \cdot \n = \mathbf{0}$ on $\partial \Omega$, the boundary conditions $\E \times \n = \mathbf{0}$ and $\j \times \n = \mathbf{0}$ are equivalent  due to Ohm's law $\j = \E + \u \times \B$. However, for the Hall MHD equations $\E \times \n = \mathbf{0}$ and $\j \times \n = \mathbf{0}$ are independent. The generalised Ohm's law then implies
	\begin{align*}
		&\Reminv  \j \times \n = \E\times \n + (\u \times \B) \times \n - \RH\,(\j \times \B) \times \n\\
		\Leftrightarrow\  & \RH\,(\j \times \B) \times \n= \mathbf{0}\\
		\Leftrightarrow\  & \RH \left[(\j \cdot \n) \B - \j\, \B \cdot \n \right] = \mathbf{0}\\
		\Rightarrow\  & \j \cdot \n = 0.
	\end{align*}
	Hence, there exists an additional compatibility condition that $\j \cdot \n = 0$. 
	
\end{remark}	

In the following, we consider the convergence of the Picard iteration. 
\begin{theorem}\label{thm:picard_convergence}
	For a fixed mesh drawn from a quasi-uniform sequence (so that the inverse estimates hold) and $f\in [H^{-1}]^{3}$, $\u_{h}^{n}$, $p_h^n$, $\E_h^n$, $\j_{h}^{n}$ and $\B_{h}^{n}$ from Algorithm \ref{alg:picard-s} converge if $\Rem$ and $\Re$ are small enough.  
	%[FIXME: What can we say about the convergence of $p^n_h$ \blue{@Kaibo}]
\end{theorem}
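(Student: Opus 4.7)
The plan is to cast Algorithm \ref{alg:picard-s} as a fixed-point map $T:(\u_h^{n-1},\B_h^{n-1})\mapsto(\u_h^n,\B_h^n)$ on $\bm{W}_h$ and to invoke the Banach fixed-point theorem on a bounded invariant set $\mathcal{K}\subset\bm{W}_h$, once $T$ is shown to be a strict contraction in $\|\cdot\|_X$ for small enough $\Re$ and $\Rem$. Theorem \ref{thm:wellposed-picard} guarantees that $T$ is well-defined on $\mathcal{K}$ provided $\|\B_h^{n-1}\|_{\mathbf{L}^\infty}$ is controlled along the iteration, which, on a fixed quasi-uniform mesh, is supplied by an inverse estimate.

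First I would establish uniform a priori bounds on the iterates, mirroring the proof of Theorem \ref{stability_fem}. Testing \eqref{picard1} with $\v_h=\u_h^n$ and \eqref{picard4} with $\k_h=S\j_h^n$: the convective term drops because $\nabla\cdot\u_h^{n-1}=0$ holds exactly on $\bm{V}_h$; the two frozen Lorentz contributions $-S(\j_h^n\times\B_h^{n-1},\u_h^n)$ and $-S(\u_h^n\times\B_h^{n-1},\j_h^n)$ cancel by antisymmetry of the triple product; and the Hall term $\RH S(\j_h^n\times\B_h^{n-1},\j_h^n)$ vanishes for the same reason. Testing \eqref{picard2} with $\F_h=S\E_h^n$ and using $\nabla\times\E_h^n=\mathbf{0}$ (obtained from \eqref{picard3} tested against $\C_h=\nabla\times\E_h^n$) eliminates the cross term $S(\E_h^n,\j_h^n)$, while testing \eqref{picard3} with $\C_h=\B_h^n$ forces $\nabla\cdot\B_h^n=0$. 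This yields
\begin{equation*}
\Re^{-1}\|\nabla\u_h^n\|^2+S\Rem^{-1}\|\j_h^n\|^2\leq\|\f\|_{-1}\|\nabla\u_h^n\|,
\end{equation*}
and thus $\|(\u_h^n,\B_h^n)\|_X\leq C(\Re,\Rem,S)\|\f\|_{-1}$ uniformly in $n$. The discrete Gaffney inequality together with the inverse estimate $\|\B_h^n\|_{\mathbf{L}^\infty}\leq Ch^{-3/2}\|\B_h^n\|$ then produces a uniform $\mathbf{L}^\infty$ bound on $\B_h^n$, defining the invariant set $\mathcal{K}$.

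Next I would derive the contraction estimate. Setting $\delta\u^{k}:=\u_h^{k}-\u_h^{k-1}$ (and analogously for the other variables), subtract the Picard system at step $n+1$ from that at step $n$ and test the velocity difference with $\delta\u^{n+1}$ and the Ohm difference with $S\delta\j^{n+1}$. The same cancellations as in the a priori estimate yield the coercive quantity $\Re^{-1}\|\nabla\delta\u^{n+1}\|^2+S\Rem^{-1}\|\delta\j^{n+1}\|^2$ on the left, while on the right only the frozen-coefficient perturbations
\begin{equation*}
-((\delta\u^n\cdot\nabla)\u_h^n,\delta\u^{n+1}),\ S(\j_h^n\times\delta\B^n,\delta\u^{n+1}),\ S(\u_h^n\times\delta\B^n,\delta\j^{n+1}),\ -\RH S(\j_h^n\times\delta\B^n,\delta\j^{n+1})
\end{equation*}
survive. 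The first three are bounded by H\"older, the Sobolev embedding $H^1_0\hookrightarrow L^6$, the generalised Gaffney inequality, and the a priori bounds on the iterates. The Hall perturbation is the delicate one: Cauchy--Schwarz, the inverse estimate $\|\delta\B^n\|_{\mathbf{L}^\infty}\leq Ch^{-3/2}\|\delta\B^n\|$, the identity $\|\delta\B^n\|\leq C\|\delta\j^n\|$ (valid since $\nabla\cdot\delta\B^n=0$), and the a priori bound $\|\j_h^n\|\leq C\sqrt{\Re\Rem/S}\,\|\f\|_{-1}$ yield a bound of order $\RH\sqrt{S\Re\Rem}\,\|\f\|_{-1}\,h^{-3/2}\,\|\delta\j^n\|\,\|\delta\j^{n+1}\|$. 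Young's inequality absorbs half of each coercive term, producing
\begin{equation*}
\|(\delta\u^{n+1},\delta\B^{n+1})\|_X\leq\Lambda(\Re,\Rem,S,\RH,\|\f\|_{-1},h)\,\|(\delta\u^n,\delta\B^n)\|_X,
\end{equation*}
with $\Lambda\to 0$ as $(\Re,\Rem)\to 0$ for fixed $S$, $\RH$, $\f$ and $h$. Taking $\Re$ and $\Rem$ small enough that $\Lambda<1$ gives convergence of $(\u_h^n,\B_h^n)$ by Banach's theorem; convergence of $\j_h^n$, $\E_h^n$ and $p_h^n$ then follows from \eqref{jE} and the inf-sup stability of $(\bm{V}_h,Q_h)$.

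The main obstacle is the Hall perturbation $\RH(\j_h^n\times\delta\B^n,\cdot)$: because $\tilde{\nabla}_h\times\B_h$ is only controlled in $\mathbf{L}^2$, this term cannot be absorbed by the $\|\cdot\|_X$ topology alone and forces an $\mathbf{L}^\infty$ bound on $\delta\B^n$. The inverse estimate on the fixed quasi-uniform mesh supplies this at the price of an $h^{-3/2}$ factor, which is precisely why the theorem is restricted to a fixed mesh and why $\Lambda$ degrades as $h\to 0$. Removing this mesh-dependence would require higher regularity of $\tilde{\nabla}_h\times\B_h$ than the variational formulation provides, and we do not pursue this further.
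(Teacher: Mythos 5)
Your proposal is correct and follows essentially the same route as the paper: a contraction estimate for consecutive Picard differences measured in the energy $\Re^{-1}\|\nabla\cdot\|^{2}+S\,\Reminv\|\cdot\|^{2}$, obtained by subtracting successive steps and testing with the differences, with the a priori energy bounds, the Sobolev embedding and the generalised Gaffney inequality handling the standard terms, the Hall perturbation absorbed via an inverse estimate on the fixed quasi-uniform mesh, and $p_h^n$, $\E_h^n$ recovered afterwards from the inf-sup condition and the elimination formula \eqref{jE}. The only cosmetic difference is in the Hall term: you use an $\mathbf{L}^{\infty}$ inverse estimate on $\delta\B^{n}$ (factor $h^{-3/2}$), whereas the paper uses the generalised H\"older split $L^{(6+2\delta)/(1+\delta)}\times L^{3+\delta}\times L^{2}$ together with the generalised Gaffney inequality (factor $h^{-3/(3+\delta)}$); both yield the same fixed-mesh conclusion.
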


The proof is similar to \cite[Theorem 7]{Hu2018}, and we only give a sketch of the proof focusing on the additional Hall term. 
The essence of the proof is to show that one gets a contraction in the errors $\bm e_{u}^{n}:=\u_{h}^{n}-\u_{h}^{n-1}$ and $\bm e_{j}^{n}:=\tilde{\nabla}_{h}\times \B_{h}^{n}-\tilde{\nabla}_{h}\times \B_{h}^{n-1}$, i.e.,
\begin{equation}\label{contraction0}
	\frac{1}{2}(	\Re^{-1}\|\nabla \bm e_{u}^{n}\|^{2}+S \Reminv\|\bm e_{j}^{n}\|^{2})\leq \frac{1}{4}(\Re^{-1}\|\nabla \bm e_{u}^{n-1}\|^{2}+S \Reminv\| \bm e_{j}^{n-1}\|^{2}),
\end{equation}
if $\Re$ and $\Rem$ are small enough. One gets an expression for these errors by subtracting the $(n-1)$-th step of 
\eqref{picard-reduced1}-\eqref{picard-reduced3} from the $n$-th step and using the test functions 
$\v_{h}=\bm e_{u}^{n}$  and $\C_{h}=\B_{h}$. 
%From the discrete energy law one can bound $\Re^{-1}\|\nabla \u_{h}^{n}\|$ and $\Reminv S\|\j_{h}\|$ by $\|\f\|_{-1}$. 
This gives
\begin{align}\label{contraction}
	\begin{split}
	\Re^{-1}\|\nabla \bm e_{u}^{n}\|^{2}+S\Reminv \|\bm e_{j}^{n}\|^{2}=&(\u_{h}^{n}\times \B_{h}^{n-1}-\u_{h}^{n-1}\times \B_{h}^{n-2}, \bm e_{j}^{n})+\cdots\\
	&-\RH(\j_{h}^{n}\times \B_{h}^{n-1}-\j_{h}^{n-1}\times \B_{h}^{n-2}, \bm e_{j}^{n}).
\end{split}
\end{align}
Here we have omitted other terms of the standard MHD system which are treated in detail in  \cite[Theorem 7]{Hu2018}.
% Those terms are similar to $(\u_{h}^{n}\times \B_{h}^{n-1}-\u_{h}^{n-1}\times \B_{h}^{n-2},  \bm e_{j}^{n})$. 
The last term is the Hall term. The first term can be estimated by 
\begin{align*}
	|(\u_{h}^{n}\times \B_{h}^{n-1}-\u_{h}^{n-1}\times \B_{h}^{n-2}, \bm e_{j}^{n})|&=|(\bm e_{u}^{n}\times \B_{h}^{n-1},  \bm e_{j}^{n})+(\u_{h}^{n-1}\times  \bm e_{B}^{n-1},  \bm e_{j}^{n})|\\&
	\leq C( \| \bm e_{u}^{n}\|_{L^6}\|\B_{h}^{n-1}\|_{L^3}\| \bm e_{j}^{n}\|+ \|\u_{h}^{n-1}\|_{L^6}\| \bm e_{B}^{n-1}\|_{L^3}\|\bm e_{j}^{n}\|)\\&
	\leq C(\|\nabla \bm e_{u}^{n}\|^{2}+\|\bm e_{j}^{n}\|^{2}+\|\bm e_{j}^{n-1}\|^{2}),
\end{align*}
where in the last step we have used the Sobolev embedding $\| \bm e_{u}^{n}\|_{L^6}\leq C\|\nabla \bm e_{u}^{n}\|$, the generalised Gaffney inequality $\| \bm e_{B}^{n-1}\|_{L^3}\leq C\|\tilde{\nabla}\times \bm e_{B}^{n-1}\|=C\|\bm e_{j}^{n-1}\|$, and the energy bounds $\|\B_{h}^{n-1}\|_{L^3}\leq C\|\f\|_{-1}$, $\|\u_{h}^{n-1}\|\leq C\|\f\|_{-1}$ ($\|\f\|_{-1}$ is assumed to be a given finite number). For $\Re^{-1}$ and $\Reminv$ large enough, we can move $\|\nabla \bm e_{u}^{n}\|^{2}$ and $\|\nabla \bm e_{j}^{n}\|^{2}$ to the left hand side of \eqref{contraction}. 
%All the other abbreviated terms can be treated using a similar argument, which are bounded by  $\|\nabla \bm e_{u}^{n}\|$, $\|\bm e_{j}^{n}\|$, $\|\nabla \bm e_{u}^{n-1}\|$, $\|\bm e_{j}^{n-1}\|$ by the H\"older inequality. 

The boundedness of the Hall term is more complicated. In fact, for some $0\leq\delta\leq 3$ depending on the domain, 
\begin{align*}
	|(\j_{h}^{n}\times \B_{h}^{n-1}-\j_{h}^{n-1}\times \B_{h}^{n-2}, \bm e_{j}^{n})|&=|(\bm e_{j}^{n}\times \B_{h}^{n-1}, \bm e_{j}^{n})+(\j_{h}^{n-1}\times \bm e_{B}^{n-1}, \bm e_{j}^{n})|\\
	&=|(\j_{h}^{n-1}\times \bm e_{B}^{n-1}, \bm e_{j}^{n})|\leq C\|\j_{h}^{n-1}\|_{L^{\frac{6+2\delta}{1+\delta}}}\|\bm e_{B}^{n-1}\|_{L^{3+\delta}}\|\bm e_{j}^{n}\|\\
	& \leq  Ch^{-\frac{3}{3+\delta}}\|\j_{h}^{n-1}\|\|\bm e_{j}^{n-1}\| \|\bm e_{j}^{n}\|\\&
	\leq  Ch^{-\frac{3}{3+\delta}}(\|\bm e_{j}^{n-1}\| ^{2}+\|\bm e_{j}^{n}\|^{2}),
\end{align*}
where we used the inverse estimate, the generalised Gaffney inequality and the energy bound $\|\j_{h}^{n-1}\|\leq C\|\f\|_{-1}$. Again, we move $\|\bm e_{j}^{n}\|^{2}$ to the left hand side of \eqref{contraction} if $\Re^{-1}$ and $\Reminv$ are large enough.
The contraction \eqref{contraction0} proves the convergence of $\u^n_h$ and $\j^n_h$. Note that the convergence of $\j^n_h$ also implies the convergence of $\B^n_h$ since $\| \bm e_{B}^{n}\|\leq C\|\tilde{\nabla}\times \bm e_{B}^{n}\|=C\|\bm e_{j}^{n}\|$.

To show the convergence of $p_{h}^{n}$, we note that from \eqref{picard-reduced1}, 
\begin{align*}
	(p_{h}^{n}-p_{h}^{n-1}, \nabla\cdot \v_{h})=\Re^{-1}(\nabla \bm e_{u}^{n}, &\nabla \v_{h})+((\bm e_{u}^{n-1}\cdot \nabla)\u_{h}^{n}, \v_{h})+((\u_{h}^{n-2}\cdot \nabla)\bm e_{u}^{n}, \v_{h})\\&-\S (\bm e_{j}^{n}\times \B_{h}^{n-1}, \v_{h})-\S (\j_{h}^{n-1}\times \bm e_{B}^{n-1}, \v_{h}).
\end{align*}
From the inf-sup condition of the velocity-pressure pair, there exists $\v_{h}$ such that
$$
(p_{h}^{n}-p_{h}^{n-1}, \nabla\cdot\v_{h})\geq C\|p_{h}^{n}-p_{h}^{n-1}\|^{2}, \quad\mbox{and}\quad \|\v_{h}\|_{1}\leq \|p_{h}^{n}-p_{h}^{n-1}\|. 
$$
Taking this $\v_{h}$ as the test function, we get
\begin{align*}
	C\|p_{h}^{n}-p_{h}^{n-1}\|^{2}\leq &\Re^{-1}\| \bm e_{u}^{n}\|_{1}\| \v_{h}\|_{1}+\|\bm e_{u}^{n-1}\|_{1}\|\u_{h}^{n}\|_{1} \|\v_{h}\|_{1}+\|\u_{h}^{n-2}\|_{1}\|\bm e_{u}^{n}\|_{1}\|\v_{h}\|_{1}\\&+\S \|\bm e_{j}^{n}\|\|\B_{h}^{n-1}\|_{L^{3}}\| \v_{h}\|_{1}+\S \|\j_{h}^{n-1}\|\| \bm e_{B}^{n-1}\|_{L^{3}}\| \v_{h}\|_{1}.
\end{align*}
Since $\u_{h}^{n}$ converges in $H^{1}(\Omega)$ and $\B_{h}^{n}$ converges in $L^{3}(\Omega)$ (alternatively, $\j_{h}^{n}=\tilde{\nabla}_{h}\times \B_{h}^{n}$ converges in $L^{2}(\Omega)$), we obtain the $L^{2}$-convergence of $p_{h}^{n}$ by the Cauchy-Schwarz inequality.

	For the standard MHD equations, the convergence of the electric field $$\E_{h}^{n}=\Reminv\tilde{\nabla}_{h}\times \B_{h}^{n}-\mathbb{Q}_{c}(\u_{h}^{n}\times \B_{h}^{n-1})+\RH\,\mathbb{Q}_{c}((\tilde{\nabla}_{h}\times \B_{h}^{n})\times  \B_{h}^{n-1})$$ follows from the strong convergence of $\B_{h}^{n}$ in $\mathbf{H}_0^h(\div)\cap \mathbf{H}_{0}^{h}(\curl)\hookrightarrow L^{3+\delta}$  and $\u_{h}^{n}$ in $\mathbf{H}^{1}\hookrightarrow \mathbf{L}^{6}$. For the convergence of the Hall-term, we can apply the inverse estimate as before.

\begin{remark}
	For the standard MHD system, the condition on the size of $\Re^{-1}$ and $\Reminv$ only depends on $\|\f\|_{-1}$. Due to the Hall term, this condition also involves a factor $h^{-\frac{3}{3+\delta}}$ which might suggest that the convergence of the Picard iteration deteriorates on finer meshes. Theorem \ref{thm:picard_convergence} proves the convergence of the Picard iteration on a fixed mesh.
\end{remark}
%Therefore to bound the Hall term, $\Rem$ should be on the order of $h^{-\frac{3}{3+\delta}}$. The boundedness of other terms does not involve a negative power of $h$. 

\subsection{2.5D Hall MHD formulation}\label{sec:2.5Dform}
In this section, we introduce the 2.5-dimensional formulation of \eqref{eq:HallMHD}, which refers to the assumption that vector fields still have three components but derivatives in the $z$-direction vanish. That means we assume that a three-dimensional vector-field can be decomposed into a two-dimensional vector field and scalar field with the notation
\begin{equation}
	\B(x,y,z) = \begin{pmatrix}
		\tilde{\B} (x,y) \\
		B_3(x,y)
	\end{pmatrix}.
\end{equation}
Recall that there exist two different curl operators in two dimensions, \mbox{given by}
\begin{equation}
	\scurl \tilde{\B} = \partial_x B_2 - \partial_y B_1, \qquad  \vcurl B_3 = \begin{pmatrix}
		\partial_y B_3\\
		-\partial_x B_3
	\end{pmatrix},
\end{equation}
that correspond to the cross-products
\begin{equation}
	\tilde{\u} \times \tilde{\B} =u_1 B_2 - u_2 B_1, \qquad
	\tilde{\B}  \times E_3 =\begin{pmatrix}
		B_2 E_3 \\
		-B_1 E_3
	\end{pmatrix}.
\end{equation}
Hence, we can rewrite the three-dimensional cross-product and curl operator as
\begin{equation}
	\j \times \B = \begin{pmatrix}
		\tilde{\j} \times B_3 - \tilde{\B} \times j_3 \\
		\tilde{\j} \times \tilde{\B}
	\end{pmatrix} 
	\quad \text{ and } \quad
	\nabla \times \B = 
	\begin{pmatrix}
		\vcurl B_3 \\
		\scurl \tilde{\B}
	\end{pmatrix}.
\end{equation}

With this notation we are able to rewrite \eqref{eq:HallMHD} on a bounded polygonal Lipschitz domain $\Omega \subset \mathbb{R}^2$ as 
\begin{subequations}
	\label{eq:2.5DHallMHD}
	\begin{align} \label{eq:2.5DHallMHDu}
		- \Re^{-1} \Delta \tilde{\u} + 
		( \tilde{\u} \cdot \tilde{\nabla}) \tilde{\u}
		- \S\, ( \tilde{\j} \times B_3 -  \tilde{\B} \times j_3)
		+ \tilde{\nabla} p 
		&= \tilde{\f} , \\
		- \Re^{-1} \Delta u_3 + 
		( \tilde{\u} \cdot \tilde{\nabla}) u_3
		- \S\, \tilde{\j} \times \tilde{\B}
		&= f_3 , \\
		\tilde{\j} - 
		\vcurl B_3
		&= \mathbf{0} , \\
		j_3 - 
		\scurl \tilde{\B}
		&= 0 , \\
		\vcurl E_3 &= \mathbf{0} , \\
		\scurl \tilde{\E} &= 0 , \\
		\tilde{\nabla} \cdot \tilde{\B} &= 0, \\
		\tilde{\nabla} \cdot \tilde{\u} &= 0, \\
		\Reminv \tilde{\j} - (\tilde{\E} + \tilde{\u} \times B_3 - \tilde{\B} \times u_3 -\RH\,(\tilde{\j}\times B_3 - \tilde{\B} \times j_3)) & = \mathbf{0}, \label{eq:islandcoal-j}\\
		\Reminv j_3 - (E_3 + \tilde{\u} \times \tilde{\B}-\RH\,\tilde{\j}\times \tilde{\B}) & = 0, \label{eq:islandcoal-j3}
	\end{align}
\end{subequations}

subject to the boundary conditions
\begin{equation}
	\tilde{\u} = \mathbf{0}, \,\, u_3 = 0, \,\, \tilde{\B} \cdot \tilde{\n} = 0, \,\, B_3 = 0, \,\, \tilde{\j} \times \tilde{\n} =  \mathbf{0}, \,\, j_3 = 0, \,\, \tilde{\E} \times \tilde{\n} =  \mathbf{0}, \,\, E_3 = 0. 
\end{equation}
For a finite element discretisation, as before we can look for $\tilde{\B}_h$ in an $\mathbf{H}_{0}^{h}(\div)$-conforming space and for $\tilde{\j}_h$ and $\tilde{\E}_h$ in an $\mathbf{H}_{0}^{h}(\curl)$-confirming space. The other components $u_3$, $B_3$, $j_3$ and $E_3$ are approximated in an $H_{0}^{h}(\grad)$-conforming space.

\section{Conservative discretisations for time-dependent problems}\label{sec:timedepproblems}

For time-dependent problems, we include the time derivatives in the formulation for the stationary problem, i.e., we add $\frac{\partial \u_{h}}{\partial t}$ to \eqref{fem-1} and $\frac{\partial \B_{h}}{\partial t}$ to \eqref{fem-3}. 
%This means we can remove the $(\nabla\cdot \B_{h}, \nabla\cdot \C_{h})$ term, since the magnetic Gauss's law will be automatically preserved in the evolution provided the initial condition is divergence-free \cite{Ma2016}.

\subsection{Conserved quantities}
In the ideal limit of $\Re=\Rem=\infty$ it is well-known that the energy, magnetic helicity and cross helicity are conserved properties of the standard incompressible MHD system \cite{Galtier2015}. The energy is defined as
\begin{equation}
	E := \int_\Omega |\u|^2 + S|\B|^2 \ \mathrm{d} x,
\end{equation}
the magnetic helicity is defined as
\begin{equation}
	H_M := \int_\Omega \mathbf{A} \cdot \B \ \mathrm{d} x,
\end{equation}
for a vector potential $\mathbf{A}$ such that $\nabla \times \mathbf{A} = \B$, and the cross helicity is defined as
\begin{equation}
	H_C := \int_\Omega \mathbf{\u} \cdot \B \ \mathrm{d} x.
\end{equation}
For the ideal Hall MHD equations, the energy and magnetic helicity are still conserved, while the cross helicity is not. Here, hybrid helicity replaces the cross helicity as a conserved property and is defined as 
\begin{equation}
	H_H := \int_\Omega (\mathbf{A} + \alpha \u)\cdot (\B + \beta \nabla \times \u) \, \mathrm{d}x,
\end{equation}
for $\alpha$ and $\beta$ satisfying the relation 
\begin{equation}\label{eqn:alphabeta}
	2S\alpha\beta-{\RH}(\alpha+\beta)=0.
\end{equation}
We prove the conservation of hybrid helicity in the next theorem. Note, that the hybrid helicity is a combination of the magnetic, cross and fluid helicity, which is defined as 
\begin{equation}
	H_F := \int_\Omega \u \cdot \nabla \times \u \ \mathrm{d}x.
\end{equation} If ${\RH}=0$, i.e., when the Hall term vanishes, the above equality \eqref{eqn:alphabeta} holds if $\alpha=0$ or $\beta=0$. For $\alpha=\beta=0$,  the hybrid helicity is just the magnetic helicity. If $\alpha=0$ and  $\beta\neq 0$ (alternatively, $\alpha\neq 0$ and $\beta=0$), the hybrid helicity becomes a combination of magnetic and cross helicity. Thus the conservation of hybrid helicity implies the conservation of both magnetic and cross helicity in standard MHD. In Hall MHD, $\alpha=\beta=0$ still corresponds to the magnetic helicity. But in this case \eqref{eqn:alphabeta} does not allow the case $\alpha=0$, $\beta\neq 0$, or $\alpha\neq 0$, $\beta=0$. This means that the cross helicity is not conserved. There exist many non-trivial choices of $\alpha$ and $\beta$, for example, $\alpha=\beta = S^{-1}\RH$.

\begin{theorem}
	The generalised hybrid helicity $H_{H}$ is conserved in the time-dependent Hall MHD system with $\f=\mathbf{0}$ and formally $\Re^{-1}=\Reminv =0$ for any $\alpha$, $\beta$ such that \eqref{eqn:alphabeta} holds.
\end{theorem}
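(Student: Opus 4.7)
The plan is to expand the generalised hybrid helicity into its three classical constituents and then track how each one evolves under the ideal Hall MHD equations, showing that the non-vanishing rates combine into a single integral that is annihilated by the algebraic condition \eqref{eqn:alphabeta}. Using the vector identity $\nabla\cdot(\mathbf{A}\times\u)=(\nabla\times\mathbf{A})\cdot\u-\mathbf{A}\cdot(\nabla\times\u)$ together with the boundary condition $\u=\mathbf{0}$ on $\partial\Omega$, one integrates $\int_\Omega \mathbf{A}\cdot(\nabla\times\u)\,\mathrm{d}x=\int_\Omega\B\cdot\u\,\mathrm{d}x$ by parts, so
\begin{equation*}
H_H=H_M+(\alpha+\beta)H_C+\alpha\beta\,H_F.
\end{equation*}
I would therefore compute $\tfrac{\mathrm{d}}{\mathrm{d}t}H_M$, $\tfrac{\mathrm{d}}{\mathrm{d}t}H_C$ and $\tfrac{\mathrm{d}}{\mathrm{d}t}H_F$ separately, starting from the ideal evolution equations $\partial_t\u=-(\u\cdot\nabla)\u-\nabla p+S\,\j\times\B$, $\partial_t\B=-\nabla\times\E$ and the ideal Ohm's law $\E=-\u\times\B+\RH\,\j\times\B$.

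For the magnetic helicity, gauge freedom lets one write $\partial_t\mathbf{A}=-\E-\nabla\phi$, and using $\int_\Omega\mathbf{A}\cdot\nabla\times\E\,\mathrm{d}x=\int_\Omega\B\cdot\E\,\mathrm{d}x$ (boundary term vanishes because $\E\times\n=\mathbf{0}$) one reduces $\tfrac{\mathrm{d}}{\mathrm{d}t}H_M$ to $-2\int_\Omega\E\cdot\B\,\mathrm{d}x$. In the ideal limit $\E\cdot\B=-(\u\times\B)\cdot\B+\RH\,(\j\times\B)\cdot\B=0$, so $\tfrac{\mathrm{d}}{\mathrm{d}t}H_M=0$. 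For the fluid helicity I would use $\int_\Omega\u\cdot\partial_t(\nabla\times\u)\,\mathrm{d}x=\int_\Omega\bm\omega\cdot\partial_t\u\,\mathrm{d}x$ (boundary contribution vanishes because $\u=\mathbf{0}$) to obtain $\tfrac{\mathrm{d}}{\mathrm{d}t}H_F=2\int_\Omega\bm\omega\cdot\partial_t\u\,\mathrm{d}x$; substituting the momentum equation and noting that $(\u\cdot\nabla)\u=\nabla(|\u|^2/2)-\u\times\bm\omega$ together with $\nabla\cdot\bm\omega=0$ causes the pressure and kinetic-energy-gradient terms to integrate out, leaving $\tfrac{\mathrm{d}}{\mathrm{d}t}H_F=2S\,I$ with
\begin{equation*}
I:=\int_\Omega\bm\omega\cdot(\j\times\B)\,\mathrm{d}x.
\end{equation*}

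The cross-helicity calculation is the most delicate and is where I expect the main bookkeeping obstacle, because two cancellations conspire in just the right way. Starting from $\tfrac{\mathrm{d}}{\mathrm{d}t}H_C=\int_\Omega(\partial_t\u)\cdot\B+\u\cdot\partial_t\B\,\mathrm{d}x$, one uses $\div\B=0$ to dispose of the pressure and $\nabla(|\u|^2/2)$ contributions, and integrates the $\u\cdot(\nabla\times\E)$ piece by parts to produce $\int_\Omega\bm\omega\cdot\E\,\mathrm{d}x$. After substituting the ideal Ohm's law, the two terms proportional to $\bm\omega\cdot(\u\times\B)$ cancel exactly, leaving $\tfrac{\mathrm{d}}{\mathrm{d}t}H_C=-\RH\,I$.

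Assembling the three pieces then gives
\begin{equation*}
\frac{\mathrm{d}H_H}{\mathrm{d}t}=\bigl[\,2S\alpha\beta-\RH(\alpha+\beta)\,\bigr]\,I,
\end{equation*}
which vanishes precisely under the stated condition \eqref{eqn:alphabeta}. Throughout the argument, the boundary conditions $\u=\mathbf{0}$, $\B\cdot\n=0$, $\E\times\n=\mathbf{0}$, and $\j\times\n=\mathbf{0}$ ensure that every integration-by-parts step produces a vanishing surface contribution; verifying this cleanly for each term (and, for $H_M$, confirming gauge-independence via $\int_\Omega\nabla\phi\cdot\B\,\mathrm{d}x=0$) is the only technical nuisance.
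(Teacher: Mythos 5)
Your proposal is correct and follows essentially the same route as the paper's proof: the same decomposition $H_H = H_M + (\alpha+\beta)H_C + \alpha\beta H_F$ (via integrating $(\mathbf{A},\bm\omega)$ by parts into $(\u,\B)$), the same term-by-term evolution computations using the ideal Ohm's law and $(\u\cdot\nabla)\u = \nabla(|\u|^2/2) - \u\times\bm\omega$, and the same final combination $[2S\alpha\beta - \RH(\alpha+\beta)](\j\times\B,\bm\omega)$. Your extra care with the boundary terms and gauge-independence is a welcome refinement but does not change the argument.
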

\begin{proof}
	We have
	\begin{align*}
		\frac{d}{dt}H_{H}&=\frac{d}{dt}(\mathbf{A}, \B)+\frac{d}{dt}[\alpha(\u, \B)+\beta(\mathbf{A}, \bm \omega)]+\frac{d}{dt}\alpha\beta(\u, \bm \omega)\\
		&=\frac{d}{dt}(\mathbf{A}, \B)+\frac{d}{dt}(\alpha+\beta)(\u, \B)+\frac{d}{dt}\alpha\beta(\u,  \bm \omega).
	\end{align*}
	First, the magnetic helicity is conserved, i.e., 
	\begin{align*}
		\frac{d}{dt}(\mathbf{A}, \B) &= 2 (\B_t, \mathbf{A}) = 2 (\nabla \times [\u \times \B] , \mathbf{A}) - 2 \RH (\j \times \B , \mathbf{A})\\
		& = 2 (\u \times \B, \B) - 2\RH (\j \times \B, \B) = 0.
	\end{align*}
	It remains to check the other two terms. In fact,
	from \eqref{eq:HallMHDu},
	\begin{equation*}
		(\u_{t}, \B)=(\B, \u\times \bm \omega+S\j\times \B-\nabla p)=(\B, \u\times \bm \omega).
	\end{equation*}
	From \eqref{eq:HallMHDB}, 
	\begin{align*}
		(\B_{t}, \u)&=-(\nabla\times \E, \u)=-(\E, \nabla\times\u)=(\u\times\B-{\RH}\j\times \B, \nabla\times\u)\\
		&=(\u\times\B,\bm \omega)-{\RH}(\j\times \B, \bm \omega).
	\end{align*}
	Consequently,
	\begin{align*}
		\frac{d}{dt}(\alpha+\beta)(\u, \B)=(\alpha+\beta)[(\u_{t}, \B)+(\u, \B_{t})]=-{\RH}(\alpha+\beta)(\j\times \B, \bm \omega).
	\end{align*}
	Moreover, 
	\begin{align*}
		\frac{d}{dt}\alpha\beta(\u, \bm \omega)=2\alpha\beta(\u_{t}, \bm \omega)=2\alpha\beta(\u\times \bm \omega+S\j\times \B-\nabla p, \bm \omega)=2S\alpha\beta(\j\times \B, \bm \omega).
	\end{align*}
	This implies that
	$$
	\frac{d}{dt}H_{H}=[2S\alpha\beta-{\RH}(\alpha+\beta)](\j\times \B, \bm \omega)
	$$
	and proves the desired result. 
\end{proof}

Similar to the discussions in \cite{arnold1999topological}, we show that the hybrid helicity provides a lower bound for the energy when $\alpha=\beta=S^{-1}\RH$. This bound, which was referred to as the Arnold inequality in the case of the magnetic helicity \cite[Section 8]{moffatt2021some}, shows that non-zero hybrid helicity, as a measure of the knottedness, provides a topological barrier which prevents a hybrid energy defined by $\|\B+ S^{-1}\RH \bm \omega\|^{2}$ from decaying below a certain value. The conclusion also holds for dissipative flows where the helicity is not conserved.
\begin{theorem}\label{thm:arnold}
	$$
	\|\B+ S^{-1}\RH \bm \omega\|^{2}\geq C^{-1}|H_{H}|,
	$$
	where $C$ is the positive constant in the Poincar\'e inequality.
\end{theorem}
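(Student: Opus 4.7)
The plan is to exploit the key observation that when $\alpha=\beta=S^{-1}\RH$, the combination appearing in the hybrid helicity is itself a vector potential / curl pair. Define the shifted potential $\mathbf{A}':=\mathbf{A}+S^{-1}\RH\,\u$ and the shifted field $\B':=\B+S^{-1}\RH\,\bm\omega$. Since $\nabla\times\mathbf{A}=\B$ and $\nabla\times\u=\bm\omega$, we have the fundamental identity
\begin{equation*}
\nabla\times\mathbf{A}' = \B',
\end{equation*}
so that $\mathbf{A}'$ plays the role of a vector potential for $\B'$ and
\begin{equation*}
H_{H}=\int_{\Omega}\mathbf{A}'\cdot\B'\,\mathrm{d}x=\int_{\Omega}\mathbf{A}'\cdot(\nabla\times\mathbf{A}')\,\mathrm{d}x.
\end{equation*}

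First I would fix a gauge for $\mathbf{A}'$, choosing the divergence-free representative with $\mathbf{A}'\cdot\n=0$ on $\partial\Omega$ (possible on simply connected Lipschitz $\Omega$ as assumed throughout the paper). Applying Cauchy--Schwarz gives $|H_{H}|\leq \|\mathbf{A}'\|\,\|\B'\|$. Next I would invoke the (vector) Poincar\'e inequality on the chosen gauge class, namely $\|\mathbf{A}'\|\leq C\|\nabla\times\mathbf{A}'\|=C\|\B'\|$, with $C$ the Poincar\'e constant referred to in the statement. Combining the two estimates yields
\begin{equation*}
|H_{H}|\leq \|\mathbf{A}'\|\,\|\B'\|\leq C\,\|\B'\|^{2}=C\,\|\B+S^{-1}\RH\,\bm\omega\|^{2},
\end{equation*}
which, upon rearrangement, is exactly the desired Arnold-type inequality.

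The main obstacle is justifying the Poincar\'e step in the form $\|\mathbf{A}'\|\leq C\|\nabla\times \mathbf{A}'\|$ compatibly with the boundary conditions of the problem: one must verify that a divergence-free gauge representative with $\mathbf{A}'\cdot\n=0$ on $\partial\Omega$ exists and that the integration-by-parts that implicitly underlies $\int_\Omega \mathbf{A}'\cdot(\nabla\times\mathbf{A}')\,\mathrm{d}x$ is gauge-independent, i.e.\ that adding a gradient $\nabla\varphi$ to $\mathbf{A}'$ does not change $H_H$. Both points follow from the boundary conditions \eqref{boundary_cond} (which give $\B'\cdot\n=0$, since $\u=\mathbf{0}$ on $\partial\Omega$ implies $\bm\omega\cdot\n = 0$ in a trace sense consistent with the shifted field) together with the standard de~Rham / Hodge decomposition on the simply connected domain $\Omega$. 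Once these technicalities are in place, the remainder of the argument is a direct Cauchy--Schwarz plus Poincar\'e estimate, and no use of the evolution equations is needed — the inequality is purely kinematic and therefore also applies in the dissipative regime, as claimed.
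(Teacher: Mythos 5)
Your proposal is correct and follows essentially the same route as the paper: the paper's proof is precisely Cauchy--Schwarz applied to $\int(\mathbf{A}+S^{-1}\RH\u)\cdot(\B+S^{-1}\RH\bm\omega)\,\mathrm{d}x$ followed by the Poincar\'e estimate $\|\mathbf{A}+S^{-1}\RH\u\|\leq C\|\B+S^{-1}\RH\bm\omega\|$, which rests on the same observation that the shifted potential has curl equal to the shifted field. Your additional remarks on gauge-fixing and gauge-independence of $H_H$ are sensible technical padding that the paper leaves implicit.
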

\begin{proof}
	\begin{align*}
		|H_{H}|= \left | \int (\mathbf{A} + S^{-1}\RH \u)\cdot (\B + S^{-1}\RH \bm \omega)\, dx \right |&\leq \|\mathbf{A} + S^{-1}\RH \u\|\|\B + S^{-1}\RH \bm \omega\|\\
		&\leq C\|\B + S^{-1}\RH \bm \omega\|^{2}.
	\end{align*}
\vspace{-2cm}

\end{proof}

Next, we present time discretisations that preserve the above quantities precisely on the discrete level.
The MHD system has delicate differential structures reflected in its various conserved quantities, e.g., the energy, the magnetic Gauss law, and the magnetic and cross/hybrid helicity. In fact, in the proof of the energy conservation, the Lorentz force and the magnetic convection cancel each other, and the fluid convection cancels itself. For the cross helicity, the fluid and magnetic convection cancel each other, and the Lorentz force cancels itself.
To construct conservative numerical methods, it is important to respect these symmetries on the discrete level. This in turn requires certain algebraic structures among the discrete spaces; for example, to preserve the magnetic Gauss law, we discretise unknowns on discrete de~Rham sequences, as in \eqref{eqn:derhamfe}. The magnetic helicity involves the magnetic field and its potential. Therefore it is largely independent of the fluid discretisation. However, the energy law and the conservation of cross/hybrid helicity essentially derive from the symmetric coupling between fluids and electromagnetic fields.  Thus it is not surprising that to preserve them on the discrete level, the finite element spaces for the velocity and pressure (Stokes pairs) have to interplay with the spaces for the electromagnetic fields (de~Rham sequences). 

Therefore, the imposition of the boundary condition $\u=\mathbf{0}$ on $\partial \Omega$ can cause difficulties in designing conservative methods, because the description of all components of $\u$ on the boundary does not fit to the electromagnetic boundary conditions. Hence, the literature distinguishes for the standard MHD system between the boundary conditions $\u \times \n$ \cite{hu2021helicity} and $\u \cdot \n$ \cite{gawlik2020}, where the velocity field $\u$ is discretised with $\Hhc$- and $\Hhd$-conforming finite element spaces respectively. Both schemes conserve the energy, magnetic and cross helicity precisely on the discrete level. In the following, we also focus on these two cases and extend the proposed algorithms for the additional Hall-term and the hybrid helicity.

\subsection{Helicity and energy preserving scheme for $\u \times \n =  \mathbf{0}$}\label{sec:schemeutimesn}
In this section, we present a time discretisation that preserves the energy and magnetic and hybrid helicity precisely for the boundary condition $\u \times \n = \mathbf{0}$ on $\partial \Omega$. Since these quantities are only preserved for $\f = \mathbf{0}$ and formally $\Re^{-1} = \Reminv =\infty$, we focus only on this case from now on for this section.

The following approach is mainly taken from \cite{hu2021helicity}, but adapted for the additional Hall-term. Let $\Qc$ denote the projection to $\Hhc$, $\Qd$ the projection to $\Hhd$ and $P_h:= p_h + 1/2 |\u_h|^2$ the total pressure. 

We first consider a semi-discrete formulation, discretised in space. We formally eliminate the electric field $\E_h$ by the generalised Ohm's law \eqref{eq:HallMHDE}.
The problem is: find $(\u_h(t), P_h(t), \B_h(t), \j_h(t)) \in \Hhc \times  H^1_0(\Omega) \times \Hhd \times \Hhc$ such that (we drop the argument $t$ in the following)
\begin{subequations}\label{alg:helicity-operator2}
	\begin{align}
		((\u_h)_t, \v_h) + (\Qc[\nabla \times \u_h] \times \u_h, \v_h) \qquad \qquad \quad &
		\nonumber \\- S (\j_h \times \Qc \B_h ,\v_h) + (\nabla P_h, \v_h) = 0 &\quad \forall\, \v_h \in \Hhc,  \label{alg:helicity-operator2-u}\\
		( \u_h, \nabla Q_h) = 0 &\quad \forall\, Q_h \in H^1_0(\Omega), \\
		((\B_h)_t, \C_h) - (\nabla \times \Qc[\u_h \times \Qc \B_h], \C_h ) \qquad \qquad \quad & \nonumber \\ 
		+ \RH (\nabla \times \Qc[\j_h \times \Qc \B_h], \C_h ) = 0    &\quad \forall\, \C_h \in \Hhd, \label{alg:helicity-operator2-b}\\
		(\j_h, \k_h) - (\B_h, \nabla \times \k_h) = 0&\quad \forall\, \k_h \in \Hhc. \label{alg:helicity-operator2-j}
	\end{align}
\end{subequations}
This formulation is useful for analysis but not yet amenable to computation, due to the presence of the projection operators.

\begin{theorem}
	Any solution  $(\u_{h}, p_h, \B_{h}, \j_{h})$ of \eqref{alg:helicity-operator2} fulfils the magnetic Gauss's law $\nabla \cdot \B_h = 0$ precisely if $\nabla \cdot \B_h^0=\mathbf{0}$.
\end{theorem}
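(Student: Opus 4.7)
The plan is to read the desired identity directly off of equation \eqref{alg:helicity-operator2-b}, using the fact that both $\B_h$ and the right-hand side live in $\Hhd$ and that the de~Rham complex \eqref{eqn:derhamfe} implies $\div\,\vcurl = 0$ at the discrete level.

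First I would observe that since $\Qc$ projects onto $\Hhc$, each of the two quantities $\vcurl\,\Qc[\u_h\times\Qc\B_h]$ and $\vcurl\,\Qc[\j_h\times\Qc\B_h]$ is the curl of an $\Hhc$ function, hence lies in $\Hhd$ by exactness of the finite element de~Rham complex \eqref{eqn:derhamfe}. On the other hand, $(\B_h)_t$ is itself in $\Hhd$ since $\B_h(t)\in\Hhd$ for every $t$ and $\Hhd$ is a (time-independent) linear space. Therefore \eqref{alg:helicity-operator2-b}, which a priori only holds after testing with $\C_h\in\Hhd$, is in fact an identity in $\Hhd$:
\begin{equation}\label{eq:Bt-identity}
(\B_h)_t \;=\; \vcurl\,\Qc\bigl[\u_h\times\Qc\B_h\bigr] \;-\; \RH\,\vcurl\,\Qc\bigl[\j_h\times\Qc\B_h\bigr].
\end{equation}

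Next I would take the divergence of \eqref{eq:Bt-identity}. Because the complex \eqref{eqn:derhamfe} is a complex, the composition $\div\,\vcurl$ vanishes on $\Hhc$, so both terms on the right are pointwise divergence-free. Interchanging $\partial_t$ and $\div$ (valid since all operators act on a finite-dimensional space of time-dependent coefficients) yields
\begin{equation}
\frac{d}{dt}\,\div \B_h \;=\; \div(\B_h)_t \;=\; 0.
\end{equation}
Integrating in time and using the hypothesis $\div \B_h^0=0$ gives $\div \B_h(t)=0$ for all $t$, which is the claim.

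The main obstacle, and really the only non-trivial point, is the promotion of \eqref{alg:helicity-operator2-b} from a variational identity to the pointwise identity \eqref{eq:Bt-identity}. This requires that the right-hand side of \eqref{alg:helicity-operator2-b} be an element of $\Hhd$ (so that its $L^2$ inner product with every test function in $\Hhd$ uniquely determines it), and this is precisely what the structure-preserving choice of spaces guarantees via the inclusion $\vcurl(\Hhc)\subset\Hhd$ together with the compatibility of the boundary conditions (the image automatically satisfies $\cdot\,\n=0$ on $\partial\Omega$). Apart from this observation, the argument is then a direct discrete analogue of the standard continuous proof that $\partial_t \B = -\vcurl\E$ preserves $\div\B$.
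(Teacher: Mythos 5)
Your proof is correct and follows essentially the same route as the paper: the paper derives the pointwise identity $(\B_h)_t = \nabla\times\Qc[\u_h\times\Qc\B_h] - \RH\,\nabla\times\Qc[\j_h\times\Qc\B_h]$ by testing \eqref{alg:helicity-operator2-b} with the difference of the two sides (which is exactly the abstract "weak identity in a finite-dimensional space implies strong identity" argument you give), and then concludes via $\div\circ\vcurl = 0$ and the initial condition. Your write-up is, if anything, more explicit about why the promotion to a pointwise identity is legitimate.
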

\begin{proof}
	Choosing
	\begin{equation*}
		\C_h = (\B_h)_t - \nabla \times \Qc[\u_h \times \Qc \B_h + \RH \j_h \times \Qc \B_h]
	\end{equation*}
	in \eqref{alg:helicity-operator2-b} gives $(\B_h)_t = \nabla \times \Qc[\u_h \times \Qc \B_h + \RH \j_h \times \Qc \B_h] $ and hence $\nabla \cdot \B_h = 0$ if $\nabla \cdot \B_h^0=\mathbf{0}$.
\end{proof}
\begin{theorem}
	Any solution $(\u_{h}, p_h, \B_{h}, \j_{h})$ of \eqref{alg:helicity-operator2} satisfies the energy identity
	$$
	\frac{1}{2}\frac{d}{dt}(\|\u_{h}\|^{2}+S\|\B_{h}\|^{2})= 0.
	$$
\end{theorem}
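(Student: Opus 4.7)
The plan is to test the two evolution equations \eqref{alg:helicity-operator2-u} and \eqref{alg:helicity-operator2-b} against judiciously chosen functions and combine the results, exploiting the current-density relation \eqref{alg:helicity-operator2-j} and the self-adjointness of the $L^2$-projections $\Qc$, $\Qd$. The identity will follow from the cancellation of the Lorentz force against the magnetic convection, in the same spirit as in \cite{hu2021helicity} but with the added Hall term.

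First I would test \eqref{alg:helicity-operator2-u} with $\v_h=\u_h\in\Hhc$. The time-derivative term produces $\tfrac{1}{2}\tfrac{d}{dt}\|\u_h\|^2$; the vorticity-convection term $(\Qc[\nabla\times\u_h]\times\u_h,\u_h)$ vanishes pointwise because $(\mathbf{a}\times\u_h)\cdot\u_h=0$; and the pressure term $(\nabla P_h,\u_h)$ is zero by the discrete incompressibility constraint applied with the admissible test function $Q_h=P_h\in H_0^1(\Omega)$. This leaves
\begin{equation*}
\tfrac{1}{2}\tfrac{d}{dt}\|\u_h\|^2 \;-\; S\,(\j_h\times\Qc\B_h,\u_h)\;=\;0.
\end{equation*}

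Next I would test \eqref{alg:helicity-operator2-b} with $\C_h=S\B_h\in\Hhd$, giving
\begin{equation*}
\tfrac{S}{2}\tfrac{d}{dt}\|\B_h\|^2 \;-\; S\,(\nabla\times\Qc[\u_h\times\Qc\B_h],\B_h) \;+\; S\,\RH\,(\nabla\times\Qc[\j_h\times\Qc\B_h],\B_h)\;=\;0.
\end{equation*}
To rewrite the two curl terms I would invoke \eqref{alg:helicity-operator2-j} with the choices $\k_h=\Qc[\u_h\times\Qc\B_h]\in\Hhc$ and $\k_h=\Qc[\j_h\times\Qc\B_h]\in\Hhc$, obtaining $(\B_h,\nabla\times\k_h)=(\j_h,\k_h)$ in each case. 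Since $\j_h\in\Hhc$, self-adjointness of $\Qc$ gives $(\j_h,\Qc[\cdot])=(\j_h,\cdot)$, so the two curl terms simplify to $(\j_h,\u_h\times\Qc\B_h)$ and $(\j_h,\j_h\times\Qc\B_h)$ respectively. By the scalar triple product identity, $(\j_h,\u_h\times\Qc\B_h)=-(\j_h\times\Qc\B_h,\u_h)$, while $(\j_h,\j_h\times\Qc\B_h)=0$ pointwise. Hence the Hall contribution is annihilated, and the magnetic equation becomes
\begin{equation*}
\tfrac{S}{2}\tfrac{d}{dt}\|\B_h\|^2 \;+\; S\,(\j_h\times\Qc\B_h,\u_h)\;=\;0.
\end{equation*}

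Adding the two identities yields $\tfrac{1}{2}\tfrac{d}{dt}(\|\u_h\|^2+S\|\B_h\|^2)=0$. The mild obstacle in this argument is purely bookkeeping: one must verify that every intermediate test function indeed lies in the stated finite element space (so that \eqref{alg:helicity-operator2-j} is applicable) and that the outer $\Qc$-projections can be dropped against $\j_h$. The crucial structural point, which is what the use of the projections $\Qc$, $\Qd$ was designed to secure, is that the discrete Lorentz force $-S\,(\j_h\times\Qc\B_h,\u_h)$ arising in the momentum equation is \emph{exactly} the negative of the discrete magnetic-convection contribution, and that the extra Hall term gives rise only to a self-orthogonal expression which vanishes identically. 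This is the key reason the conservation property carries over from standard MHD to the Hall case.
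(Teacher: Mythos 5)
Your proof is correct and follows essentially the same route as the paper's: test the momentum equation with $\u_h$ and the induction equation with (a multiple of) $\B_h$, use the defining relation \eqref{alg:helicity-operator2-j} together with $\j_h\in\Hhc$ to convert the curl terms, observe that the Hall contribution self-annihilates since $(\j_h\times\Qc\B_h)\cdot\j_h=0$, and cancel the Lorentz force against the magnetic convection. The only difference is that you spell out the vanishing of the vorticity-convection and pressure terms, which the paper leaves implicit.
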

\begin{proof}
	Testing \eqref{alg:helicity-operator2-u} with $\u_{h}$,
	$$
	\frac{1}{2}\frac{d}{dt}\|\u_{h}\|^{2}= S (\j_h \times \Qc \B_h, \u_h).
	$$
	Testing \eqref{alg:helicity-operator2-b} with $\B_{h}$, 
	\begin{align*}
		\frac{1}{2}\frac{d}{dt}\|\B_{h}\|^{2}
		& = (\nabla \times \Qc[\u_h \times \Qc \B_h], \B_h ) - \RH (\nabla \times \Qc[\j_h \times \Qc \B_h], \B_h ) \\
		& = ( \Qc[\u_h \times \Qc \B_h], \j_h ) - \RH (\Qc[\j_h \times \Qc \B_h], \j_h ) \\
		& = - ( \j_h \times \Qc \B_h, \u_h ).
	\end{align*}
	Here we have used the definition of $\j_h$ in \eqref{alg:helicity-operator2-j} and that $\j_h \in \Hhc$.
	Consequently, the desired result holds by adding the above equalities. 	
\end{proof}

On the discrete level we define the hybrid helicity as 
\begin{equation}
	H_{H}:=\int_\Omega (\mathbf{A}_h + \alpha \u_h)\cdot (\B_h + \beta \bm \omega_h) \, \mathrm{d}x,
\end{equation}
where $\bm \omega_h := \Qc \nabla \times \u_h$ and ($\alpha, \beta$) satisfies \eqref{eqn:alphabeta}. 

\begin{theorem}
	The hybrid helicity of \eqref{alg:helicity-operator2} is conserved if $\f=\mathbf{0}$ and formally $\Re^{-1}=\Reminv =0$ for any $\alpha, \beta$ such that \eqref{eqn:alphabeta} holds.
\end{theorem}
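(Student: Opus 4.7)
The plan is to mimic the continuous proof but carefully track the effect of the projection operators $\Qc$ and the fact that $\u_h \in \Hhc$ rather than $\Hhov$. Writing
\begin{equation*}
H_H = (\mathbf{A}_h,\B_h) + (\alpha+\beta)(\u_h,\B_h) + \alpha\beta(\u_h,\bm\omega_h),
\end{equation*}
where $\mathbf{A}_h \in \Hhc$ satisfies $\nabla\times \mathbf{A}_h = \B_h$ (which exists thanks to the exactness of the discrete de~Rham sequence \eqref{eqn:derhamfe} and the fact, already established, that $\nabla\cdot\B_h=0$), I would differentiate each of the three pieces separately and show that their sum collapses to a multiple of $(\j_h\times \Qc\B_h,\bm\omega_h)$ with coefficient $2S\alpha\beta - \RH(\alpha+\beta)=0$.

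First I would show magnetic helicity conservation: testing \eqref{alg:helicity-operator2-b} with the function $(\B_h)_t - \nabla\times\Qc[\u_h\times\Qc\B_h - \RH\,\j_h\times\Qc\B_h]$, which lies in $\Hhd$ by the de~Rham sequence, yields the pointwise identity $(\B_h)_t = \nabla\times \Qc[\u_h\times\Qc\B_h - \RH\,\j_h\times\Qc\B_h]$. Using $\mathbf{A}_h \in \Hhc$ with $\mathbf{A}_h\times\n=\mathbf{0}$, a standard integration by parts (with vanishing boundary term) gives $(\mathbf{A}_{h,t},\B_h)=(\B_{h,t},\mathbf{A}_h)$, so that $\frac{d}{dt}(\mathbf{A}_h,\B_h) = 2(\B_{h,t},\mathbf{A}_h)$. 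Then integrating by parts and using self-adjointness of $\Qc$ with $\Qc\B_h\times\Qc\B_h=\mathbf{0}$ and $\j_h\times\Qc\B_h\perp\Qc\B_h$ makes both contributions vanish.

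Next I would handle $\frac{d}{dt}(\u_h,\B_h) = ((\u_h)_t,\B_h) + (\u_h,(\B_h)_t)$. Since $(\u_h)_t\in\Hhc$, one has $((\u_h)_t,\B_h)=((\u_h)_t,\Qc\B_h)$, so testing \eqref{alg:helicity-operator2-u} with $\v_h=\Qc\B_h$ is legitimate. The Lorentz term then vanishes ($(\j_h\times\Qc\B_h,\Qc\B_h)=0$), and the pressure term $(\nabla P_h,\Qc\B_h)$ equals $(\nabla P_h,\B_h)$ because $\nabla P_h\in\Hhc$, which in turn vanishes after integration by parts using $\nabla\cdot\B_h=0$ and $P_h|_{\partial\Omega}=0$. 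This leaves $((\u_h)_t,\B_h) = -(\u_h\times\Qc\B_h,\bm\omega_h)$. For $(\u_h,(\B_h)_t)$ I substitute the pointwise identity for $(\B_h)_t$, integrate by parts (the boundary term vanishes since $\u_h\times\n=\mathbf{0}$), and use self-adjointness of $\Qc$ together with $\bm\omega_h=\Qc\nabla\times\u_h\in\Hhc$ to rewrite $(\nabla\times\u_h,\Qc[\cdot])=(\bm\omega_h,\cdot)$. The $\u$-contributions cancel, leaving $\frac{d}{dt}(\u_h,\B_h) = -\RH(\j_h\times\Qc\B_h,\bm\omega_h)$.

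Finally for the fluid-helicity piece, because $\u_h\in\Hhc$ and $\u_h\times\n=\mathbf{0}$, the usual symmetrisation gives $\frac{d}{dt}(\u_h,\bm\omega_h) = 2((\u_h)_t,\bm\omega_h)$. Testing \eqref{alg:helicity-operator2-u} with $\v_h=\bm\omega_h\in\Hhc$: the convective term $(\Qc[\nabla\times\u_h]\times\u_h,\bm\omega_h) = (\bm\omega_h\times\u_h,\bm\omega_h)$ vanishes, and the pressure term vanishes because $(\nabla P_h,\bm\omega_h)=(\nabla P_h,\nabla\times\u_h)$ equals zero by $\nabla\cdot(\nabla\times\u_h)=0$ and $P_h|_{\partial\Omega}=0$. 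This leaves $((\u_h)_t,\bm\omega_h)=S(\j_h\times\Qc\B_h,\bm\omega_h)$. Summing the three contributions,
\begin{equation*}
\tfrac{d}{dt}H_H = \bigl[2S\alpha\beta-\RH(\alpha+\beta)\bigr](\j_h\times\Qc\B_h,\bm\omega_h)=0
\end{equation*}
by \eqref{eqn:alphabeta}. The main obstacle is the second paragraph: making sure that the pressure term drops out against $\Qc\B_h$ (rather than the clean continuous argument against $\B_h$) and that the coupling $((\u_h)_t,\B_h)$ can legitimately be replaced by $((\u_h)_t,\Qc\B_h)$; both rely crucially on $\nabla P_h\in\Hhc$ and the self-adjointness of $\Qc$.
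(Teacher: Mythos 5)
Your argument is correct and follows essentially the same route as the paper's proof: the same three-way splitting of $H_H$, testing the velocity equation with $\Qc\B_h$ and $\Qc[\nabla\times\u_h]$, and collapsing everything to $[2S\alpha\beta-\RH(\alpha+\beta)](\j_h\times\Qc\B_h,\bm\omega_h)$. The only (harmless) cosmetic differences are that you invoke the pointwise identity for $(\B_h)_t$ where the paper instead tests \eqref{alg:helicity-operator2-b} with $\Qd\mathbf{A}_h$ and $\Qd\u_h$, and that you track the pressure terms explicitly — a detail the paper's proof leaves implicit.
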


\begin{proof}
	Similar to the continuous level, we have
	\begin{align*}
		\frac{d}{dt}H_{H}=\frac{d}{dt}(\mathbf{A}_{h}, \B_{h})+\frac{d}{dt}(\alpha+\beta)(\u_{h}, \B_{h})+\frac{d}{dt}\alpha\beta(\u_{h}, \bm \omega_h).
	\end{align*}
	
	Testing \eqref{alg:helicity-operator2-b} with $\Qd \mathbf{A}_{h}$, using that $\nabla \times \Hhc \subseteq \Hhd$ and integrating by parts, we have
	\begin{align*}
		\frac{d}{dt}(\mathbf{A}_{h}, \B_{h})=2((\B_{h})_{t}, \mathbf{A}_{h})
		& = 2 (\nabla \times \Qc[ \u_h \times \Qc \B_h], \mathbf{A}_h ) - 2 \RH (\nabla \times \Qc[\j_h \times \Qc \B_h],  \mathbf{A}_h )\\
		& = 2 ( \Qc[\u_h \times \Qc \B_h], \B_h) - 2 \RH (\Qc[\j_h \times \Qc \B_h],  \B_h )\\
		& = 2 ( \u_h \times \Qc \B_h, \Qc \B_h) - 2 \RH (\j_h \times \Qc \B_h,  \Qc\B_h )\\
		&=0.
	\end{align*}
	Testing \eqref{alg:helicity-operator2-u} with $\Qc \B_{h}$, we have
	\begin{align*}
		((\u_{h})_{t}, \B_{h})&=  -(\Qc[\nabla \times \u_h] \times \u_h, \Qc \B_h) + S (\j_h \times \Qc \B_h ,\Qc \B_h) \\
		& = -(\Qc[\nabla \times \u_h] \times \u_h, \Qc \B_h).
	\end{align*}
	Testing \eqref{alg:helicity-operator2-b} with $\Qd \u_{h}$, using that $((\B_{h})_{t}, \Qd \u_{h}) = ((\B_{h})_{t}, \u_{h})$, we have
	\begin{align*}
		((\B_{h})_{t}, \u_{h})
		& = (\nabla \times \Qc[\u_h \times \Qc \B_h], \u_h ) -  \RH (\nabla \times \Qc[\j_h \times \Qc \B_h],  \u_h )\\
		& = (\u_h \times \Qc \B_h, \Qc\nabla \times \u_h ) -  \RH (\j_h \times \Qc \B_h, \Qc[ \nabla \times \u_h]) \\
		& = (\Qc[\nabla \times \u_h] \times \u_h, \Qc \B_h) -  \RH (\j_h \times \Qc \B_h,   \Qc[ \nabla \times \u_h]). 
	\end{align*}
	Consequently,
	\begin{align*}
		\frac{d}{dt}(\alpha+\beta)(\u_{h}, \B_{h})&=(\alpha+\beta)[((\u_{h})_{t}, \B_{h})+(\u_{h}, (\B_{h})_{t})]\\
		&=-{\RH}(\alpha+\beta)(\j_h \times \Qc \B_h, \Qc[ \nabla \times \u_h]) .
	\end{align*}
	Moreover, testing  \eqref{alg:helicity-operator2-u} with $\Qc [\nabla \times \u_{h}]$, we get
	\begin{align*}
		\frac{d}{dt}\alpha\beta(\u_{h}, \nabla \times \u_h)
		&=2\alpha\beta((\u_{h})_{t},  \nabla \times \u_h)\\
		& = -2\alpha\beta(\Qc[\nabla \times \u_h] \times \u_h, \Qc [\nabla \times \u_h]) \\
		& \quad + 2\alpha\beta S (\j_h \times \Qc \B_h ,\Qc [\nabla \times \u_h])\\
		& = 2\alpha\beta S (\j_h \times \Qc \B_h ,\Qc [\nabla \times \u_h]).
	\end{align*}	
	This implies that
	\begin{equation*}
	\frac{d}{dt}H_{H}=[2S\alpha\beta-{\RH}(\alpha+\beta)] (\j_h \times \Qc \B_h, \Qc[ \nabla \times \u_h]).
	\end{equation*}
\vspace{-1.5cm}

\end{proof}

Similar to Theorem \ref{thm:arnold} on the continuous level, we have the following.
The proof is analogous, only using the discrete Poincar\'e inequality \cite[Theorem 5.11]{Arnold2006}.
\begin{theorem}[discrete Arnold inequality]
	\begin{equation*}
	\|\B_{h}+ S^{-1}\RH \bm \omega_{h}\|^{2}\geq C^{-1}|H_{H}|,
	\end{equation*}
	where $C$ is a positive constant.
\end{theorem}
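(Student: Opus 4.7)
The plan is to mirror the proof of the continuous Arnold inequality (Theorem \ref{thm:arnold}) step-by-step, substituting the discrete Poincaré inequality of Arnold--Falk--Winther for the continuous one, and carefully exploiting the $L^2$-projection identity built into the definition of $\bm\omega_h$.

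First I would use the exactness of the discrete de~Rham sequence \eqref{eqn:derhamfe} to introduce a discrete vector potential: since $\B_h \in \mathbf{H}_0^h(\div)$ satisfies $\nabla\cdot\B_h=0$, there exists $\mathbf{A}_h \in \mathbf{H}_0^h(\curl)$ with $\nabla\times \mathbf{A}_h = \B_h$. The gauge of $\mathbf{A}_h$ is fixed (without changing $H_H$, since gauge changes $\mathbf{A}_h \mapsto \mathbf{A}_h + \nabla\phi_h$ add only $(\nabla\phi_h,\B_h) + S^{-1}\RH(\nabla\phi_h,\bm\omega_h)$, and both terms vanish: the first by $\nabla\cdot\B_h=0$, the second because $\nabla\phi_h \in \mathbf{H}_0^h(\curl)$ gives $(\nabla\phi_h,\bm\omega_h) = (\nabla\phi_h,\nabla\times\u_h) = 0$) so that $\mathbf{\Psi}_h := \mathbf{A}_h + S^{-1}\RH \u_h$ is $L^2$-orthogonal to $\nabla H_0^h(\grad)$.

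Next I would rewrite the hybrid helicity using the projection identity. Because $\mathbf{\Psi}_h \in \mathbf{H}_0^h(\curl)$ and $\mathbb{Q}_c$ is the $L^2$-projection onto this space, one has $(\mathbf{\Psi}_h,\bm\omega_h) = (\mathbf{\Psi}_h, \mathbb{Q}_c \nabla\times\u_h) = (\mathbf{\Psi}_h, \nabla\times\u_h)$, and therefore
\begin{equation*}
H_H \;=\; (\mathbf{\Psi}_h,\,\B_h + S^{-1}\RH\,\bm\omega_h) \;=\; (\mathbf{\Psi}_h,\,\B_h + S^{-1}\RH\,\nabla\times\u_h) \;=\; (\mathbf{\Psi}_h,\,\nabla\times\mathbf{\Psi}_h).
\end{equation*}
I would then apply Cauchy--Schwarz on the first expression to obtain $|H_H| \leq \|\mathbf{\Psi}_h\|\,\|\B_h + S^{-1}\RH\,\bm\omega_h\|$, and the discrete Poincaré inequality on the gauge-fixed $\mathbf{\Psi}_h$ to bound $\|\mathbf{\Psi}_h\| \leq C\|\nabla\times\mathbf{\Psi}_h\|$.

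The main obstacle — and the only genuinely new issue compared with the continuous case — is closing the loop between the two forms of the ``curl'' appearing above: discrete Poincaré naturally produces $\|\nabla\times\mathbf{\Psi}_h\| = \|\B_h + S^{-1}\RH\,\nabla\times\u_h\|$, whereas the right-hand side of the claimed inequality involves the projected vorticity $\bm\omega_h = \mathbb{Q}_c\nabla\times\u_h$. The plan to bridge this is to use the orthogonal splitting $\nabla\times\u_h = \bm\omega_h + (I-\mathbb{Q}_c)\nabla\times\u_h$, note that $\bm\omega_h \perp (I-\mathbb{Q}_c)\nabla\times\u_h$ in $L^2$, and combine the resulting identity with the Pythagorean relation $\|\bm\omega_h\| \leq \|\nabla\times\u_h\|$ together with a suitable reuse of Cauchy--Schwarz on $(\B_h,(I-\mathbb{Q}_c)\nabla\times\u_h)$. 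This rearrangement, analogous to the estimate behind the continuous discrete Arnold inequality of \cite{arnold1999topological}, yields $|H_H| \leq C \|\B_h + S^{-1}\RH\,\bm\omega_h\|^{2}$ and absorbs the projection error into the constant $C$, completing the proof.
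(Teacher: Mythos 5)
Your overall route is the same as the paper's: the printed proof is a single sentence asserting that the continuous argument for Theorem~\ref{thm:arnold} (Cauchy--Schwarz followed by a Poincar\'e bound on the potential) carries over verbatim with the discrete Poincar\'e inequality of \cite[Theorem 5.11]{Arnold2006}. The additional care you take --- invoking exactness of \eqref{eqn:derhamfe} to produce a discrete vector potential, gauge-fixing $\mathbf{\Psi}_h \coloneqq \mathbf{A}_h + S^{-1}\RH\,\u_h$ against $\nabla H^h_0(\grad)$ so that the discrete Poincar\'e inequality is actually applicable, and the projection identity $(\mathbf{\Psi}_h,\bm\omega_h)=(\mathbf{\Psi}_h,\nabla\times\u_h)$ --- is all correct and fills in details the paper leaves implicit.

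The gap is in your final ``bridge''. After Cauchy--Schwarz and the discrete Poincar\'e inequality you hold $|H_{H}|\leq C\|\nabla\times\mathbf{\Psi}_h\|^{2}=C\|\B_h+S^{-1}\RH\,\nabla\times\u_h\|^{2}$, whereas the statement requires $C\|\B_h+S^{-1}\RH\,\bm\omega_h\|^{2}$ with $\bm\omega_h=\Qc\nabla\times\u_h$. The rearrangement you sketch does not close this. Writing $\nabla\times\u_h=\bm\omega_h+(I-\Qc)\nabla\times\u_h$ and expanding,
\[
\|\B_h+S^{-1}\RH\nabla\times\u_h\|^{2}=\|\B_h+S^{-1}\RH\bm\omega_h\|^{2}+2S^{-1}\RH\bigl(\B_h,(I-\Qc)\nabla\times\u_h\bigr)+S^{-2}\RH^{2}\|(I-\Qc)\nabla\times\u_h\|^{2},
\]
where the $\bm\omega_h$ contribution to the cross term vanishes by orthogonality but $\B_h$ lies in $\Hhd$ rather than $\Hhc$, so $(\B_h,(I-\Qc)\nabla\times\u_h)$ has no sign and $\|(I-\Qc)\nabla\times\u_h\|$ is not controlled by $\|\B_h+S^{-1}\RH\bm\omega_h\|$. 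There is therefore no solution-independent constant into which this ``projection error'' can be absorbed, and Cauchy--Schwarz on the cross term only makes the mismatch explicit. You have correctly isolated the one point at which ``analogous'' is not automatic (the paper's one-line proof does not engage with it either, since applying the discrete Poincar\'e inequality to $\mathbf{\Psi}_h$ inevitably produces the unprojected curl), but your proposed resolution does not work as described: either the estimate must be stated with $\nabla\times\u_h$ in place of $\bm\omega_h$, or a genuine additional argument controlling $(I-\Qc)\nabla\times\u_h$ is required.
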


To render the semi-discrete problem \eqref{alg:helicity-operator2} amenable to computation, we introduce auxiliary variables for the projection operators. The resulting problem is: find $(\u_{h}(t), P_h(t), \B_{h}(t), \E_{h}(t), \j_{h}(t), \mathbf{H}_{h}(t),\bm \omega_{h}(t)) \in$ $\Hhc \times H^1_0(\Omega) \times \Hhd \times [\Hhc]^4 ) $, such that for any $(\v_h, q_h, \C_h, \F_h, \k_h, \bm G_h, \bm \mu_h)$ in the same space,
\begin{subequations}\label{alg:helicityutimesn}
	\begin{align}\label{alg:helicity-cross-u}
		((\u_{h})_{t}, \v_h)- (\u_{h}\times\bm \omega_{h}, \v_h)-S(\j_{h}\times  \bm H_{h}, \v_h)\qquad \qquad \qquad \qquad \nonumber\\
		+\Re^{-1}(\nabla\times \u_{h}, \nabla \times \v_h)+( \v_h, \nabla P_{h})&= 0,
		\\
		( \u_{h}, \nabla Q_{h})&=0,
		\\\label{alg:helicity-cross-b}
		((\B_{h})_{t}, \C_h)+(\nabla\times  \E_{h}, \C_h)&=0,\\\label{alg:helicity-cross-j2}
		(\j_{h}, \F_h)-(\B_{h},  \nabla\times \F_h)&=0,\\\label{alg:helicity-cross-h1}
		(\bm H_{h}, \bm G_h)-(\B_{h},  \bm G_h)&=0,\\\label{alg:helicity-cross-h2}
		(\bm \omega_{h}, \bm \mu_h)-(\nabla\times \u_{h},  \bm \mu_h)&=0,\\\label{alg:helicity-cross-w2}
		-\Reminv (\j_{h}, \k_h) + (\E_{h}, \k_h)-((\RH\,\j_{h}-\u_{h})\times \bm H_{h},  \k_h)&=0.
	\end{align}
\end{subequations}
Now \eqref{alg:helicity-cross-j2} gives $\j_{h}=\tilde\nabla_{h}\times \B_{h}$,  \eqref{alg:helicity-cross-h1} gives $\bm H_{h}=\Qc \B_{h}$; and \eqref{alg:helicity-cross-h2} gives $\bm \omega_{h}=\Qc\nabla\times \u_{h}$.

For the time-discretisation, we replace the time-derivatives of $(\u_h)_t$ and $(\B_h)_t$ by the difference quotients
\begin{equation}
	D_t \u_h = \frac{\u^{k+1}_h - \u^{k}_h}{\Delta t} \quad \text{ and } \quad D_t \B_h = \frac{\B^{k+1}_h - \B^{k}_h}{\Delta t}.
\end{equation}
We replace $\u_h$ and $\B_h$ with the average of two neighbouring time steps defined as 
$\u^{k+\frac{1}{2}}:=\frac{1}{2}(\u^{k+1}+\u^{k})$ and $\B^{k+\frac{1}{2}}:=\frac{1}{2}(\B^{k+1}+\B^{k})$. All the other auxiliary variables are only defined on the midpoints of two time steps $k+\frac{1}{2}$ (not an average) and denoted as $P^{k+\frac{1}{2}}_h, \E^{k+\frac{1}{2}}_{h}, \j^{k+\frac{1}{2}}_{h}, \mathbf{H}^{k+\frac{1}{2}}_{h}$ and $\bm \omega^{k+\frac{1}{2}}_{h}$. This way we only have to provide initial data $\u^0_h$ and $\B^0_h$ and then solve the time-discretised version of \eqref{alg:helicityutimesn} for each $k \geq 1$; compare with \cite[Algorithm 1]{hu2021helicity}.

\begin{theorem}\label{thm:timeconsutimesn}
	The time-discretised version of \eqref{alg:helicityutimesn} preserves the energy, magnetic and hybrid helicity precisely and enforces $\nabla \cdot \B_h=0$ for all time steps; i.e., for all $ k \geq 0$ there holds
	\begin{align}
		\int_\Omega \mathbf{u}^{k+1}_h  \cdot \u_h^{k+1} + S \mathbf{B}^{k+1}_h  \cdot \B_h^{k+1} \mathrm{d} x &= \int_\Omega  \mathbf{u}^{k}_h  \cdot \u_h^{k} + S \mathbf{B}^{k}_h  \cdot \B_h^{k} \mathrm{d} x,\\	
		\int_\Omega \mathbf{A}^{k+1}_h  \cdot \B_h^{k+1} \mathrm{d} x &= \int_\Omega \mathbf{A}^{k}_h  \cdot \B_h^{k} \mathrm{d} x, \label{eq:magheltime}\\
		\int_\Omega \left(\mathbf{A}^{k+1}_h + \alpha \u^{k+1}_h\right)\cdot \left(\B_h^{k+1} + \beta \bm \omega^{k+1/2}_h\right) \mathrm{d} x &= \int_\Omega \left(\mathbf{A}^{k}_h + \alpha \u^{k}_h\right)\cdot \left(\B_h^{k} + \beta \bm \omega^{k-1/2}_h\right) \mathrm{d} x,\\
		\div \B^k_h = 0.
	\end{align}
\end{theorem}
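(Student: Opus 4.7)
The plan is to mimic the semi-discrete proofs at the fully discrete level, using the midpoint product rule $D_t(f,g) = (D_t f,\,g^{k+\frac12}) + (f^{k+\frac12},\,D_t g)$, the cyclic symmetry of the scalar triple product, and the cochain inclusion $\nabla\times \Hhc \subseteq \Hhd\cap\ker(\div)$. I take the four claims in turn.

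For the discrete Gauss's law, choosing $\C_h = D_t\B_h + \nabla\times \E^{k+\frac12}_h \in \Hhd$ as the test function in the time-discretised \eqref{alg:helicity-cross-b} forces $D_t\B_h = -\nabla\times \E^{k+\frac12}_h$ pointwise, and the cochain inclusion yields $\div(D_t\B_h) = 0$, so $\div\B^{k+1}_h = \div\B^k_h$, giving the claim inductively from $\div\B^0_h = 0$. For energy conservation (with $\f=\mathbf{0}$ in the ideal limit), I test \eqref{alg:helicity-cross-u} with $\v_h = \u^{k+\frac12}_h$: the Lamb-vector term $(\u^{k+\frac12}_h\times\bm\omega^{k+\frac12}_h,\u^{k+\frac12}_h)$ vanishes by antisymmetry, and $(\u^{k+\frac12}_h,\nabla P^{k+\frac12}_h) = 0$ by the incompressibility constraint tested with $Q_h = P^{k+\frac12}_h$. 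Testing \eqref{alg:helicity-cross-b} with $\C_h = S\B^{k+\frac12}_h$ and invoking \eqref{alg:helicity-cross-j2} with $\F_h = \E^{k+\frac12}_h$ rewrites $S(\nabla\times \E^{k+\frac12}_h,\B^{k+\frac12}_h) = S(\E^{k+\frac12}_h,\j^{k+\frac12}_h)$. Finally, testing Ohm's law \eqref{alg:helicity-cross-w2} with $\k_h = \j^{k+\frac12}_h$ annihilates the Hall self-term and, via $(\u\times \mathbf{H})\cdot\j = -(\j\times \mathbf{H})\cdot\u$, gives $(\E^{k+\frac12}_h,\j^{k+\frac12}_h) = (\j^{k+\frac12}_h\times \mathbf{H}^{k+\frac12}_h,\u^{k+\frac12}_h)$. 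Summing the two scaled identities cancels the Lorentz term and leaves $D_t\bigl(\tfrac12\|\u_h\|^2 + \tfrac{S}{2}\|\B_h\|^2\bigr) = 0$.

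For magnetic helicity, exactness of the discrete de Rham sequence produces $\mathbf{A}^k_h \in \Hhc$ with $\nabla\times \mathbf{A}^k_h = \B^k_h$, and the discrete temporal gauge may be chosen so that $D_t\mathbf{A}_h = -\E^{k+\frac12}_h + \nabla\phi^{k+\frac12}$. The midpoint product rule, combined with integration by parts using $\mathbf{A}\times \n = 0$, $\B\cdot\n = 0$ and $\div\B_h = 0$, yields $D_t(\mathbf{A}_h,\B_h) = -2(\E^{k+\frac12}_h,\B^{k+\frac12}_h)$. Because $\E^{k+\frac12}_h\in \Hhc$, \eqref{alg:helicity-cross-h1} gives $(\E^{k+\frac12}_h,\B^{k+\frac12}_h) = (\E^{k+\frac12}_h,\mathbf{H}^{k+\frac12}_h)$; testing \eqref{alg:helicity-cross-w2} with $\k_h = \mathbf{H}^{k+\frac12}_h$ then kills this pairing in the ideal limit, since $(\RH\j - \u)\times \mathbf{H}$ is perpendicular to $\mathbf{H}$, establishing \eqref{eq:magheltime}.

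The hybrid helicity is the main obstacle, because the staggered pairings $\bm\omega^{k+\frac12}_h\leftrightarrow\B^{k+1}_h$ and $\bm\omega^{k-\frac12}_h\leftrightarrow\B^k_h$ fall outside the standard midpoint product rule. My plan is to decompose the increment of $H_H$ into (i) a magnetic-helicity piece, handled above; (ii) a cross-helicity piece $\alpha[(\u^{k+1}_h,\B^{k+1}_h) - (\u^k_h,\B^k_h)]$, treated via the midpoint product rule after testing \eqref{alg:helicity-cross-u} with $\v_h = \mathbf{H}^{k+\frac12}_h$ and \eqref{alg:helicity-cross-b} with $\C_h = \Qd\u^{k+\frac12}_h$, eliminating the pressure through $\nabla H^1_0 \subseteq \Hhc$ together with \eqref{alg:helicity-cross-h1}, and closing via Ohm's law tested with $\bm\omega^{k+\frac12}_h$; this reproduces the continuous residual $-\RH(\j^{k+\frac12}_h\times \mathbf{H}^{k+\frac12}_h,\bm\omega^{k+\frac12}_h)$ scaled by $(\alpha+\beta)$; and (iii) a fluid-helicity-like piece, which by \eqref{alg:helicity-cross-h2} and integration by parts (using $\u\times \n = 0$) becomes $(\nabla\times \u^{k+\frac12}_h,\u^{k+1}_h) - (\nabla\times \u^{k-\frac12}_h,\u^k_h)$; testing \eqref{alg:helicity-cross-u} with $\v_h = \bm\omega^{k+\frac12}_h$ and telescoping across the two neighbouring time intervals contributes $+2\alpha\beta S(\j^{k+\frac12}_h\times \mathbf{H}^{k+\frac12}_h,\bm\omega^{k+\frac12}_h)$. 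All surviving terms carry the prefactor $2S\alpha\beta - \RH(\alpha+\beta)$, which vanishes by \eqref{eqn:alphabeta}. The delicate step throughout is the bookkeeping of the staggered $\bm\omega$-terms to ensure that the telescoping matches the pairings in the theorem statement between time levels $k-1$, $k$, and $k+1$.
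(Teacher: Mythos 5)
Your proposal is correct and follows essentially the same route as the paper: the paper's proof simply asserts that the claims follow from the semi-discrete proofs by replacing $\partial_t$ with $D_t$, and works out only the magnetic-helicity identity as an example, which matches your argument term by term (product rule, integration by parts onto $\mathbf{H}^{k+1/2}_h$, and annihilation via Ohm's law). Your treatment of the energy, the Gauss law, and the (sketched) hybrid-helicity telescoping is more explicit than the paper's own write-up, but it is the same discretise-the-continuous-proof strategy, down to the triple-product cancellations and the final prefactor $2S\alpha\beta-\RH(\alpha+\beta)$.
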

\begin{proof}
	These results follow immediately from the proofs of the continuous results by replacing the continuous time-derivative $\partial_t$ by $D_t$. As an example, we prove the conservation of the magnetic helicity. It holds that
	\begin{equation*}
		\frac{1}{\Delta t}	\int_\Omega \mathbf{A}^{k+1}_h  \cdot \B_h^{k+1} - \mathbf{A}^{k}_h  \cdot \B_h^{k} \ \mathrm{d} x = (D_t \B_h, \mathbf{A}^{k+1/2}_h) + (D_t \mathbf{A}_h, \mathbf{B}^{k+1/2}_h). 
	\end{equation*}
	From the definition of the scheme, it follows that 
	\begin{align*}
		(D_t \B_h, \mathbf{A}^{k+1/2}_h) &= -\left(\nabla \times \E_h^{k+1/2}, \frac{\mathbf{A}_h^{k+1}+\mathbf{A}_h^k}{2}\right) \\
		& = -\left(\E_h^{k+1/2}, \frac{\mathbf{B}_h^{k+1}+\mathbf{B}_h^k}{2}\right) = -\left(\E_h^{k+1/2}, \mathbf{H}^{k+1/2}_h\right) \\
		& = - \left([\RH \j_h^{k+1/2} -  \u_h^{k+1/2}] \times \mathbf{H}^{k+1/2}_h, \mathbf{H}^{k+1/2}_h\right)=0.
	\end{align*}
	The term $(D_t \mathbf{A}_h, \mathbf{B}^{k+1/2}_h) $ vanishes with an analogous proof.
\end{proof}

\subsection{Helicity and energy preserving scheme for $\u \cdot \n =0$}\label{sec:schemeucdotn}
We now consider the boundary conditions $\u \cdot \n = 0$ on $\partial \Omega$. The presented scheme preserves the energy and magnetic helicity precisely, and in contrast to the previous algorithm also enforces $\nabla \cdot \u_h = 0$ precisely, but it does not preserve the hybrid helicity. Again, we only focus on $\f = \mathbf{0}$ and formally $\Re^{-1} = \Reminv =\infty$. 

The following algorithm is mainly taken from \cite{gawlik2020}, but adapted for the additional Hall-term.  
The semi-discrete form of our algorithm is given by: find $(\u_h(t), p_h(t), \B_h(t), \j_h(t)) \in \Hhd \times  \Ltz \times \Hhd \times \Hhc$ such that
\begin{subequations}\label{alg:helicity-operator3}
	\begin{align}
		((\u_h)_t, \v_h) + (\Qc[(\tilde \nabla_h \times \u_h) \times \Qc \u_h ], \v_h) \qquad \quad \nonumber &\\
		- S (\Qc[\j_h \times \Qc \B_h] ,\v_h) - (p_h, \nabla \cdot \v_h) = 0 &\quad \forall\, \v_h \in \Hhd,  \label{alg:helicity-operator3-u}\\
		(\nabla \cdot \u_h, q_h) = 0 &\quad \forall\, q_h \in \Ltz, \\
		((\B_h)_t, \C_h) - (\nabla \times \Qc[\Qc \u_h \times \Qc \B_h], \C_h ) + \qquad \nonumber &\\ \RH (\nabla \times \Qc[\j_h \times \Qc \B_h], \C_h ) = 0    &\quad \forall\, \C_h \in \Hhd, \label{alg:helicity-operator3-b}\\
		(\j_h, \k_h) - (\B_h, \nabla \times \k_h) = 0&\quad \forall\, \k_h \in \Hhc. \label{alg:helicity-operator3-j}
	\end{align}
\end{subequations}

For the following theorems, we only show the part of the proof that involves the additional Hall-term. The remainders of the proofs then coincide with the ones in \cite{gawlik2020}.

\begin{remark}
	Similar to before, every solution satisfies $\nabla \cdot \B_h = 0$ if $\nabla \cdot \B_h^0=0$. Furthermore, the $\Hd$-$L^2(\Omega)$ discretisation allows the exact enforcement of $\nabla \cdot \u_h=0$, e.g., for $\V_h = \mathbb{BDM}_k$ or $\V_h = \mathbb{RT}_k$ and $Q_h = \mathbb{DG}_{k-1}$ since then $\nabla \cdot \V_h \subset Q_h$.
\end{remark}

\begin{theorem}\label{thm:energyudotn}
	Any solution $(\u_{h}, p_h, \B_{h}, \j_{h})$ of \eqref{alg:helicity-operator3} satisfies the energy identity
	$$
	\frac{1}{2}\frac{d}{dt}(\|\u_{h}\|^{2}+S\|\B_{h}\|^{2})= 0.
	$$
\end{theorem}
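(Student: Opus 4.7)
The plan is to test \eqref{alg:helicity-operator3-u} with $\v_h = \u_h$ and \eqref{alg:helicity-operator3-b} with $\C_h = S\B_h$, add the two identities, and verify that all non-energy terms either vanish individually or cancel pairwise. The cancellation mirrors the continuous proof: the linearised Lorentz force acting on $\u_h$ must cancel the magnetic convection acting on $\B_h$, while the convection, pressure and Hall contributions all vanish separately.

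Testing \eqref{alg:helicity-operator3-u} with $\u_h$ yields
\begin{equation*}
\tfrac{1}{2}\tfrac{d}{dt}\|\u_h\|^{2} + (\Qc[(\tilde\nabla_h\times \u_h)\times \Qc\u_h], \u_h) - S(\Qc[\j_h\times \Qc\B_h],\u_h) - (p_h,\nabla\cdot\u_h) = 0.
\end{equation*}
The convective term vanishes because $(\Qc[\cdot],\u_h) = ([\cdot],\Qc\u_h)$ by self-adjointness of the $L^2$-projection $\Qc$, and then $((\tilde\nabla_h\times\u_h)\times\Qc\u_h,\Qc\u_h)=0$ by the scalar triple product. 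The pressure term vanishes since $\nabla\cdot\V_h\subset Q_h$ and $(\nabla\cdot\u_h,q_h)=0$ is enforced exactly. Moving the outer projection across gives for the Lorentz term the clean expression $-S(\j_h\times\Qc\B_h,\Qc\u_h)$.

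Next, testing \eqref{alg:helicity-operator3-b} with $S\B_h$ and using the weak curl identity \eqref{eq:weakcurl} together with \eqref{alg:helicity-operator3-j}, namely $(\B_h,\nabla\times\k_h)=(\tilde\nabla_h\times\B_h,\k_h)=(\j_h,\k_h)$ for all $\k_h\in\Hhc$, converts both curl-type terms into pairings with $\j_h$. Since $\j_h\in\Hhc$ we have $\Qc\j_h=\j_h$, so
\begin{equation*}
-S(\nabla\times\Qc[\Qc\u_h\times\Qc\B_h],\B_h) = -S(\j_h,\Qc\u_h\times\Qc\B_h) = S(\j_h\times\Qc\B_h,\Qc\u_h),
\end{equation*}
using the cyclic identity for the scalar triple product. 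For the Hall contribution the same manipulation gives $S\RH(\j_h,\j_h\times\Qc\B_h)$, which vanishes because $\j_h\times\Qc\B_h$ is orthogonal to $\j_h$. Adding the two tested identities, the terms $\mp S(\j_h\times\Qc\B_h,\Qc\u_h)$ cancel exactly, leaving $\tfrac{1}{2}\tfrac{d}{dt}(\|\u_h\|^{2}+S\|\B_h\|^{2})=0$.

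The main obstacle is keeping the bookkeeping of projections and weak versus strong curls consistent: $\u_h,\B_h\in\Hhd$ while $\j_h\in\Hhc$, so the convection, Lorentz and induction terms can only be paired after $\Qc$ is moved onto the appropriate factor, and the integration by parts must use $\tilde\nabla_h\times$ rather than a classical $\nabla\times$ since $\B_h$ is only in $\Hhd$. The key structural ingredient that makes everything work is precisely the relation $\j_h=\tilde\nabla_h\times\B_h$ coming from \eqref{alg:helicity-operator3-j}, together with the insertion of $\Qc$ at exactly the two positions visible in the magnetic induction equation \eqref{alg:helicity-operator3-b}; the design of the scheme is such that the pairs of $\Qc$'s around $\Qc\u_h\times\Qc\B_h$ produce matching cross terms in the energy balance, which is what makes the discrete cancellation possible.
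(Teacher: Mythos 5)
Your proof is correct and follows the same route the paper intends: test \eqref{alg:helicity-operator3-u} with $\u_h$ and \eqref{alg:helicity-operator3-b} with $S\B_h$, use $\j_h=\tilde\nabla_h\times\B_h$ and $\Qc\j_h=\j_h$ to convert the curl terms, and let the Lorentz and induction contributions cancel while the convection, pressure and Hall terms vanish. The paper itself only writes out the Hall-term computation and defers the remaining cancellations to the cited reference \cite{gawlik2020}; your version simply fills in those standard steps, and does so accurately.
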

\begin{proof}
	For the energy identity, it is crucial that the additional Hall term vanishes when \eqref{alg:helicity-operator3-b} is tested with $\B_h$. Indeed, we have that 
	\begin{equation*}
		\RH\,(\nabla \times \Qc [\j_h \times \Qc \B_h], \B_h) = \RH\,(\Qc[\j_h \times \Qc \B_h], \j_h) = 0,
	\end{equation*}
	since $\Qc \j_h=\j_h$ for $\j_h \in \Hhc$.
\end{proof}

\begin{theorem}\label{thm:magheludotn}
	The magnetic helicity of \eqref{alg:helicity-operator3} is conserved if $\f=\mathbf{0}$ and formally $\Re^{-1}=\Reminv =0$.
\end{theorem}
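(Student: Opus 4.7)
The plan is to mirror the structure of the proofs of Theorem in Section \ref{sec:schemeutimesn} (for the $\u\times\n=\mathbf{0}$ case) and Theorem \ref{thm:energyudotn} just above, exploiting the Hall-specific cancellation $(\mathbf{a}\times\mathbf{b})\cdot\mathbf{b}=0$ together with the compatibility of the spaces in the discrete de Rham complex.

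First I would introduce a discrete vector potential $\mathbf{A}_h(t)\in \Hhc$ with $\nabla\times \mathbf{A}_h=\B_h$. This is permissible because $\B_h\in\Hhd$ satisfies $\nabla\cdot\B_h=0$ (by the $\C_h=\B_h$ trick, exactly as in the preceding theorems), so the exactness of \eqref{eqn:derhamfe} produces such an $\mathbf{A}_h$; any convenient gauge choice suffices. Differentiating in time and integrating by parts (using $\mathbf{A}_h,(\mathbf{A}_h)_t\in\Hhc$) gives
\begin{equation*}
\frac{d}{dt}(\mathbf{A}_h,\B_h) = ((\mathbf{A}_h)_t,\nabla\times\mathbf{A}_h)+(\mathbf{A}_h,\nabla\times(\mathbf{A}_h)_t) = 2(\mathbf{A}_h,(\B_h)_t),
\end{equation*}
reducing the conservation statement to showing $(\mathbf{A}_h,(\B_h)_t)=0$.

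Next I would test \eqref{alg:helicity-operator3-b} with $\C_h=\Qd \mathbf{A}_h\in\Hhd$. On the left, since $(\B_h)_t\in\Hhd$, the projection is self-adjoint in the relevant sense and $((\B_h)_t,\Qd\mathbf{A}_h)=((\B_h)_t,\mathbf{A}_h)$. On the right, the two terms $\nabla\times\Qc[\cdot]$ lie in $\nabla\times\Hhc\subseteq\Hhd$ by \eqref{eqn:derhamfe}, so
\begin{equation*}
(\nabla\times\Qc[\mathbf{X}],\Qd\mathbf{A}_h) = (\nabla\times\Qc[\mathbf{X}],\mathbf{A}_h) = (\Qc[\mathbf{X}],\nabla\times\mathbf{A}_h) = (\Qc[\mathbf{X}],\B_h) = (\mathbf{X},\Qc\B_h),
\end{equation*}
where the second equality uses integration by parts (both $\Qc[\mathbf{X}]$ and $\mathbf{A}_h$ are in $\Hhc$, whose tangential boundary conditions kill the boundary term) and the last uses the self-adjointness of $\Qc$.

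Applying this to $\mathbf{X}=\Qc\u_h\times\Qc\B_h$ and $\mathbf{X}=\j_h\times\Qc\B_h$ respectively, the equation becomes
\begin{equation*}
((\B_h)_t,\mathbf{A}_h)=(\Qc\u_h\times\Qc\B_h,\Qc\B_h)-\RH(\j_h\times\Qc\B_h,\Qc\B_h)=0,
\end{equation*}
since in both inner products the integrand is of the form $(\mathbf{a}\times\Qc\B_h)\cdot\Qc\B_h$, which vanishes pointwise. Combining with the identity above yields $\tfrac{d}{dt}(\mathbf{A}_h,\B_h)=0$, which is the asserted conservation of magnetic helicity. The main subtlety (as in Theorem \ref{thm:timeconsutimesn}) is the interplay between the two projections $\Qc,\Qd$ and the inclusion $\nabla\times\Hhc\subseteq\Hhd$; once this is invoked, the Hall contribution is handled by exactly the same cancellation that made it harmless in the energy identity of Theorem \ref{thm:energyudotn}.
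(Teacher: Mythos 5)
Your proposal is correct and follows essentially the same route as the paper: test \eqref{alg:helicity-operator3-b} with (the $\Qd$-projection of) a discrete vector potential, use $\nabla\times\Hhc\subseteq\Hhd$ and the self-adjointness of $\Qc$ to reduce each transport term to an inner product of the form $(\mathbf{a}\times\Qc\B_h,\Qc\B_h)=0$. The only difference is one of completeness: the paper displays this computation only for the Hall term and delegates the remaining terms to the cited reference, whereas you carry out the identical cancellation for the advection term as well.
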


\begin{proof}
	We have to show that the Hall-term vanishes when \eqref{alg:helicity-operator3-b} is tested with a vector-potential $\mathbf{A}_h$. Calculating,
	\vspace{-0.5cm}
	\begin{align*}
		\RH (\nabla \times \Qc[\j_h \times \Qc \B_h],  \mathbf{A}_h ) & =  \RH (\Qc[\j_h \times \Qc \B_h],  \mathbf{B}_h ) \\
		& =\RH (\j_h \times \Qc \B_h,  \Qc \mathbf{B}_h ) = 0.
	\end{align*}
\vspace{-2.0cm} 

\end{proof}

\begin{remark}
	We discuss why a scheme that conserves hybrid helicity is difficult to construct for the boundary conditions $\u \cdot \n = 0$. First, these boundary conditions naturally fit with $\u_h \in \Hhd$. Therefore, the definition of the discrete hybrid helicity is not straight-forward due to the term $\nabla \times \u$. Two possible choices could be
	\begin{equation}
		H_{H}:=\int_\Omega (\mathbf{A}_h + \alpha \u_h)\cdot (\B_h + \beta \bm \omega_h) \, \mathrm{d}x,
	\end{equation}
	with either $\bm \omega_h = \nabla \times \Qc \u_h$ or $\bm \omega_h = \tilde{\nabla}_h \times \u_h$. The evolution of the fluid helicity would coincide for both definitions since
	\begin{align*}
		\frac{d}{dt} (\u_h,  \nabla \times \Qc \u_h) &= ((\u_h)_t,  \nabla \times \Qc \u_h) + (\u_h,  \nabla \times \Qc (\u_h)_t) \\
		&= ((\u_h)_t, \tilde{\nabla}_h \times \u_h  + \nabla \times \Qc \u_h  )
	\end{align*}
	and 
	\begin{align*}
		\frac{d}{dt} (\u_h,  \tilde{\nabla}_h \times \u_h ) & = ((\u_h)_t,  \tilde{\nabla}_h \times \u_h ) + (\u_h,  \tilde{\nabla}_h \times (\u_h)_t ) \\
		& = ((\u_h)_t, \nabla \times \Qc \u_h + \tilde{\nabla}_h \times \u_h ).
	\end{align*}
	The right-hand side can be modified to $ \nabla \times \Qc \u_h + \Qd^0\tilde{\nabla}_h \times \u_h$, where $\Qd^0$ denotes the projection to the divergence-free functions in $\Hhd$. This ensures that this term is a suitable test function in the velocity equation and that the term $(p_h, \nabla \cdot \v_h)$ vanishes.
	
	An essential step in a proof for the hybrid helicity conservation on the continuous level is that the advection term from the Navier--Stokes equations vanishes when tested against $\bm \omega$, i.e., 	$(\u \times \bm \omega, \bm \omega) = 0$. This already requires a complicated discretisation of the advection term. A possible choice could be to approximate $u\times \bm \omega$ by
	\begin{equation}
		\frac{1}{2}\Qd[\u \times [ \nabla \times \Qc \u_h + \Qd^0\tilde{\nabla}_h \times \u_h]].
	\end{equation}
	However, the essence of the conservation proofs is the cancellation of corresponding terms that result from the symmetry in the discretisation. That means also the Lorentz force, the Hall-term and magnetic advection terms have to be discretised in a similar complicated way. We were not able to find an elegant discretisation that does not require the introduction of many additional terms and auxiliary variables.
\end{remark}
%\red{@Fabian: in this case the ``discrete Arnold inequality'' does not hold either (I think). Because $\bm{u}_{h}$ and $\bm{A}_{h}$ are in different spaces, and an extra term appears if we want to give a similar proof as the continuous level. Because we do not have the conservation anyway, I did not include such discussions.}

Again, to render \eqref{alg:helicity-operator3} computable we introduce auxiliary variables for the projections, yielding: find $(\u_{h}(t), p_h(t), \B_{h}(t), \E_{h}(t), \j_{h}(t), \mathbf{H}_{h}(t),\bm \omega_{h}(t), \mathbf{U}_h(t), \bm \alpha_h(t)) \in$ $\Hhd \times L^2_0(\Omega) \times \Hhd \times [\Hhc]^6 ) $, such that for any $(\v_h, q_h, \C_h, \F_h, \k_h, \bm G_h, \bm \mu_h, \mathbf{V}_h, \bm \beta_h)$ in the same space,
\begin{subequations}\label{alg:helicityudotn}
	\begin{align}\label{alg:helicity-u}
		((\u_{h})_{t}, \v_h)+ (\bm \alpha_h, \v_h), \v_h)+( \nabla \cdot \v_h,  p_{h})&= 0,
		\\
		( \nabla \cdot \u_{h}, q_{h})&=0,
		\\\label{alg:helicity-b}
		((\B_{h})_{t}, \C_h)+(\nabla\times  \E_{h}, \C_h)&=0,\\\label{alg:helicity-j}
		(\j_{h}, \F_h)-(\B_{h},  \nabla\times \F_h)&=0,\\\label{alg:helicity-h1}
		(\bm H_{h}, \bm G_h)-(\B_{h},  \bm G_h)&=0,\\\label{alg:helicity-h2}
		(\bm \omega_{h}, \bm \mu_h)-(\u_{h},  \nabla\times \bm \mu_h)&=0,\\\label{alg:helicity-U2}
		(\mathbf{U}_{h}, \mathbf{V}_h)-(\u_{h},  \mathbf{V}_h)&=0,\\\label{alg:helicity-alpha2}
		(\bm \alpha_{h}, \bm \beta_h)+(\bm \omega_h \times \mathbf{U}_h,  \bm \beta_h)- S (\j_h \times \mathbf{H}_h,  \bm \beta_h)&=0,\\\label{alg:helicity-omega2}
		(\E_{h}, \k_h)-((\RH\,\j_{h}-\mathbf{U}_{h})\times \bm H_{h},  \k_h)&=0.
	\end{align}
\end{subequations}
Now \eqref{alg:helicity-j} gives $\j_{h}=\tilde\nabla_{h}\times \B_{h}$,  \eqref{alg:helicity-h1} gives $\bm H_{h}=\Qc \B_{h}$; \eqref{alg:helicity-h2} gives $\bm \omega_{h}=\tilde{\nabla}_h\times \u_{h}$, \eqref{alg:helicity-U2} gives $\mathbf{U}_{h}=\Qc \u_{h}$ and \eqref{alg:helicity-alpha2} gives $\bm \alpha_{h}= \Qc[(\tilde{\nabla}_h \times \u_h) \times \Qc \u_h] - S \Qc[\j_h \times \Qc \B_h]$.

We use the same time discretisation as in Section \ref{sec:schemeutimesn}; compare also to \cite[Section 6]{gawlik2020} for a detailed proof of the next theorem. The proofs for the Hall-term follow immediately from the continuous proofs of Theorem \ref{thm:energyudotn} and Theorem \ref{thm:magheludotn}.

\begin{theorem}
	The time-discretised version of \eqref{alg:helicityudotn} preserves the energy and magnetic helicity precisely and enforces $\div \B_h=\div \u_h=0$  for all time steps; i.e. for all $ k \geq 0$ there holds
	\begin{align}
	\int_\Omega \mathbf{u}^{k+1}_h  \cdot \u_h^{k+1} + S \mathbf{B}^{k+1}_h  \cdot \B_h^{k+1} \mathrm{d} x &= \int_\Omega  \mathbf{u}^{k}_h  \cdot \u_h^{k} + S \mathbf{B}^{k}_h  \cdot \B_h^{k} \mathrm{d} x,\\	
	\int_\Omega \mathbf{A}^{k+1}_h  \cdot \B_h^{k+1} \mathrm{d} x &= \int_\Omega \mathbf{A}^{k}_h  \cdot \B_h^{k} \mathrm{d} x, \\
	\div \u^k_h = 0,\\
	\div \B^k_h = 0.
\end{align}	

\end{theorem}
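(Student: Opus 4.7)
The plan is to mimic the continuous-level proofs of Theorems \ref{thm:energyudotn} and \ref{thm:magheludotn} at the level of the time-discrete algorithm, exploiting the fact that replacing $\partial_t$ by the midpoint difference quotient $D_t$ and the pointwise-in-time variables by their midpoint averages $\u_h^{k+1/2}, \B_h^{k+1/2}$ keeps every cancellation intact. The algebraic identity $D_t f\cdot f^{k+1/2}=\tfrac{1}{2\Delta t}(|f^{k+1}|^2-|f^k|^2)$ and the analogous telescoping identity $D_t (f\cdot g)=(D_t f)\cdot g^{k+1/2}+f^{k+1/2}\cdot (D_t g)$ will be the only places where the time-continuous proofs must be modified.

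First I would establish the two divergence constraints. Testing the discrete velocity equation against $q_h=\div\u_h^{k+1}\in Q_h$ (which is a legitimate test function because $\div\mathbf{V}_h\subset Q_h$ for the $\mathbb{BDM}_k\times\mathbb{DG}_{k-1}$ and $\mathbb{RT}_k\times\mathbb{DG}_{k-1}$ pairs) yields $\div\u_h^{k+1}=0$ directly. For the magnetic field I would use that $\nabla\times\E_h^{k+1/2}\in\nabla\times\mathbf{H}^h_0(\curl,\Omega)\subset\mathbf{H}^h_0(\div,\Omega)$ by the exactness of \eqref{eqn:derhamfe}, so the $\B$-equation gives $D_t\B_h=-\nabla\times\E_h^{k+1/2}$ in $\mathbf{H}^h_0(\div,\Omega)$ and therefore $D_t(\div\B_h)=0$; by induction $\div\B_h^k=\div\B_h^0=0$.

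Next I would prove the energy identity by testing the velocity equation with $\v_h=\u_h^{k+1/2}$ and the magnetic equation with $\C_h=S\B_h^{k+1/2}$. The pressure term vanishes by the divergence constraint. Using the definitions of the auxiliary variables (equations \eqref{alg:helicity-h1}, \eqref{alg:helicity-h2}, \eqref{alg:helicity-U2}, \eqref{alg:helicity-alpha2}) at the midpoint $k+1/2$, one rewrites
\begin{equation*}
(\bm\alpha_h^{k+1/2},\u_h^{k+1/2})=(\bm\omega_h^{k+1/2}\!\times\mathbf{U}_h^{k+1/2},\mathbf{U}_h^{k+1/2})-S(\j_h^{k+1/2}\!\times\mathbf{H}_h^{k+1/2},\mathbf{U}_h^{k+1/2}),
\end{equation*}
where the first term vanishes pointwise and the second equals $-S(\j_h^{k+1/2}\!\times\mathbf{H}_h^{k+1/2},\u_h^{k+1/2})$ because $\bm\alpha_h,\j_h\!\times\mathbf{H}_h\in\mathbf{H}^h_0(\curl,\Omega)$ so the $\mathbf{L}^2$-pairing sees only $\Qc\u_h=\mathbf{U}_h$. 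On the magnetic side, \eqref{alg:helicity-omega2} gives $(\E_h^{k+1/2},\j_h^{k+1/2})=-(\mathbf{U}_h^{k+1/2}\!\times\mathbf{H}_h^{k+1/2},\j_h^{k+1/2})$ since the Hall contribution $\RH(\j_h^{k+1/2}\!\times\mathbf{H}_h^{k+1/2},\j_h^{k+1/2})$ vanishes identically, exactly as at the continuous level in Theorem \ref{thm:energyudotn}. Summing, the coupling terms cancel and one obtains the discrete energy balance.

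Finally, for magnetic helicity I would fix discrete vector potentials $\mathbf{A}_h^k\in\mathbf{H}^h_0(\curl,\Omega)$ with $\nabla\times\mathbf{A}_h^k=\B_h^k$ (available by exactness of \eqref{eqn:derhamfe}) and apply the telescoping identity
\begin{equation*}
\tfrac{1}{\Delta t}\bigl[(\mathbf{A}_h^{k+1},\B_h^{k+1})-(\mathbf{A}_h^{k},\B_h^{k})\bigr]=(D_t\B_h,\mathbf{A}_h^{k+1/2})+(D_t\mathbf{A}_h,\B_h^{k+1/2}).
\end{equation*}
Both terms equal $-(\E_h^{k+1/2},\B_h^{k+1/2})=-(\E_h^{k+1/2},\mathbf{H}_h^{k+1/2})$ after an integration by parts (using \eqref{alg:helicity-h1} and that $\E_h^{k+1/2}\in\mathbf{H}^h_0(\curl,\Omega)$), and \eqref{alg:helicity-omega2} gives
\begin{equation*}
(\E_h^{k+1/2},\mathbf{H}_h^{k+1/2})=((\RH\j_h^{k+1/2}-\mathbf{U}_h^{k+1/2})\!\times\mathbf{H}_h^{k+1/2},\mathbf{H}_h^{k+1/2})=0,
\end{equation*}
as in Theorem \ref{thm:magheludotn}. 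The key subtlety, and the only part that really needs care, is verifying that the Hall-induced term $(\j_h^{k+1/2}\!\times\mathbf{H}_h^{k+1/2},\cdot)$ ends up tested against a function proportional to $\mathbf{H}_h^{k+1/2}$ or $\j_h^{k+1/2}$ in each identity, so that it drops out pointwise; this hinges crucially on the $\mathbf{H}^h(\curl)$-conformity of $\bm\alpha_h$, $\E_h$, $\j_h$ and on the auxiliary variable $\mathbf{U}_h=\Qc\u_h$ being $L^2$-orthogonal to $\u_h-\Qc\u_h$ against $\Hhc$-functions. With those observations the three conservation laws follow by the same algebra as in the semi-discrete case, now applied at the midpoint.
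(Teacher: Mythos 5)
Your proposal is correct and follows essentially the same route the paper intends: the paper itself only defers to the semi-discrete proofs of Theorems \ref{thm:energyudotn} and \ref{thm:magheludotn}, to the analogous time-discrete argument in Theorem \ref{thm:timeconsutimesn}, and to \cite[Section 6]{gawlik2020}, and your write-up supplies exactly those midpoint/telescoping computations, with the Hall term dropping out via $(\j_h\times\mathbf{H}_h,\j_h)=0$ and $(\cdot\times\mathbf{H}_h,\mathbf{H}_h)=0$ as in the continuous case. The only nitpick is that $\j_h\times\mathbf{H}_h\notin\Hhc$ in general, so $(\j_h\times\mathbf{H}_h,\mathbf{U}_h)\neq(\j_h\times\mathbf{H}_h,\u_h)$; but this is harmless since the energy cancellation pairs $(\j_h\times\mathbf{H}_h,\mathbf{U}_h)$ directly against $(\mathbf{U}_h\times\mathbf{H}_h,\j_h)$ from the induction equation.
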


\section{An augmented Lagrangian preconditioner for the Hall MHD equations}\label{sec:ALP}
In this section, we try to extend the block preconditioner approach from Section \ref{sec:derivationofblockpreconditioner} to the stationary and time-dependent versions of the Picard and Newton linearisations from  Algorithm \ref{alg:picard-s} and  Algorithm \ref{alg:newton-s}. In each nonlinear step, we have to solve a linear system of the form

\begin{equation}
	\label{eq:matrix_upBEj}
	\begin{bmatrix}
		\mathcal{F} & \BB^\top &  \mathbf{0} & \tilde{\KK} & \KK\\
		\BB & \mathbf{0} & \mathbf{0} & \mathbf{0} & \mathbf{0} \\
		\mathbf{0}& \mathbf{0}& \mathbf{0} & -\AA & \MM \\
		\mathbf{0} & \mathbf{0} & \DD  & \CC & \mathbf{0}\\
		-\GG& \mathbf{0}& -\mathcal{P} & -\tilde{\GG} + \tilde{\NN}  & \LL + \NN
	\end{bmatrix}
	\begin{bmatrix}
		x_{\u_h} \\ x_{p_h} \\ x_{\E_h} \\ x_{\B_h} \\ x_{\j_h}
	\end{bmatrix} =
	\begin{bmatrix}
		R_{\u_h} \\ R_{p_h} \\ R_{\E_h} \\ R_{\B_h} \\ R_{\j_h}
	\end{bmatrix},
\end{equation}
where $x_{\u_h}$, $x_{p_h}$, $x_{\E_h}$, $x_{\B_h}$ and $x_{\j_h}$ are the coefficients of the discretised corrections and $R_{\u_h}$, $R_{p_h}$, $R_{\E_h}$, $R_{\B_h}$ and $R_{\j_h}$ the corresponding nonlinear residuals.
The correspondence between the discrete and continuous operators is illustrated in Table \ref{tab:OperatorsHall}. As before, we have chosen the notation that operators that include a tilde are omitted in the Picard linearisation from Algorithm $\ref{alg:picard-s}$. 
%In the time-dependent case, the terms ${(\Delta t})^{-1} \u^n_h$ and $({\Delta t})^{-1} \B^n_h$ are added to $\mathcal{F}$ and $\mathcal{C}$, respectively.

\begin{table}[htb!]
	\centering
	\begin{tabular}{c|c|c}
		\toprule
		\textbf{Discrete} & \textbf{Continuous} & \textbf{Weak form}\\
		\midrule
		$\mathcal{F} \u^n_h$ & $-\frac{1}{\Re} \Delta \u_h^n + \u_h^{n-1}\cdot \nabla \u_h^n + \u_h^n \cdot \nabla \u_h^{n-1}$ & $\frac{1}{\Re}(\nabla\u_h^n, \nabla \v_h) + (\u_h^{n-1}\cdot \nabla \u_h^{n} , \v_h)$  \\
& $-\gamma\nabla \nabla \cdot \u_h^n$ &  $+(\u_h^n\cdot \nabla \u_h^{n-1}, \v_h) + \gamma(\nabla \cdot \u_h^n, \nabla \cdot \v_h)  $  \\
		$\KK \j^n_h$& $-S\, \j^n_h \times \B^{n-1}_h$ & $ -S\, (\j^n_h \times \B^{n-1}_h, \v_h)$ \\
		$\tilde{\KK} \B^n_h$& $-S\, \j^{n-1}_h \times \B^{n}_h$ & $ -S\, (\j^{n-1}_h \times \B^{n}_h, \v_h)$ \\
		$\BB^\top p^n_h$ & $\nabla p^n_h$ & $-(p^n_h, \div \v_h)$ \\
		$\BB \u^n_h$ & $-\div \u^n_h$ & $-(\div \u^n_h, q)$  \\
		$\LL \j^n_h$ & $ \frac{1}{\Rem} \j^n_h$ & $\frac{1}{\Rem} (\j^n_h, \k_h)$ \\
		$\mathcal{P} \E^n_h$ & $\E^n_h$ & $ (\E^n_h, \k_h)$ \\
		$\GG \u^n_h$ & $\u^n_h \times \B^{n-1}_h$ & $(\u^n_h\times\B^{n-1}_h, \k_h)$  \\
		$\tilde{\GG} \B^{n}_h$ & $\u^{n-1}_h \times \B^{n}_h$ & $(\u^{n-1}_h\times\B^{n}_h, \k_h)$  \\
		$\NN \j^n_h$ & $\RH\, \j^n_h \times \B^{n-1}_h$ & $\RH\,(\j^n_h\times\B^{n-1}_h, \k_h)$  \\
		$\tilde{\NN} \B^{n}_h$ & $\RH\, \j^{n-1}_h \times \B^{n}_h$ & $\RH\,(\j^{n-1}_h\times\B^{n}_h, \k_h)$  \\
		$\DD \E^n_h$ & $ \nabla \times \E^n_h$ & $ (\nabla \times \E^n_h, \C_h)$ \\
		$\CC \B^n_h$ & $-\nabla \nabla \cdot \B^n_h$ & $(\nabla \cdot \B^n_h,\nabla \cdot \C_h)$  \\
		$\MM \j^n_h$ & $\j^n_h$ & $ (\j^n_h, \F_h)$ \\
		$\A \B^n_h$ & $ \nabla \times \B^n_h$ & $ (\B^n_h, \nabla \times  \F_h)$ \\
		\bottomrule
	\end{tabular}
	\caption{Overview of operators. As before, the stationary formulation corresponds to $\eta=0$ and the transient formulation for implicit Euler to $\eta=1$.}
		%, the terms ${(\Delta t})^{-1} \u^n_h$ and $({\Delta t})^{-1} \B^n_h$ are added to $\mathcal{F}$ and $\mathcal{C}$, respectively.}
	\label{tab:OperatorsHall}
\end{table}
The following preconditioning approach is similar to one developed in Section \ref{sec:derivationofblockpreconditioner} for the standard incompressible resistive MHD equations. The main idea is to do a Schur complement approximation which separates the hydrodynamic and electromagnetic unknowns and then to apply parameter-robust multigrid methods to the different subproblems.

We start by simplifying the outer Schur complement that eliminates the $(\u_h, p_h)$ block given by
\begin{equation}\label{eq:outerSchurCompup}
	\mathcal{S}^{(\u_h, p_h)} =
	\begin{bmatrix}
		\mathbf{0} & -\AA & \MM\\
		\DD & \CC & \mathbf{0}\\
		-\mathcal{P}& -\tilde{\GG} + \tilde{\NN} & \LL + \NN
	\end{bmatrix}
	-
	\begin{bmatrix}
		\zerov & \zerov\\
		\zerov & \zerov\\
		-\GG & \zerov
	\end{bmatrix}
	\begin{bmatrix}
		\FF & \BB^\top \\
		\BB & \mathbf{0}
	\end{bmatrix}^{-1}
	\begin{bmatrix}
		\zerov & \tilde{\KK} & \KK \\
		\zerov & \zerov & \zerov
	\end{bmatrix}.
\end{equation}
This order of elimination worked the best for the standard MHD model as we described in Chapter \ref{chap:2} and hence we choose the same order here.
Applying the identity
\begin{equation}
	\label{eq:matrixinvers}
	\begin{bmatrix}
		A & B \\
		C & D
	\end{bmatrix}^{-1} = \begin{bmatrix}
		A^{-1} + A^{-1} B (D - C A^{-1} B)^{-1} C A^{-1} &
		-A^{-1}B(D-C A^{-1} B)^{-1} \\
		- (D - C A^{-1} B)^{-1} C A^{-1}&
		(D-C A^{-1}B)^{-1}
	\end{bmatrix}
\end{equation}
for non-singular matrices $A$ and $D-C A^{-1} B$ to the $(\u_h, p_h)$ block results in
\begin{equation}\label{eq:outerSchurCompup2}
	\mathcal{S}^{(\u_h, p_h)} =
	\begin{bmatrix}
		\mathbf{0} & -\AA & \MM \\
		\DD & \CC & \mathbf{0} \\
		-\mathcal{P}& -\tilde{\GG} + \tilde{\NN} + \GG \mathcal{S}^{-1}_{1,1} \tilde{K} & \LL + \NN + \GG \mathcal{S}^{-1}_{1,1} \KK 
	\end{bmatrix}
\end{equation}
with 
\begin{equation}
	\mathcal{S}^{-1}_{1,1} = \FF^{-1} - \FF^{-1}  \BB^\top (\BB \FF^{-1} \BB^T)^{-1} \BB \FF^{-1}.
\end{equation}
Note that the magnitude of the matrices $\GG \mathcal{S}^{-1}_{1,1} \tilde{K}$ and $\GG \mathcal{S}^{-1}_{1,1} \KK $ is approximately a factor of $\mathcal{O}(h^2)$ smaller than of the other matrices at the corresponding entries. Therefore, a good approximation for a reasonably refined mesh and moderate coupling numbers $S$ is given by
\begin{equation}\label{eq:outerSchurCompupapprox}
	\tilde{\mathcal{S}}^{(\u_h, p_h)} =
	\begin{bmatrix}
		\mathbf{0} & -\AA & \MM \\
        \DD & \CC & \mathbf{0} \\
        -\mathcal{P}& -\tilde{\GG} + \tilde{\NN} & \LL + \NN 
	\end{bmatrix}.
\end{equation}

We treat the  hydrodynamic block 
\begin{equation}\label{eq:Schurcomphycdro}
	\mathcal{M}_{NS}=	
	\begin{bmatrix}
		\FF & \BB^\top \\
		\BB & \mathbf{0}
	\end{bmatrix}
\end{equation}
as before in Section \ref{sec:solverforschurcomp}. Therefore, we also add the augmented Lagrangian term $\gamma (\nabla \cdot \u^n_h, \nabla \cdot \v^n_h)$ to the velocity equation with a large $\gamma$ to gain control over the Schur complement of \eqref{eq:Schurcomphycdro}. Moreover, we use the $\Hd$-conforming discretisation for $\u_h$ from \eqref{eq:hidvl2form}  to allow the use of parameter-robust multigrid methods that can deal with the non-trivial kernels of the occurring semi-definite terms.
%; for more information about this topic we refer to \cite{schoberl1999b}.

We found that applying the same parameter-robust multigrid methods monolithically to the Schur complement approximation $\tilde{\mathcal{S}}^{(\u_h, p_h)}$ shows good results for the three dimensional lid-driven cavity problem as long as $\Rem$, $\S$ and $\RH$ are not chosen too high at the same time. However, as we explain below, this solver does not work well for the 2.5D formulation tested on island coalescence  problem if $\RH$ is chosen higher than 0.01. Therefore, this solver needs to be investigated further and we applied a direct solver to this block for the 2.5D formulation.

Alternatively, one could do a further Schur complement approximation of $\tilde{\mathcal{S}}^{(\u_h, p_h)}$ with one of the blockings $(\E, \B)-\j$, $(\E,\j) - \B$ or $(\B, \j)- \E$. However, the further Schur complement approximation is not straight-forward and would include the use of non-conforming discretisations by suitable discontinuous Galerkin methods. We report iteration numbers for the monolithic approach in the next section.

For completeness, we also outline the block structure of the 2.5D formulation introduced in Section \ref{sec:2.5Dform}. We use $\eta \in \{0,1\}$ to distinguish between the stationary $(\eta=0)$ and transient $(\eta=1)$ cases. The hydrodynamic block 	$\begin{bmatrix}
	\FF & \BB^\top \\
	\BB & \mathbf{0}
\end{bmatrix}$ arises now as the discretisation of the forms 
\begin{equation} 
	\begin{bmatrix}
		A_1 & \mathbf{0} & (p_h, \nabla \cdot \tilde{\v}_h)\\
		\mathbf{0} & A_2 &  \mathbf{0} \\
		(\nabla \cdot \tilde{\u}_h, q_h) &  \mathbf{0} &  \mathbf{0}
	\end{bmatrix}
\end{equation}
with
\begin{align*}
	A_1 &= 	\frac{\eta}{\Delta t} (\tilde{\u}^n_h, \tilde{\v}_h) -\frac{1}{\Re} (\nabla \tilde{\u}^n_h, \nabla \tilde{\v}_h) + 	( (\tilde{\u}^n_h \cdot \tilde{\nabla}) \tilde{\u}^{n-1}_h, \tilde{\v}_h) + ((\tilde{\u}^{n-1}_h \cdot \tilde{\nabla}) \tilde{\u}^{n}_h, \tilde{\v}_h)\\
	&  + \gamma (\nabla \cdot \tilde{\u}^n_h, \nabla \cdot \tilde{\v}^n_h), \\
	A_2 &= \frac{\eta}{\Delta t} (u^n_{3h}, v_{3h}) -\frac{1}{\Re} (\nabla u^n_{3h}, \nabla v_{3h}) + 	( (\tilde{\u}^n_h \cdot \tilde{\nabla}) u_{3h}^{n-1}, v_{3h}) + ((\tilde{\u}^{n-1}_h \cdot \tilde{\nabla}) u^n_{3h}, v_{3h}).
\end{align*}

Furthermore,
\begin{equation}
	\begin{bmatrix}
		\mathbf{0} & \tilde{\KK} & \KK \\
		\mathbf{0} & \mathbf{0} & \mathbf{0} \\
	\end{bmatrix}, 
	\begin{bmatrix}
		\mathbf{0} & \mathbf{0} \\
		\mathbf{0} & \mathbf{0} \\
		-\GG & \mathbf{0}
	\end{bmatrix}
	\text{ and }
	\begin{bmatrix}
		\mathbf{0} & -\AA & \MM \\
		\DD & \CC & \mathbf{0} \\
		-\mathcal{P}& -\tilde{\GG} + \tilde{\NN} & \LL + \NN 
	\end{bmatrix}
\end{equation}
correspond to 
\begin{equation} 
	\resizebox{\textwidth}{!}{$
		\begin{bmatrix}
			\mathbf{0}  &  \mathbf{0}  & S(\tilde{\B}^n_h\times j^{n-1}_{3h}, \tilde{\v}_h) &  -S(\tilde{\j}^{n-1}_h \times \B^{n}_{3h}, \tilde{\v}_h) &  -S (\tilde{\j}^n_h \times \B^{n-1}_{3h}, \tilde{\v}_h)& S(\tilde{\B}^{n-1}_h\times j^{n}_{3h}, \tilde{\v}_h) \\
			\mathbf{0} & \mathbf{0} & -S (\tilde{\j}^{n-1}_{h}\times \tilde{\B}^{n}_h, v_{3h}) & \mathbf{0} &  -S (\tilde{\j}^{n}_{h}\times \tilde{\B}^{n-1}_h, v_{3h}) & \mathbf{0} \\
			\mathbf{0} & \mathbf{0} & \mathbf{0} & \mathbf{0} &  \mathbf{0} & \mathbf{0} \\
		\end{bmatrix}$},
\end{equation}

\begin{equation} 
	\begin{bmatrix}
		\mathbf{0} & \mathbf{0} & \mathbf{0}\\
		\mathbf{0} & \mathbf{0} & \mathbf{0}\\
		\mathbf{0} & \mathbf{0} & \mathbf{0}\\
		-(\tilde{\u}^n_h \times B^{n-1}_{3h}, \k_h) & (\tilde{\B}^{n}_h \times u^{n-1}_{3h}, \k_h) & \mathbf{0}\\
		-(\tilde{\u}^n_h \times \tilde{\B}^{n-1}_h,k_{3h})& \mathbf{0} & \mathbf{0}
	\end{bmatrix}
\end{equation}

and 

\begin{equation}
	\resizebox{\textwidth}{!}{$
		\begin{bmatrix}
			\mathbf{0} & \mathbf{0} &\mathbf{0} & -(B^n_{3h}, \scurl \tilde{\F}_h) &  (\tilde{\j}^n_h, \tilde{\F}_h)&\mathbf{0} \\
			\mathbf{0} & 	\mathbf{0} & -(\tilde{\B}^n_h, \vcurl F_{3h}) &	\mathbf{0} & 	\mathbf{0} &	(j^n_{3h}, F_{3h}) \\
			\mathbf{0} & (\vcurl E^n_{3h} , \tilde{\C}_h) & \substack{\frac{\eta}{\Delta t} (\tilde{\B}^n_h, \tilde{\C}_h)\\+ \frac{1}{\Rem} (\nabla \cdot \tilde{\B}^n_h, \nabla \cdot \tilde{\C}_h)}& \mathbf{0} & \mathbf{0} & \mathbf{0} \\ 
			(\scurl \tilde{\E}^n_h, C_{3h}) & \mathbf{0}& \mathbf{0}&  \frac{\eta}{\Delta t} (B^n_{3h}, C_{3h}) &  \mathbf{0}& \mathbf{0}\\
			-(\tilde{\E}^n_h, \k_h) & \mathbf{0} &\substack{(\tilde{\B}^n_h \times u^{n-1}_{3h}, \k_h) \\- \RH (\tilde{\B}^n_h \times j^{n-1}_{3h}, \k_h)} & 
			\substack{(\tilde{\u}^{n-1}_h \times B^n_{3h}, \k_h) \\+ \RH (\tilde{\j}^{n-1}_h \times B^n_{3h},\k_h)} &  \substack{\frac{1}{\Rem} (\tilde{\j}^n_h, \k_h) \\ + \RH (\tilde{\j}^n_h \times B^{n-1}_{3h}, \k_h)} & -\RH (\tilde{\B}^{n-1}_h \times j^n_{3h}, \k_h)  \\ 
			\mathbf{0} & -(E^n_{3h}, k_{3h}) & \substack{- (\tilde{\u}^{n-1}_h \times \tilde{\B}^n_h, k_{3h}) \\+ \RH (\tilde{\j}^n_h  \times \tilde{\B}^{n-1}_h, k_{3h})} &\mathbf{0}  & \RH(\tilde{\j}^n_h \times \tilde{\B}^{n-1}_h, k_{3h}) & \frac{1}{\Rem} (j^n_{3h}, k_{3h})
		\end{bmatrix}$}.
\end{equation}
Our numerical experiments suggest that the same outer Schur complement approximation (now applied to the blocking $(\tilde{\u}_h, u_{3h}, p_h)$ and $(\tilde{\B}_h,B_{3h},\tilde{\E}_h,E_{3h},\tilde{\j}_h,j_{3h})$) still works well for the 2.5D case. However, we observe poor performance of the monolithic multigrid method applied to this block for an island coalescence and $\RH>0.01$. Robust solvers for this inner problem require further investigation and, as mentioned before, we apply a direct solver to this block in the 2.5D numerical results in the next section.

\section{Numerical results}\label{sec:numericalresultsHall}
As before, the numerical results were implemented in Firedrake.
%\cite{rathgeber2016} which uses the solver package PETSc \cite{balay2019} and the implementation of parameter-robust multigrid methods from PCPATCH \cite{farrell2019pcpatch}. 
Moreover, we replaced the Laplace term $-\Delta \u$ in our implementation by $-2\nabla \cdot \varepsilon(\u)$, where $\varepsilon(\u) := 1/2(\nabla \u + \nabla \u^\top)$ denotes the symmetric gradient. This allows us to also consider alternative boundary conditions 
\begin{equation}
	\u = \mathbf{0} \text{ on } \Gamma_D , \qquad \frac{2}{\Re}  \varepsilon( \u )\cdot \n = p \n \text{ on } \Gamma_N
\end{equation}
with $\Gamma_D\cup \Gamma_N = \partial \Omega$.
Note that both formulations are equivalent for the boundary conditions $\u=\mathbf{0}$ on $\partial \Omega$ which we consider in this work \cite[Chap.~15]{Quarteroni2017}.

\subsection{Verification and convergence order}\label{sec:verificationandconvergenceorder}
In the first example, we consider the method of manufactured solutions for a smooth given solution to verify the implementation of our solver and report convergence rates. We employ the Picard iteration for the stationary problem from Algorithm \ref{alg:picard-s}.
%We compute convergence rates and confirm that the divergence constraints $\div \uu_h =0$ and $\div \B_h=0$ are enforced exactly.
The right-hand sides and boundary conditions are calculated corresponding to the analytical solution
\begin{gather}
	\begin{split}
		\u(x,y,z) = \begin{pmatrix}
			\cos(y) \\ \sin(z) \\ \exp(x)
		\end{pmatrix}, \quad
		p(x,y,z) = y \sin(x) \exp(z), \quad 
		\B(x,y,z)  = \begin{pmatrix}
			\sin(z) \\ \sin(x) \\ \cos(y)
		\end{pmatrix},\\
		\E(x,y,z) = \begin{pmatrix}
			x \sin(x) \\ \exp(y) \\ z^3
		\end{pmatrix}, \quad
		\j(x,y,z) = \begin{pmatrix}
			\cos(y z) \\ \exp(xz) \\ \sinh(x)
		\end{pmatrix}.  \qquad \qquad \qquad \quad
	\end{split}
\end{gather}
We used second order $\mathbb{BDM}$-elements for $\u_h$, second order $\mathbb{NED}1$-elements for $\E_h$ and $\j_h$, second order $\mathbb{RT}$-elements for $\B_h$ and first order $\mathbb{DG}$-elements for $p_h$ on $\Omega=(0,1)^3$.
Based on the standard error estimates for these spaces, one would expect third order convergence in the $L^2$-norm for $\u_h$ and  second order convergence for $p_h$, $\B_h$, $\E_h$ and $\j_h$. This is numerically verified by Table \ref{tab:convOrder}.
%\red{Add table of $div(B_h)$ and $div(u_h)$ for different Re and Rem, after $div(u_h)$=0 enforcement works.}
\begin{table}[!htb]
	\begin{centering}
		\resizebox{\textwidth}{!}{
		\begin{tabular}{c |c c|c c|c c|c c|c c} 
			\toprule
			h & $\|\u-\u_h\|_0$ & rate & $\|p-p_h\|_0$ & rate & $\|\B-\B_h\|_0$ & rate & $\|\E-\E_h\|_0$ & rate & $\|\j-\j_h\|_0$ & rate  \\
			\midrule
			1/4 & 3.08E-04 & - & 3.52E-02 & - & 2.44E-03 & - & 9.57E-03 & - & 6.77E-03 & - \\
			1/8 & 4.50E-05 & 2.78 & 6.58E-03 & 2.42 & 6.04E-04 & 2.02 & 2.50E-03 & 1.93 & 1.79E-03 & 1.92 \\
			1/16 & 5.99E-06 & 2.91 & 1.36E-03 & 2.27 & 1.50E-04 & 2.01 & 6.32E-04 & 1.99 & 4.53E-04 & 1.98 \\
			1/32 &7.72E-07 & 2.96 & 2.99E-04 & 2.19 & 3.74E-05 & 2.00 & 1.58E-04 & 2.00 & 1.14E-04 & 1.99 \\
			\bottomrule
		\end{tabular}}
		\caption{$L^2$-error and convergence order.}
		\label{tab:convOrder}
	\end{centering}
\end{table}

\subsection{Lid-driven cavity problem}
As in Section \ref{sec:numericalresults}, we consider a lid-driven cavity problem for a background magnetic field $\B_0=(0,1,0)^\top$ which determines the boundary conditions $\B\cdot \n = \B_0\cdot \n$ on $\partial \Omega$ and set $\f = \mathbf{0}$ for $\Omega=(-0.5, 0.5)^3$. The boundary condition $\u=(1,0,0)^\top$ is imposed at the boundary $y=0.5$ and homogeneous boundary conditions elsewhere. %The problem models the flow of a conducting fluid driven by the movement of the lid at the top of the cavity. The magnetic field imposed orthogonal to the lid creates a Lorentz force that perturbs the flow of the fluid. 

Since we consider non-homogeneous boundary conditions in this problem the boundary conditions for $\E$ and $\j$ have to be chosen in a compatible way, which we derive in the following. 
From \eqref{eq:HallMHDj} we can deduce the necessary condition that 
\begin{equation}\label{eq:bcscompatibility}
	\Reminv \j \times \n = \E \times \n + (\u \times \B) \times \n - \RH (\j \times \B) \times \n
\end{equation}
has to hold on $\partial \Omega$. 

On a face that does not correspond to $y=0.5$, we have $\u = (0,0,0)^\top$. Then it is clear that \eqref{eq:bcscompatibility} is fulfilled if we choose $\E\times\n=\j\times\n=\mathbf{0}$ on these faces.

On the face $y=0.5$, we have that $\n = (0,1,0)^\top$ and hence \eqref{eq:bcscompatibility} simplifies to
\begin{equation}
	\begin{cases}
		&	\Reminv  j_3 = -E_3 -1 + \RH j_1,\\
		&	\Reminv  j_2 = E_2,\\
		&	\Reminv  j_1 = E_1 + \RH j_3.
	\end{cases}
\end{equation}
If we choose $\E\times\n=\mathbf{0}$ it follows that 
\begin{equation}
	\j \times \n =\frac{1}{\Reminv + \Rem \RH^2} 
	\begin{pmatrix}
		\Rem \RH \\ 0 \\ 1
	\end{pmatrix} 
	\times \n.
\end{equation}

In Table \ref{tab:ldcstatRERHall}, we present iteration numbers for the Picard and Newton linearisations for the stationary version of the lid-driven cavity problem. Note that we applied our scalable solver with a monolithic multigrid method for the Schur complement approximation here. The direct solver that we mentioned earlier is only used for 2.5D results. We have used the same elements for $\u_h$, $\B_h$, $\E_h$ and $\j_h$ and $p_h$ as in the previous example. Moreover, we have used a coarse mesh of $6\times6\times6$ cells and 3 levels of refinement for the multigrid method resulting in an $48\times48\times48$ mesh with 29.2 million DoFs. One can observe good robustness in the reported ranges of $\RH$ for both linearisations. The Newton linearisation shows slightly better non-linear convergence, while the linear iterations are slightly smaller in most cases for the Picard iteration.

Table \ref{tab:ldctimeRERHall} shows the corresponding results for the time-dependent version of the lid-driven cavity problem. Here, we have chosen a time step of $\Delta t=0.01$ and iterated until the final time of $T=0.1$. We iterated some of the cases until the final time of $T=1.0$ to confirm that the reported iteration numbers remain representative for longer final times. We have chosen the L-stable BDF2 method for the time-discretisation where the first time step was computed by Crank-Nicolson. We observe good robustness in both the nonlinear and linear iteration numbers for this problem.

\begin{table}[htbp!]
	\centering
	%	\resizebox{\textwidth}{!}{%
	\begin{tabular}{r|ccc|ccc}
		\toprule
		& \multicolumn{3}{c|}{Picard} & \multicolumn{3}{c}{Newton} \\
		\midrule
		$\RH\backslash\Re$  &1 &     100 &    1,000 &1 & 100&     1,000  \\
		\midrule
		0.0 & ( 4) 4.8 & ( 4) 5.5 & ( 4)10.0 & ( 3) 6.0 & ( 4) 4.3 & ( 4) 8.8 \\
		0.1 & ( 4) 5.0 & ( 4) 4.8 & ( 4)10.0 & ( 3) 6.0 & ( 4) 4.3 & ( 4) 9.3 \\
		1.0 & ( 4) 5.3 & ( 4) 4.5 & ( 5)10.2 & ( 3) 5.0 & ( 4) 4.3& ( 4) 12.0 \\

		\bottomrule	
	\end{tabular}%}
	\caption{Iteration counts for the stationary lid-driven cavity problem. The entries of the table correspond to: (Number of nonlinear iterations) Average number of linear iterations per nonlinear step.\label{tab:ldcstatRERHall}\newline}
	
	%		\resizebox{\textwidth}{!}{%
	\begin{tabular}{r|ccc|ccc}
		\toprule
		& \multicolumn{3}{c|}{Picard} & \multicolumn{3}{c}{Newton} \\
		\midrule
		$\RH\backslash\Re$  &1 &     1,000 &    10,000 & 1 & 1,000 & 10,000 \\
		\midrule
		0.0 & (3.0) 5.6 & (3.1) 2.2 & (3.2) 2.0 & (2.1) 7.5 & (3.1)  2.2 & (3.2)  2.0 \\ 
		0.1 & (3.0) 5.6 & (3.1) 2.2 & (3.2) 2.0 & (2.1) 7.5 & (3.1)  2.2 & (3.2) 2.0 \\ 
		1.0 & (3.0) 5.8 & (3.1) 2.2 & (3.2) 2.0 & (2.2) 7.3 & (3.1)  2.2 & (3.2) 2.0\\ 
		
		\bottomrule	
	\end{tabular}%}
	
	\caption{Iteration counts for the time-dependent lid-driven cavity problem.\label{tab:ldctimeRERHall}}
	
\end{table}

\newcolumntype{C}{ >{\centering\arraybackslash} m{3.0cm} }
\newcolumntype{D}{ >{\centering\arraybackslash} m{2cm} }

\begin{figure}[htbp!]
	\centering
	\begin{tabular}{|D |C |C |C |C |}
		\hline
		$\Rem = 10$ &	\includegraphics[width=2.5cm]{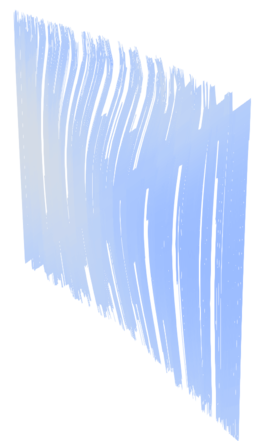} &
		\includegraphics[width=2.5cm]{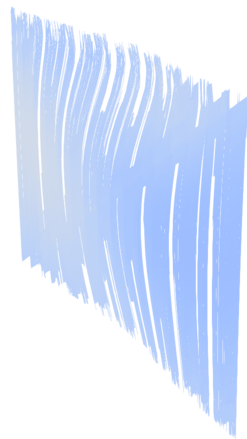} &
		\includegraphics[width=2.5cm]{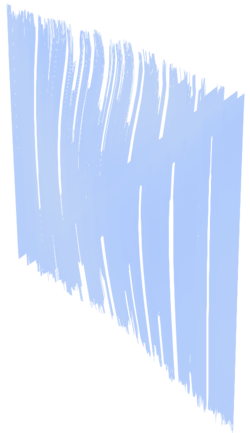} &
		\includegraphics[width=2.5cm]{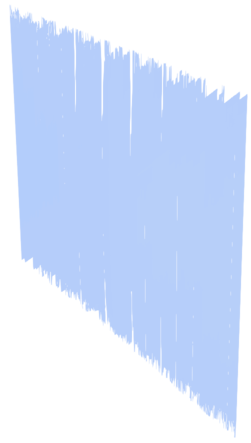} \\
		\hline
		$\Rem = 50$ &	\includegraphics[width=2.5cm]{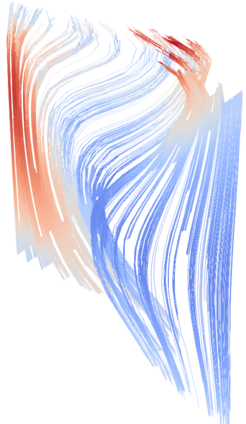} &
		\includegraphics[width=2.5cm]{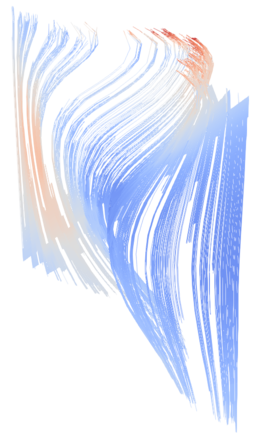} &
		\includegraphics[width=2.5cm]{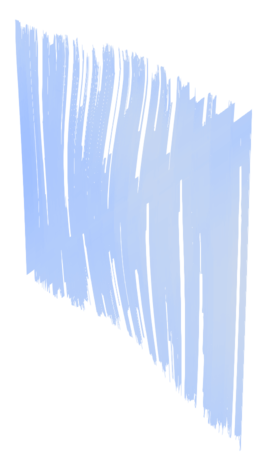} &
		\includegraphics[width=2.5cm]{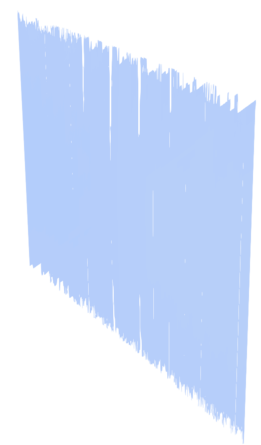} \\
		\hline
		$\Rem = 100$ &	\includegraphics[width=2.5cm]{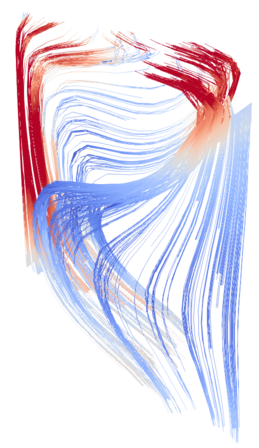} &
		\includegraphics[width=2.5cm]{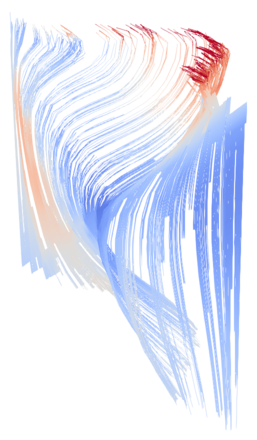} &
		\includegraphics[width=2.5cm]{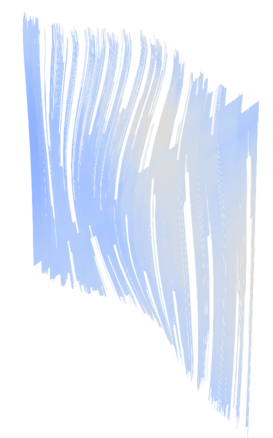} &
		\includegraphics[width=2.5cm]{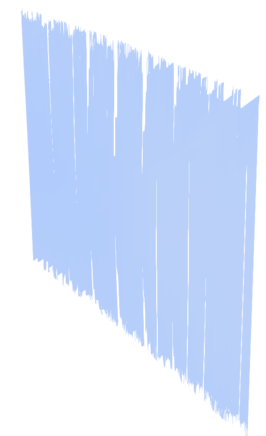} \\
		\hline
		&  $\RH = 0.0$ & $\RH = 0.01$ & $\RH = 0.1$ & $ \RH = 1.0$\\
		\hline
		%		$\Re=\Rem=1$ & $\Re=\Rem=500$ & $\Re=\Rem=5,$000\\
	\end{tabular}
	\caption{Streamlines of the magnetic field for the stationary lid-driven cavity problem for different values of $\Rem$ and $\RH$.\label{fig:StreamlinesldcHall}}
\end{figure}

Figure \ref{fig:Streamlinesldc} shows plots of the magnetic field for different values of $\Rem$ and $\RH$. For $\RH=0$ one can nicely observe the physical phenomenon that for the standard MHD equations the magnetic fields lines tend to be advected by the fluid flow the higher $\Rem$ is chosen. For increasing $\RH$ one can see that this effect is damped until for $\RH=1$, where the influence of the fluid flow is negligible and the magnetic field is close to the background magnetic field in the direction of $(0,1,0)^\top$. %For dominating Hall-term the Hall MHD equations get closer to the so-called electron MHD (EMHD) equations \cite[Section 2.4.2]{Galtier2015}. In this limit, the electron can only move on a neutralising background that is formed by the ions that are too heavy to follow the electrons.

\subsection{Test of conservative scheme for $\u \times\n = \mathbf{0}$ }
In this section, we want to numerically verify our results from Section \ref{sec:schemeutimesn} for the boundary conditions $\u \times \n = \mathbf{0}$. Here, we used $\Omega=[0,1]^3$ and a mesh of $12\times12\times12$ cells. We chose the interpolant of the following functions as the initial conditions 
\begin{align}
		\u^0(x,y,z) &= \begin{pmatrix}
			-\sin(\pi(x-0.5))\cos(\pi(y-0.5))z(z-1) \\ 
			\cos(\pi(x-0.5))\sin(\pi(y-0.5))z(z-1) \\ 
			0
		\end{pmatrix},\\
		\B^0(x,y,z)  &= \begin{pmatrix}
			-\sin(\pi x)\cos(\pi y) \\ 
			\cos(\pi x) \sin(\pi y) \\ 
			0
		\end{pmatrix},
\end{align}
which satisfy the boundary conditions $\u^0\times \n = \mathbf{0}$,  $\B^0\times\n=\mathbf{0}$ and the constraints $\nabla \cdot \u^0=\nabla \cdot \B^0=0$. 
Remember that the interpolant of divergence-free functions is still divergence-free for $\mathbb{RT}$ and $\mathbb{BDM}$ elements, see \eqref{eq:InterpolationPreserveRT}. We enforce this property in our implementation by using a sufficiently high quadrature degree in the evaluation of the degrees of freedom for the $\mathbb{RT}$ and $\mathbb{BDM}$ elements; see earlier in this thesis in Section \ref{sec:interp-boundary-data}. 
Here, we discretise $\u$ with $\mathbb{NED}1$-elements and $p$ with $\mathbb{CG}_1$-elements. 

For the computation of the magnetic helicity we determine a discrete vector-potential such that $\nabla \times \mathbf{A}_h = \B_h$ by the system
\begin{equation}
	\left(\nabla \times \mathbf{A}_h, \nabla \times \k_h\right) = \left(\B_h, \nabla \times \k_h \right) \quad \forall\ \k_h \in \Hhc.
\end{equation}
We solve this singular system with GMRES preconditioned by ILU, which is known to be convergent if the problem is consistent \cite{Ipsen1998}. 

Although, the scheme \eqref{alg:helicityutimesn} contains multiple auxiliary variables, it can be solved efficiently with a fixed point iteration \cite[Section 4]{hu2021helicity}. For the  time step from $t_k$ to $t_{k+1}$ we compute iterative solutions $\left( \u^{(k+1,j)}_{h}, P^{(k+\frac{1}{2},j)}_h,  \B^{(k+1,j)}_{h}, \E^{(k+\frac{1}{2},j)}_{h}, \j^{(k+\frac{1}{2},j)}_{h}, \mathbf{H}^{(k+\frac{1}{2},j)}_{h},\bm \omega^{(k+\frac{1}{2},j)}_{h}\right)$ until the stopping criterion
\begin{equation}
	\frac{\|\u^{(k+1,j+1)}_{h} - \u^{(k+1,j)}_{h}\|}{\|\u^{(k+1,j)}_{h}\|} + \frac{\|\B^{(k+1,j+1)}_{h} - \B^{(k+1,j)}_{h}\|}{\|\B^{(k+1,j)}_{h}\|} < \text{TOL}
\end{equation}
is satisfied for a given tolerance $\text{TOL}$. We initialise the iteration with the values from time step $k$ and first determine the updates  $\left (\E^{(k+\frac{1}{2},j+1)}_{h}, \j^{(k+\frac{1}{2},j+1)}_{h},\mathbf{H}^{(k+\frac{1}{2},j+1)}_{h},\bm \omega^{(k+\frac{1}{2},j+1)}_{h}\right)$ by solving \eqref{alg:helicity-cross-j2} - \eqref{alg:helicity-cross-w2}
with right-hand sides of the level $j$. Then, we update the velocity and pressure by
\begin{subequations}
	\begin{alignat}{2}
		\frac{1}{\Delta t} (\u^{(k+1,j+1)}_{h}, \v_h) + (\nabla P^{(k+\frac{1}{2},j+1)}_h, \v_h) &= (\F_h, \v_h)  & \quad \forall\  \v_h \in \Hhd,\\
		(\nabla Q_h, \u^{(k+1,j+1)}_h) &= 0 &\quad \forall \ Q_h \in H^1_0(\Omega), 
	\end{alignat}
\end{subequations}
with
\begin{equation}
	\F_h = \frac{1}{\Delta t} \u^{k}_h + S \j^{(k+\frac{1}{2},j+1)}_h \times \mathbf{H}^{(k+\frac{1}{2},j+1)}_h + \frac{1}{2}\left(\u^{(k+1,j)}_h + \u^{k}_h\right)\times \bm \omega^{(k+\frac{1}{2},j+1)}_h.
\end{equation}
The magnetic field is updated by solving
\begin{equation}
	\frac{1}{\Delta t} (\B^{(k+1,j+1)}_{h}, \C_h) = \frac{1}{\Delta t} (\B^{k}_{h}, \C_h)- (\nabla \times \E^{(k+\frac{1}{2},j+1)}_h, \C_h)  \quad \forall\  \C_h \in \Hhd.
\end{equation}

Figure \ref{fig:uHcurl} shows plots of the different conserved quantities for $\Re=\Rem=\infty$ and $\RH=0.5$. One can clearly see that the energy and hybrid helicity remain constant over time, while the cross and fluid helicity are not conserved. These are the observations we expected from the theory in Section \ref{sec:schemeutimesn}. Moreover, $\operatorname{div}(\B_h)$ and the magnetic helicity also show good preservation with small oscillations on the machine precision level.

In Figure \ref{fig:differentRe}, we show plots of the energy and hybrid helicity for $\RH=0.1$ and multiple finite values of $\Re$ and $\Rem$. This test confirms that both quantities are indeed only conserved in the ideal limit of $\Re=\Rem=\infty$.

Finally, Figure \ref{fig:differentRH} compares the cross and hybrid helicity for different values of $\RH$ in the ideal limit of $\Re=\Rem=\infty$. One can observe that the cross helicity is indeed only conserved for $\RH=0$, which corresponds to the standard MHD equations. On the other hand, the hybrid helicity is conserved for all tested values of $\RH$. Note that the hybrid helicity corresponds for $\RH=0$ to the magnetic helicity.

\begin{figure}[htbp!]
	\centering
	\includegraphics[width=12cm]{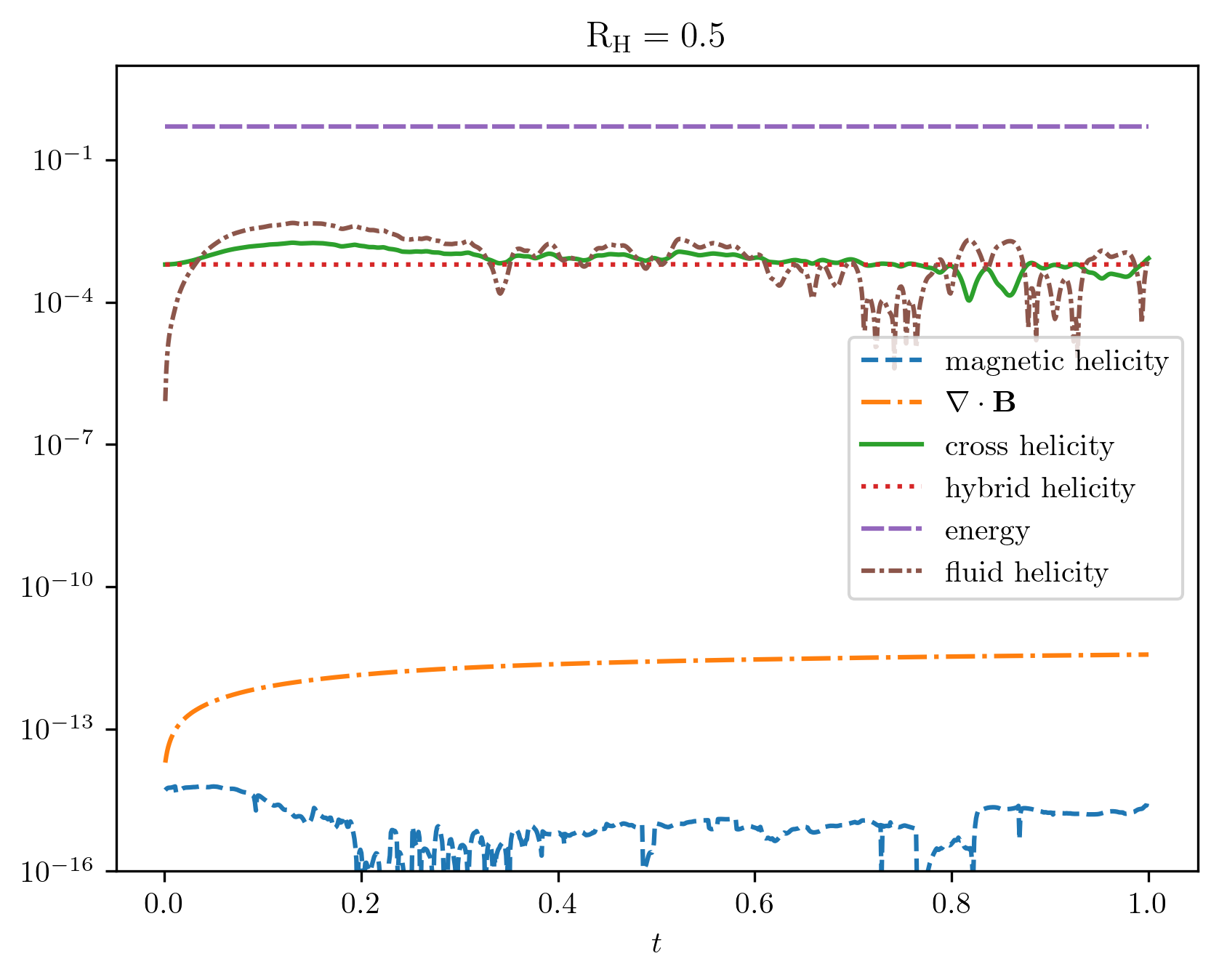} 
	\caption{Plot of conserved quantities for $\u \times \n = \mathbf{0}$ in the ideal limit.}
	\label{fig:uHcurl}
\end{figure}

\begin{figure}[htbp!]
	\centering
	\begin{tabular}{cc}
		\includegraphics[width=7.5cm]{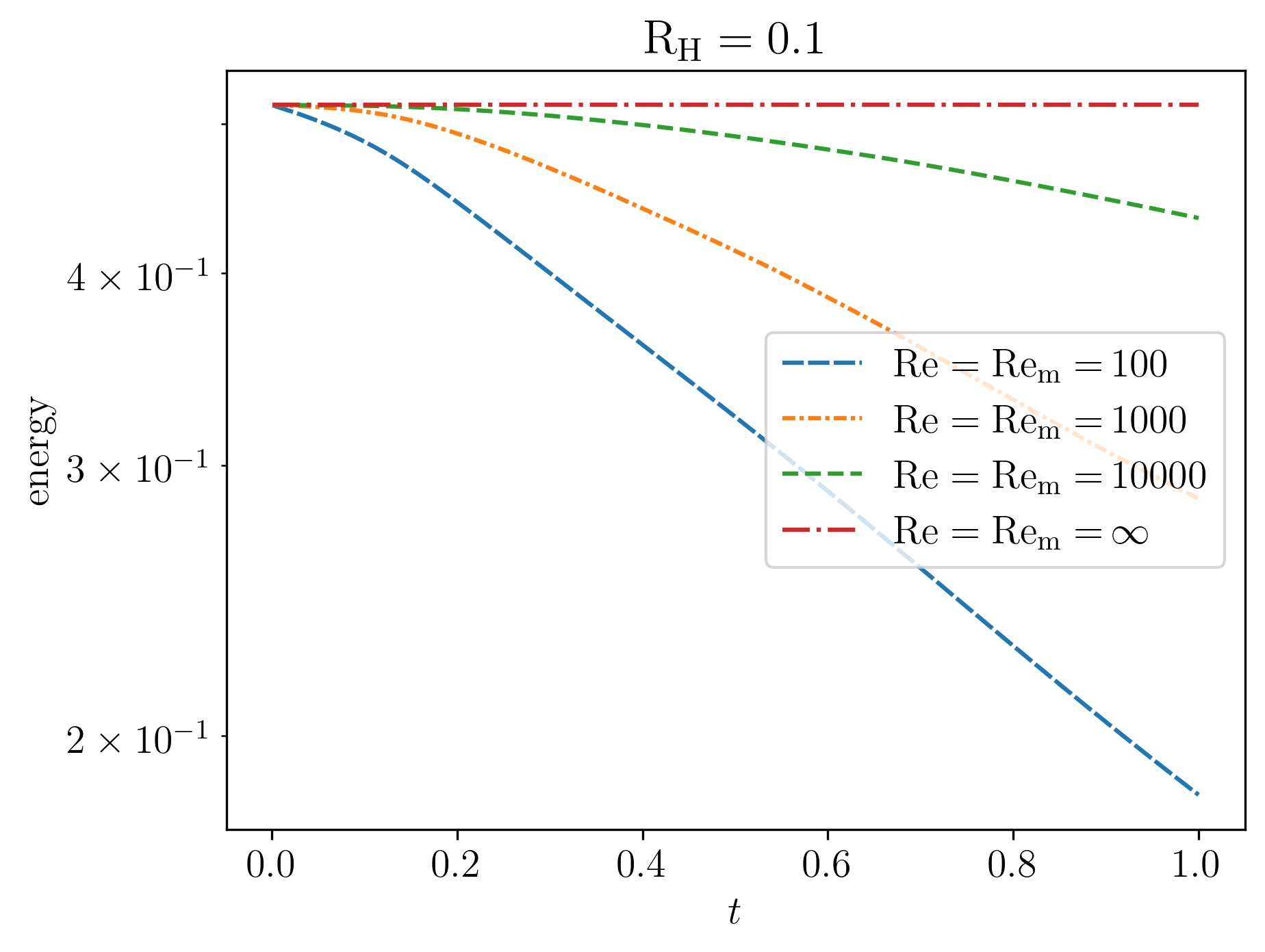} &
		\includegraphics[width=7.5cm]{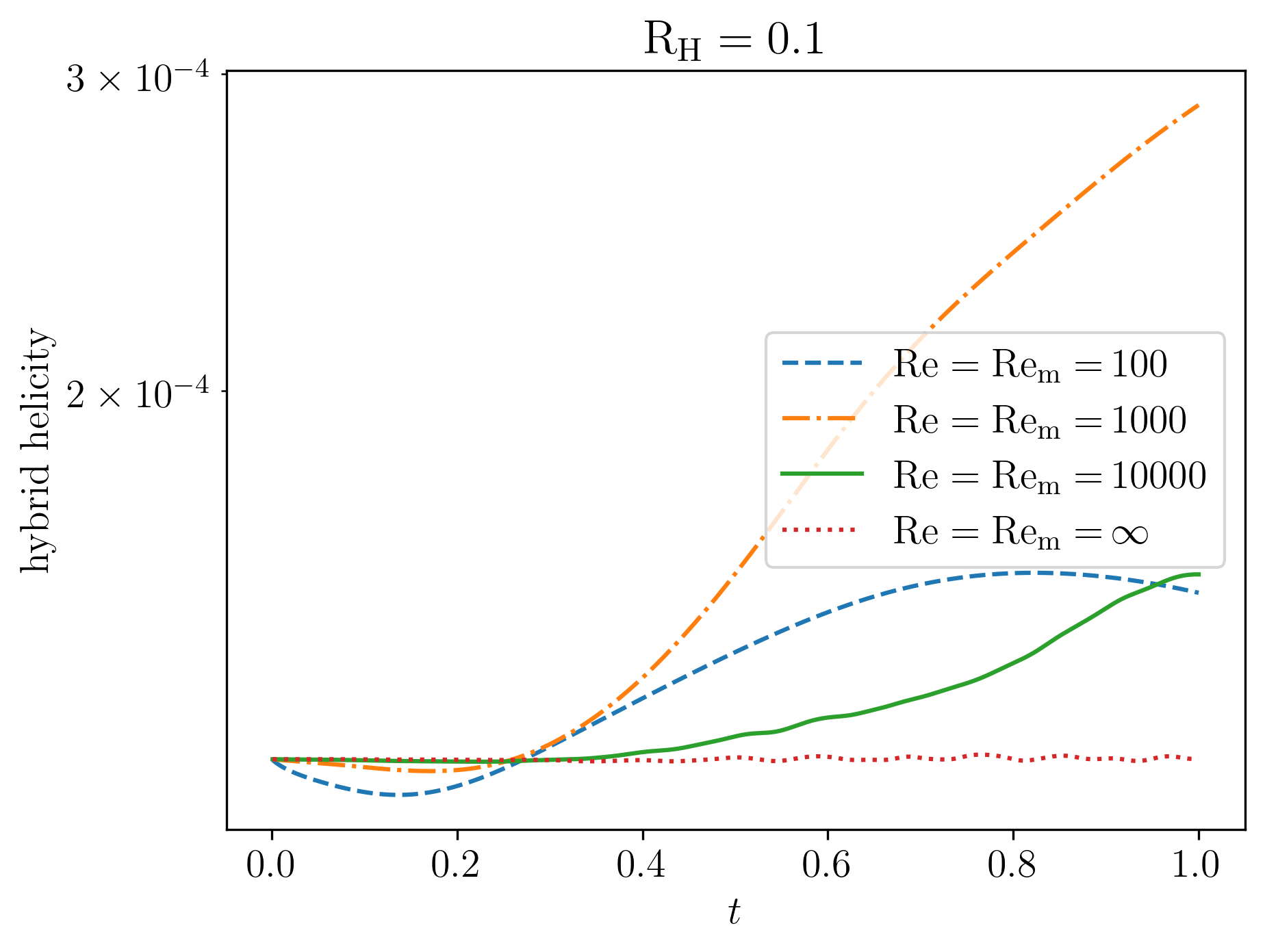}
	\end{tabular}
	\caption{Plots of the energy (left) and hybrid helicity (right) for different values of $\Re$ and $\Rem$ for $\u\times \n = \mathbf{0}$.}
	\label{fig:differentRe}
	
\end{figure}

\begin{figure}[htbp!]
	\centering
	\begin{tabular}{cc}
		\includegraphics[width=7.5cm]{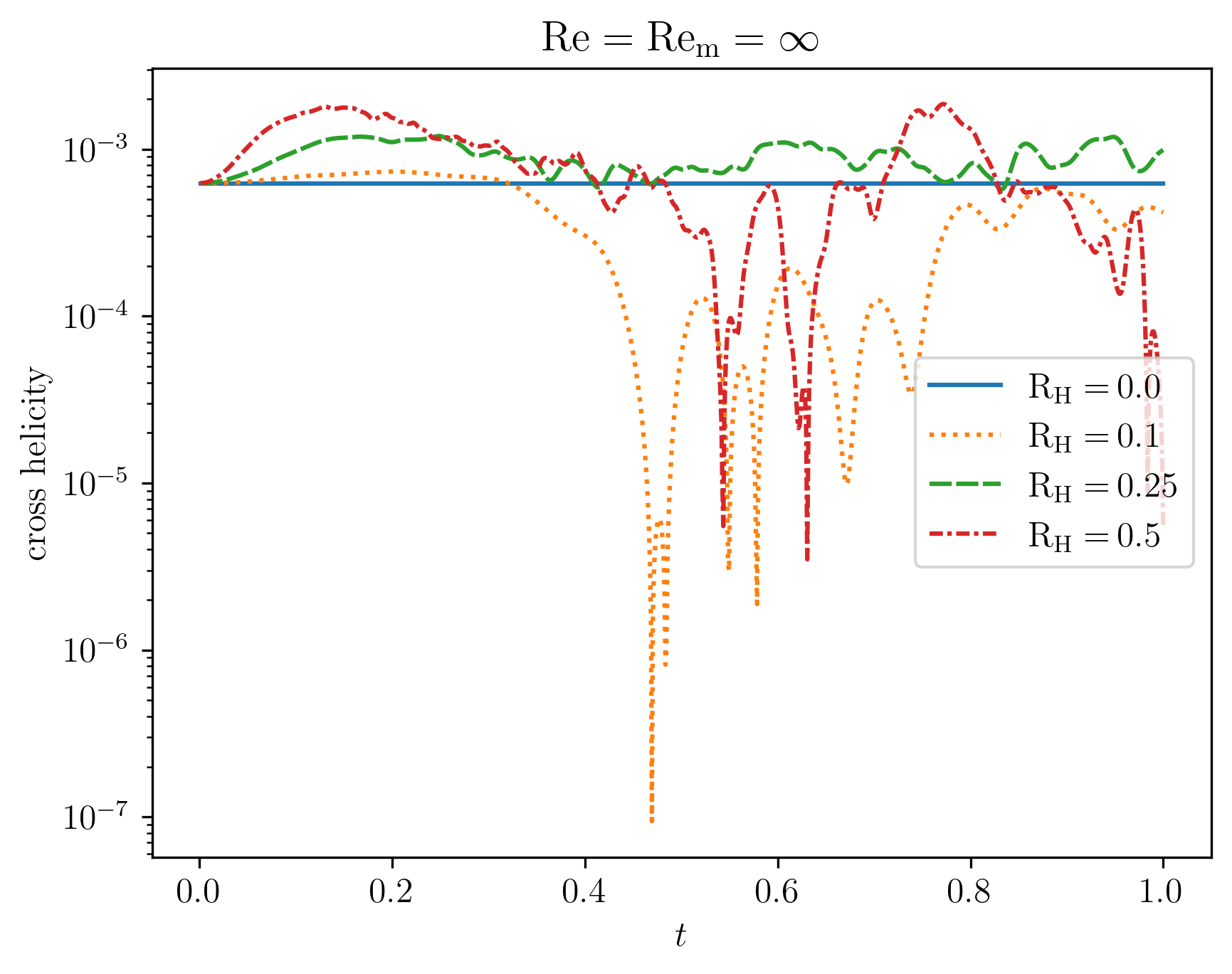} &
		\includegraphics[width=7.5cm]{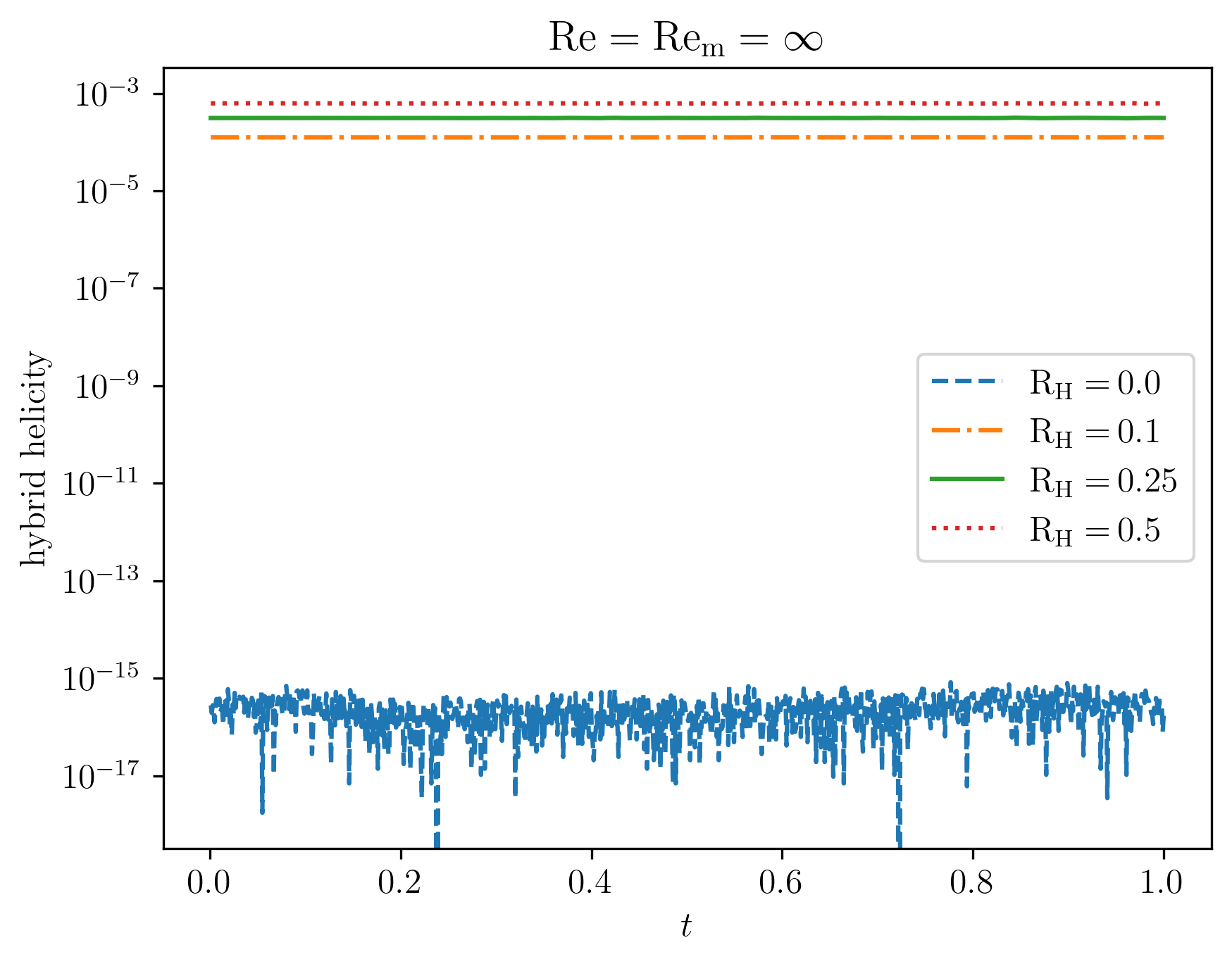}
	\end{tabular}
	\caption{Plots of the cross helicity (left) and hybrid helicity (right) for different values of $\RH$ for $\u\times \n = \mathbf{0}$ in the ideal limit of $\Re=\Rem=\infty$.}
	\label{fig:differentRH}
\end{figure}

\subsection{Test of conservative scheme for $\u \cdot \n = 0$ }
In this test, we  verify our results from Section \ref{sec:schemeucdotn} for the boundary conditions $\u \cdot \n = 0$. Here, we use the same initial conditions for $\B^0$ as before and 
\begin{gather}
	\begin{split}
		\u^0 = \nabla \times \v_{\text{pot}} \quad \text{with} \quad 
		\v_{\text{pot}}(x,y,z) = \frac{1}{\pi}\begin{pmatrix}
			\sin(\pi y)\sin(\pi z) \\ 
			\sin(\pi x)\sin(\pi z) \\ 
			\sin(\pi x)\sin(\pi y)
		\end{pmatrix},
	\end{split}
\end{gather}
which satisfy the boundary condition $\u^0\cdot \n = 0$ and $\nabla \cdot \u^0=0$. We discretise $\u$ with $\mathbb{RT}_1$-elements and $p$ with $\mathbb{DG}_0$-elements. We solve the system with a similar fixed point iteration to the one we described in the last subsection. The iteration coincides with that used in \cite[Section 6]{gawlik2020}.

In contrast to the case $\u \times \n = \mathbf{0}$, we now enforce $\nabla \cdot \u_h = 0$ precisely over time. All conserved properties are plotted in Figure \ref{fig:uHdiv}. Remember that the hybrid helicity is not conserved for this scheme and therefore not displayed here. Moreover, corresponding plots to Figure \ref{fig:differentRe} and  \ref{fig:differentRH} show similar results and are therefore omitted here.

\begin{figure}[htbp!]
	\centering
	\includegraphics[width=12cm]{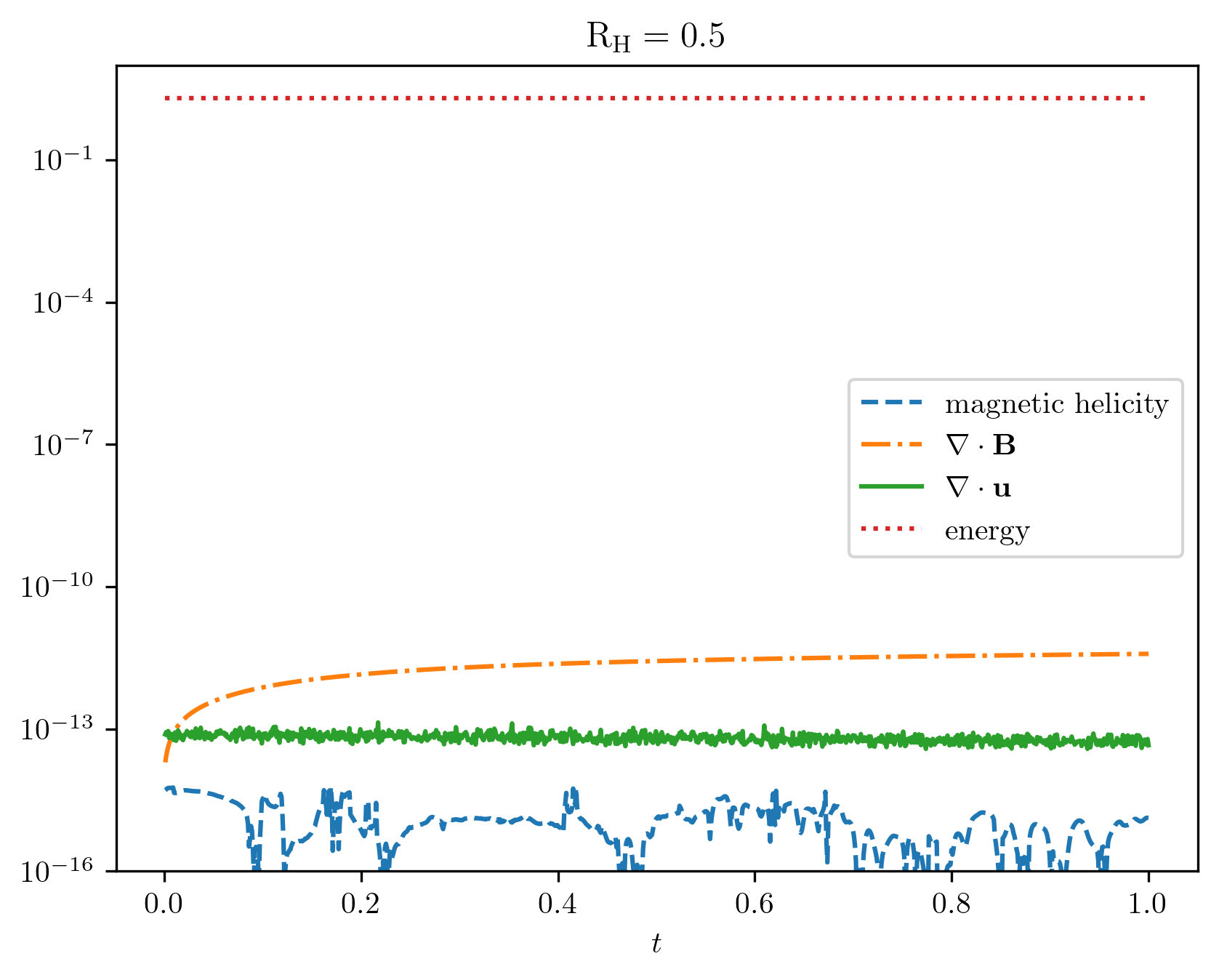} 
	\caption{Plot of conservative quantities for $\u \cdot \n = 0$ in the ideal limit.}
	\label{fig:uHdiv}
\end{figure}

%\section*{Code availability} The code that was used to generate the numerical results and all major Firedrake components have been archived on \cite{xxx}.

\subsection{Island coalescence problem}
Finally, we consider a 2.5-dimensional island coalescence problem to model a magnetic reconnection process. We use the same setup as in Section \ref{sec:islandcoal}. For the additional variables, we set the equilibrium solution
\begin{align}
	u_{3, eq} = B_{3, eq}= 0, \qquad \tilde{\j}_{eq} = \mathbf{0}, \qquad j_{3,eq} = \scurl \tilde{\B}_{eq}. 
\end{align}
The components $\tilde{\E}_{eq}$ and $E_{3,eq}$ of the electric field are computed by the equations \eqref{eq:islandcoal-j} and \eqref{eq:islandcoal-j3}. Since we use a direct solver for the solution of the Schur complement, we only considered a base mesh $20\times20$ cells and three levels of refinement here resulting in an $160\times 160$ mesh. We iterated until the final time $T=12.0$ with a fixed step size of $\Delta t = 0.025$. We considered a length scale of $L=1$, a reference value for the magnetic field of $\overline{B}=1$, a reference density of $\rho_0=1$ and a Alfv\'en velocity of $v_A=1$, i.e., the Lundquist number is given here as $S_L = 1/\eta$ and coincides with $\Rem$.

Figure \ref{fig:reconHall} shows the reconnection rate for different choices of $\RH$ at $\Rem=\Re=100,500,1{,}000,1{,}500$. All graphs have in common that the reconnection process happens faster for higher Hall parameters. This is consistent with the results of other numerical experiments \cite[Section 4.3]{Morales2005}\cite{Huba2003}}. We also observe that additional peaks occur for high Hall parameters and Reynolds numbers.

 For $\Rem=\Re=100$ and $\Rem=\Re=500$ one can observe that the height of the peaks increases with growing Hall parameters. At $\Rem = \Re = 1{,}000$ this trend is broken and for $\Rem=\Re=1{,}500$ the heights of the peaks starts to decrease for higher Hall parameters. This observation matches the results shown in Figure 2 in \cite{Chacon2006Hall} qualitatively well, even though a slightly different problem setup is considered. In this figure the resistivity is plotted against the peak reconnection rate. Since the authors consider the case $\eta = \nu$ with other reference values and length scales fixed, varying the resistivity corresponds in our case to varying the values of $\Re=\Rem$. Figure 2  in \cite{Chacon2006Hall} shows that for decreasing $\eta$ (i.e. increasing $\Rem$) the peak reconnection rate increases until a certain value of $\eta$ is reached and then starts to decrease, which matches our findings. Furthermore, Figure 2 demonstrates that the decrease in the peak reconnection rate is smaller the higher the Hall parameter is chosen. While this trend is more obvious in Figure 2 due to the much larger considered range of approximately $10^{-5} \leq \eta \leq 10^{-3}$ (i.e. $10^3 \leq \Rem \leq 10^5$) the same trend is indicated in our results.

\begin{figure}[htbp!]
	\centering
	\begin{tabular}{cc}
		\includegraphics[width=7.0cm]{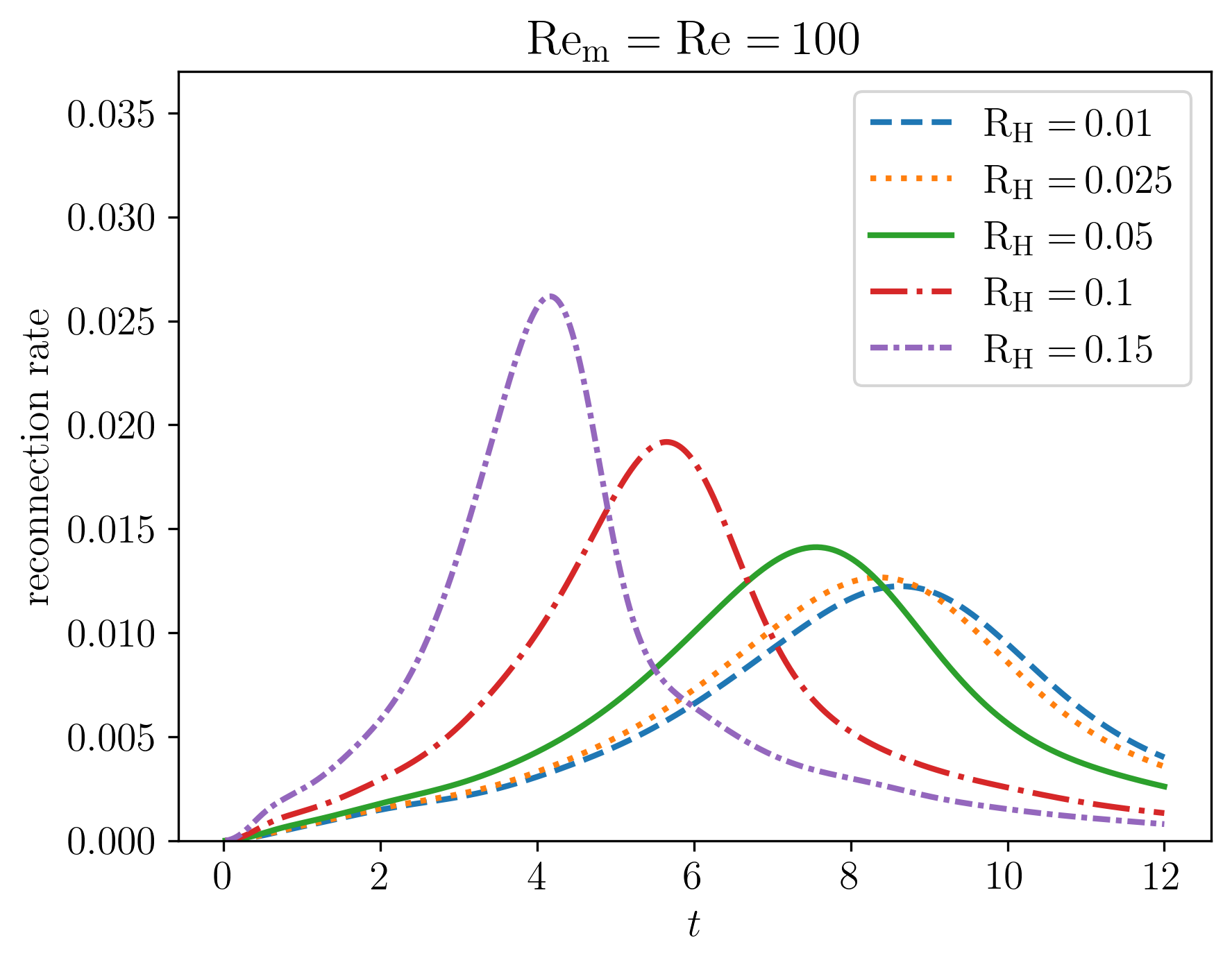} &
		\includegraphics[width=7.0cm]{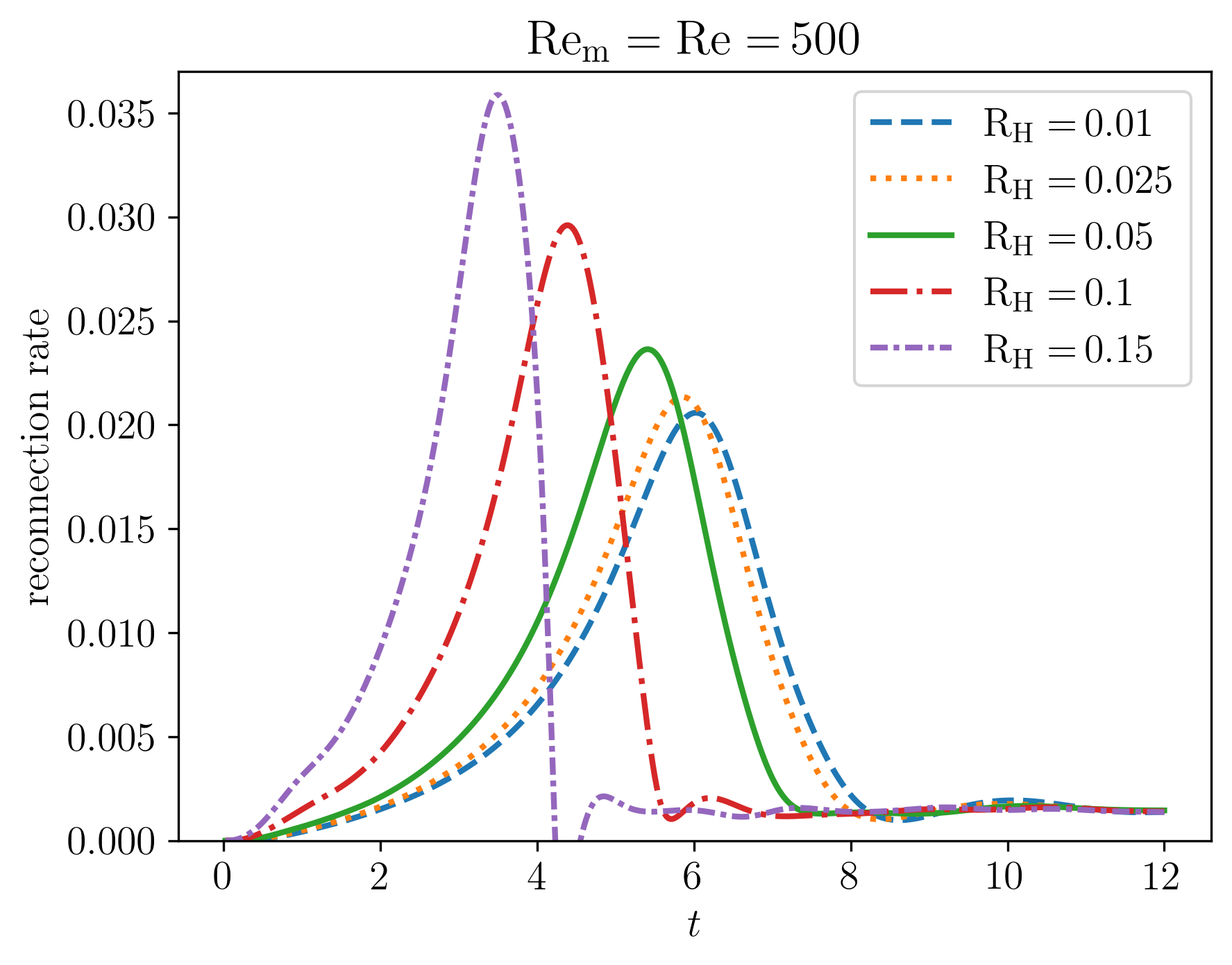} \\
		\includegraphics[width=7.0cm]{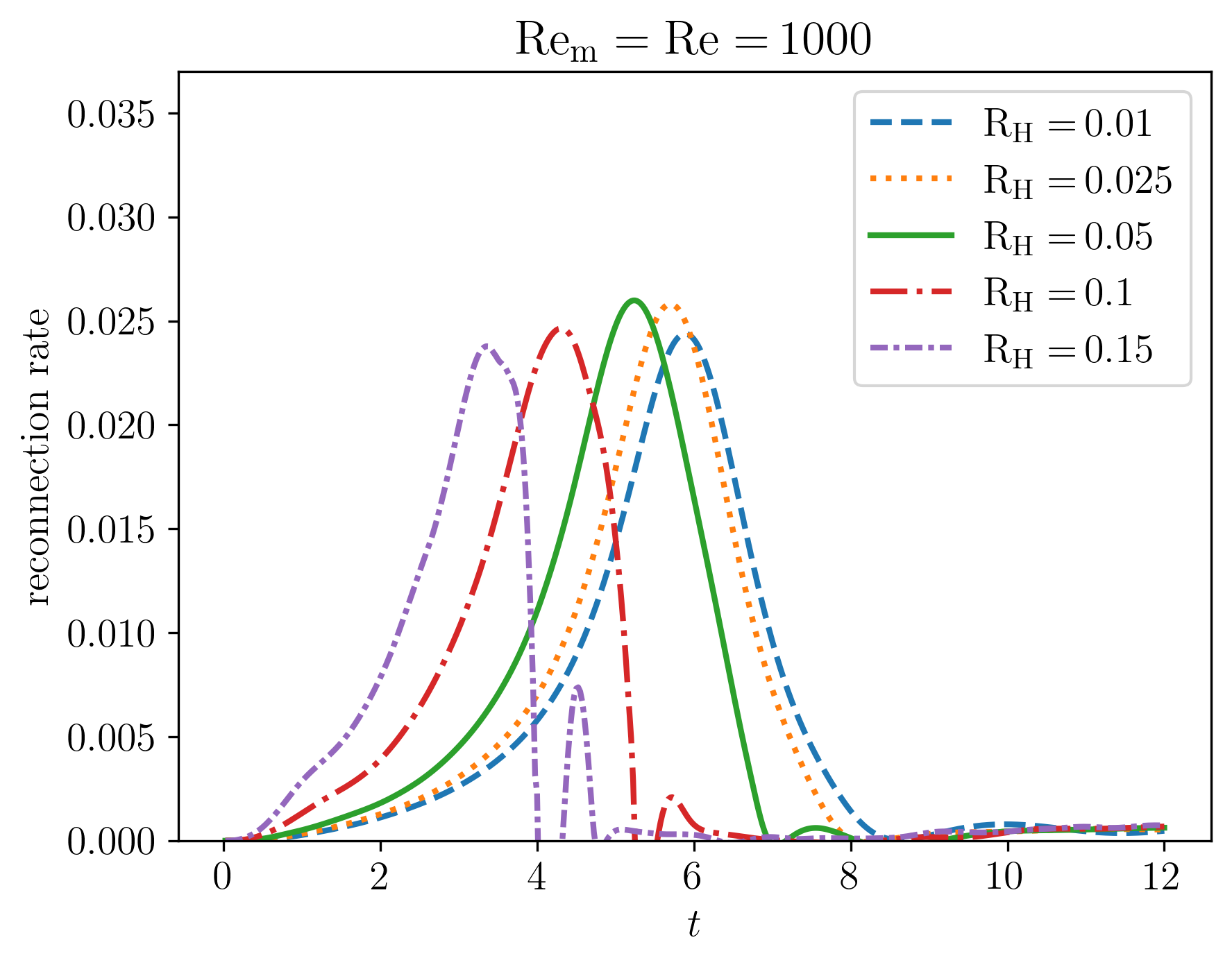} &
		\includegraphics[width=7.0cm]{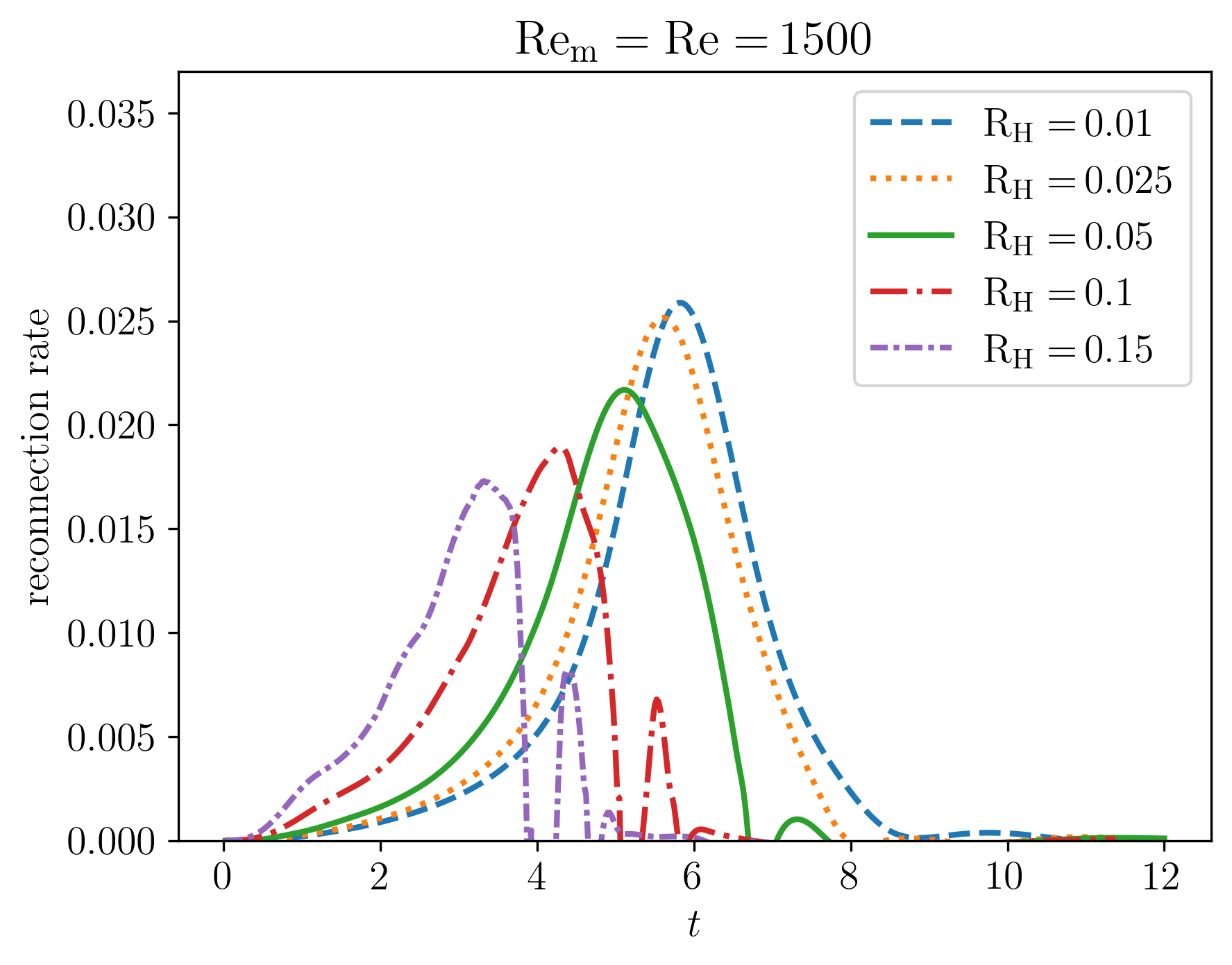} \\
	\end{tabular}
	\caption{Reconnection rates for an island coalescence problem for different choices of $\RH$.}
	\label{fig:reconHall}
\end{figure}

\renewcommand{\div}{\nabla \cdot}
\chapter{Bifurcation analysis and robust solvers for anisothermal MHD models}\label{chap:4}

In the final chapter of this thesis, we wish to consider discretisations and preconditioners for anisothermal MHD models. In particular, we investigate the Boussinesq approximation \cite{boussinesq1903theorie,Oberbeck1879} used in the modelling of MHD convection. This approximation assumes that the flow is buoyancy-driven and that density differences only appear in the buoyancy term, while other parameters depend neither on the density or temperature. Moreover, we perform a bifurcation analysis for a magnetic Rayleigh--B\'enard problem and investigate the influence of the coupling number $S$ on the bifurcation diagrams.

\section{Formulation and discretisation}\label{sec:AnIsoFormulationAndDiscretisation}
The dimensional formulation of the anisothermal MHD equations with Boussinesq approximation is given by 
\begin{subequations}
	\label{eq:MHDBouDim}
	\begin{align}
		 \partial_t \u - 2 \nu \div \eps \u +  \u \cdot \nabla \u + \nabla p \qquad \qquad \qquad & \nonumber \\ 
		 + \frac{1}{\rho_0 \mu_0 \eta} \B \times  (\E + \u \times \B) &=
		 - \beta (\theta-\theta_0) g\mathbf{e}_3, \label{eq:MHDBouDim1}\\
		\div \u&=0, \label{eq:MHDBouDim2}\\
		\E + \u \times \B - \eta \vcurl \B &= \mathbf{0}, \label{eq:MHDBouDim3}\\
		\partial_t \B + \vcurl \E &= \mathbf{0}, \label{eq:MHDBouDim4}\\
		\partial_t \theta -\alpha \Delta \theta + \u \cdot \nabla \theta &= 0, \label{eq:MHDBouDim5}\\
		\div \B &= 0, \label{eq:MHDBouDim6}
	\end{align}
\end{subequations}
subject to the boundary conditions
\begin{equation}
	\u=\mathbf{0}, \quad  \E \times \n = \mathbf{0}, \quad \B \cdot \n=0 \quad \text{ on } \del \Omega,
\end{equation}
and
\begin{equation}
	\theta = \theta_b \text{ on } \del \Omega_D, 	\quad \quad \nabla \theta\cdot \n = 0 \text{ on } \del \Omega \backslash \del \Omega_D,
\end{equation}
for a given temperature distribution $\theta_b$ and a non-empty subset $\Omega_D$ of $\partial \Omega$.
In the above model, $\theta$ denotes the temperature, $\theta_0$ a reference temperature, $\rho_0$ a reference density, $\beta$ the thermal expansion, $g$ the magnitude of acceleration due to gravity, $\mathbf{e}_3$ a unit vector in $z$-direction, the buoyancy direction, and $\alpha$ the thermal diffusivity. 

In this chapter, we are not going to focus on regularity and well-posedness results for the continuous version of these equations. Results in this direction can be found in \cite{Bian2020,Ghani2021, pan2020global}.

%We believe that our current results should be straight-forward to extend to this formulation. We plan to demonstrate the effectiveness of our solver by the simulation of a liquid metal cooled reactor, cmp.\ \cite{Suarez2021}. We believe that the simulation of such an large-scale real-world problem will be a satisfactory completion of our thesis. 

For the derivation of non-dimensional version, we introduce the new unknowns
\begin{align}
	\x^\star &= \frac{\x}{L},\\
	t^\star &= \frac{\ou}{L}t,\\ 
	\u^\star(\x^\star, t^\star)&=\frac{\u(\x,t)}{\ou}, \quad \ou = \frac{\alpha}{L},\\
	p^\star(\x^\star, t^\star)&= \frac{p(\x,t)L^2}{\rho_0 \alpha^2}, \\
	\B^\star(\x^\star, t^\star)&= \frac{\B(\x,t)}{\overline{B}},\\
	\E^\star(\x^\star, t^\star) &= \frac{\E(\x,t)}{\ou\overline{B}}, \\
    \theta^\star(\x^\star, t^\star) &= \frac{\theta(\x,t) - \theta_0}{\overline{\theta}}, \quad \overline{\theta} = \theta_1 - \theta_0,
\end{align}
with characteristic values for the magnetic field $\overline{B}$, the length scale $L$ and a second reference temperature $\theta_1 > \theta_0$ (e.g., $\theta_1$ and $\theta_0$ can denote the temperature of the hot and cold plate in a B\'enard type problem). Note that we do not include viscous dissipation in the temperature equation \eqref{eq:MHDBouDim5}.

After dropping the stars, this leads to the non-dimensional system
\begin{subequations}
	\label{eq:MHDBou}
	\begin{align}
		\partial_t \u - 2 \Pr \div \eps \u +  \u \cdot \nabla \u + \nabla p + S\ \B \times  (\E + \u \times \B) &=
		\Ra\, \Pr\, \theta \mathbf{e}_3, \label{eq:MHDBou1}\\
		\div \u&=0, \label{eq:MHDBou2}\\
		\E + \u \times \B - \frac{\Pr}{\Pm} \vcurl \B &= \mathbf{0}, \label{eq:MHDBou3}\\
		\partial_t \B + \vcurl \E &= \mathbf{0}, \label{eq:MHDBou4}\\
		\partial_t \theta - \Delta \theta + \u \cdot \nabla \theta &= 0, \label{eq:MHDBou5}\\
		\div \B &= 0, \label{eq:MHDBou6}
	\end{align}
\end{subequations}
with the Prandtl number $\Pr$, magnetic Prandtl number $\Pm$, Rayleigh number $\Ra$ and coupling number $S$ given by
\begin{equation}
	\Pr = \frac{\nu}{\alpha}, \qquad \Pm=\frac{\nu}{\eta}, \qquad \Ra = \frac{\beta g \overline{\theta} L^3}{\nu \alpha}\quad  \text{ and } \quad S = \frac{\overline{B}^2 L^2}{\rho_0 \mu_0 \eta \alpha }.
\end{equation}
For a further description of these unknowns and typical parameter values in applications we refer to Section \ref{sec:IntroAnisothermalMHDmodels}. Note that the coupling number $S$ can also be expressed in terms of the Chandrasekhar number $Q$ as
\begin{equation}
	S = \Pr\, Q  \quad \text{ with } \quad Q = \frac{\overline{B}^2 L^2}{\mu_0 \rho_0 \nu \eta}.
\end{equation}

For the stationary formulation of \eqref{eq:MHDBou}, we combine as in the previous chapters the equations \eqref{eq:MHDBou4} and \eqref{eq:MHDBou5} to the augmented Lagrangian formulation
\begin{equation}
	-\frac{\Pr}{\Pm} \nabla \div \B + \vcurl \E = \mathbf{0}.
\end{equation}
For a weak formulation, we look for the new unknown $\theta$ in \mbox{$H^1_{0, \del \Omega_D} \coloneqq \{\tau \in H^1(\Omega) \, | \, \tau = 0 \text{ on } \partial \Omega_D \}$}. Hence, the weak formulation for the homogeneous, stationary problem is given by: find \mbox{$(\u, p, \theta, \E, \B) \in X \coloneqq \mathbf{H}^1_0 \times L^2_0 \times H^1_{0, \partial \Omega_D} \times  \mathbf{H}_0(\curl) \times \mathbf{H}_0(\mathrm{div})$} such that for all \mbox{$(\v, q, \tau, \F, \C) \in X$} there holds
\begin{subequations}
\label{eq:BoussinesqStationary}
\begin{align}
	2 \Pr (\div \eps \u , \div \eps \v) + ((\u \cdot \nabla) \u, \v) -(p, \div \v) + S (\B \times \E, \v) \nonumber\\ + S (\B \times (\u\times \B), \v) - \Ra\, \Pr\, (\theta \mathbf{e}_3, \v) &= 0,\\
	- (\div \u, q) &= 0, \\
	(\E, \F) + (\u \times \B, \F) - \frac{\Pr}{\Pm} (\B, \vcurl \F) &= 0,\\
	\frac{\Pr}{\Pm} (\div \B, \div \C)  + (\vcurl \E, \C) &= 0,\\
	(\nabla \theta, \nabla \tau) + (\u \cdot \nabla \theta, \tau) &= 0.
\end{align}
\end{subequations}
For a finite element approximation, we approximate the temperature $\theta$ with $\mathbb{CG}_k$ elements. The finite element spaces for  $\B$ and $\E$ are chosen as in Chapter \ref{chap:2} and Chapter \ref{chap:3}, namely $\mathbb{RT}_k$ for $\B$ and $\mathbb{NED}1_k$ for $\E$ in 3D and $\mathbb{CG}_k$ for $E$ in 2D. We mention in each of the following sections which discretisation we choose for $\u$ and $p$.

In the previous chapters, the main purpose of the Picard type iteration was to prove well-posedness results and derive more accurate Schur complement approximations for the development of our preconditioners. The well-posedness proof for the Picard iteration of the standard MHD system from \cite{Hu2020} is straightforward to extend to the temperature-dependent case. We only investigate the full Newton linearisation here, since it outperformed in nearly all cases in the previous chapters the Picard iteration in terms of iteration numbers.

\section{Bifurcation analysis for a 2D magnetic Rayleigh-B\'enard problem}\label{sec:BifurcationAnalysis}

The author wants to thank Nicolas Boull\'e for many helpful discussions which helped to improve the content of this section.

In this section, we want to perform a bifurcation analysis for a two-dimensional magnetic Rayleigh-B\'enard problem. Our goal is to compute a bifurcation diagram for the bifurcation parameter $\Ra$ in the range between 0 and 100,000 at a high coupling number of $S=1{,}000$. Since these diagrams are quite challenging to compute directly for high coupling numbers,  we start by investigating the bifurcation diagram over $\Ra$ at a low coupling number of $S=1$ in Section \ref{sec:bif1}. We then proceed in Section \ref{sec:bif2} to choose $S$ as bifurcation parameter ranging from 1 to 1{,}000 with fixed $\Ra=100{,}000$. Finally, we use the obtained results at $\Ra=100{,}000$ and $S=1{,}000$ as initial guesses to compute the desired bifurcation diagram over $0\leq \Ra \leq 100{,}000$ at $S=1{,}000$ in Section \ref{sec:bif3}.

Another goal is to study the effect of the magnetic field on the arising bifurcations in comparison to the standard Rayleigh-B\'enard problem for the three unknowns $(\u, p, \theta)$, i.e., $\B=E=0$. Therefore, we  compare our results to the ones presented in \cite{Boulle2022}. The outline of this section is heavily influenced by this manuscript and we use a similar problem setup adapted for the magnetic and electric fields and similar numerical techniques to compute our numerical results.
For more information about the magnetic Rayleigh-B\'enard problem and its bifurcation analysis we refer to \cite{Yang2021,HAN2018370,NAFFOUTI2014714,Akhmedagaev2020,burr_muller_2002,Nandukumar_2015}. 
%Compared to the standard Rayleigh-B\'enard problem, there does not seem to be a rich literature on the bifurcation analysis for the magnetic Rayleigh-B\'enard problem. Furthermore, we want to emphasise that we are not aware of any sources that investigate the influence of the coupling number $S$ on the bifurcation analysis as we do in this section.

In the following, we consider the unit square domain $\Omega=(0,1)^2$ with coordinates $(x_1, x_3)$, no-slip boundary conditions for $\u$ and a background magnetic field that points in the direction $(0,1)^\top$. Further, we choose the horizontal walls to be thermally conducting and the vertical walls to be insulating. In summary, this leads to the boundary conditions
\begin{align}
	\begin{split}
	\u = \mathbf{0} \text{ on } \partial \Omega, \quad \theta = 
	\begin{cases}
		1, & \text{ on } \{x_3=0\}, \\
		0, & \text{ on } \{x_3 = 1\},
	\end{cases},
    \quad
  	\nabla \theta \cdot \n = 0 \text{ on }  \{x_1=0,1\}, \\
    \B\cdot \mathbf{n} = (0,1)^\top \mathbf{n} \text{ on } \partial \Omega \quad \text{ and } \quad
  	E  = 0 \text{ on } \partial \Omega. \qquad \qquad \quad 
  	\end{split}
\end{align}
Recall from Section \ref{sec:MHDModel} that the electric field is a scalar field in two dimensions. 

The trivial steady state solution for these boundary conditions, also called the conduction state, is given by
\begin{align}\label{eq:trivialsol}	
	\begin{split}
	\u_0 = \mathbf{0}, \qquad p_0 = \Ra\, \Pr\left(x_3 - \frac{1}{2} x_3^2 - \frac{1}{3}\right), \qquad  \theta_0 = 1-x_3, \\
	 \B_0 = (0,1)^\top \quad  \text{ and } \quad  E_0 = 0.  \qquad \qquad \qquad
	\end{split}
\end{align}
%This is indeed a solution of \eqref{eq:MHDBou} since in 2D the boundary condition $E=0$ on $\partial \Omega$ and $\vcurl E = \mathbf{0}$ implies that the electric field for the trivial state $E_0=0$ and there exists a solution to the equation 
%\begin{equation}
%	\nabla p_0 = \Ra\, \Pr \begin{pmatrix}
%		0 \\ 1-x_3
%	\end{pmatrix}
%\end{equation}
%given by $p_0(x_1, x_3) = \Ra\, \Pr\, (x_3 - \frac{1}{2} x_3^2 - \frac{1}{3})$.
% Note that we only include the time-dependent unknowns $\u_0, \theta_0$ and $\B_0$ in the definition of the trivial state in \eqref{eq:trivialsol}.

The problem has two symmetries that determine the behaviour of the arising bifurcations:
\begin{equation}\label{eq:symmetry1}
	[u_1, u_2, \theta, B_1, B_2](x_1, x_3) \to [-u_1, u_2, \theta, B_1, -B_2] (1-x_1, x_3)
\end{equation}
and 
\begin{equation}\label{eq:symmetry2}
	[u_1, u_2, \theta, B_1, B_2](x_1, x_3) \to [u_1, -u_2, 1-\theta, -B_1, B_2] (x_1, 1-x_3)
\end{equation}
which can be easily verified by a direct computation.
%\red{XXX: how do you find out symmetries? Do we want to show computation?}\\
In our bifurcation diagrams we always just display one of these four corresponding solutions. The evolution of the solutions will be represented in terms of $\|\u\|^2$, $\|\theta\|^2$ and $\|\B\|^2$.

Since we want to compare our numerical results to \cite{Boulle2022}, we also use Taylor-Hood elements of degree 2, i.e., $[\mathbb{CG}_2]^2 \times \mathbb{CG}_1$ to discretise $(\u, p)$ in this section.  Furthermore, we use a triangular mesh with $50\times50$ square cells where each square cell is split into four triangles by the two diagonals of each square. We use this symmetric mesh to preserve the symmetries of the problem. Note that \cite{Boulle2022} uses a quadrilateral mesh. 

 Since the deflation algorithm, which we introduce in the next paragraph, might require the solution of hundreds of thousands of nonlinear iterations in total to compute a full bifurcation diagram, we choose this rather coarse mesh here to decrease the computation time. Moreover, we apply a direct solver to solve the arising linear systems. In the next section, we introduce a scalable preconditioner that allows to solve these equations efficiently and robustly on much finer grids. If one is interested in more accurate solutions for certain parameters, on can then use the solution of the $50\times50$ grid as an initial guess and recompute the solution on finer meshes in a nested iteration. 
 %Depending of how fine the mesh has to be, one might want to use the parameter-robust and scalable preconditioner we introduce in the next Section \ref{sec:BlockPreconAniso}.

We compute our bifurcation diagrams with a technique called \emph{deflated continuation}. Deflation \cite{Deflation2015} is a method to compute multiple solutions of a nonlinear equations. To introduce this method, we rewrite our system \eqref{eq:MHDBou} as $F(\Phi, \lambda)=0$ where $\Phi = (\u, p, \theta, \B, E)$ and $\lambda \in \{\Ra, S\}$ denotes the bifurcation parameter. Assuming that Newton's method has found a solution $\Phi_1$ with $F(\Phi_1, \lambda)=0$, the deflation algorithm continues by trying to find a root of the deflated residual
\begin{equation}
	F_1(\Phi, \lambda) \coloneqq \mathcal{M}(\Phi, \Phi_1) F(\Phi, \lambda).
\end{equation}
The operator $\mathcal{M}$ should be constructed in a way that eliminates solutions which are close to the already found solution $\Phi_1$ and is close to 1 away from $\Phi_1$. In this work, we choose $\mathcal{M}$ as 
\begin{equation}
\mathcal{M}(\Phi, \Phi_1) \coloneqq \left( \frac{1}{\|\u - \u_1 \|^2 + \|\nabla (\u - \u_1)\|^2 + \|\theta - \theta_1\|^2 + \|\B - \B_1\|^2} + 1  \right).
\end{equation}
Newton's method can then be applied to $F_1$ from the same initial guess. The process can be repeated to discover multiple solutions. After the deflation method has found multiple solutions for a fixed $\lambda$, deflated continuation proceeds by using the computed solutions for $\lambda$ as initial guesses for a parameter continuation from $\lambda$ to $\lambda \pm \Delta \lambda$ (the sign expresses whether we do forward or backward continuation), i.e., we apply a simple 0-th order continuation here with fixed step-size $\Delta \lambda$. The algorithm then iteratively continues by applying a deflated continuation step to each found solution at $\lambda \pm \Delta \lambda$ until the final value of $\lambda$ is reached.
 
The performance of the deflation algorithm and which solutions are found heavily depends on the available initial guesses for the initial deflation step. We use the same approach described in \cite{Boulle2022} to provide initial guesses by computing unstable eigenmodes linearised around the trivial solution \eqref{eq:trivialsol}. The initial guesses are then given by the sum of the trivial solution and the normalised eigenmodes. Note that this approach is used in \cite{Boulle2022} to provide initial guesses for backward continuation starting from $\Ra=100{,}000$. In general, backward continuation allows to compute more complex bifurcation diagrams and especially allows the computation of disconnected branches which might not be found with forward continuation.

To compute the unstable eigenmodes, we consider the perturbation ansatz
\begin{equation}
	\begin{pmatrix}
		\u\\ \theta   \\ \B
	\end{pmatrix}
= 
	\begin{pmatrix}
		\u_0 \\ \theta_0 \\ \B_0
	\end{pmatrix}
+
\begin{pmatrix}
	\tilde{\u}  \\ \tilde{\theta} \\ \tilde{\B}
\end{pmatrix}
e^{\lambda t}
\end{equation}
for small perturbations $\tilde{\u}, \tilde{\theta},\tilde{\B} \ll 1$.
Linearising system \eqref{eq:MHDBou} around the trivial solution $(\u_0, p_0, \theta_0, \B_0, E_0)$ and inserting the perturbation ansatz leads to the generalised eigenvalue problem
\begin{equation}\label{eq:EVproblem}
		\resizebox{\textwidth}{!}{%
$\begin{bmatrix}
		\FF & -\nabla & \Ra\, \Pr\, \mathbf{e}_3 & \GG& - S\,\B_0 \times\\
		\nabla \cdot & 0 & 0 & 0 & 0\\
		- \nabla \theta_0 \cdot & 0 & \Delta - \u_0 \cdot \nabla & 0 & 0\\
		0 & 0 & 0 & \frac{\Pr}{\Pm}\nabla \nabla \cdot & - \vcurl \\
		- \times \B_0 & 0 & 0 &  -\frac{\Pr}{\Pm}\scurl \, - \u_0 \cdot & -I 
	\end{bmatrix}
\begin{bmatrix}
		\tilde{\u}  \\ p \\  \tilde{\theta} \\ \tilde{\B}\\ E
\end{bmatrix}	
= \lambda
	\begin{bmatrix}
		I & 0 & 0 & 0 & 0\\
		0 & 0 & 0 & 0 & 0\\
		0 & 0 & I & 0 & 0 \\
		0 & 0 & 0 & I & 0\\
		0 & 0 & 0 & 0 & 0\\
	\end{bmatrix}
\begin{bmatrix}
	\tilde{\u}  \\ p \\ \tilde{\theta} \\ \tilde{\B} \\ E
\end{bmatrix}$}
\end{equation}
with 
\begin{align}
\FF \tilde{\u} &= 2 \Pr \nabla \cdot \varepsilon(\tilde{\u}) - \u_0 \cdot \nabla \tilde{\u} - \tilde{\u} \cdot \nabla \u_0 - S\, \B_0 \times (\tilde{\u} \times \B_0), \\
	\GG\, \tilde{\B} &= - S\, \tilde{\B} \times E_0 - S\, \tilde{\B}\times (\u_0 \times \B_0) - \S\, \B_0\times(\u_0 \times \tilde{\B}).
\end{align}
Note that we changed our block matrix notation here slightly by including the operators directly in the matrix rather than assigning a name to each operator as previously done, e.g., in Table \ref{tab:Operators}.

We solve this eigenvalue problem with a Krylov--Schur solver \cite{Stewart2002} that is implemented in the library SLEPc \cite{SLEPc}. The real and imaginary parts of the computed eigenvalues determine the stability of the corresponding eigenmode. If the real part of all eigenvalues is negative, the solution is stable. If at least one eigenvalue has a positive real part the solution is unstable, with the type of instability depending on whether the associated imaginary part is zero or non-zero.

As mentioned before, this technique has been used in \cite{Boulle2022} to provide initial guesses at $\Ra=100{,}000$ to compute a bifurcation diagram with complex solution patterns and disconnected branches. The same approach still works for the magnetic Rayleigh-B\'enard problem at a small coupling number of $S=1$, but fails to provide initial guesses for higher coupling numbers at $S=1{,}000$ from which Newton's method is able to converge. Therefore, we proceed with the approach outlined at the beginning of this section in which we use deflated continuation for $0\leq\Ra\leq 100{,}000$ at $S=1$ and for $1\leq S\leq 1{,}000$ at $\Ra=100{,}000$ to obtain initial guesses at $\Ra=100{,}000$ and $S=1{,}000$. As we will see in Section \ref{sec:bif3} this allows us to compute all primary bifurcations as well a disconnected branch that we were not able to find with forward continuation.

\subsection{Bifurcation analysis for  $0\leq \Ra \leq 100{,}000$  with  \mbox{$S=1$}}\label{sec:bif1}
We start by analysing the stability of the conducting state \eqref{eq:trivialsol} in the range of $0\leq \Ra \leq 100{,}000$ at $S=1$ by solving the aforementioned eigenvalue problem \eqref{eq:EVproblem}. We observe 11 supercritical bifurcations emanating from the conducting state in this range. The growth rates of the first 10 unstable eigenfunctions are displayed in Figure \ref{fig:eigsplot_S1_PM1}. This plot looks nearly identical to \cite[Fig.\ 1]{Boulle2022} for the standard Rayleigh--B\'enard problem which indicates that the effect of the magnetic field at $S=1$ is almost negligible for the bifurcation patterns. The 11th supercritical bifurcation starts at $\Ra_c^{(11)}=99{,}528$. For the standard Rayleigh--B\'enard problem the 11th supercritical bifurcation starts slightly above 100,000 and is hence not included in \cite{Boulle2022}.

Similarly, the plots of the eigenfunctions in Figure \ref{fig:eigenmodes_S1_PM1} look similar to \cite[Fig.\ 2]{Boulle2022}. Figure \ref{fig:eigenmodes_S1_PM1} also includes the critical Rayleigh numbers $\Ra_c$ which differ from the ones of the non-magnetic problem by around one percent. The critical Rayleigh numbers indicate when a steady states become unstable, i.e., when the real part of the eigenvalues crosses the zero-line. We compute the critical Rayleigh numbers accordingly to \cite[Sec.~B]{Boulle2022} by solving a generalised eigenvalue problem, where we interpret $\Ra_c$ as the eigenvalue in 

\begin{equation}\label{eq:EVproblemRac}
	\resizebox{\textwidth}{!}{%
		$\begin{bmatrix}
			\FF & -\nabla & 0 & \GG& - S\,\B_0 \times\\
			\nabla \cdot & 0 & 0 & 0 & 0\\
			- \nabla \theta_0 \cdot & 0 & \Delta - \u_0 \cdot \nabla & 0 & 0\\
			0 & 0 & 0 & \frac{\Pr}{\Pm}\nabla \nabla \cdot & - \vcurl\\
			- \times \B_0 & 0 & 0 &  -\frac{\Pr}{\Pm}\scurl \,   - \u_0 \cdot & -I 
		\end{bmatrix}
		\begin{bmatrix}
			\tilde{\u}  \\ p \\  \tilde{\theta} \\ \tilde{\B}\\ E
		\end{bmatrix}	
		= \Ra_c
		\begin{bmatrix}
			0 & 0 & -\Pr\, \mathbf{e}_3 & 0 & 0\\
			0 & 0 & 0 & 0 & 0\\
			0 & 0 & 0 & 0 & 0 \\
			0 & 0 & 0 & 0 & 0\\
			0 & 0 & 0 & 0 & 0\\
		\end{bmatrix}
		\begin{bmatrix}
			\tilde{\u}  \\ p \\ \tilde{\theta} \\ \tilde{\B} \\ E
		\end{bmatrix}$}.
\end{equation}

\begin{figure}[htbp!]
	\centering
	\begin{subfigure}{0.32\textwidth}
		\includegraphics[width=4.8cm]{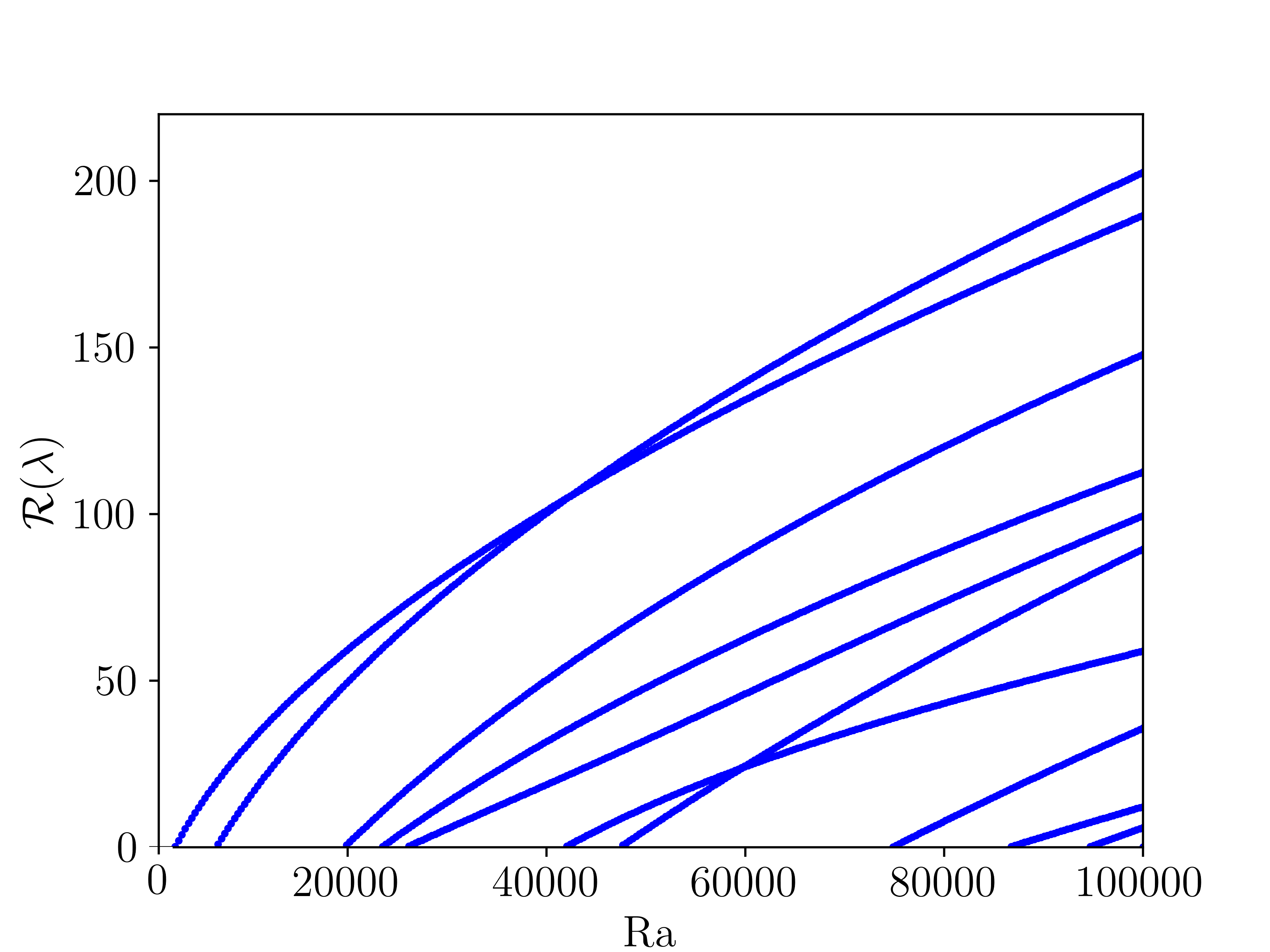}
		\subcaption{Over $\Ra$  at  \mbox{$S=1$}.}
		\label{fig:eigsplot_S1_PM1}
	\end{subfigure}
    \hspace{-0.3cm}
	\begin{subfigure}{0.32\textwidth}
		\includegraphics[width=4.8cm]{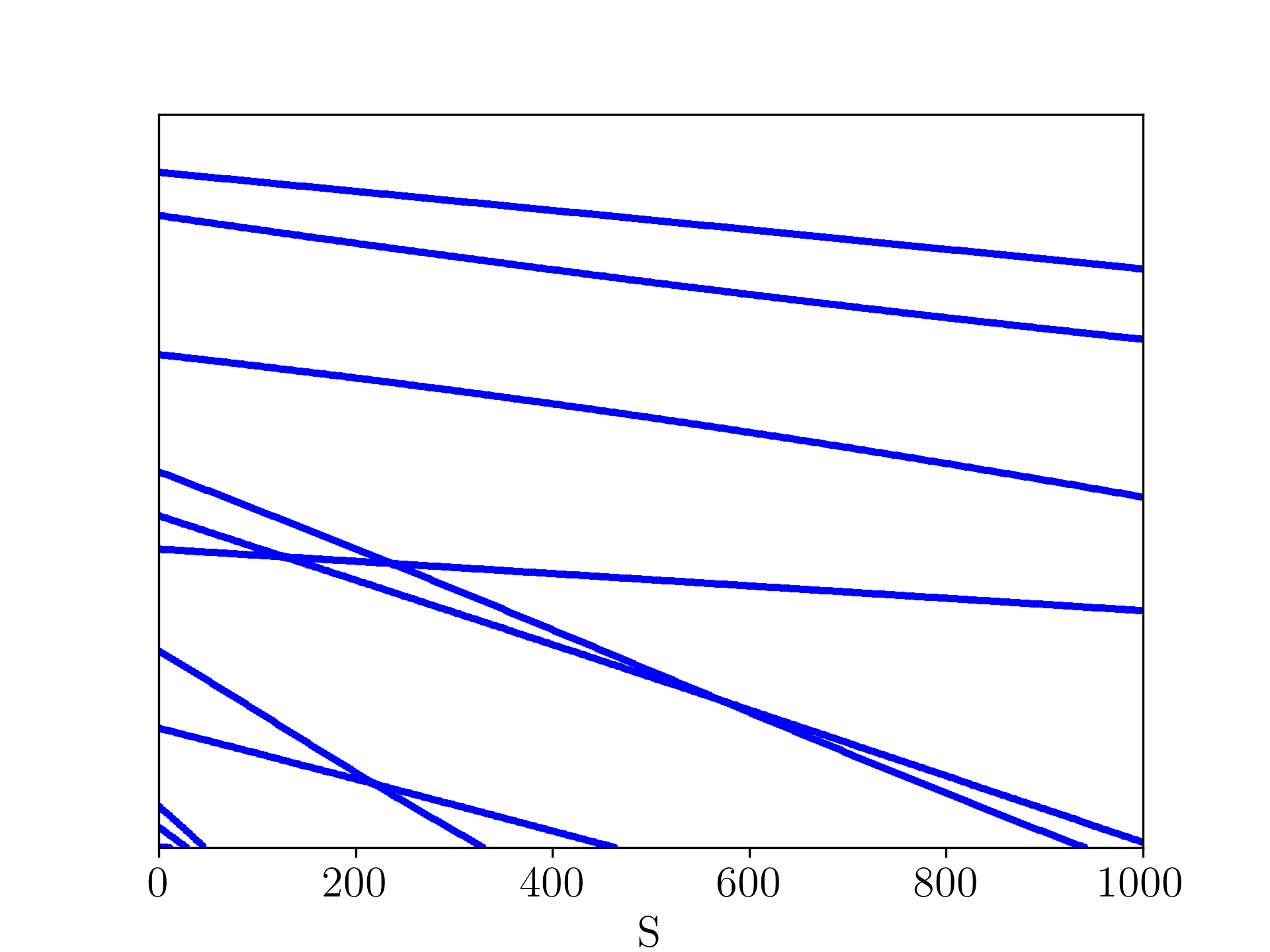}
		\subcaption{Over $S$  at \mbox{$\Ra=100{,}000$}.}
		\label{fig:eigsplot_S1000_PM1}
	\end{subfigure}
    \hspace{-0.3cm}
	\begin{subfigure}{0.32\textwidth}
		\includegraphics[width=4.8cm]{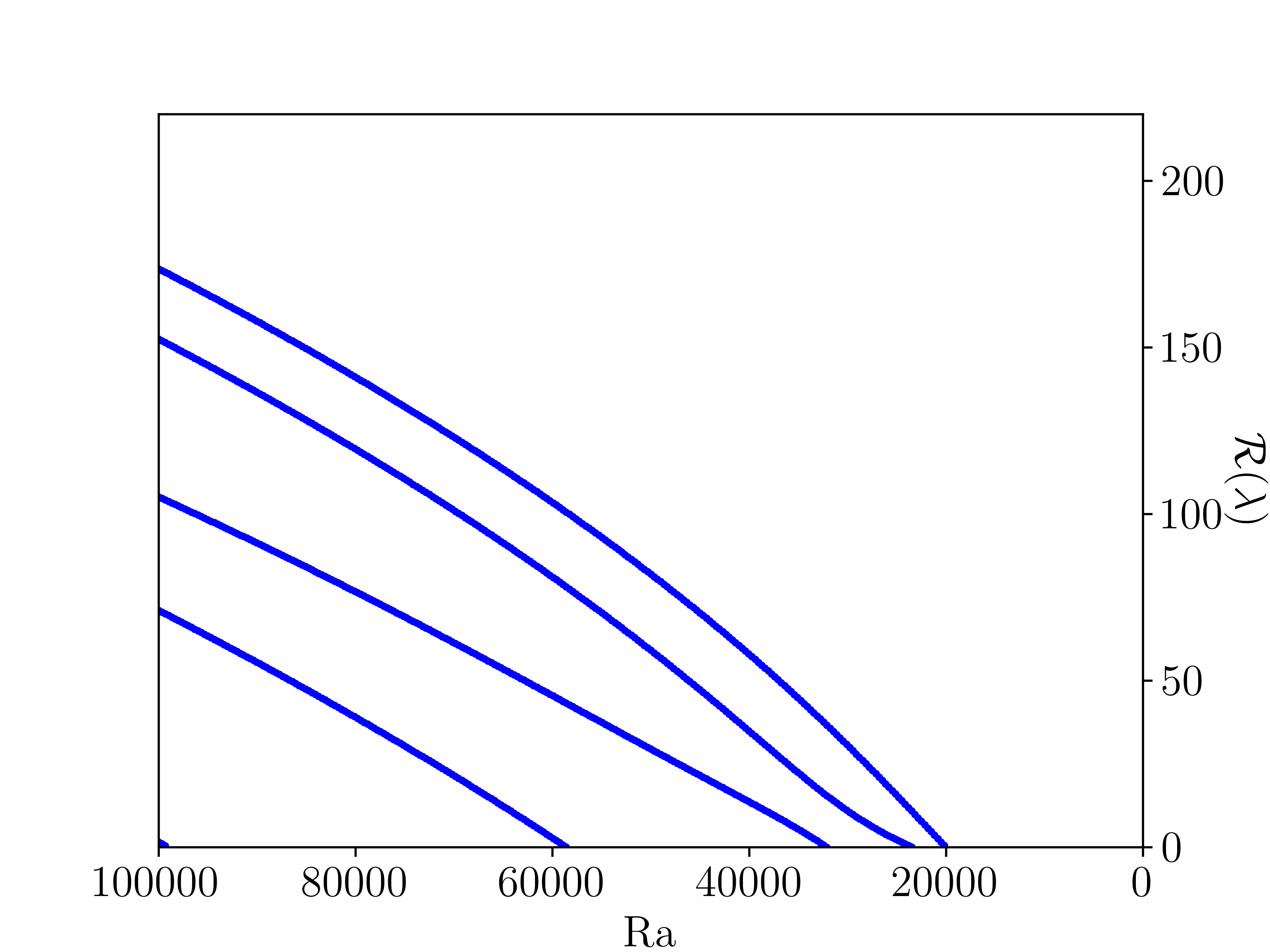}
		\subcaption{Over $\Ra$ at \mbox{$S=1{,}000$}.}
		\label{fig:eigsplot_S1000_PM1_3}
	\end{subfigure}
	\caption{Growth rates of eigenmodes emanating from the conducting state.
		\label{fig:eigsplots}}
\end{figure}

\begingroup
\renewcommand{\arraystretch}{2.0}
\begin{figure}[htbp!]
	\centering
	\newcommand{\mywidth}{2.5cm}
	\begin{tabular}{ccccc}
		$\Ra_c^{(1)} = 2{,}609$ & $\Ra_c^{(2)} = 6{,}756$ & $\Ra_c^{(3)} = 19{,}647$ & $\Ra_c^{(4)} = 23{,}408$ & $\Ra_c^{(5)} = 25{,}903$ \\
		\includegraphics[width=\mywidth]{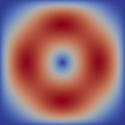} &
		\includegraphics[width=\mywidth]{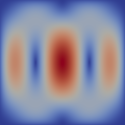} &
		\includegraphics[width=\mywidth]{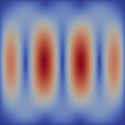} &
		\includegraphics[width=\mywidth]{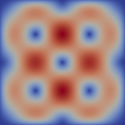} &
		\includegraphics[width=\mywidth]{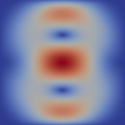} \\
		\includegraphics[width=\mywidth]{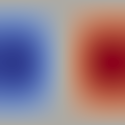} &
		\includegraphics[width=\mywidth]{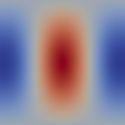} &
		\includegraphics[width=\mywidth]{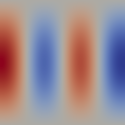} &
		\includegraphics[width=\mywidth]{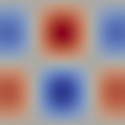} &
		\includegraphics[width=\mywidth]{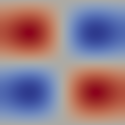} \\
		\vspace{0.5cm}
		\includegraphics[width=\mywidth]{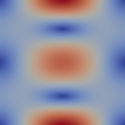} &
		\includegraphics[width=\mywidth]{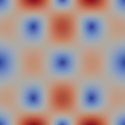} &
		\includegraphics[width=\mywidth]{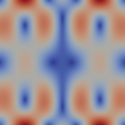} &
		\includegraphics[width=\mywidth]{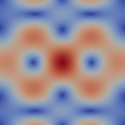} &
		\includegraphics[width=\mywidth]{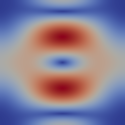} \\		
		$\Ra_c^{(6)} = 41{,}799$ & $\Ra_c^{(7)} = 47{,}364$ & $\Ra_c^{(8)} = 74{,}761$ & $\Ra_c^{(9)} = 86{,}462$ & $\Ra_c^{(10)} = 94{,}524$ \\
		\includegraphics[width=\mywidth]{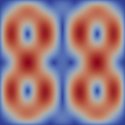} &
		\includegraphics[width=\mywidth]{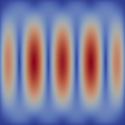} &
		\includegraphics[width=\mywidth]{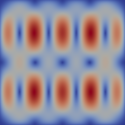} &
		\includegraphics[width=\mywidth]{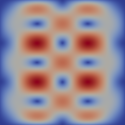} &
		\includegraphics[width=\mywidth]{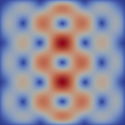} \\
		\includegraphics[width=\mywidth]{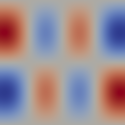} &
		\includegraphics[width=\mywidth]{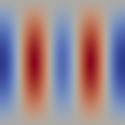} &
		\includegraphics[width=\mywidth]{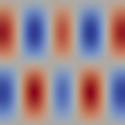} &
		\includegraphics[width=\mywidth]{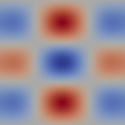} &
		\includegraphics[width=\mywidth]{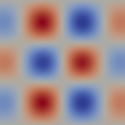} \\
		\includegraphics[width=\mywidth]{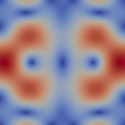} &
		\includegraphics[width=\mywidth]{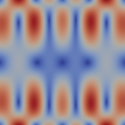} &
		\includegraphics[width=\mywidth]{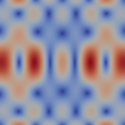} &
		\includegraphics[width=\mywidth]{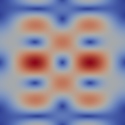} &
		\includegraphics[width=\mywidth]{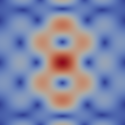} \\
	\end{tabular}
	\caption{First 10 eigenfunctions of the primary bifurcations that emanate from the conducting state \eqref{eq:trivialsol} for $0\leq \Ra \leq 100{,}000$ with \mbox{$S=1$}. The top row shows the magnitude of the velocity, the middle row the temperature and the bottom row the magnitude of the magnetic field for the different critical Raleigh numbers. \label{fig:eigenmodes_S1_PM1}}
\end{figure}
\endgroup

\FloatBarrier
Since the effect of the magnetic field at $S=1$ is negligible, we do not report a full bifurcation diagram for this case. Instead, we track the evolution of four branches in Figure \ref{fig:bifurcation_Ra_S1_diagram} which we use to generate initial guesses at $Ra=100{,}000$ and $S=1{,}000$. Branches 1, 2 and 3 correspond to the primary bifurcations that arise from the third, fourth and seventh eigenmodes at $\Ra_c^{(3)} = 19{,}647$, $\Ra_c^{(4)} = 23{,}408$ and $\Ra_c^{(7)} = 47{,}364$. The tracking of branch 6 allows us to compute a disconnected branch in the final diagram for $0\leq \Ra \leq 100{,}000$ with $S=1{,}000$. We plot branch 6 with a dashed line to highlight the fact that it will become a disconnected branch in the final diagram. We have chosen a step size of $\Delta \Ra = 500$ for this diagram.

\begin{figure}[htbp!]
	\centering
	\begin{tabular}{cc}
		\includegraphics[width=7.0cm]{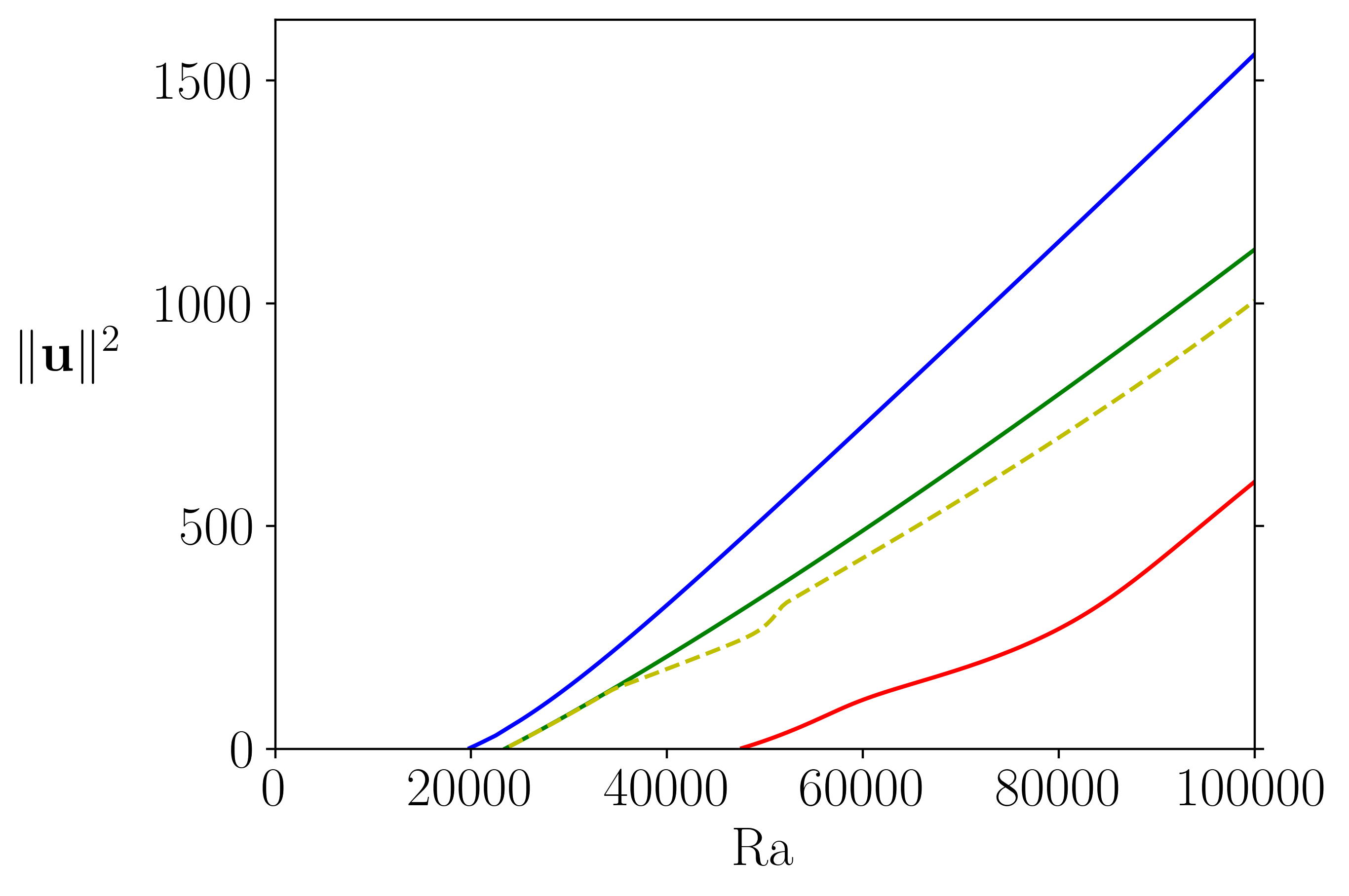} &
		\includegraphics[width=7.0cm]{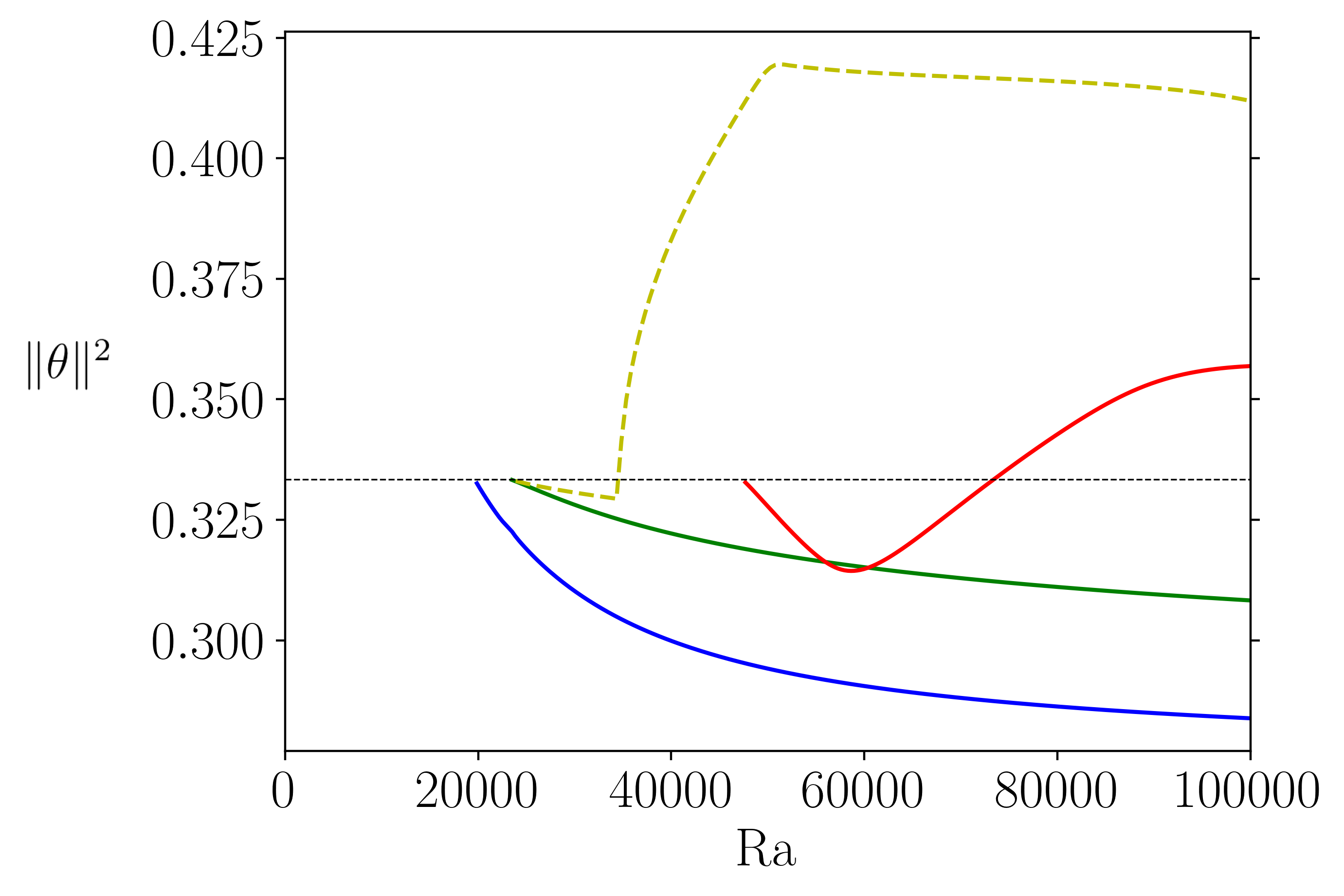} \\
	\end{tabular}
    \includegraphics[width=7.8cm]{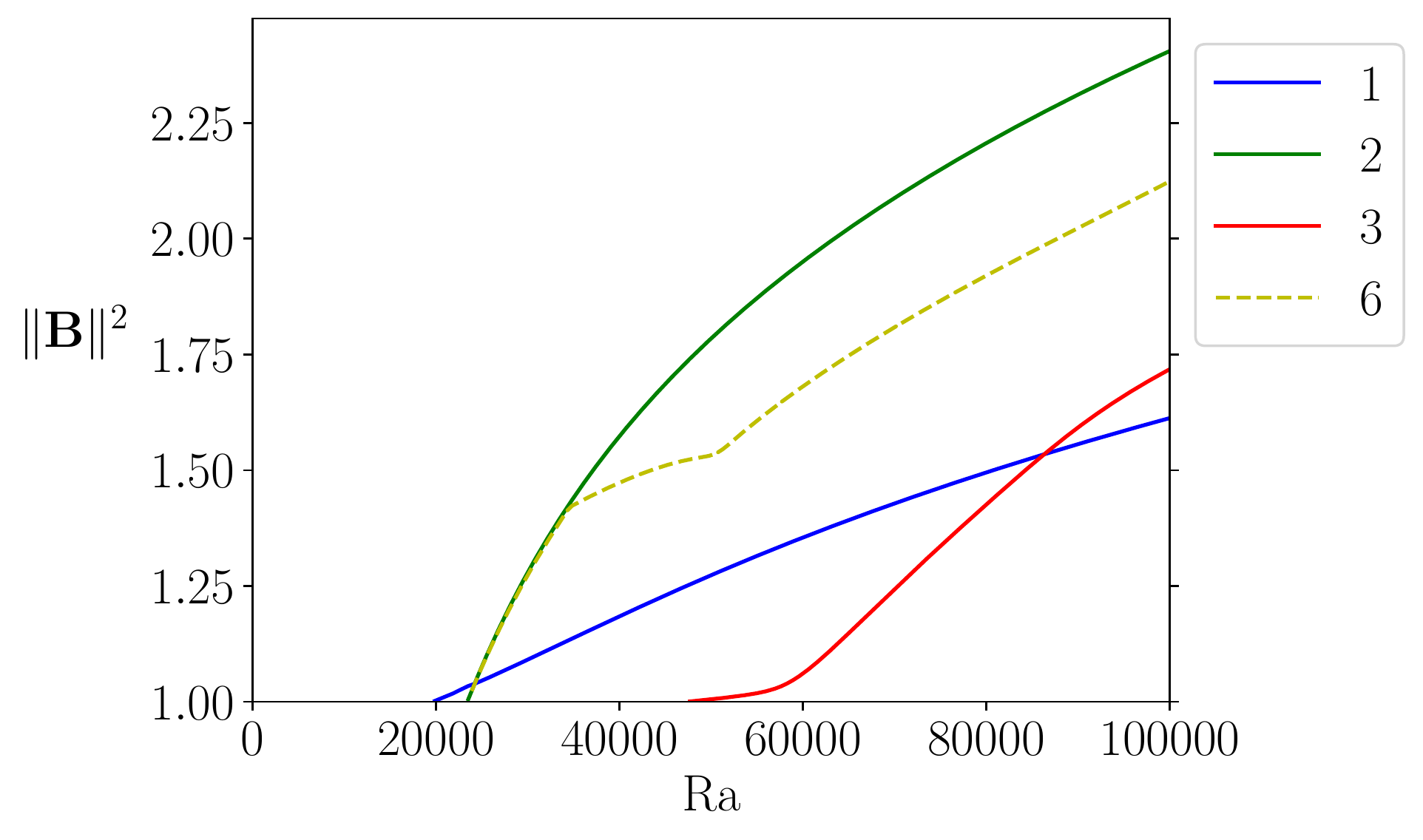}
	\caption{Bifurcation diagram over $0 \leq \Ra \leq 100{,}000$ with $S=1$.}
	\label{fig:bifurcation_Ra_S1_diagram}
\end{figure}

We also include more detailed graphs of the evolution of these four branches in Figure \ref{fig:bifurcation_Ra_S1_diagram_branch_1_2_3_6}. This figure includes plots of the magnitude of the velocity, temperature and magnetic field and the real part of the eigenvalues with largest real part. The number of displayed eigenvalues is manually chosen for each branch, but we typically display the first 4 or 6 eigenvalues. We highlight eigenvalues with $\mathcal{R}(\lambda)=0$ in green if the corresponding imaginary part is zero and in red if it is non-zero to indicate steady bifurcations and Hopf bifurcations. Note that our chosen step size $\Delta \Ra$ is not always sufficient to find eigenvalues that fulfil $\mathcal{R}(\lambda)=0$ precisely. In any case, we highlight the eigenvalue that has the smallest absolute magnitude in the real part. This provides a sufficient indication if the corresponding imaginary parts is zero or non-zero. 

Since the presented branches in Figure  \ref{fig:bifurcation_Ra_S1_diagram_branch_1_2_3_6} do not show noticeable differences to the non-magnetic case from \cite{Boulle2022}, we do not further analyse each figure in detail here, but we will do so in the next two sections.

\begin{figure}[htbp!]
	\begin{subfigure}{\textwidth}
	\centering
	\vspace{-0.5cm}
	\includegraphics[width=16.0cm]{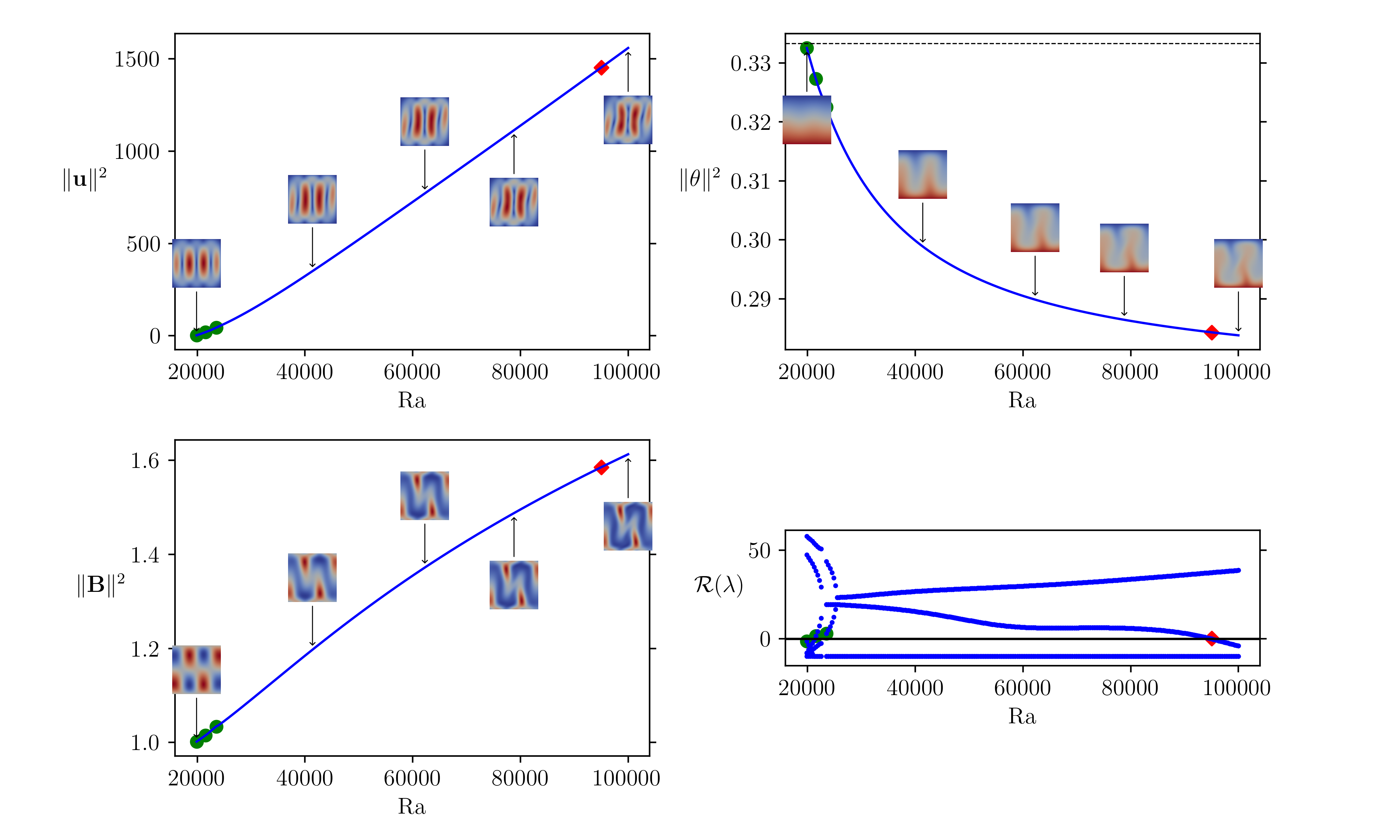}
	\vspace{-0.5cm}
	\subcaption{Evolution of branch 1 over $\Ra$ for $S=1$. Eigenvalues with $\mathcal{R}(\lambda)=0$ are highlighted in green if $\mathcal{I}(\lambda) = 0$ and in red if $\mathcal{I}(\lambda) \neq 0$.}
	\label{fig:bifurcation_Ra_S1_diagram_branch_1}
    \end{subfigure}

    \vspace{0.7cm}
    \begin{subfigure}{\textwidth}
	\centering
	\includegraphics[width=16.0cm]{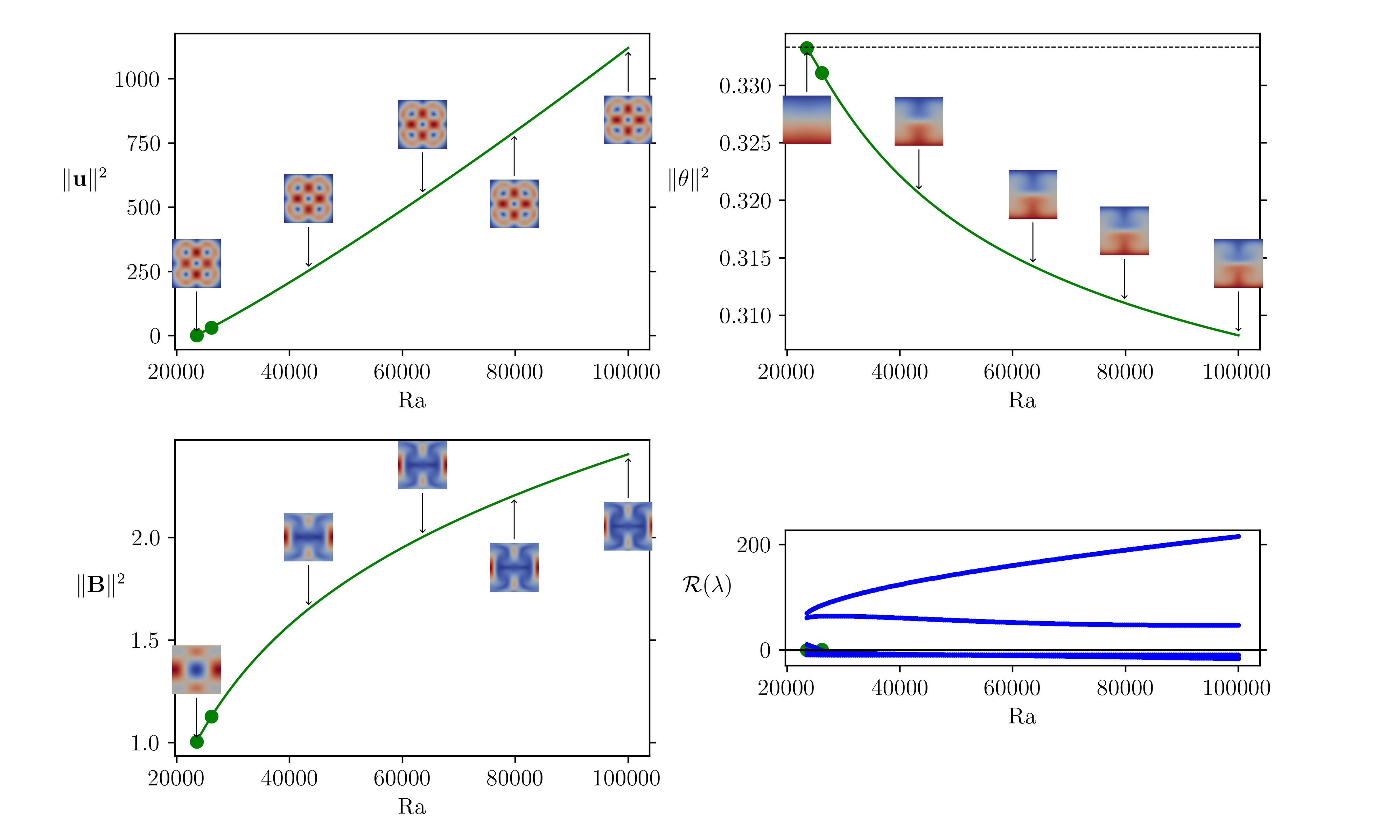}
	\vspace{-0.5cm}
	\subcaption{Evolution of branch 2 over $\Ra$ for $S=1$.}
	\label{fig:bifurcation_Ra_S1_diagram_branch_2}
    \end{subfigure}
\end{figure}
\begin{figure}
    \ContinuedFloat
    \begin{subfigure}{\textwidth}
	\centering
	\vspace{-0.5cm}
	\includegraphics[width=16.0cm]{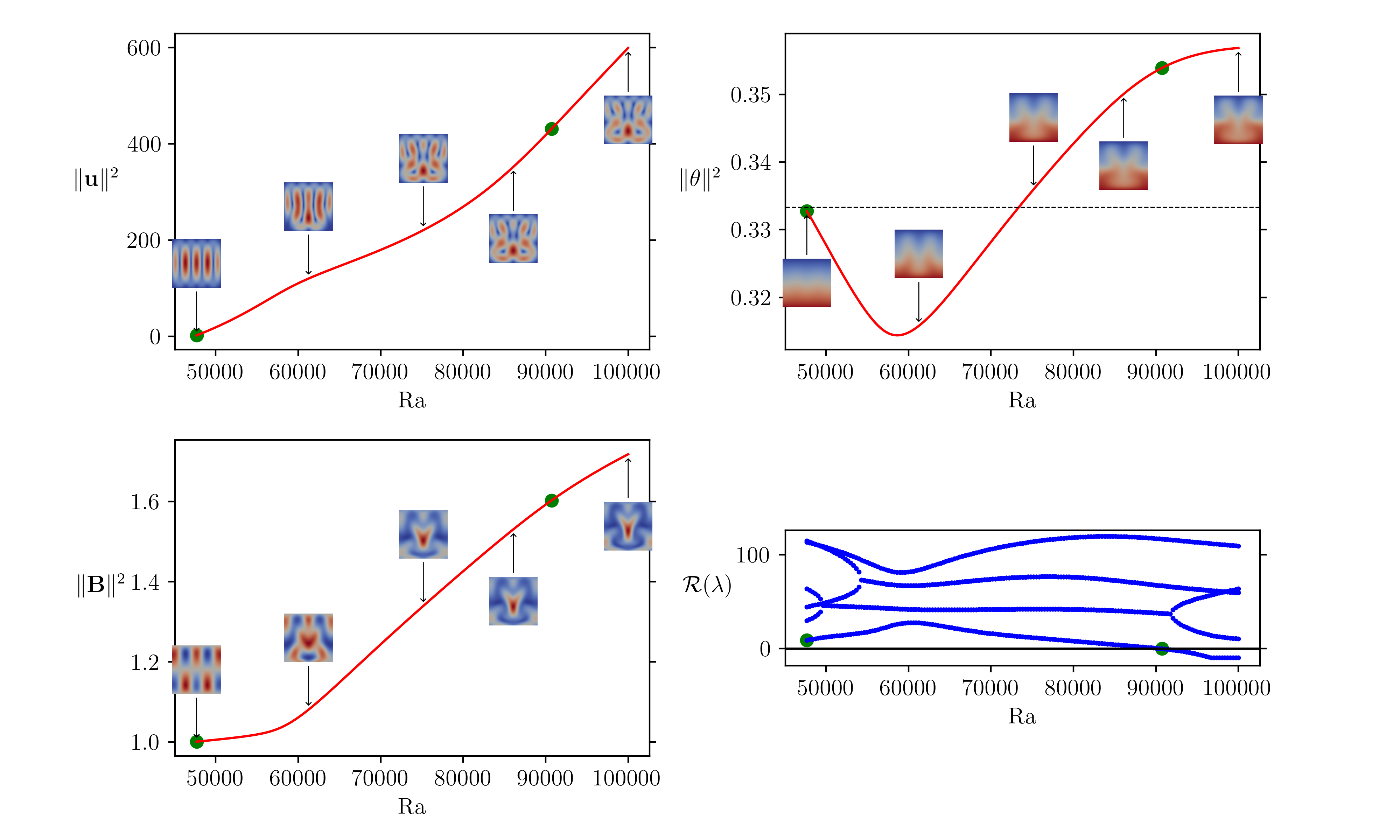}
	\vspace{-0.5cm}
	\subcaption{Evolution of branch 3 over $\Ra$ for $S=1$.}
	\label{fig:bifurcation_Ra_S1_diagram_branch_3}
    \end{subfigure}

   \vspace{0.7cm}
   \begin{subfigure}{\textwidth}
	\centering
	\includegraphics[width=16.0cm]{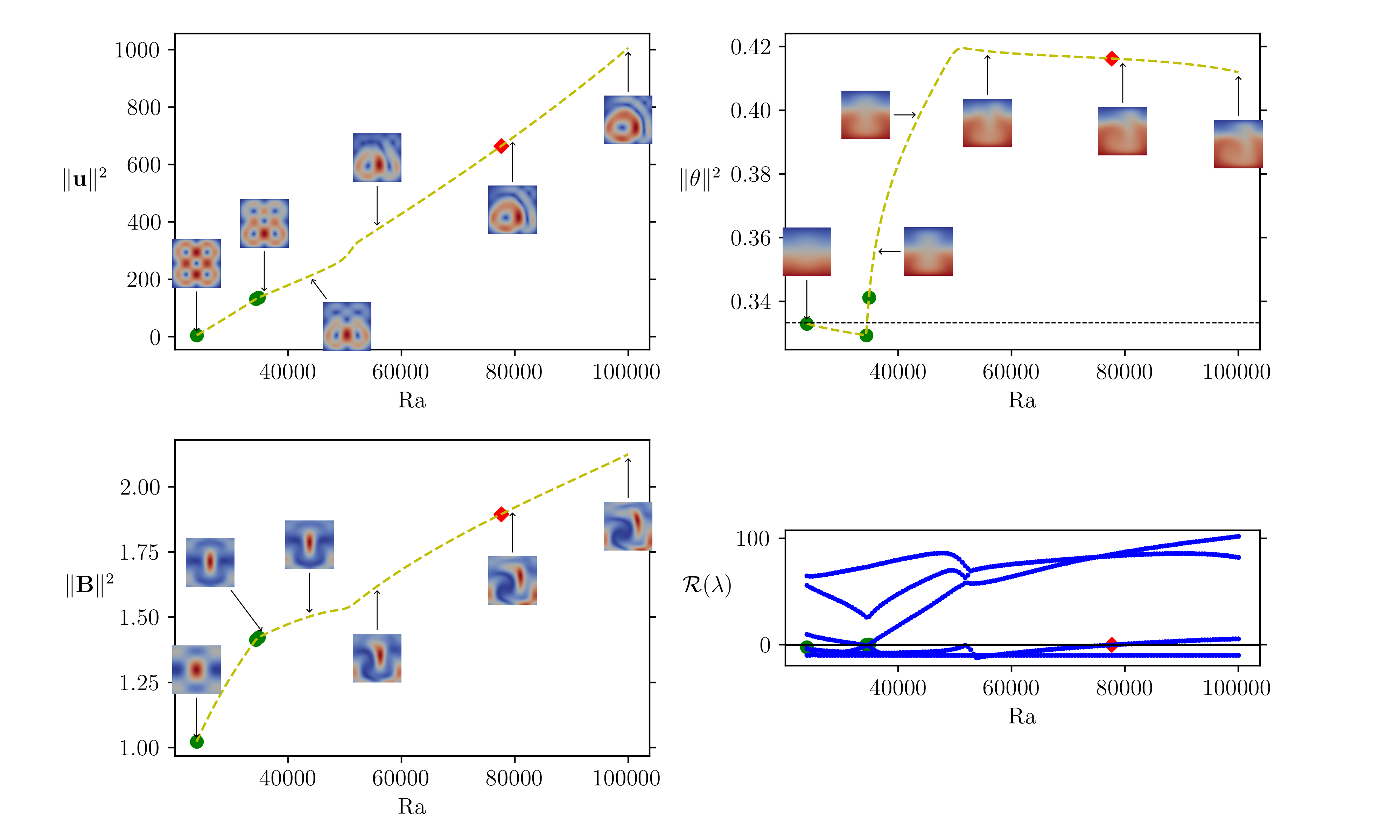}
	\vspace{-0.5cm}
	\subcaption{Evolution of branch 6 over $\Ra$ for $S=1$.}
	\label{fig:bifurcation_Ra_S1_diagram_branch_6}
	\end{subfigure}
\vspace{0.5cm}
    \caption{Evolution of branch 1, 2, 3 and 6 over $\Ra$ for $S=1$.}
	\label{fig:bifurcation_Ra_S1_diagram_branch_1_2_3_6}
\end{figure}

\FloatBarrier
\subsection{Bifurcation analysis for  $1\leq S \leq 1{,}000$  with  \mbox{$\Ra=100{,}000$}}\label{sec:bif2}
We continue by choosing the coupling number $S$ as the bifurcation parameter for fixed $\Ra=100{,}000$ and use the four computed solutions at $\Ra=100{,}000$ from the previous Section \ref{sec:bif1} as initial guesses in our deflated continuation algorithm over $1\leq S \leq 1{,}000$. Note that we perform forward deflated continuation this time and we choose a step size of $\Delta S = 10/3$. 
 
First, we report the growth rate of the most unstable eigenmodes in Figure~\ref{fig:eigsplot_S1000_PM1}. Note that Figure \ref{fig:eigsplot_S1000_PM1} is a continuation of Figure \ref{fig:eigsplot_S1_PM1} at $\Ra=100{,}000$ in the direction of $S$ and hence starts with 11 unstable eigenmodes at \mbox{$S=1$}. Also note that the smallest growth rate at $S=1$ is barely visible due to the small magnitude of the real part of its eigenvalue. Contrary to the Rayleigh number, the growth rates decrease with increasing coupling number, leaving only 5 unstable eigenmodes at $S=1{,}000$. 
%\red{Say which eigenmodes matches which eigenmode from the previous graph?}

Analogously to  \eqref{eq:EVproblemRac}, we can compute the critical coupling numbers $S_c$ by solving the generalised eigenvalue problem

\begin{equation}\label{eq:EVproblemS}
	\resizebox{\textwidth}{!}{%
		$\begin{bmatrix}
			\FF & -\nabla & \Ra\, \Pr\, \mathbf{e}_3 & \GG& 0\\
			\nabla \cdot & 0 & 0 & 0 & 0\\
			- \nabla \theta_0 \cdot & 0 & \Delta - \u_0 \cdot \nabla & 0 & 0\\
			0 & 0 & 0 & \frac{\Pr}{\Pm}\nabla \nabla \cdot & - \vcurl \\
			- \times \B_0 & 0 & 0 &  -\frac{\Pr}{\Pm}\scurl \,  - \u_0 \cdot & -I 
		\end{bmatrix}
		\begin{bmatrix}
			\tilde{\u}  \\ p \\  \tilde{\theta} \\ \tilde{\B}\\ E
		\end{bmatrix}	
		= S_c
		\begin{bmatrix}
			\B_0 \times (\cdot \times \B_0) & 0 & 0 & 0 & \B_0 \times \\
			0 & 0 & 0 & 0 & 0\\
			0 & 0 & 0 & 0 & 0 \\
			0 & 0 & 0 & 0 & 0\\
			0 & 0 & 0 & 0 & 0\\
		\end{bmatrix}
		\begin{bmatrix}
			\tilde{\u}  \\ p \\ \tilde{\theta} \\ \tilde{\B} \\ E
		\end{bmatrix}$}.
\end{equation}
Note that the other terms in the linearisation of the Lorentz force which would appear on the right-hand side vanish, since both $\u_0=\mathbf{0}$ and $E_0=0$.
This results in the critical coupling numbers $S_c^{(1)} = 12$, $S_c^{(2)} = 28$, $S_c^{(3)} = 47$, $S_c^{(4)} = 329$, $S_c^{(5)} = 463$ and $S_c^{(6)} = 940$. Plots of the eigenfunctions can be found in Figure \ref{fig:eigenmodes_S1_PM1_2}.

\begingroup
\renewcommand{\arraystretch}{2.0}
\begin{figure}[htbp!]
	\centering
	\newcommand{\mywidth}{2.0cm}
	\begin{tabular}{cccccc}
		$S_c^{(1)} = 12$ & $S_c^{(2)} = 28$ & $S_c^{(3)} = 47$ & $S_c^{(4)} = 329$ & $S_c^{(5)} = 463$ & $S_c^{(6)} = 940$ \\
		\includegraphics[width=\mywidth]{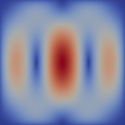} &
		\includegraphics[width=\mywidth]{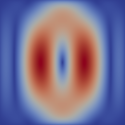} &
		\includegraphics[width=\mywidth]{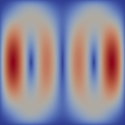} &
		\includegraphics[width=\mywidth]{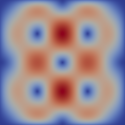} &
		\includegraphics[width=\mywidth]{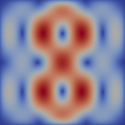} &
		\includegraphics[width=\mywidth]{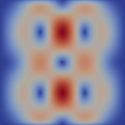} \\
		\includegraphics[width=\mywidth]{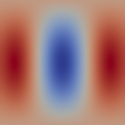} &
		\includegraphics[width=\mywidth]{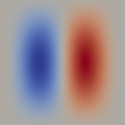} &
		\includegraphics[width=\mywidth]{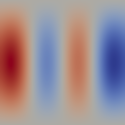} &
		\includegraphics[width=\mywidth]{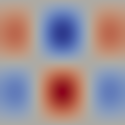} &
		\includegraphics[width=\mywidth]{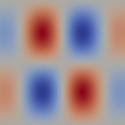} &
		\includegraphics[width=\mywidth]{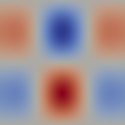} \\
		\includegraphics[width=\mywidth]{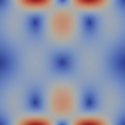} &
		\includegraphics[width=\mywidth]{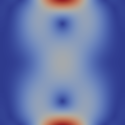} &
		\includegraphics[width=\mywidth]{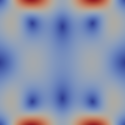} &	
		\includegraphics[width=\mywidth]{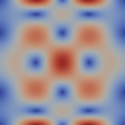} &
		\includegraphics[width=\mywidth]{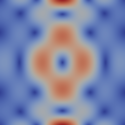} &
		\includegraphics[width=\mywidth]{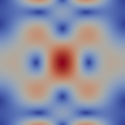} \\
	\end{tabular}
	\vspace{0.2cm}
	\caption{Eigenfunctions of the primary bifurcations that emanate from the conducting state \eqref{eq:trivialsol}  for $1\leq S \leq 1{,}000$  with  \mbox{$\Ra=100{,}000$}. The top row shows the velocity, the middle row the temperature and the bottom row the magnetic field for the different critical coupling numbers. \label{fig:eigenmodes_S1_PM1_2}}
\end{figure}
\endgroup

In Figure \ref{fig:bifurcation_S_Ra100000_diagram} we present the continuation of the 4 branches that we started with in the last section. As we can see this will result in 5 initial guesses at $\Ra = 100{,}000$ and $S=1{,}000$ since branch 3 gives rise to two bifurcations. Remember that the main goal of Section \ref{sec:bif1} and Section \ref{sec:bif2} was to deliver these initial guesses to compute the bifurcation diagram over Ra for $S=1{,}000$. This time we analyse each evolution in more detail. 

\begin{figure}[htbp!]
	\centering
	\begin{tabular}{cc}
		\includegraphics[width=7.0cm]{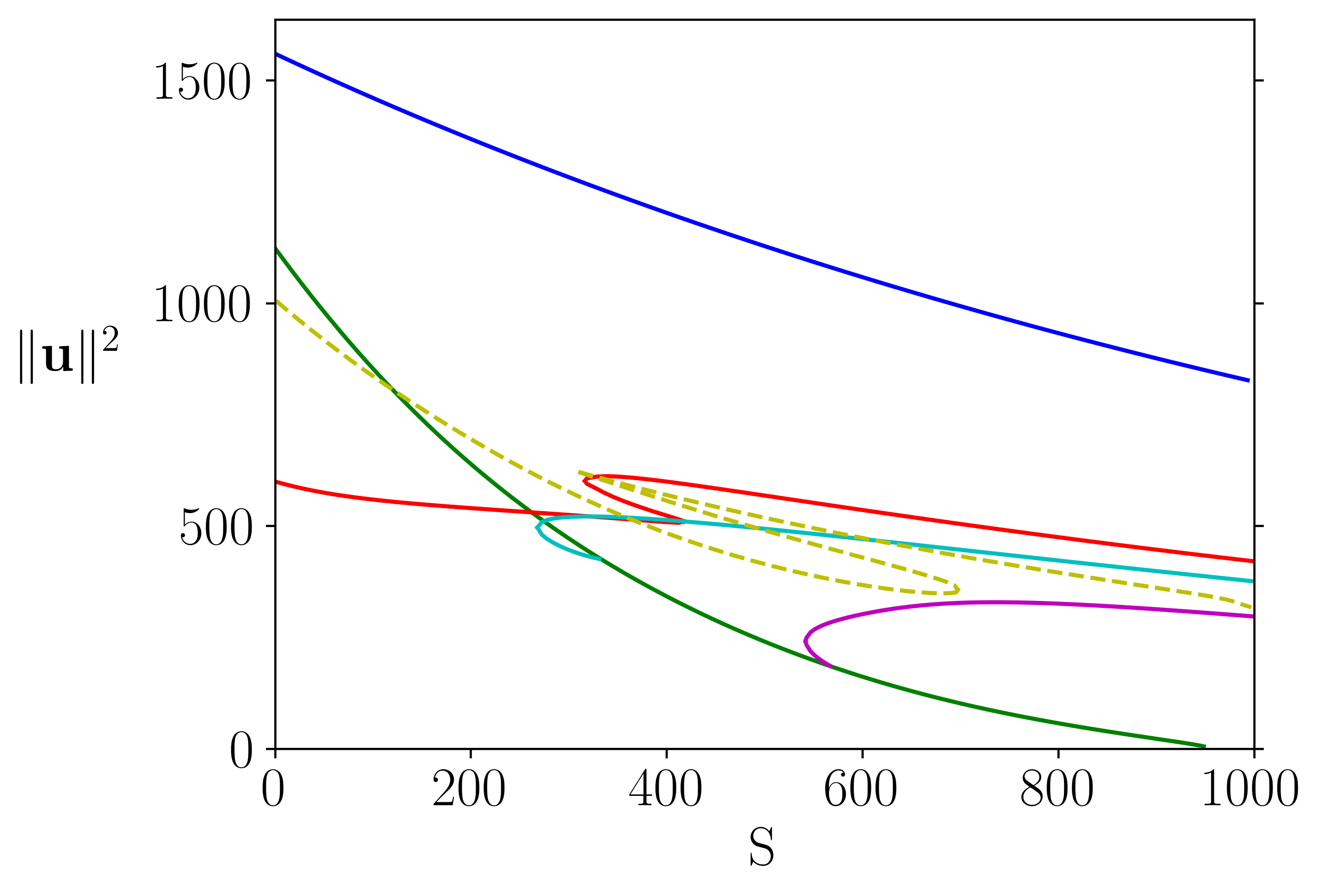} &
		\includegraphics[width=7.0cm]{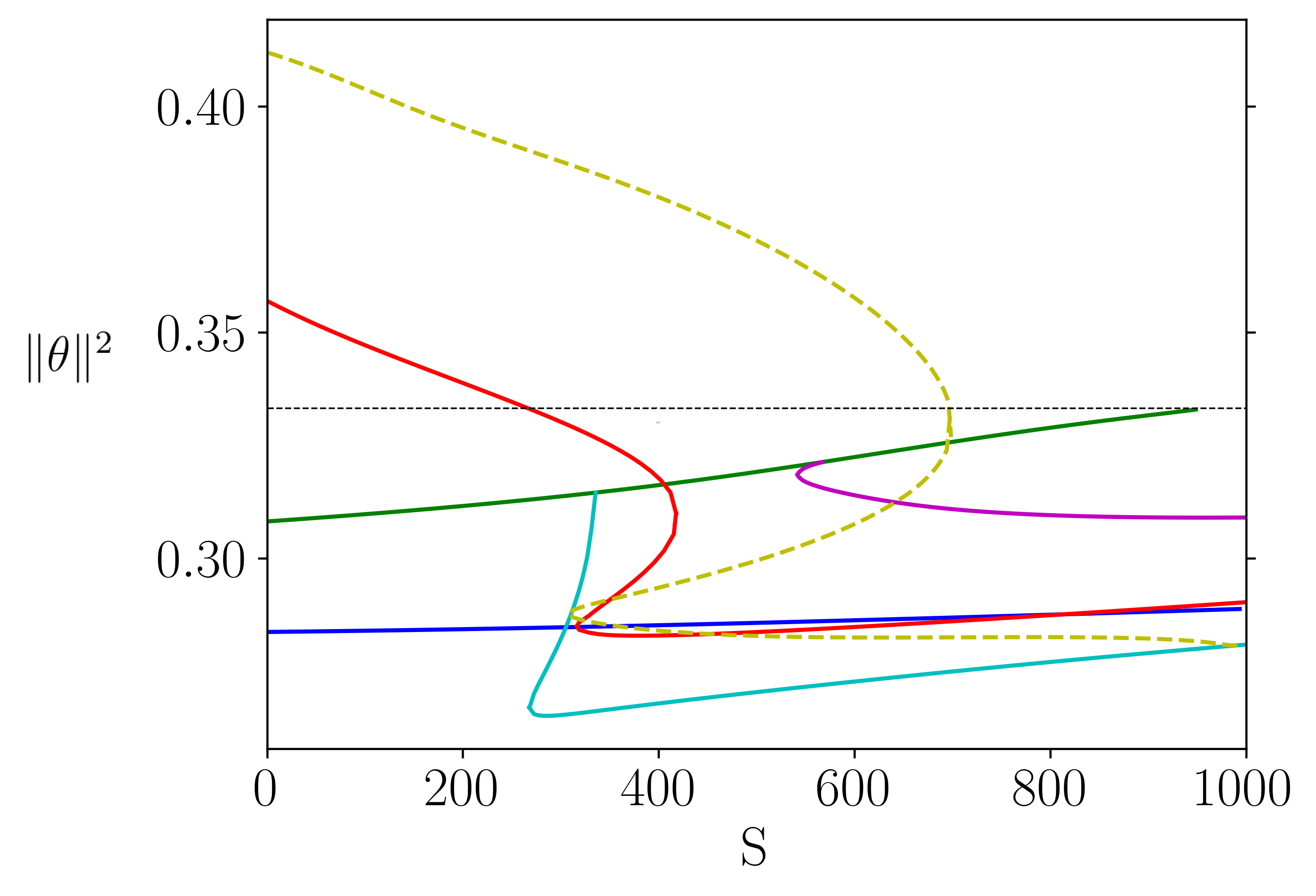} \\
	\end{tabular}
	\includegraphics[width=7.8cm]{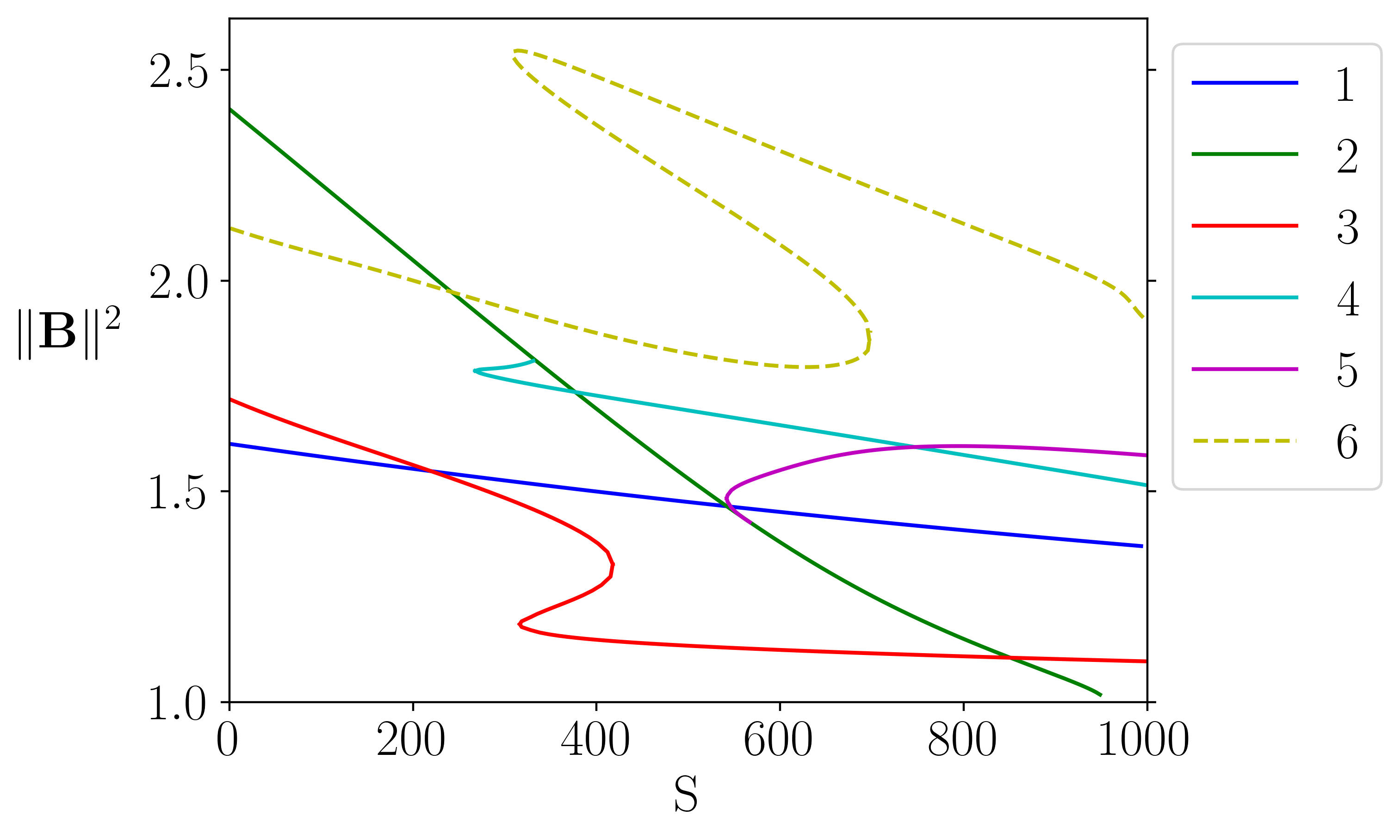}
	\caption{Bifurcation diagram over $1 \leq S \leq 1{,}000$ with $\Ra=100{,}000$.}
	\label{fig:bifurcation_S_Ra100000_diagram}
\end{figure}

Figure \ref{fig:bifurcation_S_Ra100000_diagram_branch_1} shows the evolution of branch 1. The shape of $\u$, $\theta$ and $\B$ does not change notably with increasing $S$. However, it is very interesting to notice that the unstable solution at $S=1$ starts to become more stable with increasing $S$ until it turns stable at $S \approx 700$. The corresponding imaginary part is positive, which indicates that a Hopf bifurcation is emerging at this point. 

%For increasing S the evolution of the real and imaginary parts of eigenvalues becomes more complex which can lead to jumps in the plots of the imaginary parts, see, e.g., Figure \ref{fig:bifurcation_S_Ra100000_diagram_branch_1}. Note that this solely occurs due to the fact that we only plot the first N eigenvalues of an infinite spectrum and order them by the magnitude of the real part. That means the cut-off will happen for the non-zero imaginary part of eigenvalues that are initially contained in the N eigenvalues with largest real part and then fall out of this list. The existing lines of the imaginary parts in Figure \ref{fig:bifurcation_S_Ra100000_diagram_branch_1} could be continued by increasing N, however this would then inevitably lead to new lines emerging that will be also be cut off. In any case, this way of plotting is unproblematic since the main purpose of plotting the imaginary part is to see if the imaginary part of an eigenvalue with $\mathcal{R}(\lambda)=0$ is zero or non-zero to decide which type of bifurcation occurs.

Branch 2 transitions at $S=940$ into the conducting state, as can be seen in Figure \ref{fig:bifurcation_S_Ra100000_diagram_branch_2}. Since the sixth critical coupling number $S^{(6)}_c$ is also located at $S = 940$ this indicates that branch 2 is a primary bifurcation in the diagram over S. Hence, the fourth primary bifurcation  in the diagram over Ra investigated in the previous section and the sixth primary bifurcation in the diagram over S correspond to the same bifurcation. 

Moreover, branch 2 has two eigenvalues with $\mathcal{R}(\lambda)=0$ and $\mathcal{I}(\lambda)=0$ at $S \approx 270$ and $S \approx 540$ highlighted with a green dots in Figure \ref{fig:bifurcation_S_Ra100000_diagram_branch_2}. The two bifurcations that emerge at these points result in branch 4 and branch 5 which are further illustrated in Figure \ref{fig:bifurcation_S_Ra100000_diagram_branch_4} and \ref{fig:bifurcation_S_Ra100000_diagram_branch_5}.

Branch 3 is shown in Figure \ref{fig:bifurcation_S_Ra100000_diagram_branch_3} and has two turning points in the interval at around $S \approx 420$  and $S\approx 315$ which are also indicated by eigenvalues with vanishing real part in the eigenvalue plots. Note that the S-shaped form creates multi-valued regions in the graph. Therefore, we include three eigenvalue plots for each region.

The eigenvalue plots of branch 4 and branch 5 in Figure \ref{fig:bifurcation_S_Ra100000_diagram_branch_4} and Figure  \ref{fig:bifurcation_S_Ra100000_diagram_branch_5}  have one turning point. In both graphs the zero eigenvalue at the start of the bifurcation and at the turning point can be seen. Finally, branch 6 in Figure \ref{fig:bifurcation_S_Ra100000_diagram_branch_6} has again a more complex S-shaped form. 
%For $S>500$ the shape of the velocity field retains the shape it started with at $\Ra=1$ and $S=1$ characterised by the three vertical stripes. The temperature and magnetic field however show new patterns.  

\begin{figure}[htbp!]
	\begin{subfigure}{\textwidth}
		\centering
		\includegraphics[width=16.0cm]{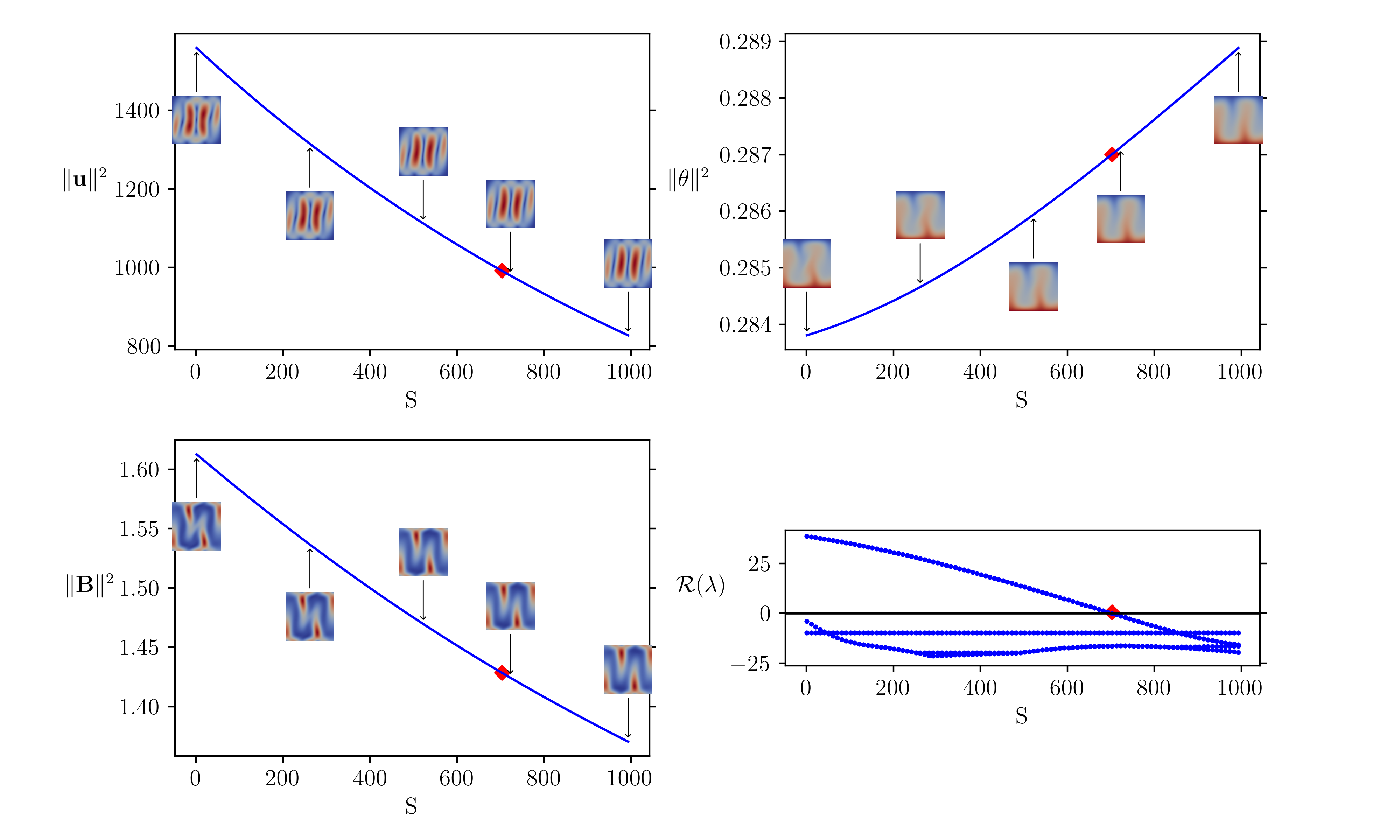}
		\vspace{-0.5cm}
		\subcaption{Evolution of branch 1 over $S$ for $\Ra=100{,}000$.}
		\label{fig:bifurcation_S_Ra100000_diagram_branch_1}
	\end{subfigure}
	
	\vspace{1.0cm}
	\begin{subfigure}{\textwidth}
		\centering
		\includegraphics[width=16.0cm]{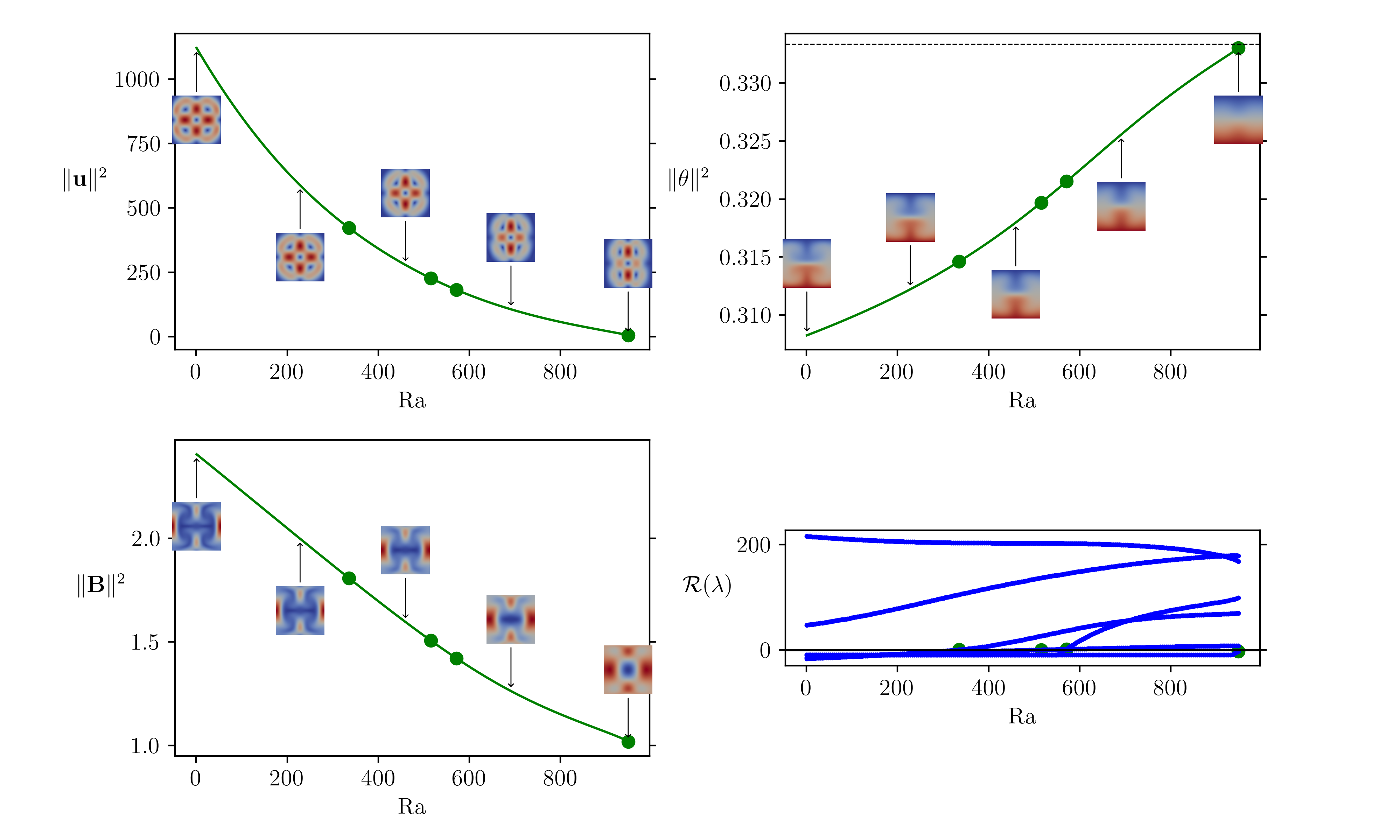}
		\vspace{-0.5cm}
		\subcaption{Evolution of branch 2 over $S$ for $\Ra=100{,}000$.}
		\label{fig:bifurcation_S_Ra100000_diagram_branch_2}
	\end{subfigure}
\end{figure}
\begin{figure}
	\ContinuedFloat
	\begin{subfigure}{\textwidth}
		\centering
		\vspace{-0.3cm}
		\includegraphics[width=16.0cm]{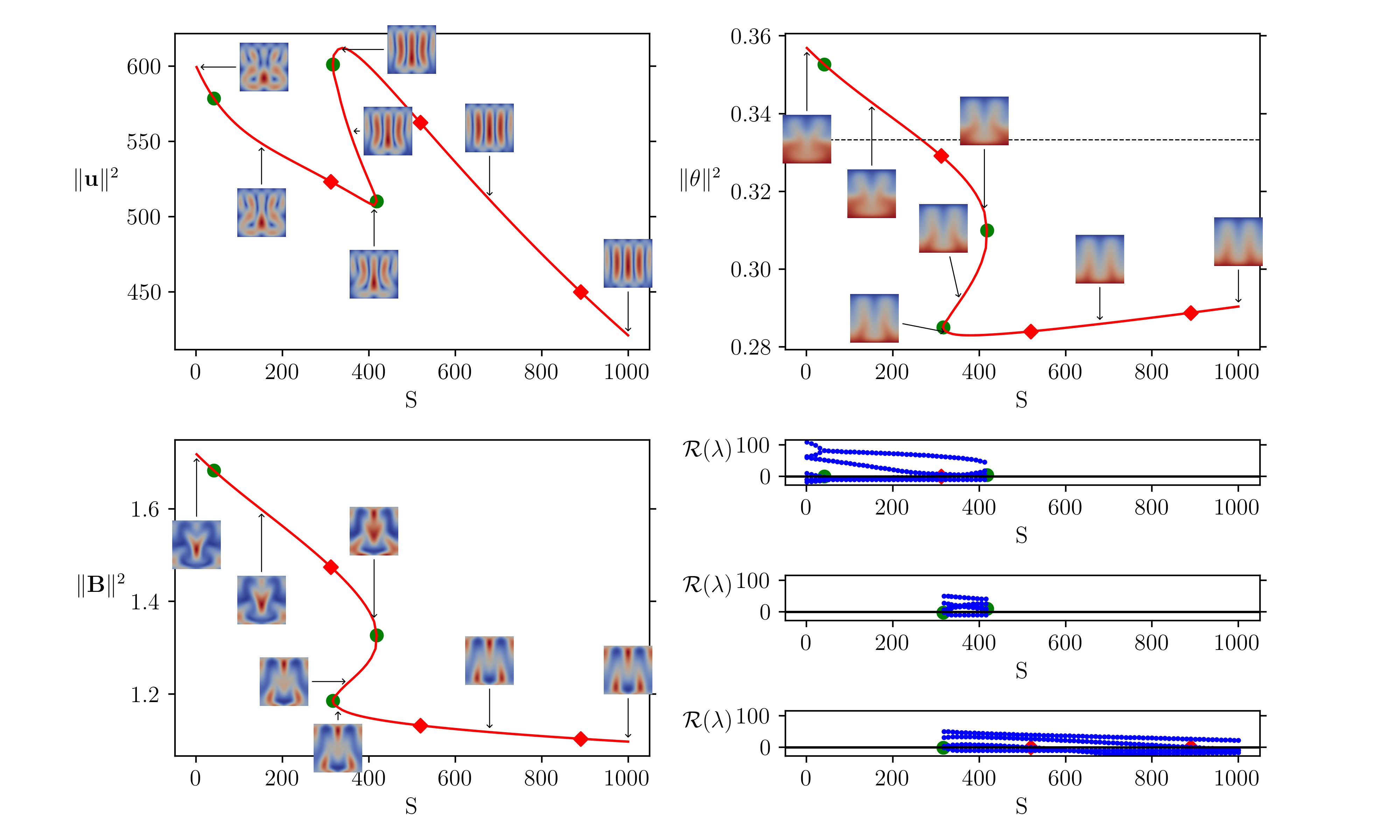}
		\vspace{-0.5cm}
		\subcaption{Evolution of branch 3 over $S$ for $\Ra=100{,}000$.}
		\label{fig:bifurcation_S_Ra100000_diagram_branch_3}
	\end{subfigure}

	\vspace{0.5cm}
	\begin{subfigure}{\textwidth}
		\centering
		\includegraphics[width=16.0cm]{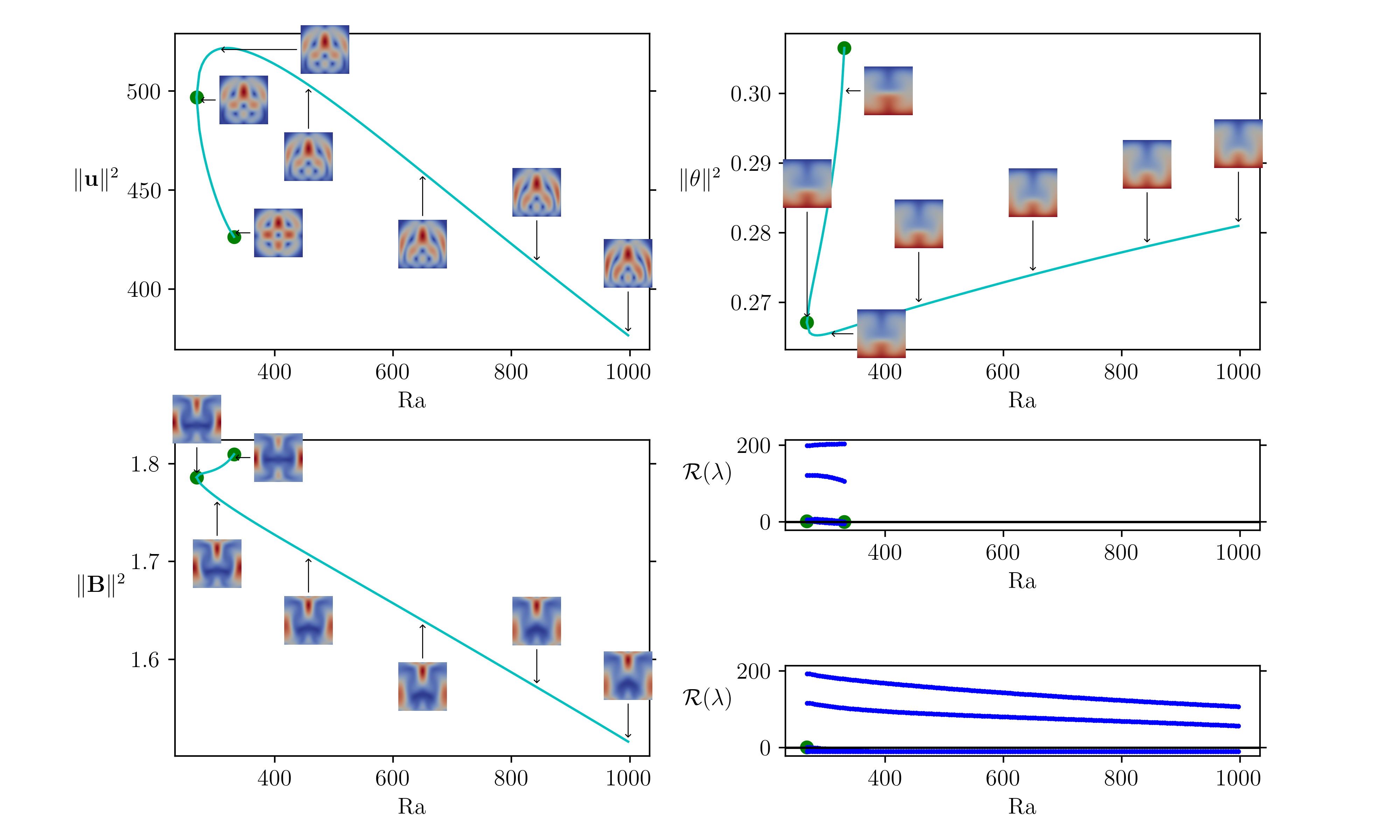}
		\vspace{-0.2cm}
		\subcaption{Evolution of branch 4 over $S$ for $\Ra=100{,}000$.}
		\label{fig:bifurcation_S_Ra100000_diagram_branch_4}
	\end{subfigure}
\end{figure}
\begin{figure}
\ContinuedFloat
\begin{subfigure}{\textwidth}
	\centering
	\vspace{-0.5cm}
	\includegraphics[width=16.0cm]{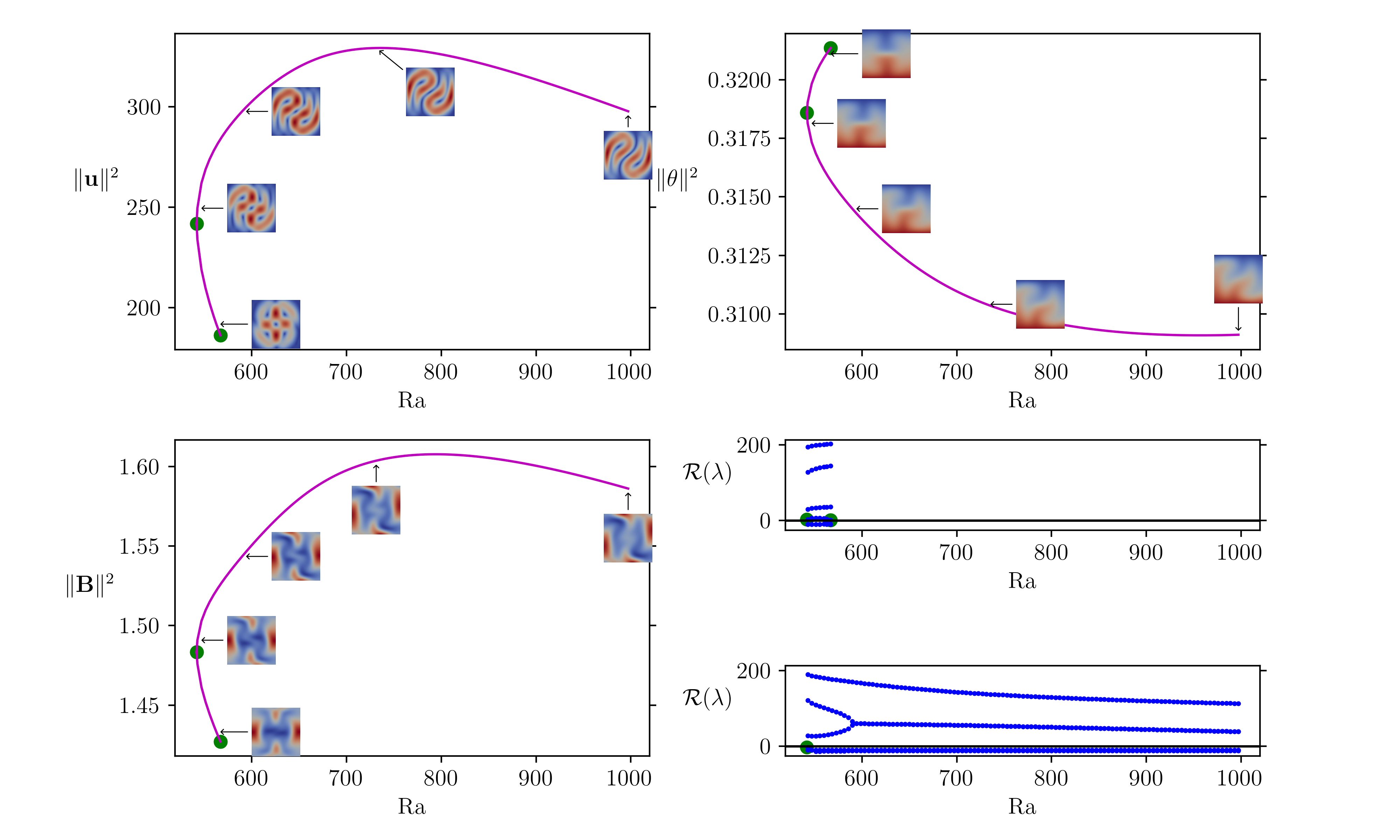}
	\vspace{-0.5cm}
	\subcaption{Evolution of branch 5 over $S$ for $\Ra=100{,}000$.}
	\label{fig:bifurcation_S_Ra100000_diagram_branch_5}
\end{subfigure}

\vspace{0.5cm}
\begin{subfigure}{\textwidth}
	\centering
	\includegraphics[width=16.0cm]{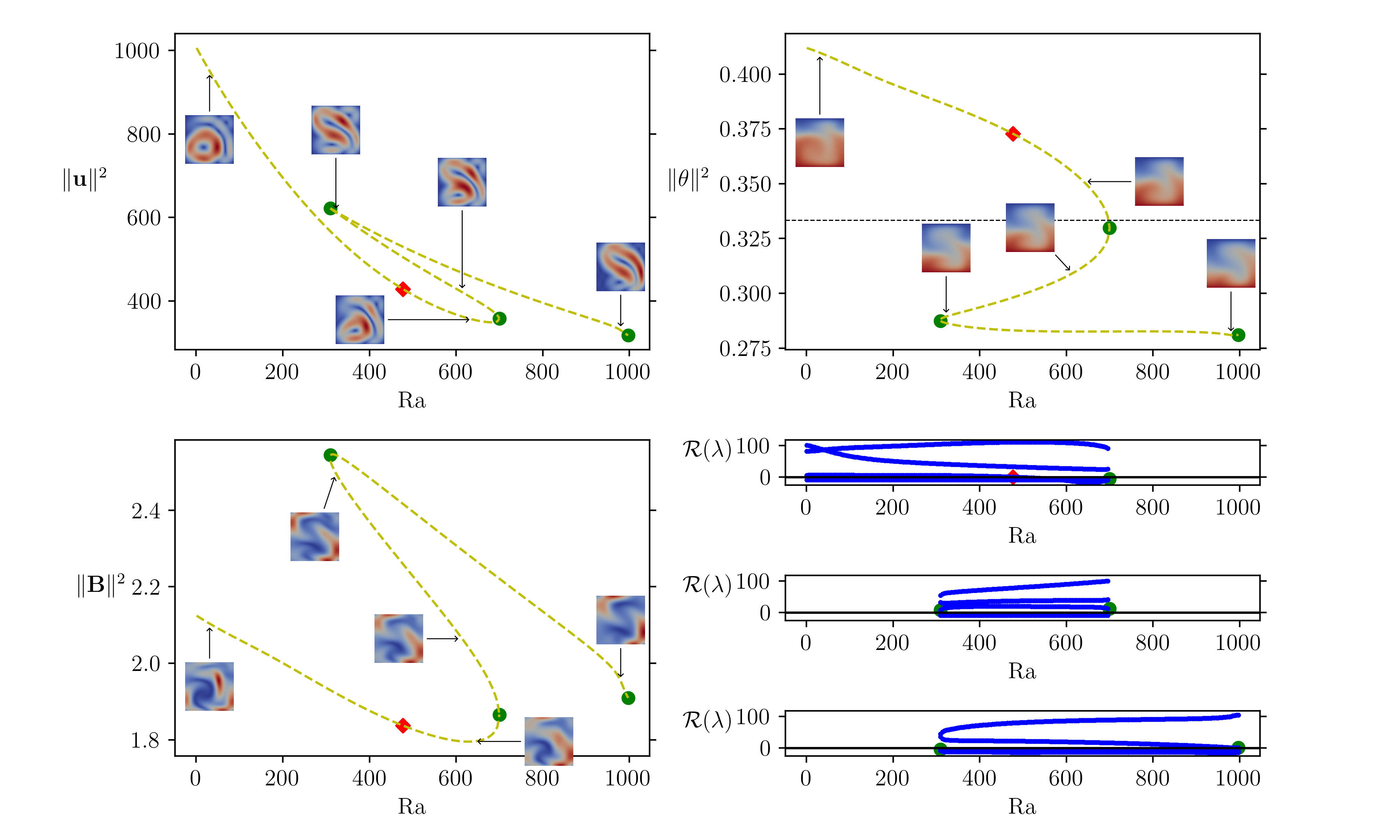}
	\vspace{-0.5cm}
	\subcaption{Evolution of branch 6 over $S$ for $\Ra=100{,}000$.}
	\label{fig:bifurcation_S_Ra100000_diagram_branch_6}
\end{subfigure}
	\caption{Evolution of branches 1, 2, 3, 4, 5 and 6 over $S$ for $\Ra = 100{,}000$.}
	\label{fig:bifurcation_S_Ra100000_diagram_branch_1_2_3_4_5_6}
\end{figure}

\iffalse
\begin{figure}[htbp!]
	\centering
	\includegraphics[width=13.0cm]{images/bifurcation_S_Ra100000/diagram_branch_1.png}
	\caption{Evolution of branch 1 over $S$ for $\Ra=100{,}000$. Note that the jump in the imaginary part of the eigenvalues occurs due to the fact that we only plot a finite number of eigenvalues and order them by the magnitude of the real part. This is also explained in more detail in the main text of this section.}
	\label{fig:bifurcation_S_Ra100000_diagram_branch_1}
\end{figure}
\begin{figure}[htbp!]
	\vspace{-0.8cm}
	\centering
	\includegraphics[width=13.0cm]{images/bifurcation_S_Ra100000/diagram_branch_2.png}
	\vspace{-1cm}
	\caption{Evolution of branch 1 over $S$ for $\Ra=100{,}000$}
	\label{fig:bifurcation_S_Ra100000_diagram_branch_2}
	
	\vspace{-0.5cm}
	\centering
	\includegraphics[width=13.0cm]{images/bifurcation_S_Ra100000/diagram_branch_3.png}
	\vspace{-1cm}
	\caption{Evolution of branch 3 over $S$ for $\Ra=100{,}000$}
	\label{fig:bifurcation_S_Ra100000_diagram_branch_3}
\end{figure}
\begin{figure}[htbp!]
	\centering
	\includegraphics[width=13.0cm]{images/bifurcation_S_Ra100000/diagram_branch_4.png}
	\vspace{-1cm}
	\caption{Evolution of branch 4 over $S$ for $\Ra=100{,}000$}
	\label{fig:bifurcation_S_Ra100000_diagram_branch_4}

	\centering
	\includegraphics[width=13.0cm]{images/bifurcation_S_Ra100000/diagram_branch_5.png}
	\vspace{-1cm}
	\caption{Evolution of branch 5 over $S$ for $\Ra=100{,}000$}
	\label{fig:bifurcation_S_Ra100000_diagram_branch_5}
\end{figure}
\fi

\FloatBarrier
\subsection{Bifurcation analysis for  $0\leq \Ra \leq 100{,}000$  with  \mbox{$S=1{,}000$}}\label{sec:bif3}

Finally, we investigate the bifurcation diagram for $0\leq \Ra \leq 100{,}000$  with  \mbox{$S=1{,}000$}. The growth rates of the five eigenmodes which emanate from the conducting state are shown in Figure \ref{fig:eigsplot_S1000_PM1_3}. These five eigenmodes are the extension of the five eigenmodes at $S=1{,}000$ from Figure \ref{fig:eigsplot_S1000_PM1}. In the following, we focus on the first four eigenmodes (since the fifth one only exists in the short interval $[99{,}041; 100{,}000]$) and display them in Figure \ref{fig:eigenmodes_S1000_PM1_3}.

 It is worth mentioning that the velocity and temperate eigenmodes both show a pattern that is oriented in the direction $(0,1)^\top$ of the trivial magnetic field $\B_0$. This indicates that for larger coupling number at $S=1{,}000$ the instabilities which are aligned with the magnetic field occur for smaller Rayleigh numbers.  In general, the order of eigenmodes changes with growing $S$. For example the 7th eigenmode in Figure \ref{fig:bifurcation_Ra_S1_diagram} at $S=1$ corresponds now to the fourth eigenmode at $S=1{,}000$.
 
 For a comparison, we have also included critical Rayleigh numbers and eigenmodes plots for a trivial magnetic field in the perpendicular direction of $(1,0)^\top$ in Figure \ref{fig:eigenmodes_S1_PM1_4}. Here, we just have three primary bifurcations left in the interval $0 \leq \Ra \leq 100{,}000$, but the third velocity eigenmode is now oriented in the direction of the magnetic field while the eigenmodes oriented in $(0,1)^\top$ occur here for $\Ra>100{,}000$. This seems to underline our observation that for high coupling numbers instabilities whose velocity field is aligned with the magnetic field occur for smaller Rayleigh numbers.

\begingroup
\renewcommand{\arraystretch}{2.0}
\begin{figure}[htbp!]
	\centering
	\newcommand{\mywidth}{2.0cm}
	\begin{tabular}{cccc}
		$\Ra_c^{(1)} = 20{,}029$ & $\Ra_c^{(2)} = 23{,}404$ & $\Ra_c^{(3)} = 32{,}095$ & $\Ra_c^{(4)} = 58{,}571$ \\ % S_c^5 = 99041
		\includegraphics[width=\mywidth]{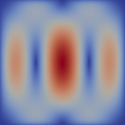} &
		\includegraphics[width=\mywidth]{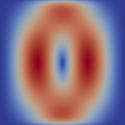} &
		\includegraphics[width=\mywidth]{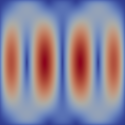} &
		\includegraphics[width=\mywidth]{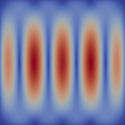} \\
		\includegraphics[width=\mywidth]{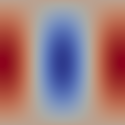} &
		\includegraphics[width=\mywidth]{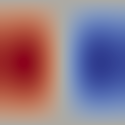} &
		\includegraphics[width=\mywidth]{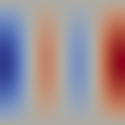} &
		\includegraphics[width=\mywidth]{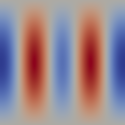} \\
		\vspace{0.5cm}
		\includegraphics[width=\mywidth]{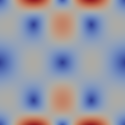} &
		\includegraphics[width=\mywidth]{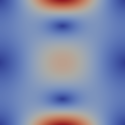} &
		\includegraphics[width=\mywidth]{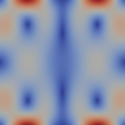} &	
		\includegraphics[width=\mywidth]{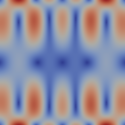} \\
	\end{tabular}
	\caption{First 4 eigenfunctions of the primary bifurcations that emanate from the conducting state \eqref{eq:trivialsol}  for  $0\leq \Ra \leq 100{,}000$  with  \mbox{$S=1{,}000$}. Note that all the velocity and temperature eigenmodes show a pattern in pointing in the direction of $\B_0 = (0,1)^\top$.  \label{fig:eigenmodes_S1000_PM1_3}}
\end{figure}
\endgroup

\begingroup
\renewcommand{\arraystretch}{2.0}
\begin{figure}[htbp!]
	\centering
	\newcommand{\mywidth}{2.0cm}
	\begin{tabular}{ccc}
		$\Ra_c^{(1)} = 24{,}483$ & $\Ra_c^{(2)} = 60{,}583$ & $\Ra_c^{(3)} = 81{,}818$ \\ % $\Ra_c^4 = 109{,}689$ \\ 
		\includegraphics[width=\mywidth]{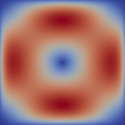} &
		\includegraphics[width=\mywidth]{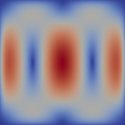} &
		\includegraphics[width=\mywidth]{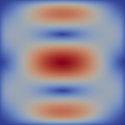} \\
		\includegraphics[width=\mywidth]{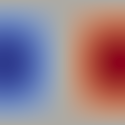} &
		\includegraphics[width=\mywidth]{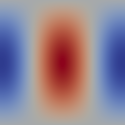} &
		\includegraphics[width=\mywidth]{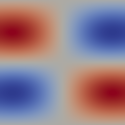} \\
		\vspace{0.5cm}
		\includegraphics[width=\mywidth]{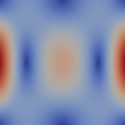} &
		\includegraphics[width=\mywidth]{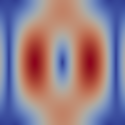} &
		\includegraphics[width=\mywidth]{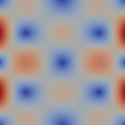} \\
	\end{tabular}
	\caption{First 4 eigenfunctions of the primary bifurcations that emanate from the conducting state \ref{eq:trivialsol} for  $0\leq \Ra \leq 100{,}000$  with \mbox{$S=1{,}000$} for the alternative magnetic field $\B_0=(1,0)^\top$. Note that here the third eigenmode is oriented in the direction of $\B_0 = (1,0)^\top$. \label{fig:eigenmodes_S1_PM1_4}}
\end{figure}
\endgroup

The bifurcation diagrams for the previously tracked branches 1--6 can be found in Figure \ref{fig:bifurcation_Ra_S1000_diagram}. Further, we include two secondary bifurcations named branch 7 and  branch 8. Note that we used backward continuation here starting at $\Ra = 100{,}000$ from the initial guesses obtained in the previous section and used a step size of $\Delta \Ra = 1000/3$. We proceed by analysing each branch in detail.

\begin{figure}[htbp!]
	\centering
	\begin{tabular}{cc}
		\includegraphics[width=7.0cm]{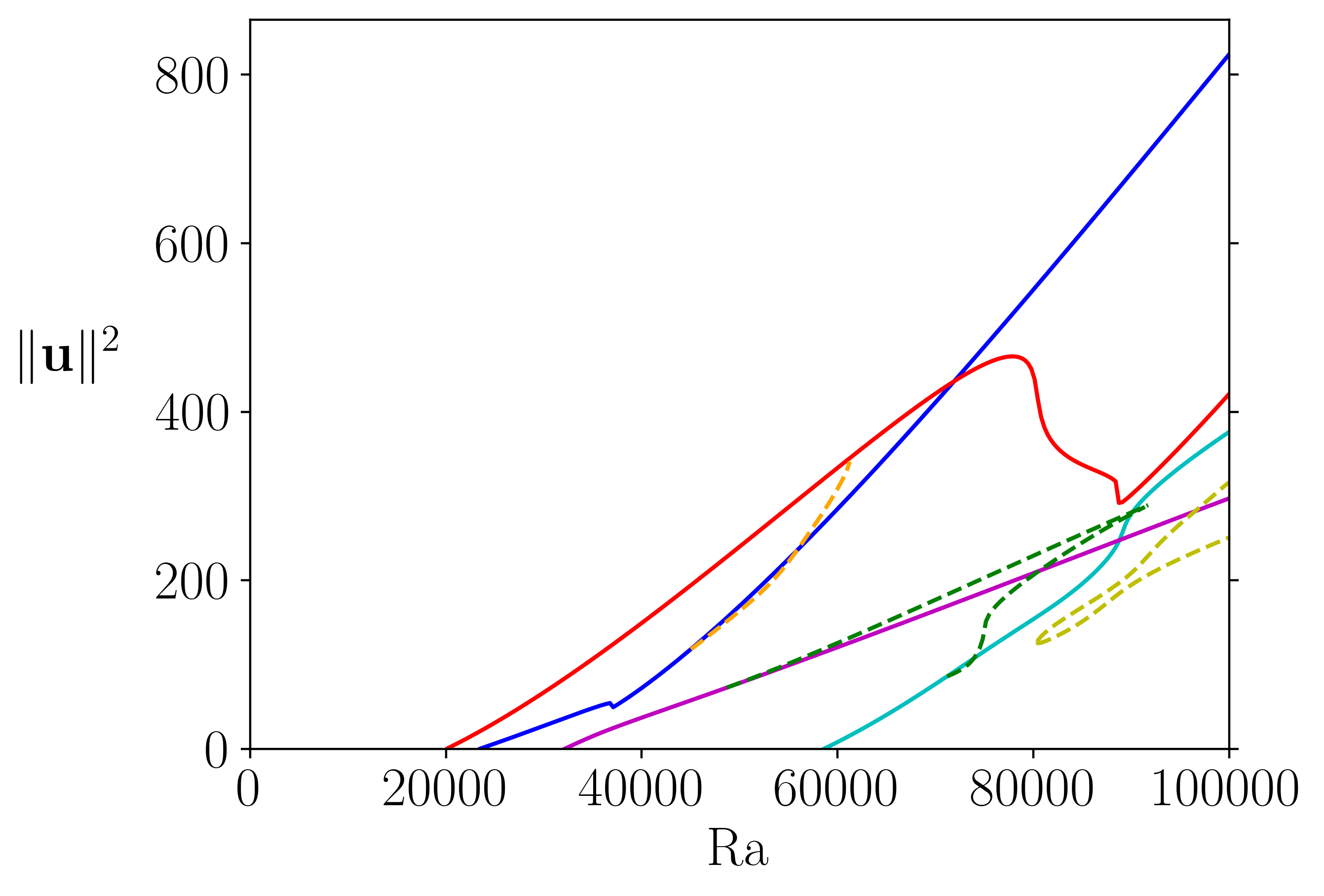} &
		\includegraphics[width=7.0cm]{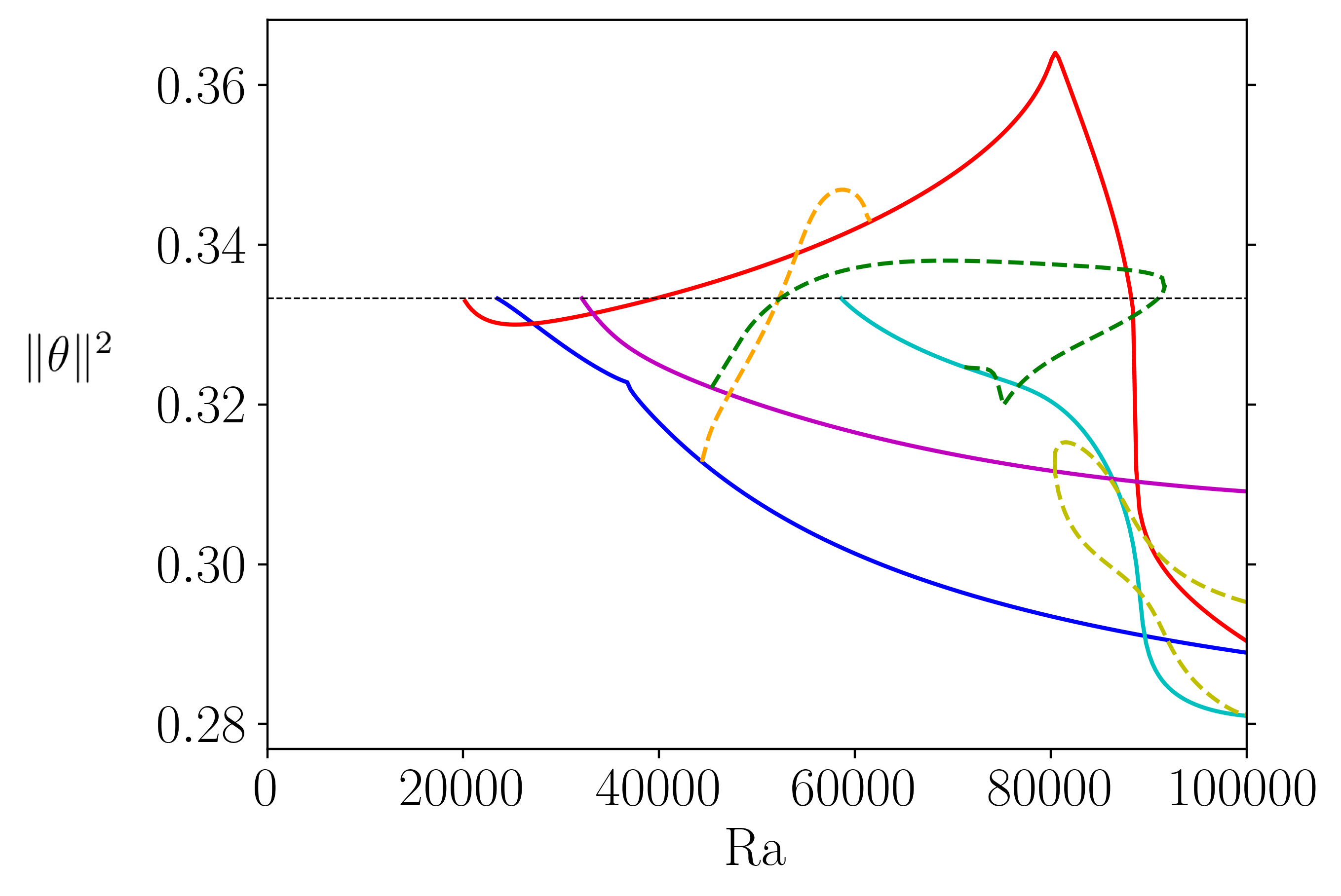} \\
	\end{tabular}
	\includegraphics[width=7.8cm]{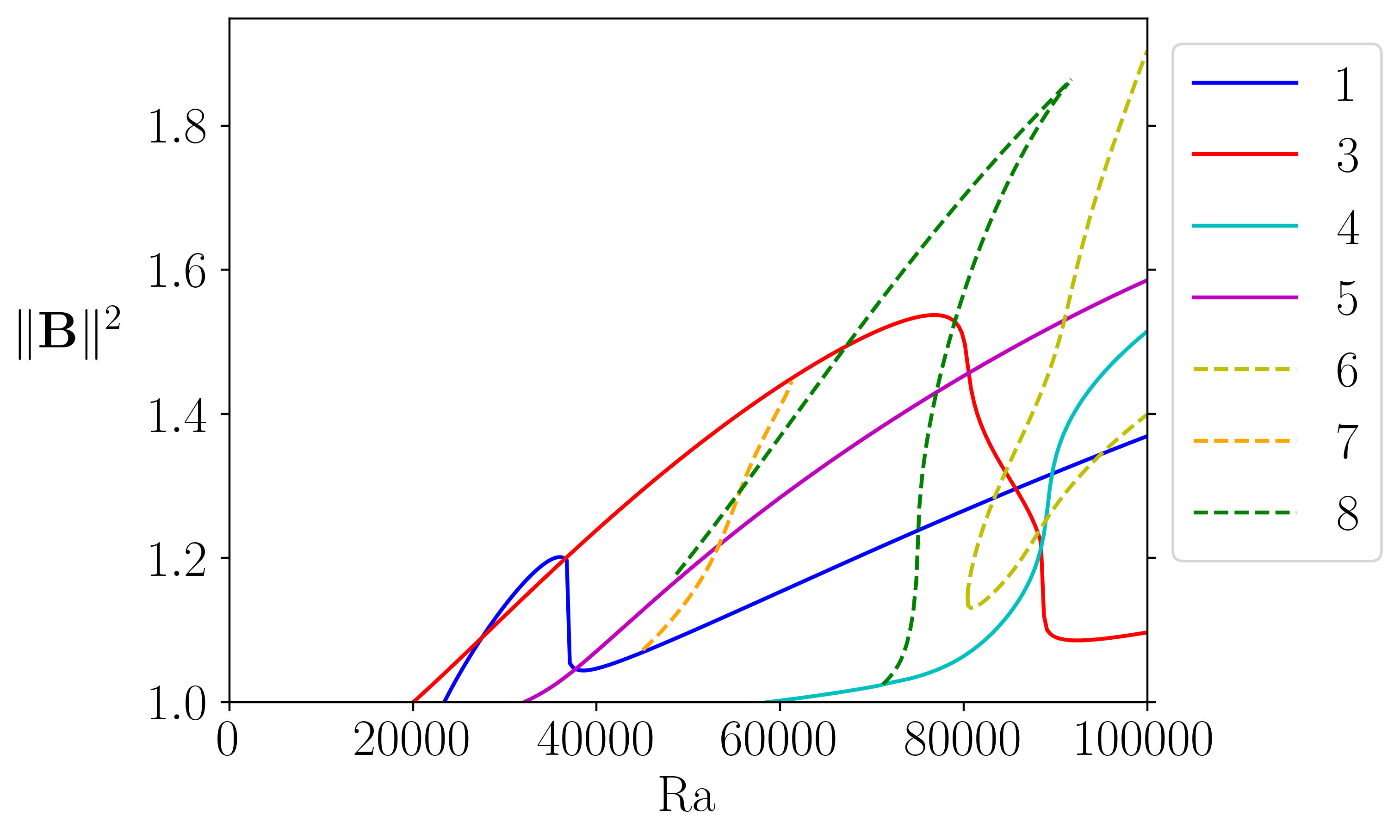}
	\caption{Bifurcation diagram over $0 \leq \Ra \leq 100{,}000$ with $S=1{,}000$.}
	\label{fig:bifurcation_Ra_S1000_diagram}
\end{figure}

Branch 1,  illustrated in Figure \ref{fig:bifurcation_Ra_S1000_diagram_branch_1}, evolves as expected until $\Ra \approx 36{,}800$ where the shape of $\u$, $\theta$ and $\B$ changes rapidly. For $\Ra < 36{,}800$ the standard continuation algorithm fails to converge and we were only able to continue the branch by deflation. A more detailed diagram of branch 1 in this range of $\Ra$ can be found in Figure \ref{fig:bifurcation_Ra_S1000_diagram_branch_1_close_up}. Here, we can see the S-shaped form of branch 1 which shows that the second primary bifurcation that originates at $\Ra_c^{(2)}=23404$ transitions into branch 1. The author would have expected that branch 1 for $\Ra > 36{,}800$ is connected to the third eigenmode, which would corresponds to the expected evolution of the blue line in the $\u$-, $\theta$- and $\B$-diagram. However, the deflated continuation algorithm was not able to continue branch 1 in this direction. Neither a much smaller step-size $\Delta \Ra$ nor a simulation on more refined grid of $100\times100$ cells was able to produce the expected result. We therefore conjecture that the bifurcation diagram is more complicated than our expectations. 

The deflated continuation algorithm also found a rather surprising evolution of branch 3 shown in Figure \ref{fig:bifurcation_Ra_S1000_diagram_branch_3}. Again we tried smaller step sizes and a more refined grid, but this line was the only one discovered. One could have expected that the fourth primary bifurcation emerging at $\Ra_c^{(4)} = 58{,}571$ and branch 3 belong to the same branch, since both branches show similar patterns in the solutions, e.g., the five vertical stripes in the velocity field.
To further investigate the evolution, we have provided a more detailed diagram for branch 3 in Figure \ref{fig:bifurcation_Ra_S1000_diagram_branch_3_close_up} in the range between $84{,}000$ and $94{,}000$. This diagram indeed seems to underline that we  follow branch 3 here rather than starting to follow secondary bifurcations. There exist two turning points at $\Ra \approx 88{,}743 $ and $\Ra \approx 88{,}904$ showing an S--shaped form for of this branch. Moreover, there emerges a secondary bifurcation at $\Ra \approx 84{,}677$. Another secondary bifurcation connects the turning point at $\Ra \approx 88{,}904$ with the bifurcation emerging at $\Ra \approx 92{,}165$. We have coloured these secondary branches in grey to indicate that we have found these branches but will not provide a detailed diagram for them in a separate figure. Finally, we have found a Hopf bifurcation emerging at $\Ra \approx 88{,}693$. Based on the eigenvalue plots we believe that we have found all bifurcations emerging in that interval and that the bifurcation diagram in this interval is complete. 
% However,  The shown branch takes a sudden turn in the $\u$-diagram at $\Ra \approx 88{,}000$ and in the $\theta$-diagram at $\Ra \approx 84{,}000$. There also exist eigenvalues with vanishing real part at these points which could suggest that we start to continue via secondary branches at these points. The shown branch connects the first primary bifurcation emerging at $\Ra_c^{(1)}=20{,}029$ with the formerly seventh primary bifurcation at $S=1$.

Branch 1 and branch 3 have a zero eigenvalue at $\Ra \approx 44{,}000$ and  $\Ra \approx 62{,}000$ as the stability plots in Figure \ref{fig:bifurcation_Ra_S1000_diagram_branch_1} and Figure \ref{fig:bifurcation_Ra_S1000_diagram_branch_3} show. The corresponding secondary bifurcation is included in Figure \ref{fig:bifurcation_Ra_S1000_diagram_branch_7} and directly connects branch 1 and branch 3. It is interesting to note that branch 7 starts from branch 1 with a symmetric solution, then breaks its symmetry and retains it again when merging into branch 3.  

The primary bifurcation emerging at $\Ra_c^{(3)}=32{,}095$ and $\Ra_c^{(4)}=58{,}571$ are illustrated in Figure \ref{fig:bifurcation_Ra_S1000_diagram_branch_4} and Figure \ref{fig:bifurcation_Ra_S1000_diagram_branch_5}. They have zero eigenvalues at $\Ra\approx 47{,}000$ and $\Ra \approx 72{,}000$.
We found another secondary bifurcation that connects these two branches and included it in Figure \ref{fig:bifurcation_Ra_S1000_diagram_branch_8}. The corresponding zero eigenvalues of branch 4 and branch 5 are highlighted at $\Ra \approx 72{,}000$ and $\Ra \approx 47{,}000$ in Figure \ref{fig:bifurcation_Ra_S1000_diagram_branch_4} and  Figure \ref{fig:bifurcation_Ra_S1000_diagram_branch_5}.

The final branch is the disconnected branch 6 in Figure \ref{fig:bifurcation_Ra_S1000_diagram_branch_6}. We want to emphasise again that we were only able to find this solution by tracking the branch from $S=1$ with deflated continuation over both $\Ra$ and $S$ in Section \ref{sec:bif1} and \ref{sec:bif2}.

In conclusion, we have observed rather surprising evolutions of some branches and have seen how increasing the coupling number can stabilise unstable branches. Moreover, we have observed that the order of unstable eigenmodes changes with increasing S and instabilities whose velocity is aligned with the magnetic field arise for smaller Rayleigh numbers at high S. Finally, we have outlined how disconnected branches can be found at high coupling numbers.

\begin{figure}[htbp!]
	\begin{subfigure}{\textwidth}
		\centering
		\includegraphics[width=16.0cm]{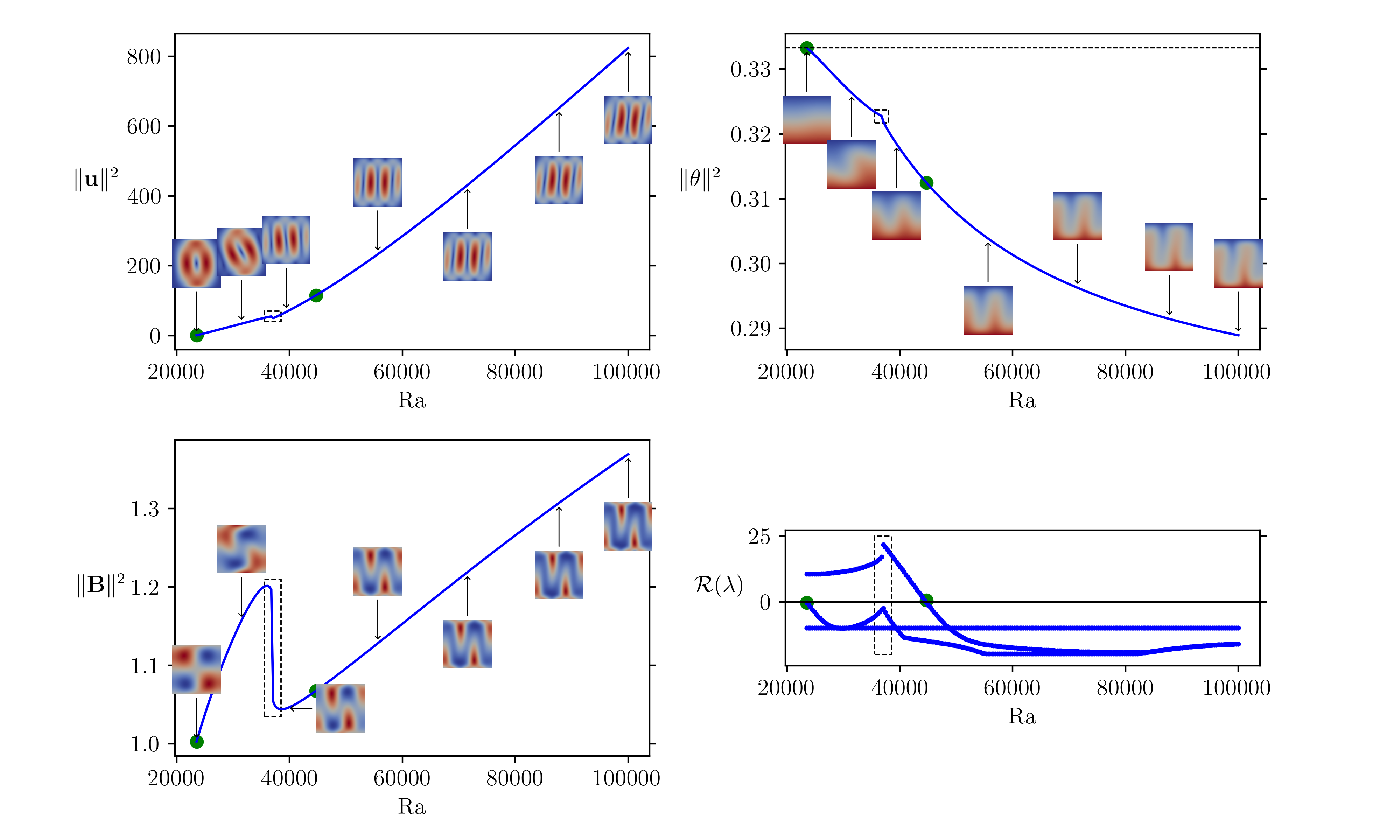}
		\vspace{-0.5cm}
	    \subcaption{Evolution of branch 1 over $\Ra$ for $S=1{,}000$. A more detailed plot of the area highlighted by the dashed rectangle can be found in the next Figure \ref{fig:bifurcation_Ra_S1000_diagram_branch_1_close_up}.}
        \label{fig:bifurcation_Ra_S1000_diagram_branch_1}
	\end{subfigure}
	
	\vspace{1.0cm}
	\begin{subfigure}{\textwidth}
		\centering
		\includegraphics[width=16.0cm]{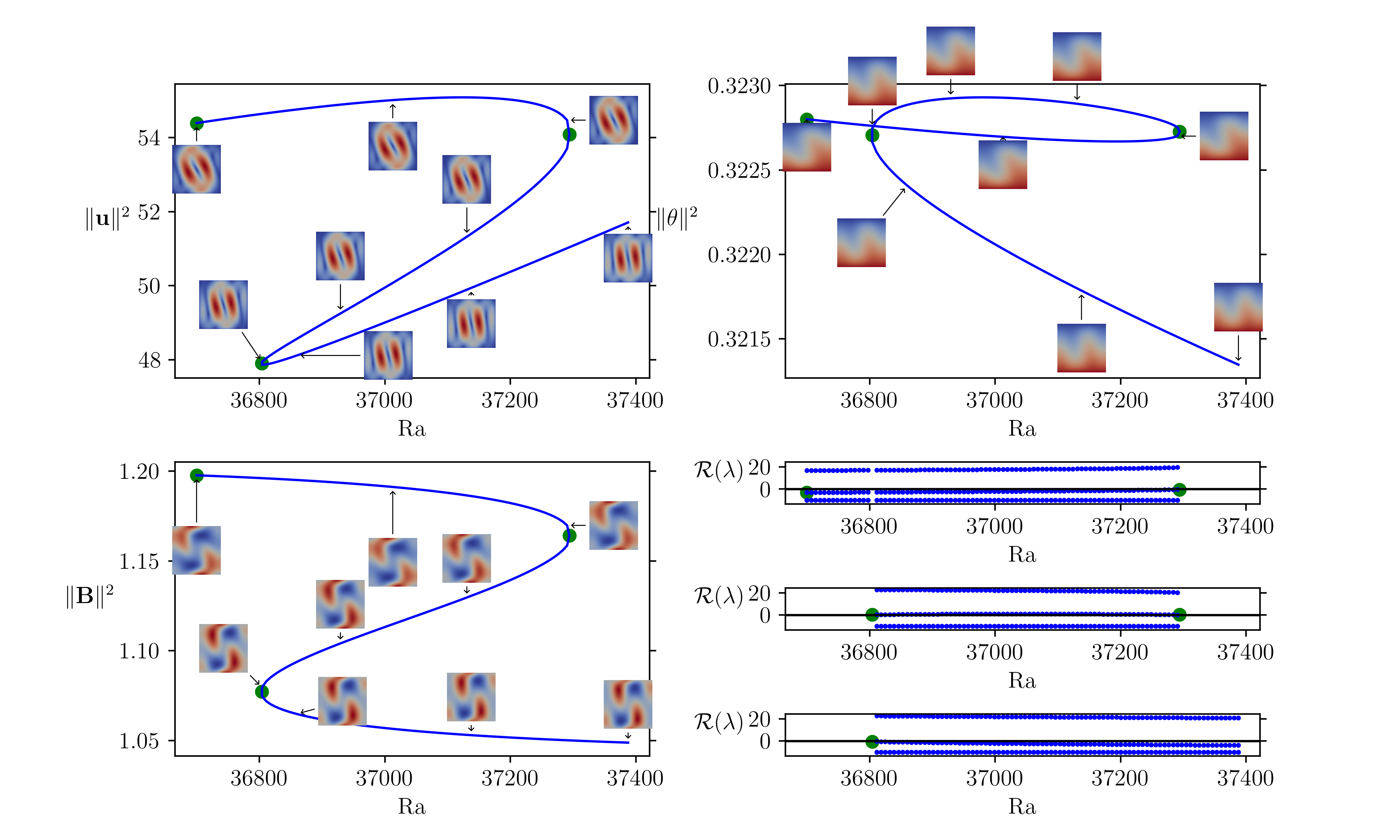}
		\vspace{-0.5cm}
	    \subcaption{More detailed evolution of branch 1 over $\Ra$ for $S=1{,}000$.}
       \label{fig:bifurcation_Ra_S1000_diagram_branch_1_close_up}
	\end{subfigure}
\end{figure}
\begin{figure}
	\ContinuedFloat
	\begin{subfigure}{\textwidth}
		\centering
		\vspace{-0.3cm}
		\includegraphics[width=16.0cm]{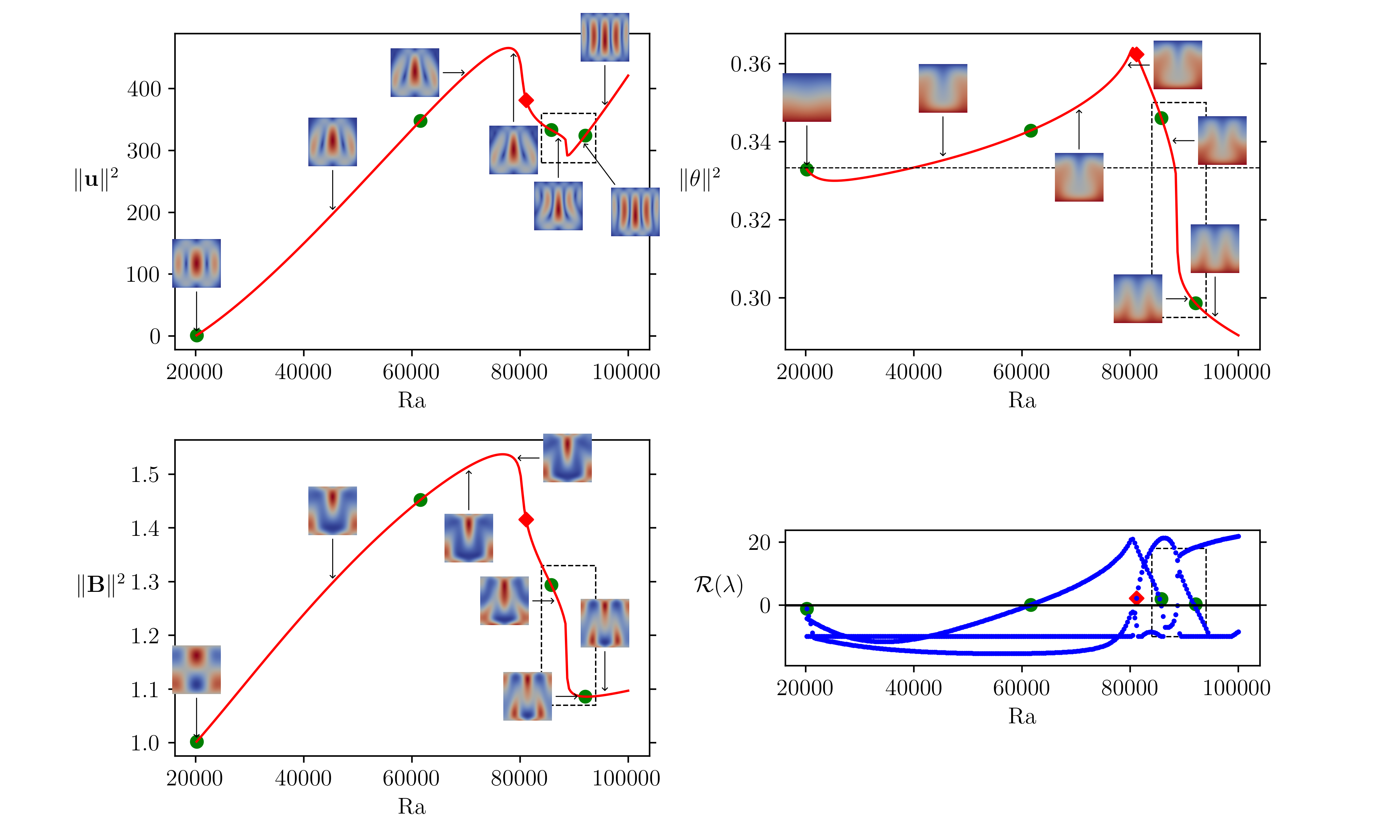}
		\vspace{-0.5cm}
	    \subcaption{Evolution of branch 3 over $\Ra$ for $S=1{,}000$. A more detailed plot of the area highlighted by the dashed rectangle can be found in the next Figure \ref{fig:bifurcation_Ra_S1000_diagram_branch_3_close_up}.}
        \label{fig:bifurcation_Ra_S1000_diagram_branch_3}
	\end{subfigure}
	
	\vspace{1.0cm}
	\begin{subfigure}{\textwidth}
		\centering		
		\includegraphics[width=16.0cm]{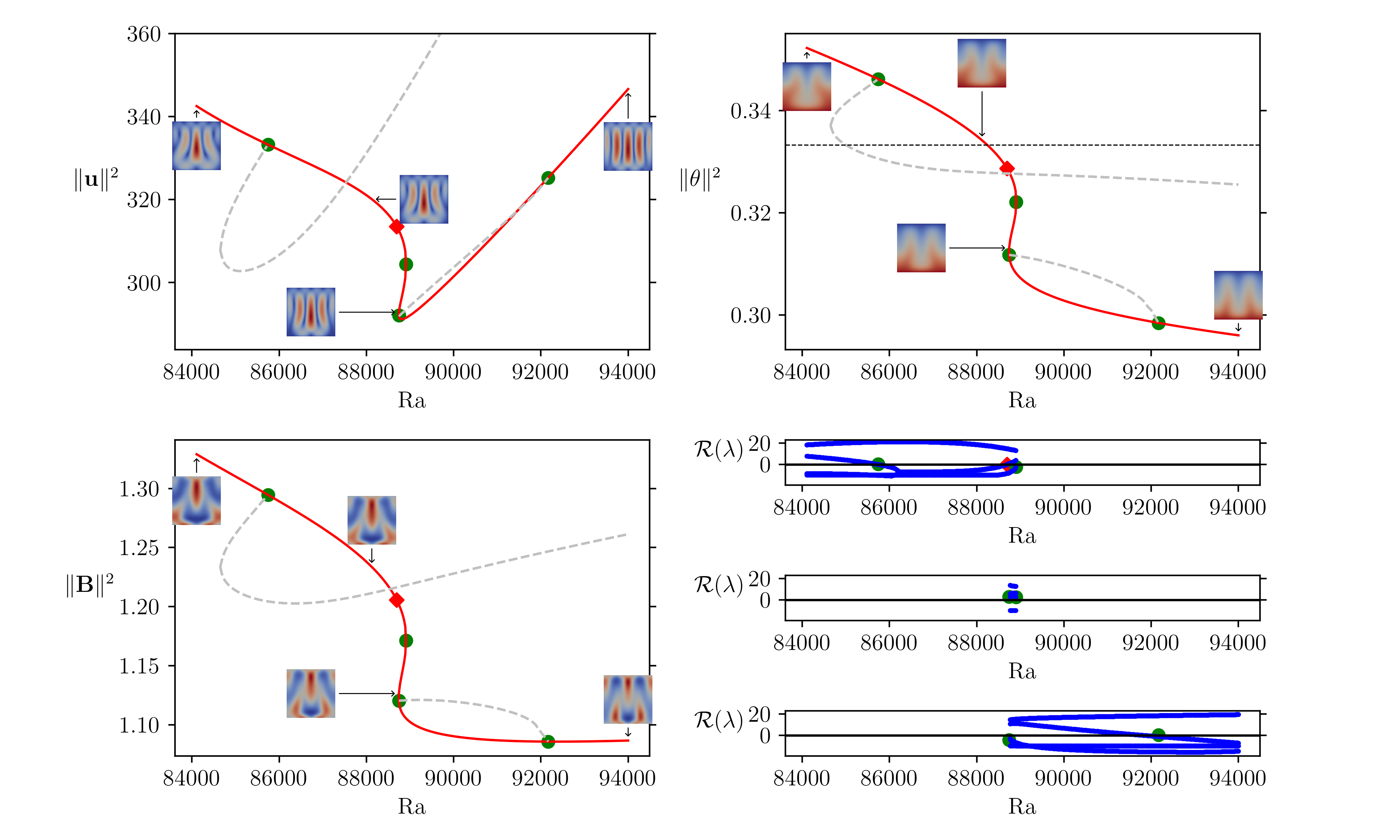}
		\vspace{-0.5cm}
	    \subcaption{More detailed evolution of branch 3 over $\Ra$ for $S=1{,}000$.}
		\label{fig:bifurcation_Ra_S1000_diagram_branch_3_close_up}
	\end{subfigure}
\end{figure}
\begin{figure}
	\ContinuedFloat
	\begin{subfigure}{\textwidth}
		\centering
		\vspace{-0.3cm}
		\includegraphics[width=16.0cm]{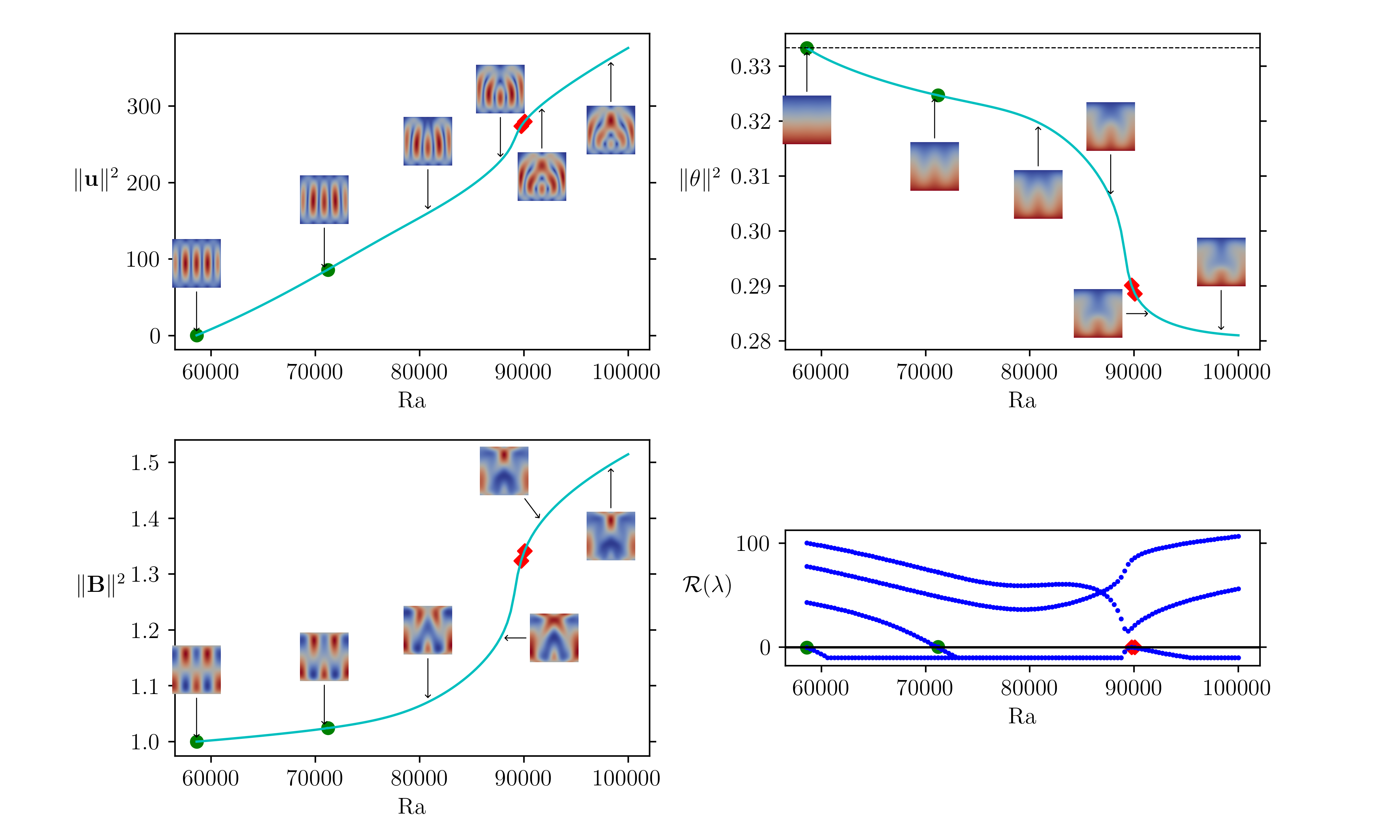}
		\vspace{-0.5cm}
	    \subcaption{Evolution of branch 4 over $\Ra$ for $S=1{,}000$.}
		\label{fig:bifurcation_Ra_S1000_diagram_branch_4}
	\end{subfigure}
	
	\vspace{1.0cm}
	\begin{subfigure}{\textwidth}
		\centering		
		\includegraphics[width=16.0cm]{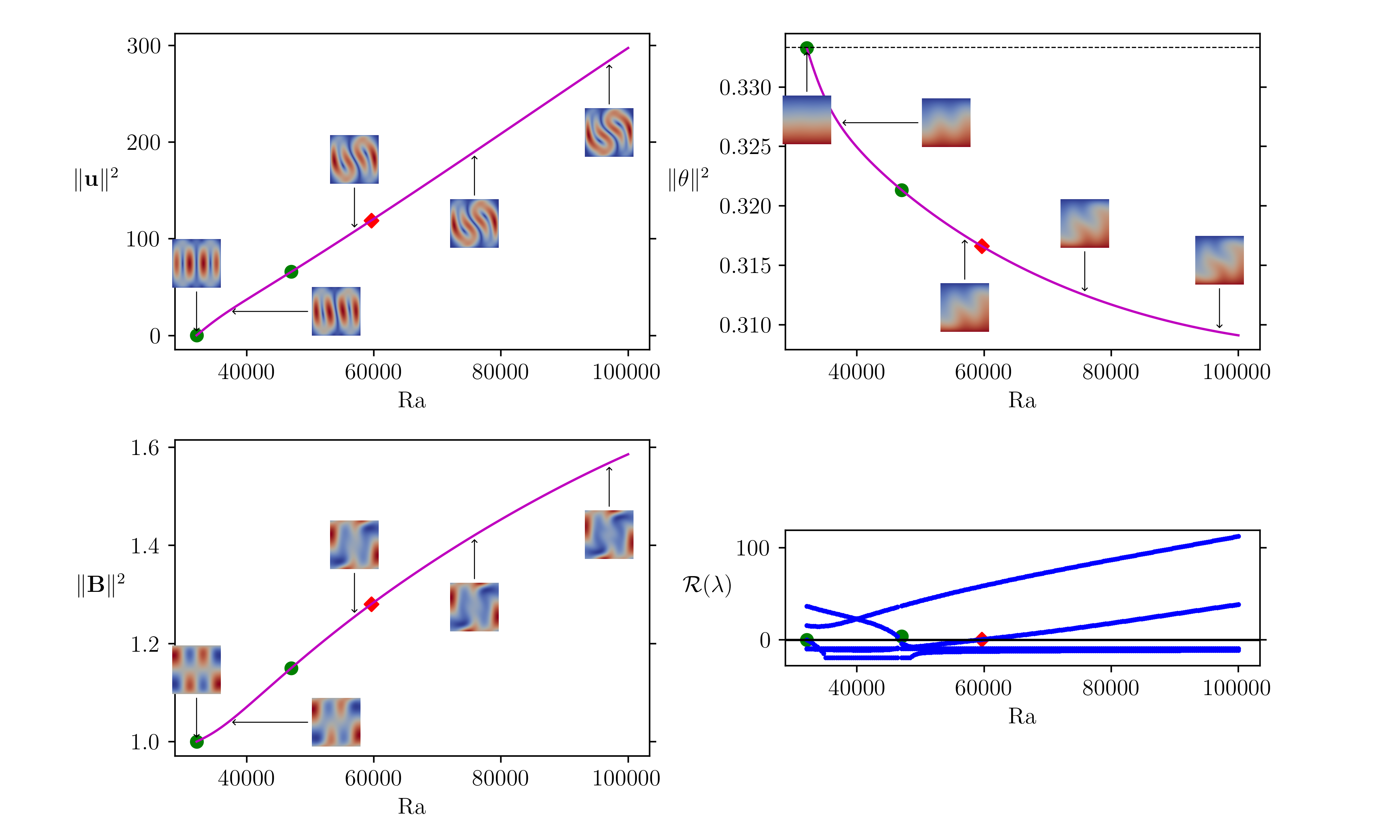}
		\vspace{-0.5cm}
	    \subcaption{Evolution of branch 5 over $\Ra$ for $S=1{,}000$.}
        \label{fig:bifurcation_Ra_S1000_diagram_branch_5}
	\end{subfigure}
\end{figure}
\begin{figure}
	\ContinuedFloat
	\begin{subfigure}{\textwidth}
		\centering
		\vspace{-0.5cm}
		\includegraphics[width=16.0cm]{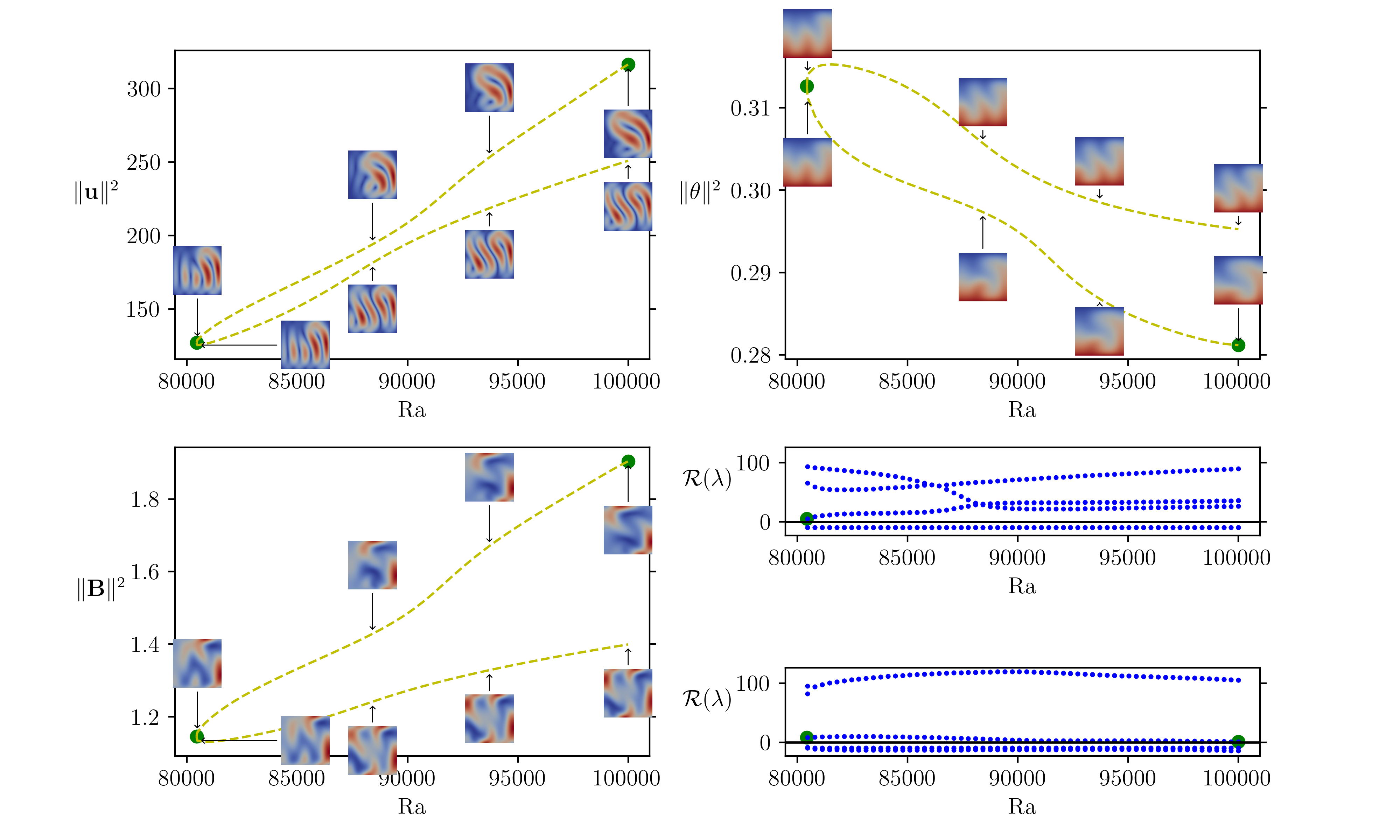}
		\vspace{-0.5cm}
	    \subcaption{Evolution of branch 6 over $\Ra$ for $S=1{,}000$.}
        \label{fig:bifurcation_Ra_S1000_diagram_branch_6}
	\end{subfigure}
	
	\vspace{0.7cm}
	\begin{subfigure}{\textwidth}
		\centering
		\includegraphics[width=16.0cm]{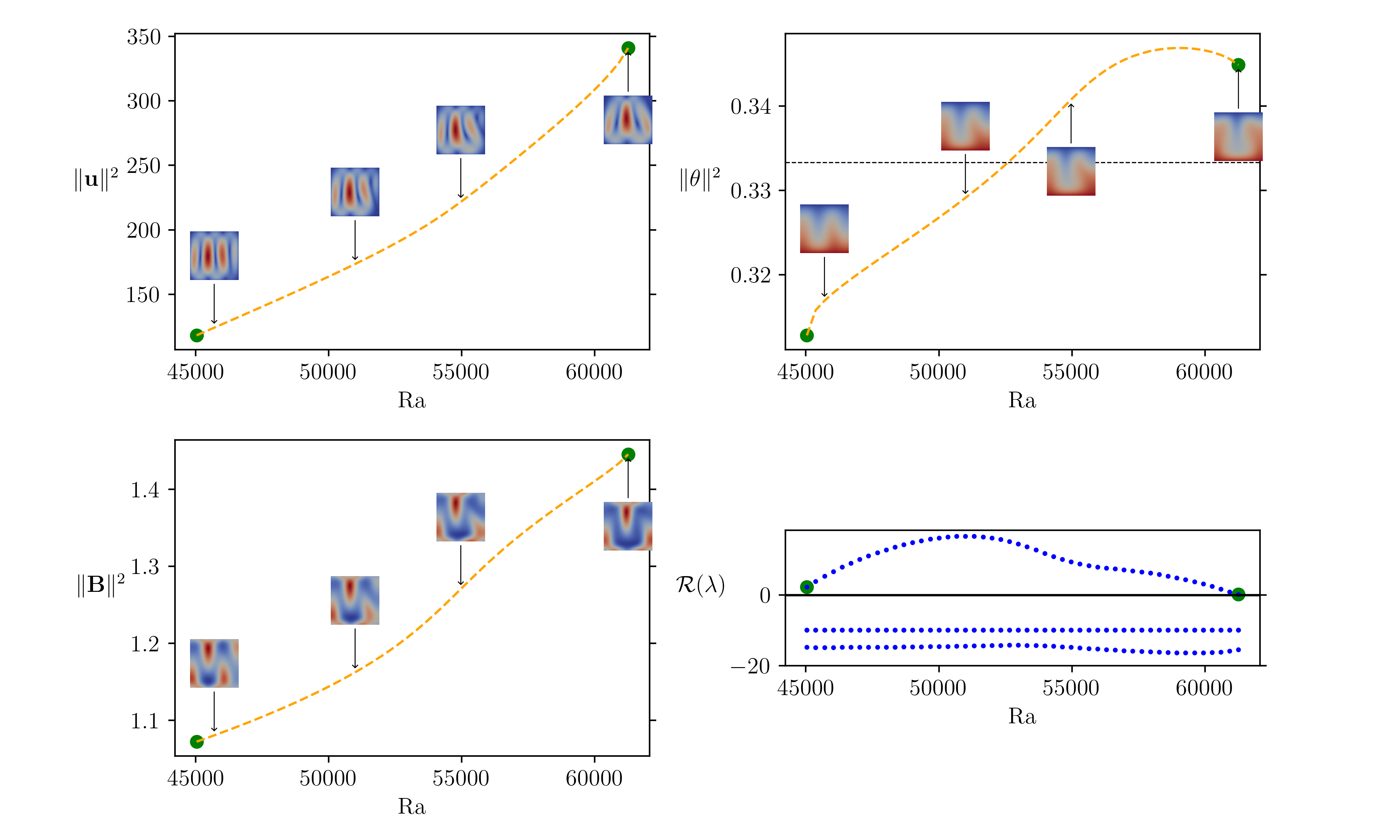}
		\vspace{-0.5cm}
	    \subcaption{Evolution of branch 7 over $\Ra$ for $S=1{,}000$.}
        \label{fig:bifurcation_Ra_S1000_diagram_branch_7}
	\end{subfigure}
\end{figure}

\begin{figure}
	\ContinuedFloat
	\begin{subfigure}{\textwidth}
		\centering
		\vspace{-0.5cm}
		\includegraphics[width=16.0cm]{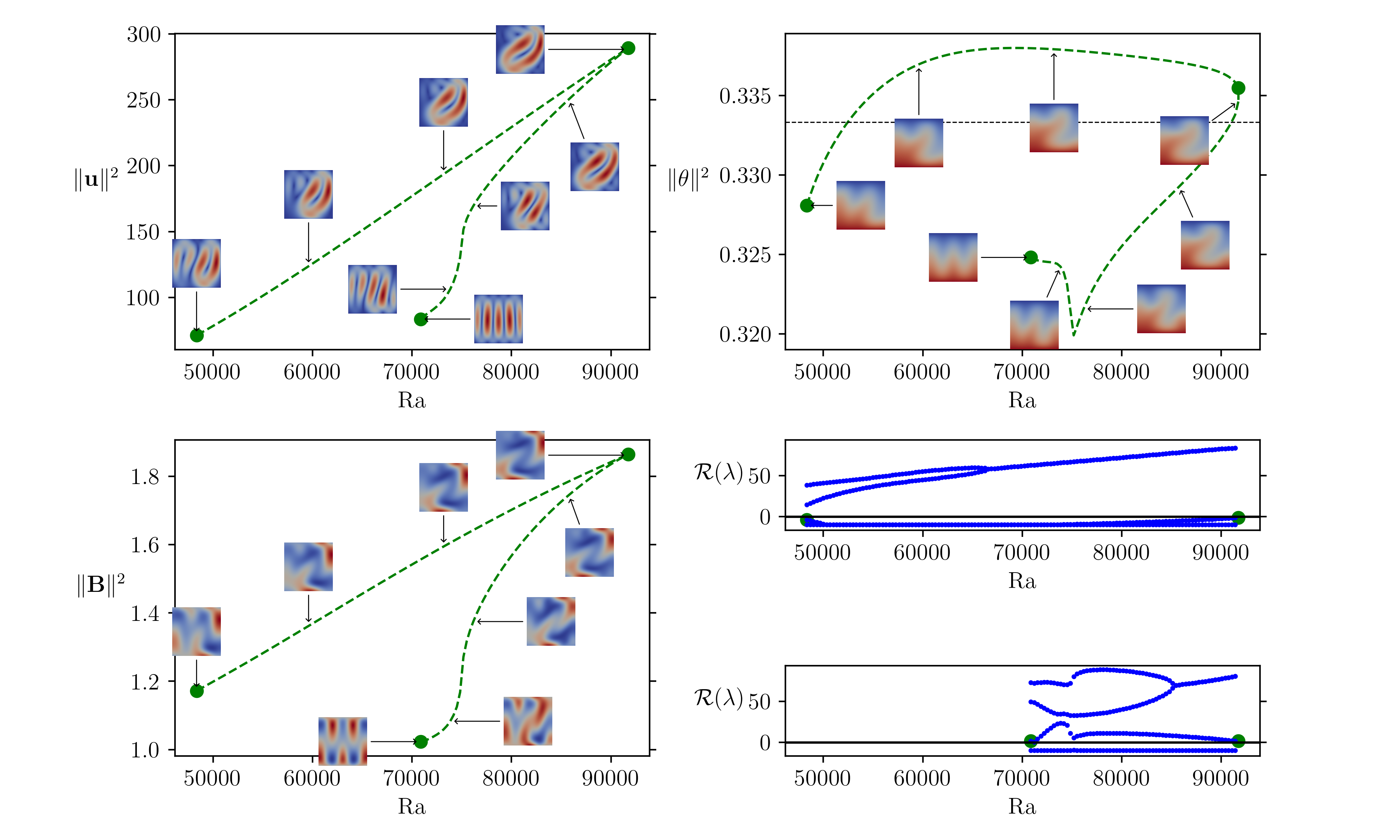}
		\vspace{-0.5cm}
		\subcaption{Evolution of branch 8 over $\Ra$ for $S=1{,}000$.}
		\label{fig:bifurcation_Ra_S1000_diagram_branch_8}
	\end{subfigure}
    \caption{Evolution of branch 1, 3, 4, 5, 6, 7, 8 over $\Ra$ for $S=1{,}000$.}
    \label{fig:bifurcation_S_Ra100000_diagram_branch_1_2_3_4_5_6_7_8}
\end{figure}

\FloatBarrier
\section{An augmented Lagrangian block preconditioner for anisothermal MHD models }\label{sec:BlockPreconAniso}
In this section, we extend the block preconditioner that we developed in Chapter~\ref{chap:2} to the temperature--dependent case. The goal is to built a preconditioner that is robust in the linear iteration counts with respect to the parameters $\Ra$, $\Pr$, $\Pm$ and $S$. This preconditioner can, e.g., be used to compute more refined solutions for the bifurcation problem studied in the previous section. Recall that due to the many nonlinear iterations required for the deflated continuation algorithm we chose a rather coarse mesh of $50 \times 50$ cells with a direct linear solver in the previous section. If one is interested in particular solutions with a finer resolution, the solutions of the coarse mesh can then be used as initial guesses and this preconditioner can be applied. Furthermore, our solver could be used to precondition the Krylov-Schur solver that we used in the previous section to solve the arising eigenvalue problems.

While the literature on preconditioners for anisothermal MHD models does not seem to be too rich, the preconditioning of standard Rayleigh-B\'enard problems or thermal convection-driven flows is more well-known, cf. \cite{Farrell2021Aniso,Howle2012} or \cite{Elman2014} for an overview.  

In each nonlinear step, we have to solve a system of the form
\begin{equation}
	\begin{bmatrix}
		\FF & \nabla & -\Ra\, \Pr\, \mathbf{e}_3 & \GG&  S\,\B_0 \times\\
		-\nabla \cdot & 0 & 0 & 0 & 0\\
		 \nabla \theta^n \cdot & 0 & -\Delta + \u^n \cdot \nabla & 0 & 0\\
		0 & 0 & 0 & -\frac{\Pr}{\Pm}\nabla \nabla \cdot &  \vcurl  \\
		 \times \B_0 & 0 & 0 &  \frac{\Pr}{\Pm}\scurl \,   +\u^n \cdot & I 
	\end{bmatrix}
	\begin{bmatrix}
		x_{\u}  \\ x_{p} \\  x_{\theta} \\ x_{\B}\\ x_{\E} 
	\end{bmatrix}	
= 
\begin{bmatrix}
	R_{\u}  \\ R_{p} \\  R_{\theta} \\ R_{\B}\\ R_{\E} 
\end{bmatrix}	
\end{equation}
with 
\begin{align}
	\FF \u &= -2 \Pr \nabla \cdot \varepsilon(\u) + \u^n \cdot \nabla \u + \u \cdot \nabla \u^n + S\, \B^n \times (\u \times \B^n), \\
	\GG \, \B &=  S\, \B \times \E^n + S\, \B\times (\u^n \times \B^n) + \S\, \B^n\times(\u^n \times \B).
\end{align}
Since the equations for $\B$ and $\E$ do not include the temperature, we group as before $(\u,p,\theta)$ and $(\B, \E)$ together and use 
\begin{equation}
	\tilde{\mathcal{S}}^{(\u, p, \theta)} = 
	\begin{bmatrix}
		-\frac{\Pr}{\Pm}\nabla \nabla \cdot &  \vcurl \\
		 \frac{\Pr}{\Pm}\scurl \,   + \u^n \cdot & I 
	\end{bmatrix}
\end{equation}
as an approximation for the outer Schur complement. Equally, we use the same multigrid method that was described in Section \ref{sec:solverformagneticblock} as a preconditioner for the Schur complement. 

For the top left block, we use a similar idea that was introduced in \cite{Farrell2021Aniso}. The first step is to perform a further Schur complement approximation grouping together $(\u,\theta)$ and $p$. For convenience, we reorder the top left block correspondingly to 
\begin{equation}\label{eq:3by3form}
	\begin{bmatrix}
		\FF & -\Ra\, \Pr\, \mathbf{e}_3 & \nabla \\
		\nabla \theta^n \cdot & -\Delta + \u^n\cdot \nabla & 0 \\
		-\nabla \cdot & 0 & 0
	\end{bmatrix},
\end{equation}
cf. \cite[page 452]{Elman2014}.
As explained in Section \ref{sec:solverforschurcomp}, we again control the Schur complement of this system by adding the augmented Lagrangian term $-\gamma \nabla \div \u$ to the top-left block $\FF$ for a large value of $\gamma$. Identity \eqref{eq:SchurCompNS} resp.\ \cite[Theorem 3.2]{Bacuta2006} can still be applied to our system by interpreting \eqref{eq:3by3form} as a $2\times2$-block system
\begin{equation}
	\begin{bmatrix}
		\AA & \BB^\top \\
		\BB & 0
	\end{bmatrix}
\quad \text{ with }
\AA = 
\begin{bmatrix}
		\FF & -\Ra\, \Pr\,  \mathbf{e}_3 \\
		\nabla \theta^n \cdot & -\Delta + \u^n\cdot \nabla 
\end{bmatrix}
\quad \text{ and }
\BB =
\begin{bmatrix}
	-\nabla \cdot & 0
\end{bmatrix}.
\end{equation}
Hence, a pressure mass matrix scaled by $-1/\gamma$ remains a good approximation for the inner Schur complement provided $\gamma$ is chosen large enough, e.g., $\gamma = 10^4$.

The second step is to apply the parameter-robust multigrid method, which we also introduced in Section \ref{sec:solverforschurcomp}, monolithically to the $(\u, \theta)$-block $\AA$. We found that this approach results in the most robust iteration counts with respect to $\Ra$ and $\Pr$ and report numerical results for this approach in the next section. Alternatively, one could try to eliminate $\u$ or $\theta$ with a further Schur complement approximation. However, both Schur complements are not straight-forward to compute.

\section{Numerical results}
In this section, we present numerical results for the block preconditioner that we introduced in Section \ref{sec:BlockPreconAniso}. 
%The code can be found at \red{xxx}. 
The results were again produced on ARCHER2, the UK national supercomputer. 
\subsection{Stationary magnetic double glazing problem}
We start by investigating a magnetic double glazing problem on the domain \mbox{$\Omega = (-1/2, 1/2)^d$} for dimension $d=2,3$. The problem setup for the hydrodynamic part is taken from \cite[Section 5.3]{Farrell2021Aniso}. The boundary conditions are chosen to be 
\begin{equation}
	\u = \mathbf{0} \text{ on } \partial \Omega,  \quad \nabla \theta \cdot \n = 0  \, \text{  on } \partial \Omega \backslash(\Gamma_H \cup \Gamma_C), \quad 
	\theta = 
	\begin{cases}
		1, & \text{ on } \Gamma_H, \\
		0, & \text{ on } \Gamma_C,
	\end{cases}
\end{equation}
where the hot and cold boundaries are defined as $\Gamma_H = \{x_1=-1/2\}$ and \mbox{$\Gamma_C = \{x_1 = 1/2\}$}. The system is completed with the magnetic boundary conditions given by 
\begin{equation}
	\B \cdot \n = (0,1)^\top \n , \qquad E = 0 \text{ on } \partial \Omega,
\end{equation}
in two dimensions and 
\begin{equation}
	\B \cdot \n = (0,0,1)^\top \n , \qquad \E\times \n = \mathbf{0} \text{ on } \partial \Omega,
\end{equation}
in three dimensions. The magnetic field can, e.g., be used to control the heat transfer similarly to what we have seen for the magnetic Rayleigh-B\'enard problem where instabilities in the velocity were mainly aligned with the magnetic field.

Note that this setup is different to the magnetic Rayleigh-B\'enard problem studied in the previous Section \ref{sec:BifurcationAnalysis}, since the direction of gravity is now perpendicular to the primary heat flow from the hot to the cold plate. In particular, this problem does \emph{not} have the symmetries \eqref{eq:symmetry1} and \eqref{eq:symmetry2} and
\begin{equation}\label{eq:trivialsol2}	
	\u_0 = \mathbf{0}, \quad \theta_0 = 1-x_1 \quad \text{ and } \quad \B_0 = (0,1)^\top, \text{ resp.}, \, \B_0 = (0,0,1)^\top 
\end{equation}
is no longer a trivial solution of this system. This is due to the fact that there does not exist a solution for $p$ to the equation 
\begin{equation}
	\nabla p = \begin{pmatrix}
		0 \\ 1-x_1
	\end{pmatrix}
\end{equation}
since the necessary condition $\scurl\,  (0, 1-x_1)^\top = 0$ is not fulfilled. 

We investigate the performance of our preconditioner from Section \ref{sec:BlockPreconAniso} for different values of $\Ra$, $\Pr$, $\Pm$ and $S$. Note that the coefficient of the magnetic subsystem is given by $\Pr/\Pm$. Hence reducing $\Pr$ increases the coupling in the hydrodynamic and magnetic block at the same time. This corresponds to increasing the Reynolds numbers $\Re$ and $\Rem$ at the same time in the numerical results of the previous chapters. Therefore, we start by choosing $\Pr = \Pm$ in the first experiment to mimic the previous experiments where just the fluid Reynolds number $\Re$ is varied.

Since we investigate a stationary problem, we perform parameter continuation as previously explained in Section \ref{sec:algorithmdetails}. Here, we choose the steps 1, 10, 100, $1{,}000$, $10{,}000$, $30{,}000$, $100{,}000$ for $\Ra$, 1.0, 0.1, 0.03, 0.01, 0.003, 0.001 for $\Pr$ and 1, 100, 1{,}000, 10{,}000 for $S$. For this problem, we chose a base mesh of $16 \times 16$ cells with 6 levels of refinement resulting in 81.8 million degrees of freedom in two dimensions. In three dimensions, the base mesh had $6\times 6 \times 6$ cells with 3 levels of refinement and 25.8 million DoFs. We used the same $\mathrm{H}(\mathrm{div}) \times L^2$-conforming discretisation for $(\u, p)$ that we introduced in Section \ref{sec:Discretisationchap2}.

The numerical results for the two-dimensional case for $\Pr = \Pm$ are shown in Table \ref{tab:hc_pr_eq_pm2d}. We observe very well controlled linear iteration counts for Rayleigh numbers ranging from 1 to 100,000 and Prandtl numbers ranging from 1.0 to 0.001. The nonlinear iterations mainly grow with increasing Rayleigh number while they remain nearly constant with respect to $\Pr$. The missing entry for the hardest case of $\Pr=0.001$ and $\Ra=100$,000 is due to the failure of nonlinear convergence. This entry might be computable with a smaller step size in the continuation algorithm.

\begin{table}[htbp!]
	\centering
	\begin{tabular}{r|cccc}
		\toprule
		$\Pr\backslash\Ra$ & 1 &     100 &   10,000 &  100,000 \\
		\midrule
		1.0   &  ( 2) 3.0 &  ( 4) 3.2 &  ( 5) 4.8 &  ( 5) 5.0 \\
		0.1   &  ( 2) 2.5 &  ( 5) 2.6 &  ( 5) 4.6 &  ( 6) 4.7 \\
		0.01  &  ( 2) 2.5 &  ( 3) 3.7 &  ( 6) 4.2 &  ( 6) 5.3 \\
		0.001 &  ( 2) 2.5 &  ( 3) 2.7 &  ( 7) 4.0 &  NF \\
		\bottomrule
	\end{tabular}
	\caption{(Nonlinear iterations) Average outer Krylov iterations per nonlinear step for the two-dimensional magnetic double glazing problem with $\Pr = \Pm$. NF indicates that this entry was not computable due to the failure of nonlinear convergence. \label{tab:hc_pr_eq_pm2d}}
\end{table}

 Table \ref{tab:hc_pr_neq_pm2d} shows the iteration numbers for the case of stronger magnetic coupling where $\Pm=1$ and is not decreased with $\Pr$. The linear iteration counts remain again fairly constant in the reported range for $\Ra$ between 1 and 100,000 and $\Pr$ between 1.0 and 0.01. The nonlinear convergence was more difficult to achieve for smaller $\Pr$, which is the reason why this table only report iteration numbers down to $\Pr=0.01$. 
 In summary, in both cases the linear solver performs very well in the reported parameter ranges. The performance of the solver is mainly limited by the convergence of the nonlinear scheme, which could be improved by choosing smaller step sizes in the continuation algorithm.

\begin{table}[htbp!]
	\centering
	\begin{tabular}{r|cccc}
		\toprule
		$\Pr\backslash\Ra$ & 1 &     100 &   10,000 &  100,000 \\
		\midrule
		1.0  &  ( 2) 3.0 &  ( 4) 3.2 &  ( 5) 4.8 &  ( 5) 5.0 \\
		0.1  &  ( 2) 2.5 &  ( 6) 2.5 &  ( 6) 4.0 &  ( 6) 4.7 \\
		0.03 &  ( 3) 2.0 &  ( 6) 2.3 &  ( 6) 4.0 &  ( 6) 5.0 \\
		0.01 &  ( 3) 2.0 &  ( 8) 2.5 &  ( 6) 5.7  &   NF  \\
		\bottomrule
	\end{tabular}
	\caption{Iteration counts for the two-dimensional magnetic double glazing problem for \Pm=1.\label{tab:hc_pr_neq_pm2d}}
\end{table}

Streamline plots for different values of $\Ra$ with $\Pm=\Pr=0.01$ can be found in Figure \ref{fig:2d_hc}. For $\Ra=1$, the plot of $\theta$ and $\B$ is mainly determined by the boundary conditions we applied, i.e., we observe a linearly decreasing temperature from the hot to the cold plate and straight magnetic field lines pointing in the direction $(0, 1)^\top$. The velocity streamlines show a circular pattern
% in the centre and a square-shaped pattern closer to the boundary
and with increasing Rayleigh number, high fluid velocities start to concentrate in a ring and complex eddies emerge in the corners of the domain. Moreover, the temperature profile and magnetic field lines start to rotate around the centre of the domain with increasing Rayleigh number.
\begin{figure}[htbp!]
	\centering
	\begin{tabular}{ccc}
		\includegraphics[width=4.0cm]{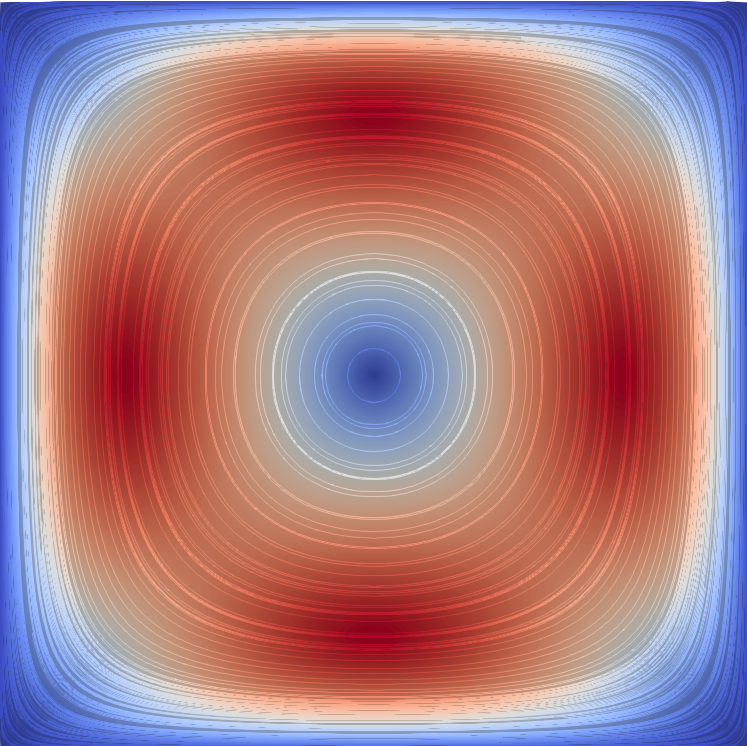} &
		\includegraphics[width=4.0cm]{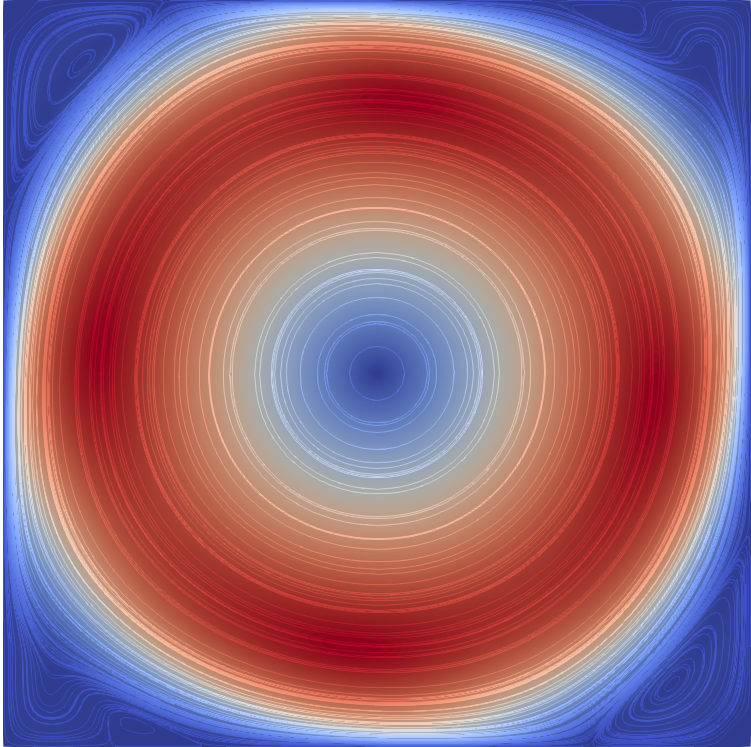} &
		\includegraphics[width=4.0cm]{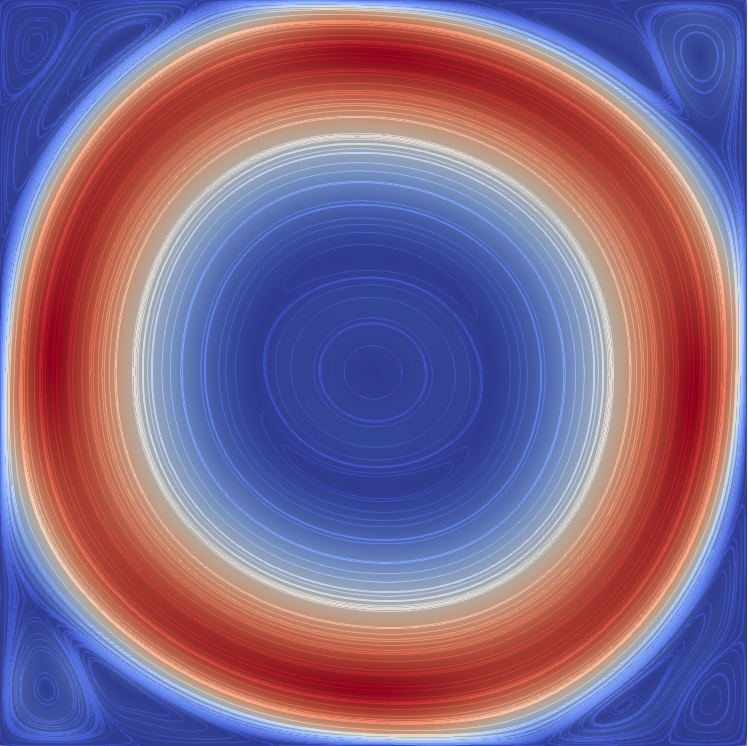} \\
		\includegraphics[width=4.0cm]{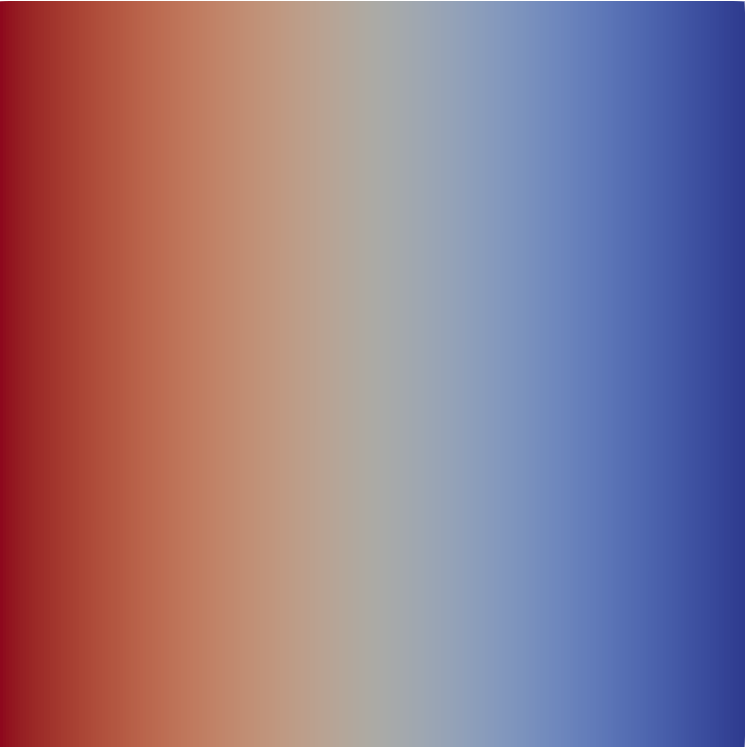} &
		\includegraphics[width=4.0cm]{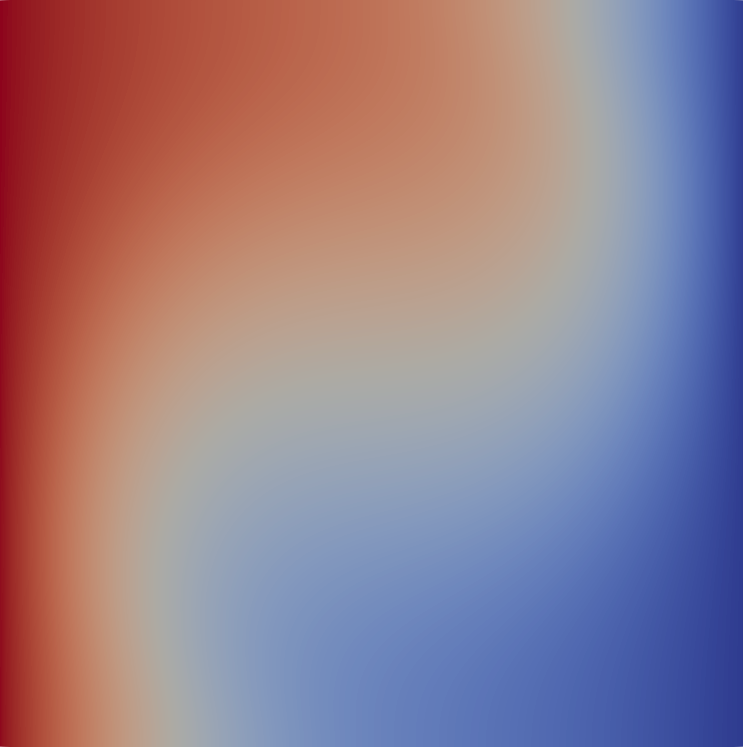} &
		\includegraphics[width=4.0cm]{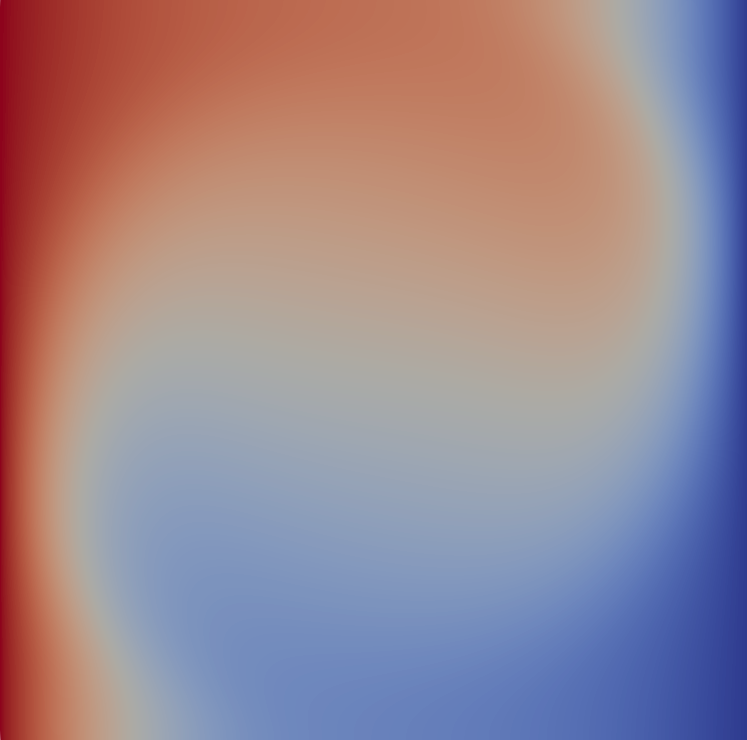} \\
		\includegraphics[width=4.0cm]{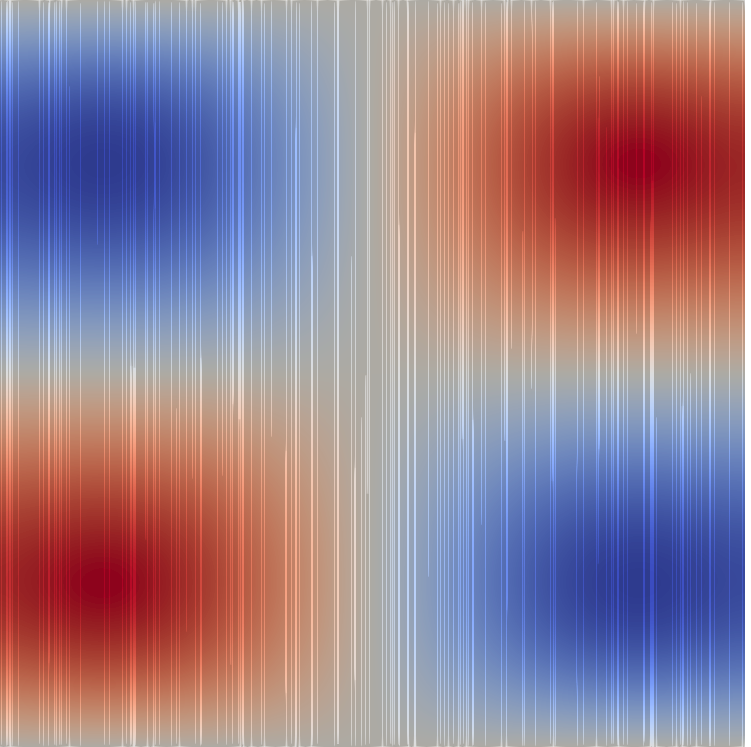} &
		\includegraphics[width=4.0cm]{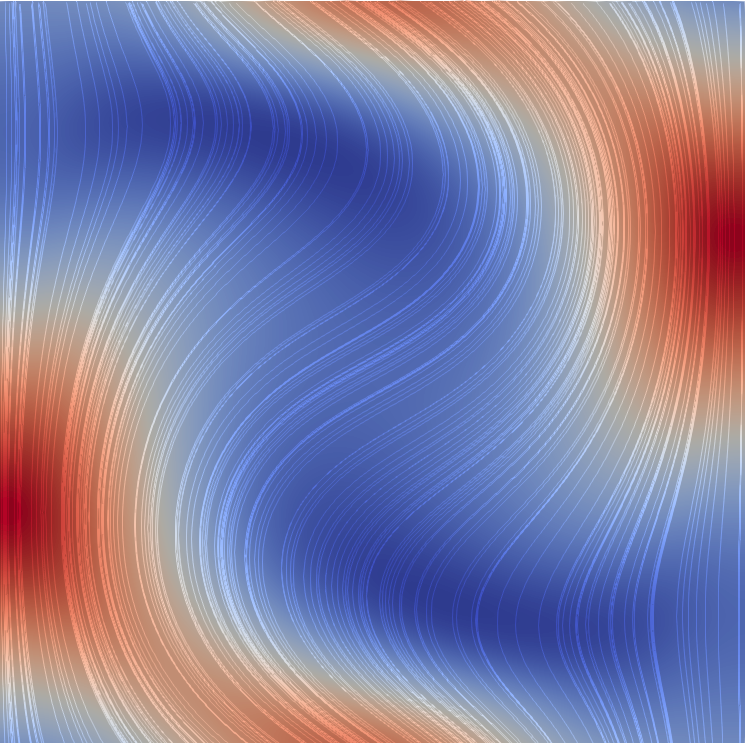} &
		\includegraphics[width=4.0cm]{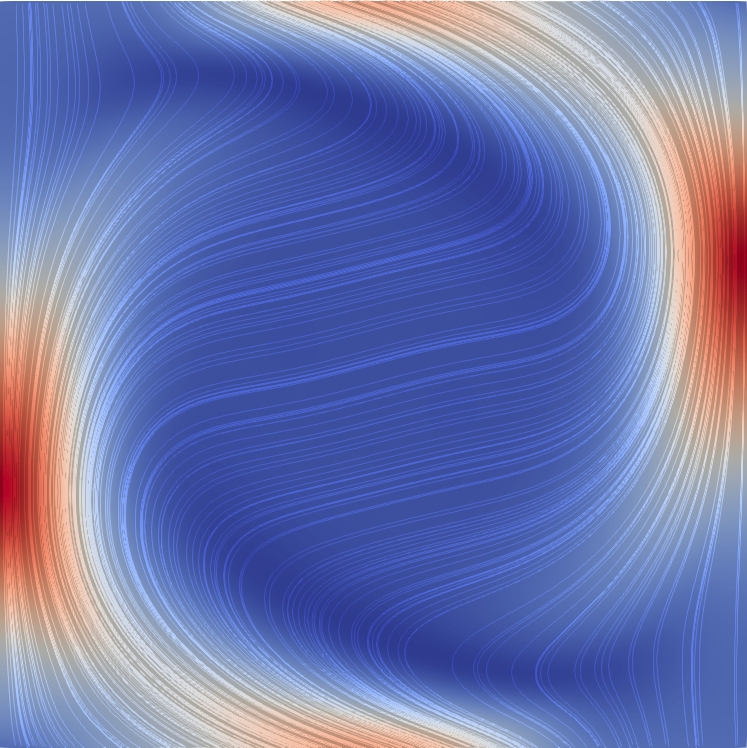} \\
		$\Ra=1$ & $\Ra=10,$000 & $\Ra=100,$000\\
	\end{tabular}
	\caption{Streamlines for the two-dimensional stationary double glazing problem for different values of $\Ra$ with $\Pm=\Pr=0.01$. The top row shows the magnitude of the velocity $\u$, the middle row the temperature $\theta$ and the bottom row the magnitude of the magnetic field $\B$. \label{fig:2d_hc}}
\end{figure}

Moreover, we report the iteration numbers for varying coupling numbers $S$. 
%This case relates to scenario that we investigated in the bifurcation analysis of the previous Section \ref{sec:BifurcationAnalysis}. Recall that due to the high number of nonlinear iteration in the deflation algorithm, the results were computed on a rather coarse mesh of $50 \times 50$ cells. If one wished to use finer grids, the computed results can be used as initial guesses and one can apply this preconditioner. 
We want to mention that the results presented in Table \ref{tab:hc_S_Ra_2d} were computed after our license for the supercomputer ARCHER2 expired. Therefore, the iteration numbers were obtained on our local workstation and we had to decrease the number of refinements from 6 to 4 levels for a $16 \times 16$ base mesh. Hence, e.g., the iteration numbers in the first row of Table \ref{tab:hc_pr_eq_pm2d} and Table \ref{tab:hc_S_Ra_2d} do not coincide. 
For the coarser mesh, one can see that the number of linear iterations at $S=1$ is higher, while fewer nonlinear iterations are used in comparison to Table \ref{tab:hc_pr_eq_pm2d}. In any case, the iteration numbers remain fairly well controlled when $\Ra$ is increased and completely robust if $S$ is increased. 

\begin{table}[htbp!]
	\centering
	\begin{tabular}{r|cccc}
		\toprule
		S/Ra &       1 &     100 &   10{,}000 &  100{,}000 \\
		\midrule
		1     &  ( 2) 3.0 &  ( 2) 5.5 &  ( 4) 6.2 &  ( 3) 9.3 \\
		100   &  ( 2) 3.5 &  ( 2) 5.5 &  ( 4) 6.5 &  ( 3) 9.7 \\
		1{,}000  &  ( 2) 3.0 &  ( 2) 6.0 &  ( 4) 5.8 &  ( 4) 6.8 \\
		10{,}000 &  ( 2) 3.0 &  ( 2) 6.0 &  ( 3) 6.0 &  ( 3) 7.3 \\
		\bottomrule
	\end{tabular}
	\caption{Iteration counts for the two-dimensional magnetic double glazing problem for varying $S$. Note that this table was computed for 4 levels of refinement of the $16 \times 16$ base mesh instead of the 6 levels that were used in all other two-dimensional experiments in this section.\label{tab:hc_S_Ra_2d}}
\end{table}

Now, we report iteration numbers for the three dimensional version for the cases $\Pm=\Pr$ in Table \ref{tab:hc_pr_eq_pm3d} and $\Pm=1$ in Table \ref{tab:hc_pr_neq_pm3d}. Similar to the three dimensional results for the standard MHD equation from Chapter \ref{chap:2}, we observe that it is harder to achieve parameter robustness in three dimensions. This might be for the same reasons outlined at the beginning of Section \ref{sec:3dresults}. For $\Pm=\Pr$, we see a moderate increase in the iteration counts if $\Ra$ is increased from 1 to 100,000. As before, the two missing entries in Table \ref{tab:hc_pr_eq_pm3d} are due to failure of the nonlinear iteration.

In the case of stronger magnetic coupling where $\Pm$ remains 1 we see more growth in the iteration numbers for high values of $\Ra$ and low values of $\Pr$. In this case, the missing entries are in fact due to the failure of the linear solver. This observation is analogous to results reported in Section \ref{sec:stationaryliddrivencavityproblemin3d} where we observed that the monolithic multigrid solver struggles to deal with high magnetic Reynolds numbers in three dimensions. 
\begin{table}[htbp!]
	\centering
	\begin{tabular}{r|ccccc}
		\toprule
		$\Pr\backslash\Ra$ & 1 &     100 & 1,000&   10,000 &  100,000 \\
		\midrule
		1.0   &  ( 2) 4.0 &  ( 3) 5.0 &  ( 3) 6.7 &  ( 4) 6.8 &  ( 4) 7.2 \\
		0.1   &  ( 2) 3.5 &  ( 3) 6.0 &  ( 4) 6.2 &  ( 5) 7.6 &  ( 5)13.2 \\
		0.01  &  ( 2) 5.5 &  ( 3)12.7 &  ( 4)15.2 & NF & NF \\
		\bottomrule
	\end{tabular}
	\caption{Iteration counts for the three-dimensional magnetic double glazing problem with $\Pr = \Pm$.\label{tab:hc_pr_eq_pm3d}}
\end{table}
\vspace{-0.6cm}
\begin{table}[htbp!]
	\centering
	\begin{tabular}{r|ccccc}
		\toprule
		$\Pr\backslash\Ra$ & 1 &   100 & 1,000 &  10,000 &  100,000 \\
		\midrule
		1.0  &  ( 2) 4.0 &  ( 3) 5.0 &  ( 3) 6.7 &  ( 4) 6.8 &  ( 4) 7.2 \\
		0.1  &  ( 3) 3.0 &  ( 3) 6.0 &  ( 5)10.6 &  ( 5)17.8 &  ( 5)22.8 \\
		0.03 &  ( 3) 8.0 &  ( 4)12.5 &  LF  &  LF  &   LF  \\
		\bottomrule
	\end{tabular}
	\caption{Iteration counts for the three-dimensional magnetic double glazing problem for \Pm=1. LF indicates that this entry was not computable due to the failure of linear convergence. \label{tab:hc_pr_neq_pm3d}}
\end{table}

In Figure \ref{fig:3d_hc} we present streamline and contour plots of the three double glazing problem at $\Pm=\Pr=1$ where we compare the case of $\Ra=1$ and $\Ra=100{,}000$. For $\Ra=1$, the velocity has a similar circular pattern as in two dimensions with high velocities concentrating at the centre of the domain. For higher $\Ra$, high velocities start to concentrate near the left and right boundaries. The magnetic field lines point in the direction of $(0,0,1)^\top$ for $\Ra=1$ and start to twist around the centre for higher $\Ra$. Finally, the contour plots for the temperature show a linear decrease from the left to the right boundary with constant values along the y-axis for $\Ra=1$. For $\Ra=100{,}000$ they remain mostly constant along the y-axis with a more complex structure emerging in the other directions.

\begin{figure}[htbp!]
	\centering
	\vspace{-0.5cm}
	\hspace{-0.2cm}\includegraphics[width=11cm]{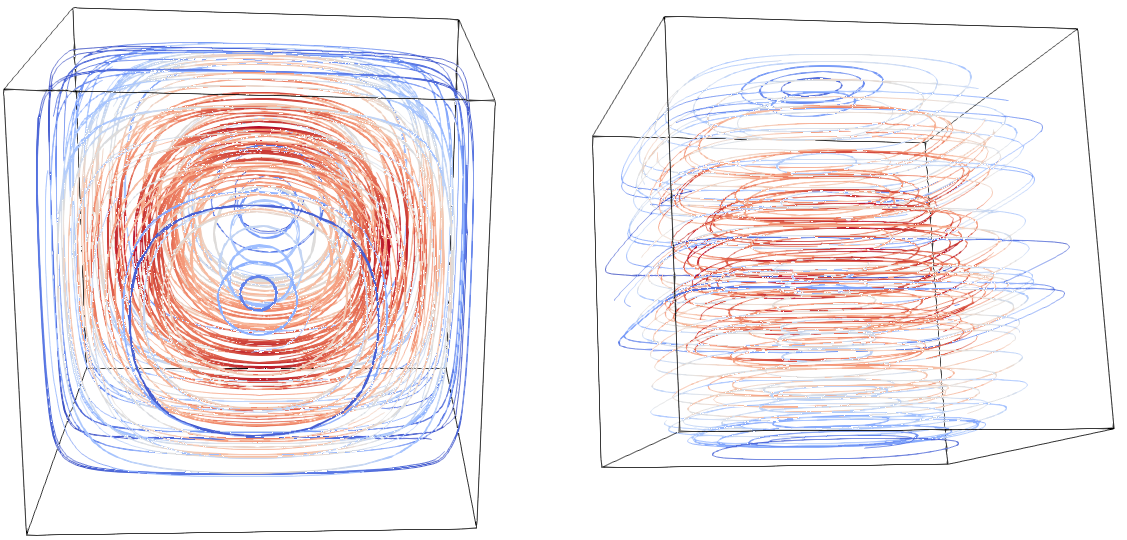} \\
	\vspace{-0.05cm}
	{\small (a) Streamlines for velocity with view from front (left) and top (right) at $\Ra = 1$.}\\
	\vspace{0.2cm}
	\hspace{-0.2cm}\includegraphics[width=11cm]{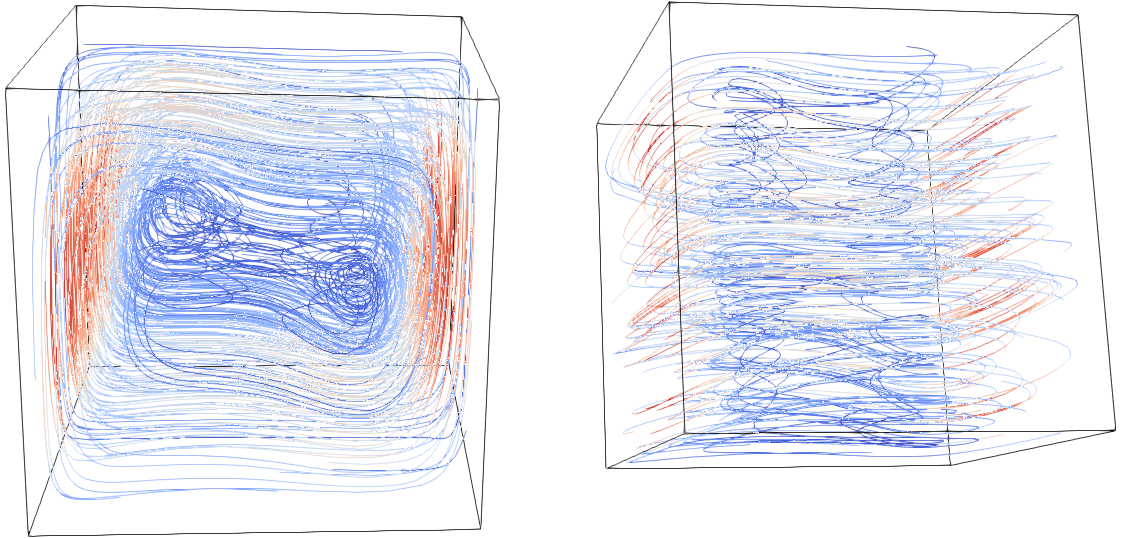}\\ 
	{\small (b) Streamlines for velocity with view from front (left) and top (right) at $\Ra = 10^5$.}\\
	\vspace{0.2cm}
	\hspace{-0.2cm}\includegraphics[width=11cm]{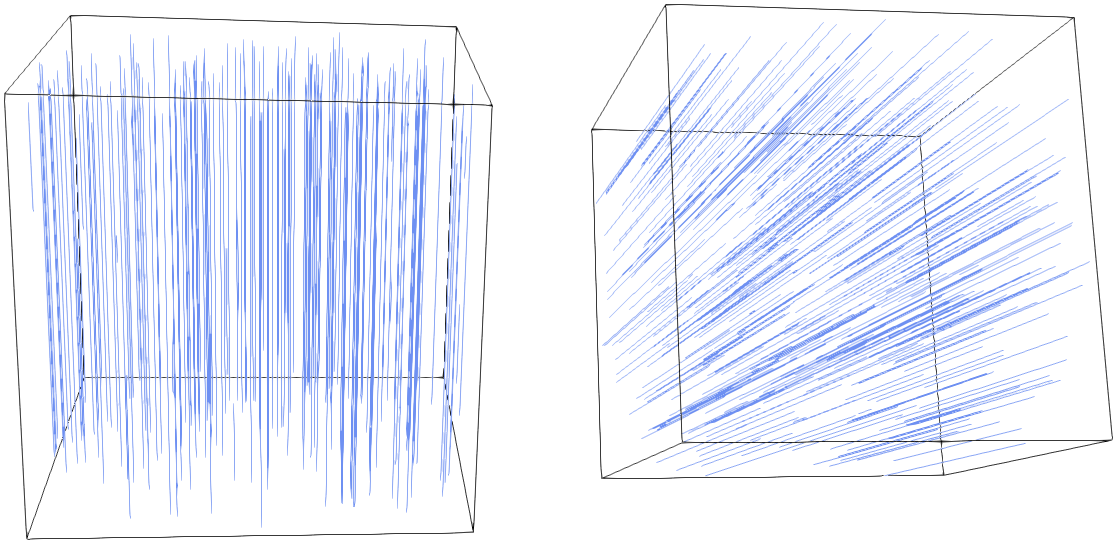} \\
	{\small (c) Streamlines for magnetic field with view from front (left) and top (right) at $\Ra = 1$.}\\
	\vspace{0.3cm}
	\hspace{-0.2cm}\includegraphics[width=11cm]{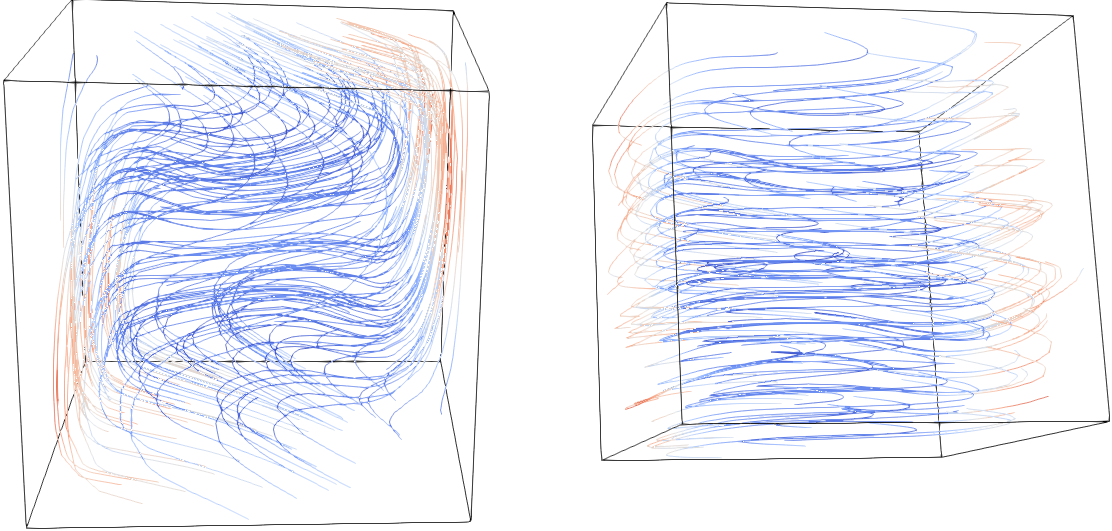}\\ 
	{\small (d) Streamlines for magnetic field with view from front (left) and top (right) at $\Ra = 10^5$.}\\
\end{figure}
\begin{figure}[htbp!]
	\centering
	\includegraphics[width=5.5cm]{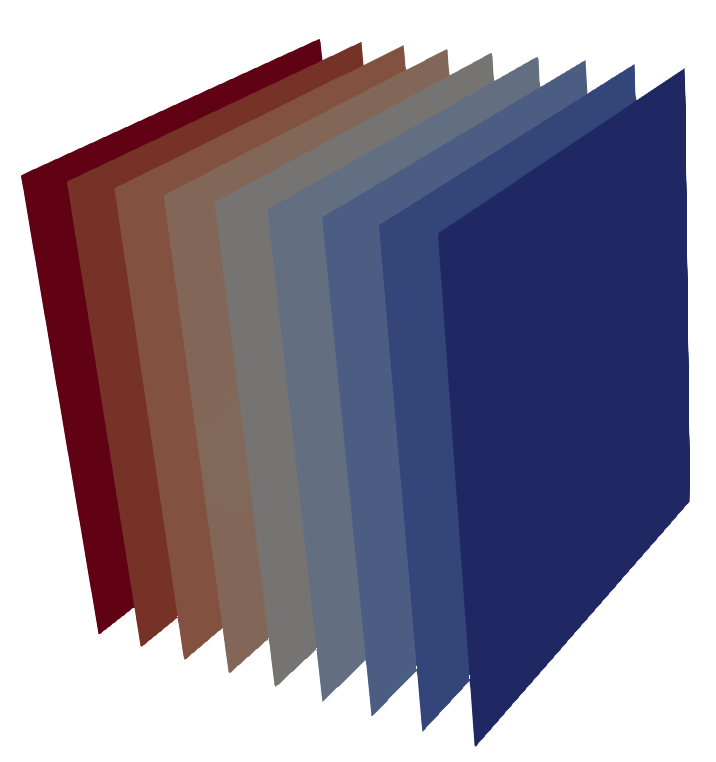} \qquad
	\includegraphics[width=5.5cm]{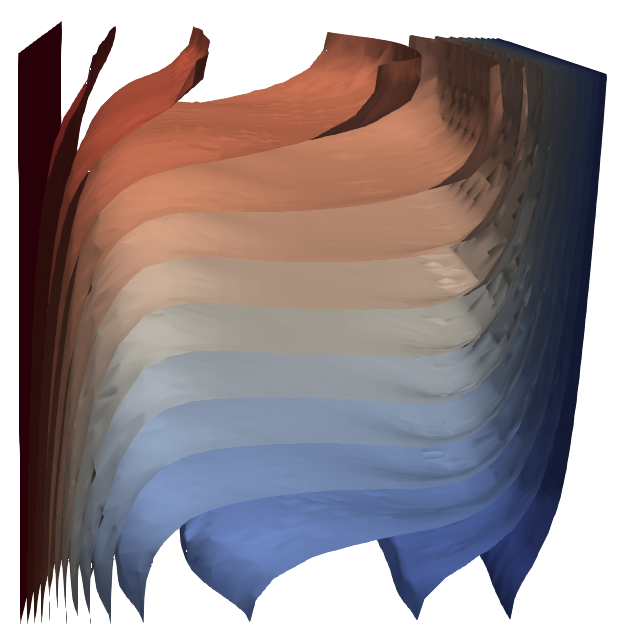}\\ 
	{\small (e) Temperature contours at $\Ra = 1$. \qquad (f) Temperature contours at $\Ra = 10^5$.}
	\caption{Streamlines for the three-dimensional stationary magnetic double glazing problem for different values of $\Ra$ with $\Pm=\Pr=1$.\label{fig:3d_hc}}
\end{figure}

\subsection{Magnetic channel cooling problem}
Finally, we consider a cooling problem where the magnetic field is applied perpendicular to the temperature gradient. We choose the channel to be $\Omega = (0,10) \times (-1,1)$ with the boundary conditions
\begin{equation}
	\u = 
		\begin{cases}
		1, & \text{ on } \{x_1 = 0\}, \\
		0, & \text{ else on } \partial \Omega,
	\end{cases},  
   \quad \nabla \u \cdot \n = \mathbf{0}  \, \text{  on } \{x_1=10\},
    \quad \B \cdot \n = (0,1)^\top \n, \quad E = 0,
\end{equation}
\begin{equation}
    \quad \nabla \theta \cdot \n = 0  \, \text{  on } \{x_1=10\}, \quad 
    	\theta = 
    \begin{cases}
    	1, & \text{ on } \{x_1 < 1\} \cap \partial \Omega, \\
    	-x_1+2, &\text{ on }  \{1<x_1<2\}\cap \partial \Omega,\\
    	0, & \text{ on } \{x_1 > 2\}\cap\{x_2=-1,1\}\cap \partial \Omega.
    \end{cases}
\end{equation}
For this problem, we use Scott--Vogelius elements of order 2 on barycentrically refined grids. We use a base mesh of $90\times 16$ cells and 4 levels of refinement with 62.9 million degrees of freedom. We start by reporting results for the stationary problem in Table \ref{tab:cc_pr_eq_pm2d_stat}. The linear iteration counts are again very well controlled for $\Ra$ up to 100,000. As before, the the nonlinear solver failed to converge for the cases of $\Pr=0.1$ and $\Pr=0.01$ at $\Ra=100$,000. 

\begin{table}[htbp!]
	\centering
	\begin{tabular}{r|cccc}
		\toprule
		$\Pr\backslash\Ra$ & 1  & 100 &  10,000 &  100,000 \\
		\midrule
		1.0  &  ( 3) 3.3 &  ( 3) 3.7 &  ( 6) 6.3 &  ( 7) 8.1 \\
		0.1  &  ( 3) 3.0 &  ( 3) 3.0 &  ( 6) 4.7 &  NF  \\
		0.01 &  ( 4) 2.8 &  ( 2) 2.5 &  ( 8) 5.9 &  NF \\
		\bottomrule
	\end{tabular}
	\caption{Iteration counts for the two-dimensional stationary magnetic cooling channel problem for $\Pm=\Pr$.\label{tab:cc_pr_eq_pm2d_stat}}
\end{table}

The plots in Figure \ref{fig:2d_cooling} demonstrate the effect of different values of Ra on the flow of the fluid. For $\Ra=1$, we mainly see a straight flow of the fluid in $y$-direction. Similarly, the magnetic field lines mainly point in the direction of $(0,1)^\top$ and the temperature profile is mainly determined by the applied boundary conditions with a linear decrease between $x_1=1$ and $x_1=2$. For increasing Rayleigh number one can see a rotational pattern occurring in the left part of the channel at $\Ra=1{,}000$ for the velocity. Further increasing $\Ra$ to 10,000 stretches this pattern in the $x$-direction and high velocities mainly occur near the inlet. The temperature starts to decrease in the interior more quickly and high temperatures are smeared out in the initial upper half. 
The magnetic field lines adopt an S-shaped form in the right part of the channel while at the inlet the lines have a similar pattern to the velocity field. 

Note that we consider $\Pr \leq 1$ in this problem for which the thermal boundary layers are thicker than the velocity boundary layers. According to Figure \ref{fig:2d_cooling} we seem to fully resolve the velocity boundary layers with our mesh size.
\begin{figure}
	\begin{subfigure}{\textwidth}
		\centering
		\begin{tabular}{c}
			\includegraphics[width=12cm]{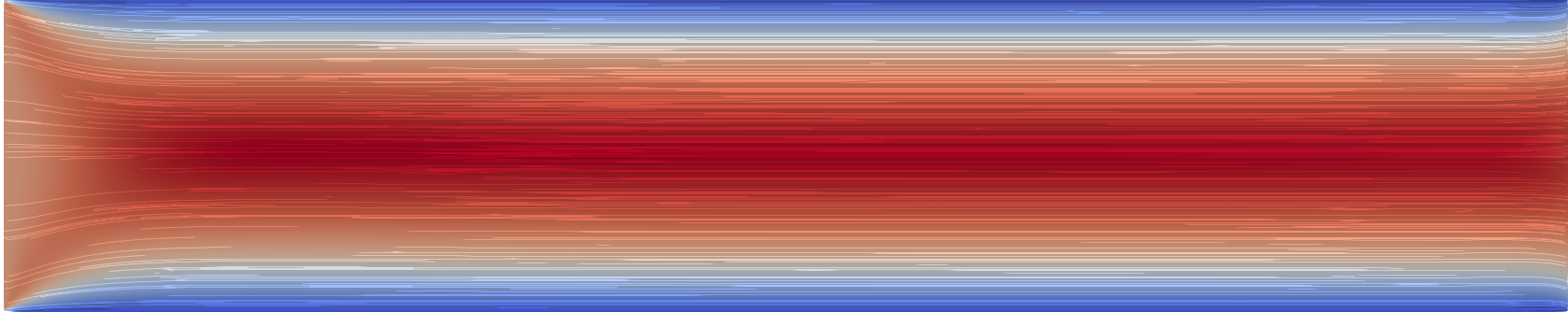} \\
			\includegraphics[width=12cm]{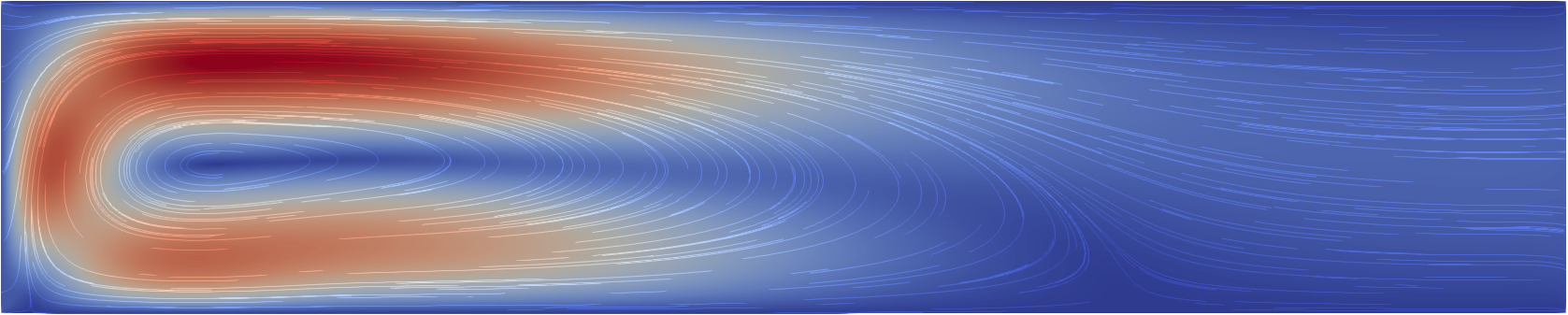} \\
			\includegraphics[width=12cm]{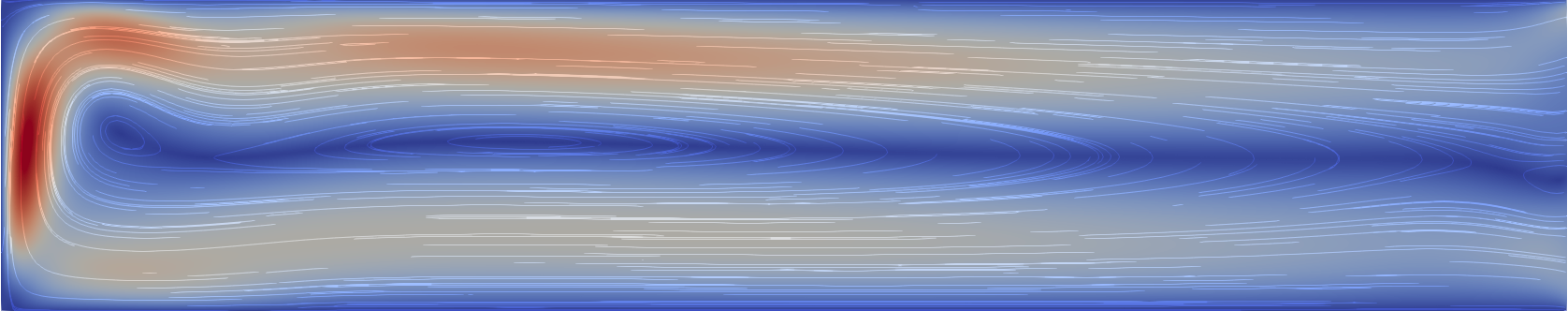} \\
		\end{tabular}
		\subcaption{ Streamlines for velocity at $\Ra = 1$, $\Ra=1,$000 and $\Ra=10,$000.}
		%\label{fig:bifurcation_Ra_S1000_diagram_branch_3}
	\end{subfigure}
	
	\vspace{1.0cm}
	\begin{subfigure}{\textwidth}
		\centering
		\begin{tabular}{c}
			\includegraphics[width=12cm]{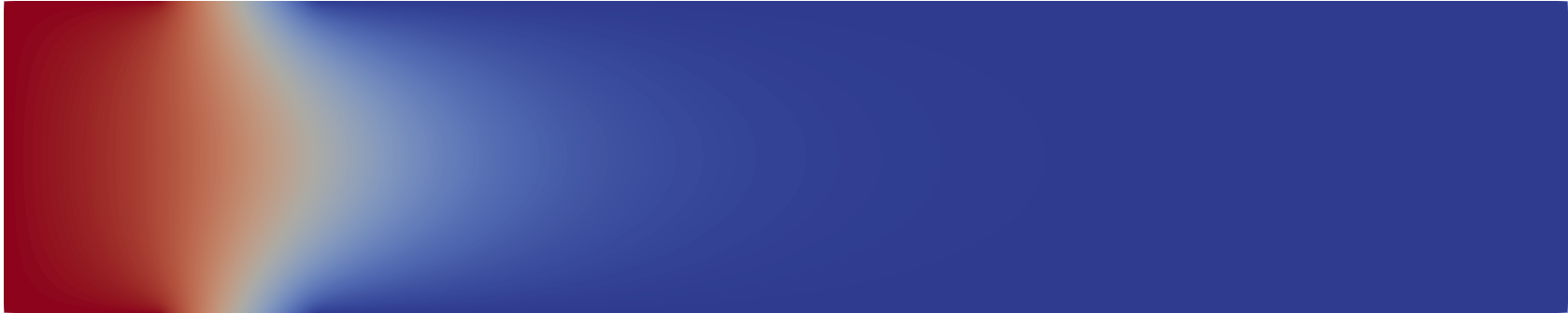} \\
			\includegraphics[width=12cm]{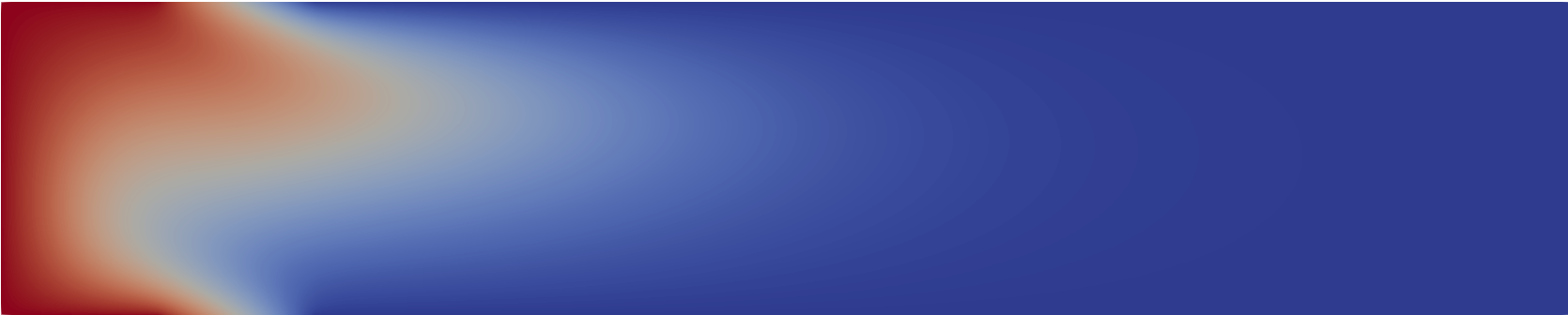} \\
			\includegraphics[width=12cm]{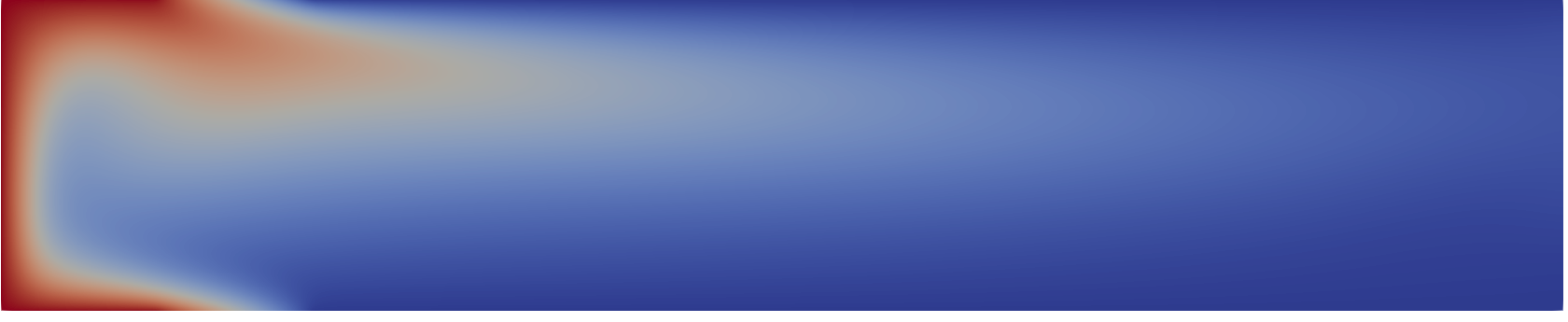} \\
		\end{tabular}
		\subcaption{Temperature at $\Ra = 1$, $\Ra=1,$000 and $\Ra=10,$000.}
		%\label{fig:bifurcation_Ra_S1000_diagram_branch_5}
	\end{subfigure}
\end{figure}
\begin{figure}[ht!]
	\ContinuedFloat
	\begin{subfigure}{\textwidth}
		\centering
		\begin{tabular}{c}
			\includegraphics[width=12cm]{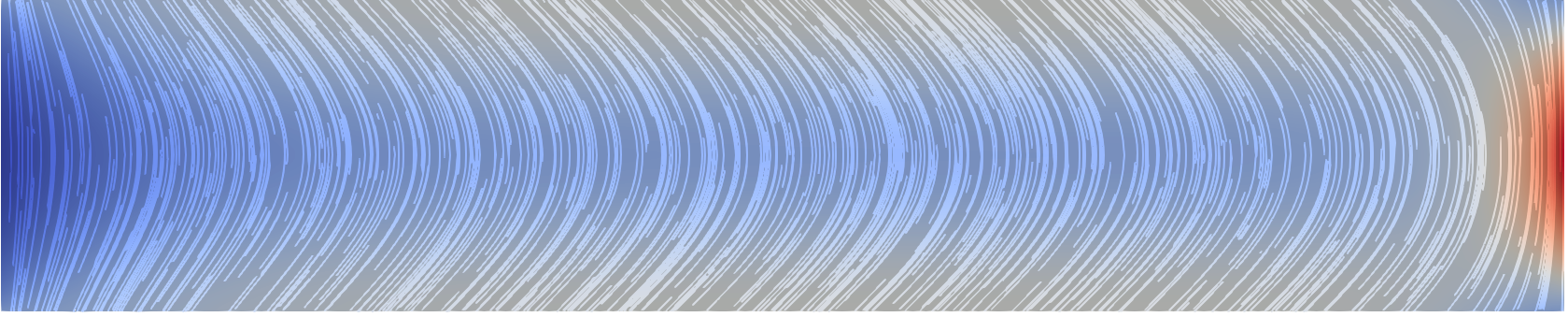} \\
			\includegraphics[width=12cm]{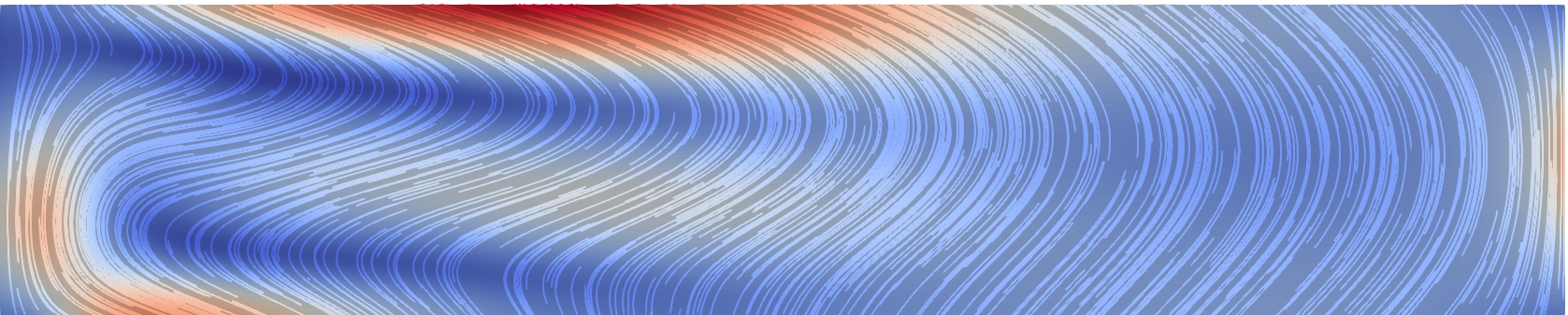} \\
			\includegraphics[width=12cm]{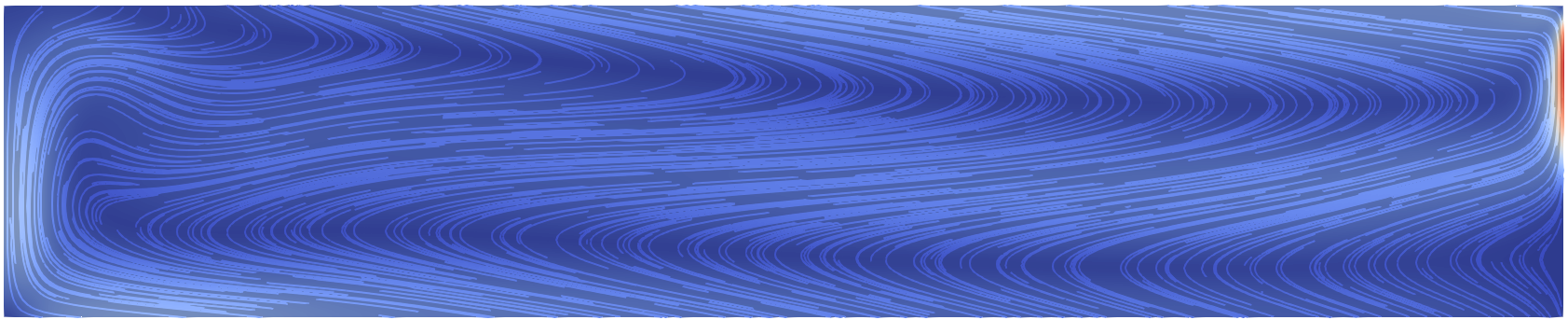} \\
		\end{tabular}
		\subcaption{Streamlines for magnetic field at $\Ra = 1$, $\Ra=1,$000 and $\Ra=10,$000.}
		%\label{fig:bifurcation_Ra_S1000_diagram_branch_5}
	\end{subfigure}
	\caption{Streamline plots for the stationary magnetic cooling channel problem.\label{fig:2d_cooling}}
\end{figure}

We also investigated the time-dependent version of this problem. We chose the L-stable BDF2 time-stepping method, where the first time step was computed with Crank-Nicolson. We iterated until a final time of $T=1$ with a step size of $\Delta t= 0.01$. Overall, we see well-controlled iteration numbers. The iteration numbers seem to decrease the smaller $\Pr$ is. Since the transient problem does not rely on parameter continuation, we were able to to compute results for a larger range of parameters without observing nonlinear convergence issues. Only in a few cases the first time step struggled to converge in the nonlinear iteration. In this case, we ran the first ten time steps with a decreased step size of $\Delta t= 0.001$ to overcome this problem.

\begin{table}[htbp!]
	\centering
	\begin{tabular}{r|ccc}
		\toprule
		$\Pr\backslash\Ra$ & 1  &   10,000 &  100,000 \\
		\midrule
		1.0 & (1.0) 1.6 & (2.1) 3.1 & (2.8) 5.5 \\
		0.1 & (1.0) 1.1 &  (1.9) 1.9 & (3.1) 4.3 \\
		0.01 & (1.0) 1.1 & (1.9) 1.5 & (2.8) 4.3 \\
		0.001 & (1.0) 1.0 & (1.1) 1.3 & (2.1) 2.5 \\
		\bottomrule
	\end{tabular}
	\caption{Iteration counts for the two-dimensional transient magnetic cooling channel problem for $\Pm=\Pr$.\label{tab:cc_pr_neq_pm2d_time}}
\end{table}

\iffalse
\FloatBarrier
\begin{figure}[htbp!]
	\centering
	\begin{tabular}{c}
	\includegraphics[width=12cm]{images/2D_cc_u_Ra_1_Pr_1.png} \\
	\includegraphics[width=12cm]{images/2D_cc_u_Ra_1000_Pr_1.png} \\
	\includegraphics[width=12cm]{images/2D_cc_u_Ra_10000_Pr_1.png} \\
	{\small (a) Streamlines for velocity for $\Ra = 1$, $\Ra=1,$000 and $\Ra=10,$000}
	\end{tabular}
    \vspace{1cm}
    
    \begin{tabular}{c}
	\includegraphics[width=12cm]{images/2D_cc_T_Ra_1_Pr_1.png} \\
	\includegraphics[width=12cm]{images/2D_cc_T_Ra_1000_Pr_1.png} \\
	\includegraphics[width=12cm]{images/2D_cc_T_Ra_10000_Pr_1.png} \\
	\vspace{0.2cm}
	{\small (b) Temperature for $\Ra = 1$, $\Ra=1,$000 and $\Ra=10,$000}
	\end{tabular}
\end{figure}
\begin{figure}[ht]
	\centering
\begin{tabular}{c}
	\includegraphics[width=12cm]{images/2D_cc_B_Ra_1_Pr_1.png} \\
	\includegraphics[width=12cm]{images/2D_cc_B_Ra_1000_Pr_1.png} \\
	\includegraphics[width=12cm]{images/2D_cc_B_Ra_10000_Pr_1.png} \\
	\vspace{0.2cm}
	{\small (c) Streamlines for magnetic field for $\Ra = 1$, $\Ra=1,$000 and $\Ra=10,$000}
\end{tabular}
	\caption{Streamlines for the three-dimensional stationary magnetic cooling channel problem for different values of $\Ra$ with $\Pm=\Pr=1$.\label{fig:2d_cooling}}
	\vspace{12cm}
\end{figure}
\fi
\chapter{Conclusion and Outlook}\label{sec:conclusionandoutlook}

In this thesis we investigated parameter-robust preconditioners and structure-preserving discretisations for several MHD models. 

In Chapter \ref{chap:2}, we have presented scalable block preconditioners for an augmented Lagrangian formulation of the incompressible resistive MHD equations that exhibit parameter-robust iteration counts in most cases. We described how to control the outer Schur complement of a Picard-type and full Newton linearisation and introduced a special monolithic multigrid method to solve the electromagnetic block. This method shows very good robustness with respect to $\Re$ and $\S$ in both the stationary and transient settings. The linear solver is also fully $\Rem$-robust in two dimensions; in three dimensions, it is able to efficiently compute results for higher parameters than was previously possible. Furthermore, our solvers allow the use of fully implicit methods for time-dependent problems. 

We aim to include stabilisation techniques for high magnetic Reynolds numbers in future work and further investigate how to develop a robust multigrid method for the problem including the term $\vcurl(\u^n \times \B)$. This would enable a more robust solver for the most difficult case of stationary problems in three dimensions at high magnetic Reynolds numbers.

In Chapter \ref{chap:3}, we have introduced a structure-preserving finite element discretisation for the incompressible Hall MHD equations that enforces $\nabla \cdot \B = 0$ precisely and proved the well-posedness and convergence of a Picard-type linearisation. Furthermore, we presented formulations that preserve the energy, magnetic and hybrid helicity precisely on the discrete level in the ideal limit for two types of boundary conditions. Finally, we investigated a block preconditioning strategy that works well as long as $\RH$ and $S$ or $\Rem$ are not chosen too high at the same time.

In future work, we want to improve the robustness of our solver with respect to the Hall parameter, especially in the 2.5-dimensional case where we currently use a direct solver to solve the electromagnetic block. This would also enable us to consider the island coalescence problem on much finer grids.  Furthermore, we are curious to investigate further if there exists a scheme that also preserves the hybrid helicity at the same time as the other quantities for the case $\u\cdot \n = 0$.

In Chapter \ref{chap:4}, we investigated anisothermal MHD models. In the first part, we performed a bifurcation analysis for a two-dimensional magnetic Rayleigh-B\'enard problem. We used deflated continuation to compute a bifurcation diagram over the parameter $0 \leq \Ra \leq 100{,}000$ at a high coupling number of $S=1{,}000$. In order to provide useful initial guesses in this regime to find complex solution patterns and disconnected branches, we started with a bifurcation analysis over $0 \leq \Ra \leq 100{,}000$  for $S=1$. We  then proceeded to use the coupling number $S$ as a bifurcation parameter at $\Ra = 100{,}000$ to construct initial guesses at $\Ra = 100{,}000$ and $S=1{,}000$. We demonstrated how an increasing coupling number can stabilise an unstable branch in one case. Moreover, we were able to find a disconnected branch with our approach that was not discovered by starting directly at $S=1{,}000$.

In future work, we want to further investigate the two branches in the diagram for $0 \leq \Ra \leq 100{,}000$ at $S=1{,}000$ that showed a somewhat surprising evolution to us. Furthermore, it would also be interesting to study the dependence on the magnetic Prandtl number $\Pm$ on the bifurcation analysis. 

In the second part of Chapter 4, we extended the augmented Lagrangian block preconditioner from Chapter \ref{chap:2} to the anisothermal version and verified numerically the good robustness of our scheme with respect to the unknowns $\Ra, \Pr, \Pm$ and $S$ in the two-dimensional stationary and transient settings. Similar to the three-dimensional results for the standard MHD equations, a further investigation is needed to make this solver fully robust with respect to high $\Pm$ and small $\Pr$ in the stationary three-dimensional setting.
%\input{_conclusion}
%\include{_Anhang}
%%%%%%%%%%%%%%%%%%%%%%%%%%%%%%%%%%%%%%%%%%%%%%%%%%%%%%%%%%%%%%%%%%
%%%%%%%%%%%%%%%%%%%%%%%%%%%%%%%%%%%%%%%%%%%%%%%%%%%%%%%%%%%%%%%%%%
\newpage

\singlespacing
\bibliography{_literature}

\end{document}